\theoremstyle{plain}
\newtheorem{thm}{Theorem}
\newtheorem{prop}{Proposition}
\newtheorem{hyp}{Hypothesis}
\numberwithin{equation}{section}
\newtheorem{lem}{Lemma}[section]
\newtheorem{cor}[lem]{Corollary}
\newtheorem{defn}[lem]{Definition}
\newtheorem{rem}[lem]{Remark}
\DeclareMathOperator*{\essinf}{ess\,inf}
\newcommand{\al}{\alpha}
\newcommand{\e}{\varepsilon}
\newcommand{\la}{\lambda}
\newcommand{\Om}{\Omega}
\newcommand{\ra}{\rightarrow}
\newcommand{\lra}{\longrightarrow}
\newcommand{\ti}{\tilde}
\newcommand{\<}{\ensuremath{\langle}}
\renewcommand{\>}{\ensuremath{\rangle}}
\newcommand{\ind}{\mathbf{1}}
\newcommand{\lqq}{\leqslant}
\newcommand{\gqq}{\geqslant}
\newcommand{\ud}{\mathrm{d}}
\newcommand{\vt}{\vartheta}
\newcommand\norm[1]{\left\lVert#1\right\rVert_{\mathrm{TV}}}
\newcommand{\bB}{\mathcal{B}}
\newcommand{\cC}{\mathcal{C}}
\newcommand{\fF}{\mathcal{F}}
\newcommand{\gG}{\mathcal{G}}
\newcommand{\hH}{\mathcal{H}}
\newcommand{\nN}{\mathcal{N}}
\newcommand{\oO}{\mathcal{O}}
\newcommand{\tT}{\mathcal{T}}
\newcommand{\vV}{\mathcal{V}}
\newcommand{\xX}{\mathcal{X}}
\newcommand{\fw}{\mathfrak{w}}
\newcommand{\ft}{\mathfrak{t}}
\newcommand{\fX}{\mathfrak{X}}
\newcommand{\fD}{\mathfrak{D}}
\newcommand{\fm}{\mathfrak{m}}
\newcommand{\fb}{\mathfrak{b}}
\newcommand{\fT}{\mathfrak{T}}
\newcommand{\CC}{\mathbb{C}}
\newcommand{\EE}{\mathbb{E}}
\newcommand{\NN}{\mathbb{N}}
\newcommand{\PP}{\mathbb{P}}
\newcommand{\RR}{\mathbb{R}}
\newcommand{\SSS}{\mathbb{S}}
\definecolor{DarkBlue}{rgb}{0,0,0.7}
\definecolor{DarkRed}{rgb}{0.95,0,0}
\title[The cutoff phenomenon for nonlinear Langevin systems with small  noise]{The Cutoff phenomenon in total variation for nonlinear Langevin systems with small layered stable noise}
\author{G. Barrera}
\address{University of Helsinki, Department of Mathematics and Statistics. Helsinki, Finland}
\email{gerardo.barreravargas@helsinki.fi}
\author{M.A. H\"ogele}
\address{Universidad de los Andes. Bogot\'a, Colombia}
\email{ma.hoegele@uniandes.edu.co}
\author{J.C. Pardo}
\address{
 CIMAT. Jalisco S/N, Valenciana, CP 36240. Guanajuato, Guanajuato, M\'exico.}
\email{jcpardo@cimat.mx }
\keywords{cutoff phenomenon, abrupt thermalization, 
exponential ergodicity,
stable L\'evy processes,
local limit theorem,
nonlinear coupling,
short coupling,
total variation distance, counterexample to Slutsky's lemma in total variation, H\"older continuity of the characteristic exponent.
}
\subjclass{37A25; 37A30; 60F05; 60G51; 60G52; 65C30}
\begin{document}
\begin{abstract}
This paper provides an extended case study of 
the cutoff phenomenon for a prototypical class of nonlinear Langevin systems with a single stable state perturbed by an additive pure jump L\'evy noise of small amplitude $\varepsilon>0$, 
where the driving noise process is of layered stable type.
Under a drift coercivity condition the associated family of processes $X^\varepsilon$ turns out to 
be exponentially ergodic with equilibrium distribution $\mu^{\varepsilon}$ in total variation distance which extends a result from \cite{Peng} to arbitrary polynomial moments. 

The main results establish the cutoff phenomenon with respect to the total variation, 
under a sufficient smoothing condition of Blumenthal-Getoor index $\alpha>\frac{3}{2}$. 
That is to say, in this setting we identify a deterministic time scale $\mathfrak{t}_{\varepsilon}^{\mathrm{cut}}$ satisfying $\mathfrak{t}_\varepsilon^{\mathrm{cut}} \rightarrow \infty$, as $\varepsilon \rightarrow 0$, 
and a respective time window, $\mathfrak{t}_\varepsilon^{\mathrm{cut}} \pm o(\mathfrak{t}_\varepsilon^{\mathrm{cut}})$, during which the total variation distance between the current state and its equilibrium $\mu^{\varepsilon}$ essentially collapses as $\varepsilon$ tends to zero. 
In addition, we extend the dynamical characterization 
under which the latter phenomenon can be described 
by the convergence of such distance to a unique profile function 
first established in \cite{BJ1} to the L\'evy case for nonlinear drift. 
This leads to sufficient conditions, which can be verified in examples, 
such as gradient systems subject to small symmetric $\alpha$-stable noise for $\alpha>\frac{3}{2}$.
The proof techniques differ completely from the Gaussian case due to the 
absence of a respective Girsanov transform
which couples the nonlinear equation 
and the linear approximation asymptotically even for short times.
\end{abstract}

\maketitle

\section{\textbf{Exposition}}
\subsection{\textbf{Introduction}} \hfill\\
Roughly speaking the term \textit{cutoff phenomenon} with respect to a distance $d_1$ refers to the following asymptotic dynamics: 
consider the setting of a parametrized family of stochastic processes $(X^{\e})_{ \e>0}$, $X^\e = (X^\e_t)_{t\gqq 0}$, such that for each $\e>0$ the process $X^{\e}$ has a unique limiting distribution~$\mu^{\e}$. 
Then - as $\e$ decreases to $0$ - 
the function $t\mapsto d_\e(X^\e_t, \mu^\e)$ given by a suitably renormalized distance $d_\e$ (of $d_1$) between the law of $X^{\e}_t$ and the corresponding limiting distribution $\mu^{\e}$ essentially resembles the step function $t\mapsto \mbox{diam} \cdot\ind_{[0, \ft_\e^{\mathrm{cut}}]}(t)$. This function descends from the value $\mbox{diam}\in (0, \infty]$ to the value~$0$,  
at a deterministic cutoff time scale $\ft^{\mathrm{cut}}_\e$, which tends to $\infty$ as $\e \ra 0$, 
where $\mbox{diam}=\limsup_{\e\to 0} \mbox{diameter}(d_\e)$ in the respective domain of probability distributions over the state space.
In other words, there exist positive deterministic functions $\e\mapsto \ft_\e^{\mathrm{cut}}$ and
$\e\mapsto \fw^{\mathrm{cut}}_{\e}$
satisfying $\ft^{\mathrm{cut}}_{\e}\ra \infty$ and $\fw^{\mathrm{cut}}_{\e}\ll \ft_\e^{\mathrm{cut}}$ such that
on the interval $(\ft^{\mathrm{cut}}_{\e}-\fw^{\mathrm{cut}}_{\e},\ft^{\mathrm{cut}}_{\e}+\fw^{\mathrm{cut}}_{\e})$ the transition from
$\mbox{diam}$ to $0$ is bound to happen. In general, this transition may depend on subsequences $\e_j\ra 0$ as $j\ra\infty$. In certain situations, a proper limit can be taken, and the limiting function gives rise to a so-called cutoff profile function connecting the asymptotic values $\mbox{diam}$ and $0$ smoothly.

This abrupt convergence phenomenon was first described by  Aldous and  Diaconis  \cite{AD} in the early eighties to conceptualize the collapse of the total variation distance between Markov chain marginals related to card shuffling to its uniform limiting distribution. Since then, this behavior has been studied by numerous authors and in different - mainly discrete - settings.  For instance
 we refer to Diaconis \cite{DI},  Mart\'inez and Ycart \cite{MY} and  Levin et al. \cite{LPW} for the Markov chain setting,  Chen and Saloff-Coste \cite{CSC08} considered some ergodic Markov processes, Lachaud \cite{BL} and Barrera \cite{BA}  for the case of the Ornstein-Uhlenbeck processes driven by a Brownian motion, to name but a few. Further standard texts on the cutoff phenomenon include \cite{Al83, AD87, BHP17, B-HS17, BLY06, BD92, Beresty, BBF08, CSC08, DI87, DGM90, DS,  La16, LanciaNardiScoppola12,  LPW, Meliot14, Trefethenbrothers, Yc99} and the references therein. The newest developments in this active field of research are found in the recent publications \cite{B-HLP19, BQZ20, BCS18, BCS19, CS21, LL19, HS20, HH21}.
 
This article provides a case study on the cutoff phenomenon in the (unnormalized) total variation distance 
for the strong solution process $X^\e$ of a class of  stochastic differential equations with nonlinear coercive vector field $-b$ with a non-degenerate stable state $0$ subject to an additive pure jump L\'evy process $L$ at $\e$-small amplitude
\begin{align}\label{dde0}
\left\{
\begin{array}{r@{\;=\;}l}
\ud X^\e_t & - b(X^\e_t) \ud t + \e \ud L_t \quad \textrm{ for  } t\gqq 0,\\
X^\e_0 & x\in \RR^d. 
\end{array}
\right.
\end{align}
Similar - and in some sense simpler - settings have been studied before: the case of  \textit{nonlinear, coercive} vector fields $(-b)$ subject to \textit{Brownian perturbation $L = W$} with respect to the \textit{total variation} \cite{BJ, BJ1} and two cases of \textit{linear, asymptotically exponentially stable} drifts $- b = - Q$ - that is,  eigenvalues have negative real part, but the matrix is not necessarily coercive, see \cite{Tudoran} - subject to \textit{pure-jump L\'evy noises $L$} \cite{ BHP,BP} in the \textit{total variation} and the \textit{Wasserstein distance}, respectively. 
This paper yields the first results on the cutoff phenomenon for \textit{nonlinear coercive, pure-jump L\'evy} SDEs in the total variation distance, which is  fraught with technical difficulties:
\begin{enumerate}
\item[(a)] It inherits the regularity issues from the linear case \cite{BP} due to the total variation distance. 
\item[(b)] It earns additional challenges due to the nonlinearity.  
In particular, there is a gap in the literature concerning available (short-time) coupling results between the solution of L\'evy SDEs with the nonlinear vector fields and its (linear) Ornstein-Uhlenbeck approximation. 
\end{enumerate}
The regularity issue (a) is overcome by the careful choice of the setting 
of a class of locally layered stable noise processes, by which we generalize 
the notion of  layered stable processes -  introduced by Houdr\'e and Kawai \cite{HOU} -  and the equator condition inspired by \cite{Simon}. 
Regularity results for densities of SDEs which turn out to be crucial for results in the total variation distance have been extensively studied for instance in \cite{ DeFo13, FoPr10,IKT18,ArtKHT,PI}.  

The nonlinear coupling problem (b) is essentially reduced to the control of two partial errors 
of a different nature addressed in Proposition~\ref{prop: stc} and Proposition~\ref{prop: lic}. 
The first error, which is dominated in the statement of Proposition~\ref{prop: stc} 
represents the crucial part of the proof of the main results.  
It directly compares the nonlinear process $X^\e$ with 
its linear inhomogeneous Ornstein-Uhlenbeck approximation 
for short times. While there are very recent short-time couplings for 
SDEs with different (nonlinear) drift under a Brownian driver 
(see Eberle and Zimmer \cite{EBERLE}), to our knowledge 
the literature on respective pure jump counterparts is virtually 
nonexistent. In order to obtain short-time coupling between the 
linear and the nonlinear vector field, we use Plancherel's theorem, 
and appropriate differential inequalities for the characteristic function 
of a strongly localized version of $X^\e$ for Blumenthal-Getoor index $\al>\nicefrac{3}{2}$. 
To the best of our effort it seems hard to derive with this technique 
the correct (exponential) integrability of 
the tails of the characteristic function - even in the linear, 
scalar Gaussian case - and at the same time it is unclear how to relax this condition.
The same  sort of technical difficulties concerning the Fourier approach arises in condition (a) p. 345 of \cite{FoPr10}.
The second error consists of the total variation distance between the short time 
linear inhomogeneous Ornstein-Uhlenbeck (Freidlin-Wentzell first order) approximation 
under linear and nonlinear initial conditions. 
A slight extension of Theorem~3.1 in \cite{HOU} 
provides a stable local limit theorem on the short-range behavior, 
which allows for an appropriate coupling in the proof of Proposition~\ref{prop: lic}.

The difficulty of the nonlinear case studied in this article 
can be informally understood as follows. 
In the linear case $b(x) = - Q x$, it is well-known 
that by the variation-of-constants formula $X^\e_t$ 
can be written as the sum of the deterministic matrix exponential dynamics 
plus the respective stochastic convolution. 
Since the total variation distance is well-behaved under 
deterministic and mutually independent components, 
it can be dominated without too much effort in the linear case. 
This program was carried out in \cite{BP}. 
In the nonlinear, additive noise case $X^\e_t$ 
can be written analogously, but it exhibits an additional error term. 
That is, $X^\e$ is given as the sum of the nonlinear deterministic dynamics, 
its stochastic (nonlinear) convolution with the noise and 
an additional random term representing the (implicit) 
nonlinear residual of the noise, which is neither deterministic nor independent 
from the noise convolution and therefore not easily dominated in total variation. 
Beyond that, the aforementioned random residual term turns out to be a challenge 
since there is no analogue of Slutsky's lemma for the total variation distance even in the case of smooth densities. 
For the sake of completeness and 
since we are not aware of a reference literature,
a counterexample is given in Subsubsection~\ref{subsub:counter}. 
On a more abstract level, the additional difficulties encountered are illustrated for  
the Wasserstein upper bounds of the total variation 
which require additional density gradient estimates 
(see Theorem~2.1 in \cite{CW04}). 

Our results cover the important examples 
of overdamped gradient systems, such as the Fermi-Pasta-Ulam-Tsingou potential, perturbed by 
pure jump L\'evy processes with Blumenthal-Getoor index $\al> \nicefrac{3}{2}$ in the sense of Definition \ref{def:llslm89} and \ref{hyp: layered}, such as symmetric $\al$-stable processes, symmetric tempered $\al$-stable processes in Rosi\'nski \cite{Ros07}
and the symmetric Lamperti-$\al$-stable process \cite{CPP10}. 
If - in addition - the limiting distributions turns out to be rotationally invariant, 
the existence of a cutoff profile is shown to be equivalent to a 
computational linear algebra eigenvector problem 
first established in \cite{BHP} for the easier situation of the Wasserstein distance. 
This characterization is given as a specific orthogonality condition 
of the (generalized) eigenvectors of the linearization $-Db(0)$ of $-b$ in the stable state $0$. 
It allows to carry over several results   
from the linear case under the Wasserstein distance in \cite{BHP}, 
to the case of a nonlinear vector field $-b$ and the total variation distance. 
In physics terminology, 
our results can be restated that 
the existence of a cutoff profile is equivalent to the 
absence of non-normal growth effects in $-Db(0)$
in the case of rotationally 
invariant limiting distributions in the nonlinear setting. 
 
For a complete comparison of the different settings and results 
and in order to avoid a lengthy introduction, 
we refer to the following self-explanatory table. 
 \begin{center}
\begin{footnotesize} 
\begin{tabular}{|l||l|l|l|l|l|}
\hline 
\textbf{Settings} & \cite{BJ} & \cite{BJ1}  & \cite{BP} & \cite{BHP} & \textbf{this article} \\
\hline\hline
\textbf{Dimension} & scalar & multivariate & multivariate & multivariate & \textbf{multivariate} \\
\hline
\textbf{Vector field} & nonlinear  & nonlinear & linear & linear & \textbf{nonlinear}\\
\hline
\textbf{Fixed point} 
& strong & strong 
& neg. real parts 
& neg. real parts 
& \textbf{strong} \\
\textbf{stability} & coercivity & coercivity & of the eigenvalues & of the eigenvalues & \textbf{coercivity} \\
\hline
\textbf{Noise process} & Brownian  & Brownian  & pure jump L\'evy & pure jump L\'evy & \textbf{pure jump L\'evy}\\
& motion & motion & &  & \\
\hline 
\textbf{Noise process} & no & no & no& yes & \textbf{no}\\
\textbf{degeneracy} &  &  & &  & \\
\hline
\textbf{Restrictions} & none & none & finite log-moment & finite moment & \textbf{finite moment}\\
\textbf{on the noise}&&& + Hypothesis~(H) &of order $\beta>0$ & \textbf{of order $\beta>0$,} \\
&&&&&\textbf{+~strongly locally}\\
&&&&&\textbf{layered stable}\\
&&&&&\textbf{$\alpha \in(3/2,2)$}\\
\hline
\textbf{Limiting}   & explicitly & abstract, & characteristic  & d.n.a. due to & \textbf{completely}\\
\textbf{distribution} & known & expansions in $\e$ & function known  & shift linearity of  & \textbf{abstract} \\
&&known \cite{Sheu, Mikami} &&Wasserstein dist.&\\
\hline
\hline 
\textbf{Results} & \cite{BJ} & \cite{BJ1}  & \cite{BP} & \cite{BHP} & \textbf{this article} \\
\hline\hline
\textbf{Distance} & total variation & total variation & total variation & rescaled  & \textbf{total variation} \\
&&&&Wasserstein&\\
\hline
\textbf{Window } & yes & yes & yes & yes & \textbf{yes} \\
\textbf{cutoff} &&&&&\\
\hline
\textbf{Profile} & yes &  dynamical & dynamical & 
 dynamical & \textbf{dynamical}\\
\textbf{cutoff} &    & characterization & characterization & ~characterization  & \textbf{~characterization} \\
&&&&+~normal growth&\textbf{+~normal growth} \\ 
&&&&~characterization&\textbf{~characterization}\\ 
&&&&~(general case) &\textbf{~(rot. inv. case)}\\
\hline
\textbf{Short time}  & Girsanov + & Girsanov + & Fourier inversion  & does not apply  & \textbf{Plancherel}\\
\textbf{coupling} & Pinsker's & Hellinger's &  &  & \textbf{isometry of $L^2$}\\
&inequality & inequality &&&\\
\hline
\end{tabular} 
 \end{footnotesize}
 \end{center}

The nonlinear Wasserstein setting with 
results in the spirit of \cite{BHP} 
are studied in the paper \cite{BHP2}. 

In the manuscript we prove several results of interest in its own right 
which to our knowledge have not been present in the literature:
(1) In Theorem~\ref{ergodicitytheorem} we generalize the strong 
ergodicity result Theorem~4.1 in \cite{Peng} from moments $\beta \gqq 2$ to any $\beta >0$. 
The proof is given in Subsection~ \ref{Appendix D}. 
(2) In Definition \ref{def:llslm89} we introduce the class of locally layered stable process, which are precisely the class 
of processes for which the short-range behavior in Theorem~3.1 in \cite{HOU} remains valid. 
(3) In Proposition~\ref{lem:Holdercontinuity} we give an elementary proof of the local $\beta$-H\"older continuity of the characteristic exponents in case of $\beta\in (0, 1]$-moments in Subsection~\ref{Appendix C.1}. (4) We also provide a complete overview of the behavior of the estimates of matrix exponentials and related flows for an asymmetric matrix in Appendix~\ref{A}, since we are not aware of a reference in the literature.

The manuscript is organized in two large sections and an extended Appendix. 
The first section lays out the setting, the main results formulated as Theorem~\ref{thm: main result} and 
\ref{thm: main result2}, the examples and the skeleton of concluding 
steps in the proof of the main results, 
which boils down to the proofs of Proposition~\ref{prop: windows}, \ref{prop: stc}, \ref{prop: lic} and \ref{prop: equil}. The respective results are proven in the (correspondingly ordered) Subsection~\ref{sec: OUP}, \ref{sec: short coupling}, \ref{sec: linear coupling} and \ref{sec: local}, respectively. 
Subsection~\ref{sec: OUP} shows the cutoff result for the linear inhomogeneous Ornstein-Uhlenbeck process. 
Subsections~\ref{sec: short coupling} and \ref{sec: local} yield the coupling of the inhomogeneous Ornstein-Uhlenbeck and the nonlinear short-time coupling, which exhibits the core difficulties. 
The Appendix is divided in Section \ref{A}, \ref{ap: multiscale}, \ref{Appendix C} and \ref{Appendix D} in which several auxiliary results are shown as a by-product in its own right. Section \ref{A} provides all necessary fine results on the derministic dynamics. Appendix \ref{ap: multiscale} yields a quantitative estimate of the Freidlin-Wentzell first order approximation. Appendix~\ref{Appendix C} gives several auxiliary technical results, some of which we have not been aware in the literature, such as the local $\beta$-H\"older-continuity of a L\'evy process in the presence of arbitrary $\beta$-moments. Appendix~\ref{Appendix D} yields the proof of Theorem~1, which implies the exponential ergodicity of $X^\e$ towards $\mu^\e$, which extends a result by \cite{Peng} to the case of an arbitrary  positive  finite moment.

\bigskip 
\subsection{\textbf{The setting and the main results}} 
\subsubsection{\textbf{The deterministic dynamics $\varphi^x$}} \hfill\\

\noindent Let  $b\in \cC^2(\RR^d, \RR^d)$ be a vector field with $b(0)=0$ 
satisfying the following coercivity condition.
\begin{hyp}[Coercivity]\label{hyp: potential}\hfill\\
Assume that there exists a positive constant $\delta$ such that
\begin{equation}
\label{coercividad}
 \< b(x)-b(y),x-y \> \gqq \delta |x-y|^2 \qquad \mbox{ for all } x, y\in \RR^d,
\end{equation}
where $|\cdot|$ and $\<\cdot, \cdot\>$ denote the Euclidean norm and the standard inner product on $\mathbb{R}^d$, respectively.
\end{hyp}

\noindent In this manuscript we are interested in the stochastically perturbed analogue of 
the dynamical system given as the global solution flow $(\varphi^{\cdot}_t)_{t\gqq 0}$ of the ordinary differential equation 
\begin{align}\label{dde1.1}
\left\{
\begin{array}{r@{\;=\;}l}
\ud \varphi_t^x & -b(\varphi_t^x) \ud t \qquad \textrm{ for any }\quad t\gqq 0,\\
\varphi_0^x & x\in \mathbb{R}^d.
\end{array}
\right.
\end{align}
It is well-known that Hypothesis~\ref{hyp: potential} implies the well-posedness  of \eqref{dde1.1}, see for instance Subsection~2.1 in \cite{BJ1}. 
Furthermore, in our setting inequality \eqref{coercividad} is equivalent to
\[
 \< Db(x)y,y \> \gqq \delta |y|^2 \qquad \mbox{ for all } x, y\in \RR^d,
 \]
where $Db(x)$ denotes the derivative of the vector field $b$ at the point $x$. Moreover, since $b(0)=0$, we have
\[
\frac{\ud }{\ud t} |\varphi^x_t|^2=-2\<\varphi^x_t,b(\varphi^x_t)\>\lqq -2\delta |\varphi^x_t|^2\quad \textrm{ for any } t\gqq 0.
\]
As a consequence $|\varphi^x_t|\lqq e^{-\delta t}|x|$ for any $t\gqq 0$ and $x\in \RR^d$, i.e.
$0$ is an asymptotically exponentially stable fixed point of \eqref{dde1.1}.
For our purposes, however, we need the precise description of the convergence to $0$ in terms of the spectral decomposition of $-Db(0)$. This is the purpose of the  following lemma which characterizes the asymptotics of $\varphi^x_t$ as $t$  tends to $\infty$
 and slightly refines the classical and well-known result by Hartman-Grobman \cite{Grobman,HARTMAN} under Hypothesis~\ref{hyp: potential}. This lemma  turns out to be crucial for the precise shape of the cutoff time and time window.

\begin{lem}[Hartman-Grobman]
\label{asymp}\hfill\\
Consider $(\varphi^x_t)_{t\gqq 0}$ defined by \eqref{dde1.1} under Hypothesis~\ref{hyp: potential}.
Then for  any $x\in \mathbb{R}^{d}\setminus\{0\}$  there exist 
\begin{enumerate}
\item[(i)] positive constants $\quad \lambda:=\lambda_x,\quad \tau:=\tau_x,\quad \ell:=\ell_x, \quad m:=m_x,\quad \ell,m\in \{1,\ldots,d\}$,
\item[(ii)] angles $\quad\theta^1:=\theta^{1}_{x},\dots,\theta^m:=\theta^m_x\in[0,2\pi)$,
where all angles $\theta^k \in(0,2\pi)$ come in pairs
$(\theta^{j_*},\theta^{j_*+1})=(\theta^{j_*}, 2\pi-\theta^{j_*})$
and
\item[(iii)] linearly independent vectors $v^1:=v^1_x,\dots,v^m:=v^m_x$ in $\mathbb{C}^d$ satisfying
$(v^{j_*},v^{j_*+1})=(v^{j_*}, \bar{v}^{j_*})$  whenever $(\theta^{j_*},\theta^{j_*+1})=(\theta^{j_*}, 2\pi-\theta^{j_*})$,
\end{enumerate}
such that
\begin{equation}\label{eq: deterministic exponential convergence}
\lim_{t \to \infty} 
\left| \frac{e^{\lambda t}}{t^{\ell-1}}\cdot \varphi^x_{t+\tau} - \sum_{k=1}^m e^{i\theta_k t}v^k \right|=0.
\end{equation}
Moreover,
\begin{equation}\label{eq: outsidezero}
0<\liminf_{t\rightarrow \infty}\left|\sum_{k=1}^{m} e^{i  t\theta^k} v^k\right|
\lqq 
\limsup_{t\rightarrow \infty}\left|\sum_{k=1}^{m} e^{i  t\theta^k} v^k\right|\lqq 
\sum_{k=1}^{m}  |v^k|.
\end{equation}
\end{lem}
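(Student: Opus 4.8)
The plan is to reduce the system \eqref{dde1.1} to its linearization at the stable state $0$ and then read off the asymptotics \eqref{eq: deterministic exponential convergence} from the Jordan structure of $-Db(0)$; the genuinely delicate point is to pin down the exact exponential rate $\lambda=\lambda_x$. First I would put $A:=-Db(0)$ and $g(z):=Db(0)z-b(z)$, so that $\psi(t):=\varphi^x_t$ solves $\dot\psi=A\psi+g(\psi)$ with $\psi(0)=x$; since $b\in\cC^2$ and $Dg(0)=0$ one has $|g(z)|\lqq C|z|^2$ and $|Dg(z)|\lqq C|z|$ near $0$, and complexifying the equivalent form $\langle Db(0)y,y\rangle\gqq\delta|y|^2$ of Hypothesis~\ref{hyp: potential} shows that every eigenvalue of $A$ has real part $\lqq-\delta$. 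Together with the bound $|\psi(t)|\lqq e^{-\delta t}|x|$ recorded before the lemma and the reverse bound $|\psi(t)|\gqq e^{-Lt}|x|>0$ ($L$ a Lipschitz constant of $b$ on the bounded range of $\psi$, using $x\ne0$), the trajectory stays in the regime where $g$ is genuinely quadratic. I then fix the spectral decomposition $\CC^d=\bigoplus_{j=1}^p E_j$ into generalized eigenspaces grouped by real part, with $A|_{E_j}$ having all eigenvalues of real part $-\lambda_j$ and $\delta\lqq\lambda_1<\dots<\lambda_p$, together with the associated real projections $P_j$.

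The heart of the argument is the rate. By variation of constants, for each $j$ for which the integrals below converge,
\[
P_j\psi(t)=e^{At}a_j-\int_t^\infty e^{A(t-s)}P_jg(\psi(s))\,\ud s,\qquad a_j:=P_jx+\int_0^\infty e^{-As}P_jg(\psi(s))\,\ud s\in E_j\cap\RR^d,
\]
and the elementary bounds $\|e^{\pm As}P_j\|\lqq C(1+s^d)e^{\mp\lambda_j s}$ combined with $|g(\psi(s))|\lqq Ce^{-2(\lambda_x-\e)s}$ show that $a_j$ is well defined and the remainder integral is $O(e^{-2(\lambda_x-\e)t})$, provided the decay rate $\lambda_x$ of $\psi$ is already known and $2\lambda_x>\lambda_j$. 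Since $\|e^{At}v\|\gqq c\,t^{-d}e^{-\lambda_j t}|v|$ on $E_j$, a component $a_j\ne0$ with $\lambda_j<\lambda_x$ would contradict the rate, so $a_j=0$ for every such $j$; and to produce $\lambda_x$ in the first place I would run a bootstrap, starting from $|\psi(t)|\lqq e^{-\delta t}|x|$ and using the displayed splitting (with $g$ quadratic) to improve the current decay exponent whenever it is not one of the $\lambda_j$ — to the first $\lambda_j$ lying above it, or, failing that, to roughly twice its value — until after finitely many steps it lands on some $\lambda_{j^*}$ with $a_{j^*}\ne0$, the reverse bound $|\psi(t)|\gqq e^{-Lt}|x|$ forbidding an overshoot past $\lambda_p$. (Equivalently one may cite the classical theorem that, under an integrable perturbation, $\lim_{t\to\infty}t^{-1}\log|\psi(t)|$ equals a spectral value of $A$; it applies here since $\|Dg(\psi(t))\|\lqq Ce^{-\delta t}$.) This delivers $\lambda=\lambda_x=\lambda_{j^*}$, $a_j=0$ for $j<j^*$, $a_{j^*}\ne0$, and, the contributions of the faster eigenspaces $E_j$ ($j>j^*$) being $o(e^{-\lambda t})$, the asymptotic identity $\varphi^x_t=e^{At}a_{j^*}+o(e^{-\lambda t})$ as $t\ra\infty$.

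It remains to expand $e^{At}a_{j^*}$. I split $E_{j^*}=\bigoplus_l E_{j^*,l}$ according to the distinct imaginary parts $\theta^l$ of its eigenvalues, set $N_l:=(A+\lambda-i\theta^l)|_{E_{j^*,l}}$ (nilpotent) and $a_{j^*,l}:=P_{j^*,l}a_{j^*}$, and for those $l$ with $a_{j^*,l}\ne0$ I let $q_l:=\max\{i:N_l^ia_{j^*,l}\ne0\}$; with $\ell-1:=q:=\max_l q_l\in\{0,\dots,d-1\}$ and $m:=\#\{l:a_{j^*,l}\ne0,\ q_l=q\}$ one obtains $t^{-(\ell-1)}e^{\lambda t}e^{At}a_{j^*}\to\sum_{l:q_l=q}e^{i\theta^l t}v^l$ as $t\ra\infty$, where $v^l:=\tfrac{1}{q!}N_l^qa_{j^*,l}\ne0$, and the relation $N_lv^l=0$ makes each $v^l$ a genuine eigenvector of $A$ for the eigenvalue $-\lambda+i\theta^l$. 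Distinctness of these eigenvalues forces the $v^l$ to be linearly independent, and the realness of $a_{j^*}$ together with $\overline{E_{j^*,l}}=E_{j^*,l'}$ whenever $\theta^{l'}=-\theta^l$ yields the conjugate-pair structure of the nonzero $v^l$ and of the $\theta^l$; combined with $\varphi^x_t=e^{At}a_{j^*}+o(e^{-\lambda t})$ this establishes \eqref{eq: deterministic exponential convergence}, already with $\tau=0$ (replacing $x$ by $\varphi^x_\tau$ only replaces $a_{j^*}$ by $e^{A\tau}a_{j^*}$, rescaling the $v^l$ but changing nothing else, which accommodates any admissible $\tau>0$). Finally, in \eqref{eq: outsidezero} the upper bound is the triangle inequality, while for the lower bound one notes $\sum_{k=1}^m e^{i\theta^k t}v^k=e^{(A+\lambda)t}w$ with $w:=\sum_{k=1}^m v^k\ne0$, and on $\mathrm{span}\{v^1,\dots,v^m\}$ the operator $A+\lambda$ has purely imaginary semisimple spectrum, so $e^{(A+\lambda)t}$ and its inverse are bounded uniformly in $t\in\RR$, whence $|e^{(A+\lambda)t}w|\gqq c|w|>0$ for all $t$ and therefore $\liminf_{t\to\infty}|\sum_{k=1}^m e^{i\theta^k t}v^k|>0$.

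I expect the rate identification of the second paragraph to be the main obstacle. The Hartman--Grobman conjugacy near $0$ is only H\"older continuous, hence cannot transport the polynomial-times-exponential asymptotics one needs; and the naive ``asymptotic phase'' integral $\int_0^\infty e^{-As}g(\psi(s))\,\ud s$ may fail to converge on the fast eigenspaces of $A$ as soon as the spectral spread of $-Db(0)$ exceeds $2\delta$. The staged bootstrap on the decay exponent — raising it through the finite list $\lambda_1<\dots<\lambda_p$ until a nonvanishing asymptotic datum $a_{j^*}$ appears — is precisely what circumvents both difficulties.
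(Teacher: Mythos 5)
The paper does not give its own argument for this lemma; it simply cites Lemma~B.2 of \cite{BJ1}. Your sketch reconstructs the expected proof along the classical Lyapunov--Perron / Coddington--Levinson ``asymptotic integration'' line, and as far as I can check it is sound: the complexification of the coercivity inequality does push all eigenvalues of $A=-Db(0)$ below $-\delta$; the two-sided bounds $e^{-Lt}|x|\lqq|\varphi^x_t|\lqq e^{-\delta t}|x|$ do pin the Lyapunov exponent to the spectrum of $A$; the variation-of-constants split $P_j\psi(t)=e^{At}a_j-\int_t^\infty e^{A(t-s)}P_j g(\psi(s))\,\ud s$ with $a_j:=P_jx+\int_0^\infty e^{-As}P_jg(\psi(s))\,\ud s$ is legitimate exactly in the range $\lambda_j<2\lambda_x$, which is precisely what your staged bootstrap supplies; the Jordan expansion of $e^{At}a_{j^*}$ produces the polynomial weight $t^{\ell-1}$ and the oscillating sum $\sum e^{i\theta^kt}v^k$ with $v^k$ genuine eigenvectors, hence linearly independent and organised in conjugate pairs because $a_{j^*}$ is real; and the lower bound in \eqref{eq: outsidezero} follows because $A+\lambda$ is semisimple with purely imaginary spectrum on $\mathrm{span}\{v^1,\dots,v^m\}$, so that the flow $e^{(A+\lambda)t}$ is uniformly bounded together with its inverse. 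Your observations about why the naive asymptotic-phase integral can diverge when the spectral spread of $Db(0)$ exceeds $2\delta$, and why the H\"older Hartman--Grobman conjugacy is unusable here, correctly identify the genuine technical issue; the cited Coddington--Levinson dichotomy theorem (applicable since $\|Dg(\psi(t))\|\lqq Ce^{-\delta t}\in L^1$) is an acceptable shortcut for the rate identification.

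Two small points, neither a gap in substance. First, the bootstrap is stated only informally; a clean version would iterate the variation-of-constants estimate with a fixed sacrifice $\e>0$, show that at each stage either one encounters $a_j\ne 0$ with $\lambda_j$ in the admissible range and is done, or one improves the exponent by a multiplicative factor $2-o(1)$, and appeal to the lower bound $|\psi(t)|\gqq e^{-Lt}|x|$ to rule out indefinite improvement. Second, the lemma asserts a \emph{positive} $\tau_x$; your closing remark that any admissible $\tau>0$ merely rescales $a_{j^*}\mapsto e^{A\tau}a_{j^*}$ and hence $v^k\mapsto e^{(-\lambda+i\theta^k)\tau}v^k$ is correct (one checks that the asymptotic datum transforms exactly this way under the time shift), so this is a matter of normalisation rather than content.
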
 
The  proof of this result is given in Lemma~B.2 of \cite{BJ1}.
\begin{rem}
For $x\in \RR^d, x\neq 0$, $\la_x$  corresponds to a real part of some  eigenvalue of $Db(0)$ and 
$\{v^k, k=1,\ldots, m\}$ are elements of the Jordan decomposition of $Db(0)$
according to the flag of 
eigenspaces (along increasing real parts of the corresponding eigenvalues) containing $x$. 
For any generic choice of $x$, $\la_x$ corresponds to the smallest real part of the eigenvalues of $Db(0)$.  
\end{rem}

\bigskip 
\subsubsection{\textbf{The stochastic perturbation $\e \ud L$}} \label{subsec:stochasticperturbation}\hfill\\

\noindent On a given probability space $(\Om, \fF, \PP)$ consider a L\'evy process 
$L=(L_t)_{t\gqq 0}$ with values in $\RR^d$, i.e. a stochastic process with c\`adl\`ag paths,  independent and stationary increments and issued from $0$. Its marginals are determined by the celebrated L\'evy-Khintchin formula 
\[
\EE\big[e^{i\< u, L_t\>}\big] = e^{t \Psi(u)} \quad \textrm{ for any } u \in \mathbb{R}^d,
\]
with the characteristic exponent
\[
\Psi(u) = i\< a, u\> - \frac{1}{2} \< u, \Sigma u\> + 
\int_{\RR^d} \Big(e^{i\< u, z\>} - 1 - i\< u, z\> \ind_{B_1(0)}(z)\Big) \nu(\ud z),
\]
where $B_1(0)=\{x\in \RR^d~|~|x|< 1\}$, $a\in \RR^d$,  $\Sigma \in \RR^{d \times d}$ is a non-negative definite matrix and $\nu: \bB(\RR^d) \ra [0, \infty]$
is a $\sigma$-finite Borel measure satisfying 
\[
\nu(\{0\}) = 0 \qquad \mbox{ and } \qquad \int_{\RR^d} (1\wedge |z|^2) \nu(\ud z) < \infty.  
\]
Let $(\fF_t)_{t\gqq 0}$ be the enhanced natural filtration of $L$ satisfying the usual conditions of Protter~\cite{Protter}.

The  stochastic analogue of the dynamical system \eqref{dde1.1} 
is described by the following stochastic differential equation. 
For $\e>0$, we consider 
\begin{align}\label{dde1}
\left\{
\begin{array}{r@{\;=\;}l}
\ud X^\e_t & - b(X^\e_t) \ud t + \e \ud L_t \quad \textrm{ for  } t\gqq 0,\\
X^\e_0 & x,
\end{array}
\right.
\end{align}
 which under Hypothesis~\ref{hyp: potential} 
has a unique strong solution $X^{\e,x}=(X^{\e,x}_{t})_{t \gqq 0}$. Such strong solution satisfies the strong Markov property with respect to the filtration $(\fF_t)_{t\gqq 0}$, see for instance p.~1026 in \cite{WANG} and the references therein.\\

\bigskip 
\subsubsection{\textbf{Exponential ergodicity and regularity of the limiting distributions~$\mu^\e$}} \hfill\\

\noindent \textbf{a) Hypotheses on the L\'evy measure: } 
The existence of invariant measures is known to be true for systems with as little as logarithmic moments \cite{Kallianpur}, however we need exponential ergodicity in the total variation distance, which typically needs some (arbitrarily low) finite moments and regularity of the transition kernel for the L\'evy measure, see for instance \cite{KULPAPER}. Both requirements are met by the class of L\'evy measures defined below.

The cutoff results we have in mind can be understood as asymptotically precise small noise formulations of an exponential ergodicity result in total variation distance. Such results typically need some kind of finite positive moments. We refer to a more detailed discussion directly after Theorem \ref{ergodicitytheorem}. 
To our knowledge - apart from dimension $d=1$ in \cite{KULPAPER} - 
there are not exponential ergodicity results available in the literature with weaker moment hypotheses. 

\begin{hyp}[Moment condition]\label{hyp: moment condition}\hfill\\
We assume
\[
\int_{|z|>1} |z|^{\beta} \nu(\ud z) < \infty\quad  \textrm{ for some } \beta>0. 
\]
\end{hyp}

Since we consider a smooth exponentially stable dynamical system with a small random perturbation, it is natural to apply a linearization procedure, which makes it necessary to compare $X^{\e, x}_\cdot$ with 
a suitable linearized process $Y^{\e}_\cdot(x)$. As they have different drift terms, 
this comparison can hold only for short times. In addition, as explained in the introduction, 
$X^{\e, x}_\cdot$ can be understood as $Y^{\e}_\cdot(x)$ plus some short time 
error term, which turns out to be not of independent nature and therefore 
hard to treat in the total variation, since the analogous statement of Slutsky's lemma (for instance \cite{Klenke}, Section 13.2, Theorem 13.18) for the total variation distance is false in general. We are not aware of this result in the literature and hence provide a counterexample in Subsection~\ref{subsub:counter}. 
The resulting difficulty is overcome by a 
short-time local limit theorem. Such a result has been given 
in Theorem~3.1 in \cite{HOU} and requires some kind of regularization in terms of a sufficiently 
steep pole of the L\'evy measure at the origin. 
With this reasoning in mind it comes not as a surprise that our results are shown for a specific class of L\'evy processes with such a property. In what follows, we assume that the L\'evy process $L$ has no Gaussian component and its L\'evy measure belongs to the following class.


\begin{defn}[Locally layered stable L\'evy measure]\label{def:llslm89}\hfill\\
Let $\nu$ be a L\'evy measure on $(\mathbb{R}^d,\mathcal{B}(\mathbb{R}^d))$. 
Then $\nu$
is called a \textbf{locally layered stable L\'evy measure} with parameters 
$(\nu_0,\nu_\infty,\Lambda,q,c_0,\alpha)$ if the following is satisfied.
There exist $\sigma$-finite Borel measures $\nu_0$ and $\nu_\infty$ such that $\nu=\nu_0+\nu_\infty$, where $\nu_\infty$ is a finite measure with support contained in
$\{|z|>1\}$ and
\[
\nu_0(A)=\int_{\mathbb{S}^{d-1}} \Lambda(\ud \theta)\int_{0}^{1}\mathbf{1}_{A}(r\theta)q(r,\theta)\ud r\quad  \textrm{ for any } A\in \mathcal{B}(\mathbb{R}^d), \;0\notin \bar{A},
\]
where $\Lambda$ is a finite positive measure on $\mathbb{S}^{d-1}$ 
(the unit sphere on $\mathbb{R}^d$), and $q:(0, 1]\times \mathbb{S}^{d-1} \to (0,\infty)$ 
is a locally integrable function for which
there exist
a positive function $c_0$ in $L^1(\Lambda)$
and a parameter $\alpha\in (0,2) $
such that 
\begin{equation*}
|r^{1+\alpha}q(r,\theta)-c_0(\theta)|\to 0, \quad \textrm{ as } r\to 0
\end{equation*}
for $\Lambda$-almost all $\theta\in \SSS^{d-1}$.
A pure jump L\'evy process with a locally layered stable L\'evy measure is called a \textbf{locally layered stable L\'evy process}.
\end{defn}

\noindent This notion generalizes naturally the notion of a layered stable L\'evy measure (and the respective L\'evy process) introduced in Definition 2.1 of \cite{HOU} to all L\'evy measures 
for which Theorem~3.1 (Short-range behavior) remains valid under Hypothesis~\ref{hyp: moment condition}. 
They include more general tail measures $\nu_\infty$ than layered stable L\'evy measures given in \cite{HOU}, such as tempered stable L\'evy measures defined in \cite{Ros07} and Lamperti stable L\'evy measures \cite{CPP10}. The following more restrictive notion is tailor-made to strengthen the result of Theorem~3.1 in \cite{HOU} to the convergence in the total variation distance which turns out to be crucial in the proof of Proposition~\ref{prop: lic}. In addition, in Theorem~\ref{Ext PengZhang} in Appendix~\ref{Appendix D} we extend Theorem~4.1 of \cite{Peng} and show that under Hypothesis~\ref{hyp: moment condition} the system \eqref{dde1} is strongly ergodic under the total variation distance.

\begin{defn}[Strongly locally layered stable L\'evy measure]\label{hyp: layered} \hfill\\
Let $\nu$
be  a locally layered stable L\'evy measure with parameters 
$(\nu_0,\nu_\infty,\Lambda,q,c_0,\alpha)$.
If, in addition, we have the small jump symmetry
\begin{equation}\label{eq:locsym}
q(r,\theta)=q(r,-\theta)\quad  \textrm{ for any } r\in (0,1), \;\theta\in \mathbb{S}^{d-1},
\end{equation}
 the uniform convergence
 \begin{equation}\label{layered0}
\sup_{\theta\in \SSS^{d-1}}|r^{1+\alpha}q(r,\theta)-c_0(\theta)|\to 0, \quad \textrm{ as } r\to 0,
\end{equation}  
and the gradient estimate
\begin{equation}\label{eq: gradientbound}
|\nabla\log q(r,\theta)| \lqq  C_1 r^{-1}\quad \textrm{ for some } C_1>0 \textrm{ and all } r\in (0,1),
\end{equation}
we \textit{call $\nu$ a} \textbf{strongly locally layered stable L\'evy measure} with parameters 
$(\nu_0,\nu_\infty,\Lambda,q,c_0,\alpha)$.
A pure jump L\'evy process with a strongly locally layered stable L\'evy measure is called a \textbf{strongly locally layered stable L\'evy process}.
\end{defn}

\begin{rem}
Examples of such processes are symmetric $\alpha$-stable L\'evy processes (see \cite{Appl, Sa}), symmetric tempered $\alpha$-stable process \cite{Ros07} and symmetric Lamperti $\alpha$-stable 
processes. 
\end{rem}

\begin{hyp}[Regularity]\label{hyp: regularity}\hfill\\
We assume that  the L\'evy process $L$ has no Gaussian component and its L\'evy measure $\nu$ is strongly locally layered stable with parameters $(\nu_0,\nu_\infty,\Lambda,q,c_0,\alpha)$.
\end{hyp}

\noindent 
In the sequel, we define sufficient conditions for an abrupt convergence of $X^{\e,x}_t$ to its unique limiting distribution $\mu^\e$ as $\e \to 0$ in the total variation distance.\\

\noindent \textbf{b) The total variation distance $\norm{\cdot}$: }
Before we  introduce the concept of cutoff  formally,  we recall the notion of the total variation distance.
Given two probability measures $\mathbb{P}$ and $\mathbb{Q}$ which are defined on the same measurable space $\left(\Omega,\mathcal{F}\right)$, denote the total variation distance between $\mathbb{P}$ and $\mathbb{Q}$ as follows
\[
\norm{\mathbb{P}-\mathbb{Q}}:=\sup_{A\in \mathcal{F}}{|\mathbb{P}(A)-\mathbb{Q}(A)|}.\] 
For simplicity, in the case of two random vectors $X$ and $Y$ defined on the same probability space $\left(\Omega,\mathcal{F}, \mathbb{P}\right)$ we use the following notation  for its total variation distance
$$\norm{X-Y}:=\norm{\mathcal{L}(X)-\mathcal{L}(Y)}, $$
where $\mathcal{L}(X)$ and $\mathcal{L}(Y)$ denote the law under $\mathbb{P}$ of the random vectors $X$ and $Y$, respectively. 
For the sake of intuitive reasoning and in a conscious abuse of notation we write $\norm{X- \mu_Y}$ instead of $\norm{X- Y}$, where $\mu_Y$
is the distribution of the random vector $Y$.
For a complete understanding of the total variation distance, we refer to Chapter 2 of the monograph of Kulik \cite{KUL} and the references therein. \\

\noindent \textbf{c) Exponential ergodicity with smooth limiting measure.} 

As we mentioned before, we are interested on the  cutoff  under the  total variation distance, which is a rather robust distance for continuous distributions and rather sensitive for discrete distributions. 
It is therefore natural to assume the following additional hypothesis which
with the help of Hypothesis~\ref{hyp: regularity}
 yields smooth densities for the finite time marginals and 
the limiting distribution of \eqref{dde1}. 

\begin{hyp}[Equator condition \cite{Simon}]\label{hyp: blumental} \hfill\\
Let $\nu$ satisfy Hypothesis~\ref{hyp: regularity}.
The support of the  measure $\Lambda$ is not contained in any proper subspace of $\mathbb{R}^{d}$ intersected with $\SSS^{d-1}$.  
Furthermore, we assume 
\begin{equation}\label{essinf}
\underbar{c}_0:=\essinf_{\theta\in \SSS^{d-1}} c_0(\theta)>0,
\end{equation}
where the essential infimum is understood with respect to the spectral measure $\Lambda$ of~$\nu$. 
\end{hyp}

\noindent The equator condition is motivated by the definition given in Simon \cite{Simon}, p.4.  It  provides a non-degeneracy condition on the support of $\Lambda$ on $\mathbb{S}^{d-1}$. 

\begin{rem}
It is not hard to see that Hypothesis~\ref{hyp: blumental} \eqref{essinf} implies
\begin{equation}
\inf_{\bar{v}\in \mathbb{S}^{d-1}}\int_{\mathbb{S}^{d-1}}
\cos^2(\sphericalangle(\bar{v}, \theta))\Lambda(\ud \theta) >0,\quad
\textrm{
 where }\quad\cos(\sphericalangle(\bar{v}, \theta))=\<\bar{v},\theta\>.
\end{equation}
\end{rem}

\noindent
The following lemma links Definition~\ref{hyp: layered} and Hypothesis~\ref{hyp: blumental} 
to the celebrated Orey-Masuda regularity condition, which is used in the proof of Proposition~\ref{prop: stc}. 

\begin{lem}[Orey-Masuda's cone condition]\label{lem oreymasuda}\hfill\\
Let $\nu$ be a strongly locally layered stable L\'evy measure on $\mathbb{R}^d$ with parameters 
$(\nu_0,\nu_\infty,\Lambda,q,c_0,\alpha)$ for $\alpha \in (0,2)$.
Under Hypothesis~\ref{hyp: blumental} 
there exist positive constants $c_{\sphericalangle}$ and $C_{\sphericalangle}$
such that for all $v\in \RR^d$ with $|v|> C_{\sphericalangle}$ we have 
\[
\int_{|\< v, z\>| \lqq 1} |\< v, z\>|^2 \nu(\ud z) \gqq 
\int_{|\< v, z\>|\lqq 1} |\< v, z\>|^2 \nu_0(\ud z)
\gqq c_{\sphericalangle} |v|^{\al}. 
\]
\end{lem}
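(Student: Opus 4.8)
The plan is to reduce the bound to a one‑dimensional integral over the radial variable and to use the equator condition to guarantee that the angular part never degenerates. First I would note the trivial inequality $\int_{|\langle v,z\rangle|\leqslant 1}|\langle v,z\rangle|^2\,\nu(\ud z)\gqq \int_{|\langle v,z\rangle|\leqslant 1}|\langle v,z\rangle|^2\,\nu_0(\ud z)$, since $\nu=\nu_0+\nu_\infty\gqq\nu_0$ and the integrand is non‑negative; this disposes of the first inequality and lets me work only with the small‑jump part $\nu_0$, whose structure $\nu_0(\ud z)=\Lambda(\ud\theta)\,q(r,\theta)\,\ud r$ in polar coordinates $z=r\theta$, $r\in(0,1]$, $\theta\in\SSS^{d-1}$, is explicit. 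Writing $\bar v=v/|v|$, the integral becomes $\int_{\SSS^{d-1}}\Lambda(\ud\theta)\,|v|^2\langle\bar v,\theta\rangle^2\int_0^{1\wedge(1/(|v||\langle\bar v,\theta\rangle|))} r^2 q(r,\theta)\,\ud r$.

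Next I would exploit the layered structure near the origin. By \eqref{layered0} there is $r_0\in(0,1)$ with $r^{1+\alpha}q(r,\theta)\gqq \tfrac12 c_0(\theta)\gqq\tfrac12\underbar c_0$ for all $r\in(0,r_0)$ and $\Lambda$‑a.e.\ $\theta$, by Hypothesis \ref{hyp: blumental}. Hence on that range $r^2q(r,\theta)\gqq \tfrac12\underbar c_0\, r^{1-\alpha}$, and $\int_0^{\rho} r^{1-\alpha}\,\ud r = \rho^{2-\alpha}/(2-\alpha)$ for $\alpha\in(0,2)$. Choosing $C_{\sphericalangle}$ large enough (depending only on $r_0$) that the truncation radius $\rho(\theta):=1\wedge(1/(|v||\langle\bar v,\theta\rangle|))$ satisfies $\rho(\theta)\leqslant r_0$ whenever $|v|>C_{\sphericalangle}$ and $\langle\bar v,\theta\rangle\neq 0$ — taking $\rho(\theta)=1/(|v||\langle\bar v,\theta\rangle|)$ once $|v||\langle\bar v,\theta\rangle|\geqslant 1/r_0$, and handling the complementary small‑angle set separately — the inner integral is bounded below by a constant times $\big(|v||\langle\bar v,\theta\rangle|\big)^{\alpha-2}$. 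Multiplying by the prefactor $|v|^2\langle\bar v,\theta\rangle^2$ yields an integrand bounded below by $c\,|v|^{\alpha}\,|\langle\bar v,\theta\rangle|^{\alpha}$, so after integrating in $\theta$ we get $\int_{|\langle v,z\rangle|\leqslant 1}|\langle v,z\rangle|^2\,\nu_0(\ud z)\gqq c\,|v|^{\alpha}\int_{\SSS^{d-1}}|\langle\bar v,\theta\rangle|^{\alpha}\,\Lambda(\ud\theta)$ (up to the contribution of the small‑angle region, which I would bound below by a comparable $|v|^{\alpha}$‑term using that there $r^2 q(r,\theta)$ is integrated all the way to $r_0$). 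Finally I would invoke the equator/full‑support part of Hypothesis \ref{hyp: blumental}: since $\Lambda$ is not supported in any proper subspace, $\bar v\mapsto\int_{\SSS^{d-1}}|\langle\bar v,\theta\rangle|^{\alpha}\Lambda(\ud\theta)$ is a continuous, strictly positive function on the compact sphere (it vanishes only if $\theta\perp\bar v$ for $\Lambda$‑a.e.\ $\theta$, which is excluded), hence bounded below by some $c_{\sphericalangle}>0$ uniformly in $\bar v$; this gives the claimed $c_{\sphericalangle}|v|^{\alpha}$ after absorbing constants.

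The main obstacle I anticipate is the careful bookkeeping of the angular truncation: for directions $\theta$ nearly orthogonal to $\bar v$ the constraint $|\langle v,z\rangle|\leqslant 1$ is almost vacuous on $r\in(0,1]$, so $\rho(\theta)=1$ rather than $1/(|v||\langle\bar v,\theta\rangle|)$, and I must show these directions contribute a lower bound of the correct order $|v|^{\alpha}$ (in fact they contribute an $O(|v|^2\langle\bar v,\theta\rangle^2)$ term with a fixed radial integral $\int_0^{r_0}r^{1-\alpha}\ud r$, which is harmless but needs to be tracked so that the constants stay uniform in $\bar v$ and the final bound is genuinely $\gqq c_{\sphericalangle}|v|^{\alpha}$ rather than a larger power). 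The rest is a routine combination of the polar decomposition of $\nu_0$, the uniform layered lower bound \eqref{layered0}, the equator condition, and compactness of $\SSS^{d-1}$.
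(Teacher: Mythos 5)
Your plan is correct and very close in spirit to the paper's proof: both pass to polar coordinates for $\nu_0$, invoke the uniform layered lower bound \eqref{layered0} together with \eqref{essinf} to get $q(r,\theta)\gqq\underline{c}_0/(2r^{1+\alpha})$ for $r<r_0$, and finish with the equator condition to bound the angular integral below uniformly in $\bar v$. The one place you diverge is the treatment of the constraint $|\<v,z\>|\lqq 1$: you carry the exact $\theta$-dependent truncation $\rho(\theta)=1\wedge\big(|v||\<\bar v,\theta\>|\big)^{-1}$ through the computation, which forces the split into a ``large-projection'' region (yielding $|v|^\alpha|\<\bar v,\theta\>|^\alpha$) and a near-orthogonal region that you must handle separately and whose contribution you must show stays uniform in $\bar v$. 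The paper sidesteps all of this with a coarser but cleaner step: after requiring $|v|>1/r_0$ it simply drops the indicator and restricts the radial integral to $r\in(0,1/|v|)$ (where $r|v||\cos(\sphericalangle(\bar v,\theta))|\lqq r|v|\lqq 1$ automatically and $1/|v|<r_0$), which produces the integrand $|v|^\alpha\cos^2(\sphericalangle(\bar v,\theta))$ and removes any angular case distinction; the equator condition then applies directly to $\inf_{\bar v}\int_{\SSS^{d-1}}\cos^2(\sphericalangle(\bar v,\theta))\,\Lambda(\ud\theta)>0$. Your version, if carried through carefully, gives the marginally sharper angular weight $|\<\bar v,\theta\>|^\alpha\gqq\cos^2(\sphericalangle(\bar v,\theta))$, but at the cost of the uniformity bookkeeping you already flag. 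One small inaccuracy to correct: in the main plan you write that the small-angle region contributes ``a comparable $|v|^\alpha$-term,'' but as you yourself note later it actually contributes only $O(1)$ (the prefactor $|v|^2\<\bar v,\theta\>^2$ stays bounded there); this is harmless for the lower bound since the large-projection region alone supplies the $|v|^\alpha$, provided you verify, by compactness and continuity of $\bar v\mapsto\int|\<\bar v,\theta\>|^\alpha\Lambda(\ud\theta)$, that the restriction to $\{|\<\bar v,\theta\>|\gqq(|v|r_0)^{-1}\}$ removes at most, say, half the mass for $|v|$ large enough.
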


\begin{proof}
Observe
\[
\begin{split}\label{layered-masuda}
\int_{|\< v, z\>|\lqq 1} |\< v, z\>|^2 \nu(\ud z)
& \gqq \int_{|\< v, z\>|\lqq 1, |z|\lqq 1} |\< v, z\>|^2 \nu(\ud z)\\
&\gqq |v|^2\int_{\SSS^{d-1}}\int_{0}^{1} 
r^2 \< \bar{v}, \theta \>^2
 \mathbf{1}\{{r|v||\cos (\sphericalangle(\bar{v}, \theta))|\lqq 1}\} q(r,\theta)\ud r\Lambda(\ud \theta),
\end{split}
\]
where $\bar{v}=\nicefrac{v}{|v|}$, 
$r=|z|$
and $\theta=\nicefrac{z}{r}$.
By \eqref{layered0} and \eqref{essinf} there exists $r_0>0$ (without loss of generality  $r_0\lqq 1$)  such that 
\[
q(r,\theta)\gqq \frac{\underbar{c}_0}{2r^{1+\alpha}}\quad \textrm{ for any } r\in (0,r_0).
\]
Consequently,  for $|v|>\nicefrac{1}{r_0}\gqq  1$ we have
\[
\begin{split}
&|v|^2\int_{\SSS^{d-1}}\int_{0}^{1} 
r^2 \< \bar{v}, \theta \>^2
 \mathbf{1}\left\{{|\cos (\sphericalangle(\bar{v}, \theta)|\lqq \frac{1}{r|v|}}\right\} q(r,\theta)\ud r\Lambda(\ud \theta)\\
&\qquad\gqq 
 |v|^2\int_{\SSS^{d-1}}\int_{0}^{\nicefrac{1}{|v|}} 
r^2 \cos^2(\sphericalangle(\bar{v}, \theta)) q(r,\theta)\ud r\Lambda(\ud \theta)\\ 
&\qquad\gqq 
\Big(\frac{\underbar{c}_0 }{2(2-\alpha)}
\inf_{\bar{v}\in \SSS^{d-1}}\int_{\SSS^{d-1}}
\cos^2(\sphericalangle(\bar{v}, \theta))\Lambda(\ud \theta)\Big)|v|^\alpha,
\end{split}
\]
which combined with Hypothesis~\ref{hyp: blumental} finishes the proof.
\end{proof}

The following result is a slight generalization of 
Theorem~4.1 in \cite{Peng} and guarantees that under Hypotheses~\ref{hyp: potential},~
\ref{hyp: moment condition},
\ref{hyp: regularity} and~
\ref{hyp: blumental} the system \eqref{dde1} is strongly ergodic under the total variation distance.

\begin{thm}\label{ergodicitytheorem}
Assume Hypotheses~\ref{hyp: potential},~
\ref{hyp: moment condition},~\ref{hyp: regularity} and~\ref{hyp: blumental} for
$\alpha\in (0,2)$ and
$\beta>0$. 
Then for any $\e>0$, there exists a unique invariant distribution $\mu^\e$ and positive constants $C_\e$, $\theta_\e$ such that for all $x\in \RR^d$, the law of the unique strong solution $X^{\e,x}$ of \eqref{dde1} satisfies
\[
\norm{X^{\e, x}_t-\mu^\e}\leq C_\e e^{-\theta_\e t}(1+|x|^{1\wedge \beta})\quad \textrm{ for  any }\quad t\geq 0.
\]
\end{thm}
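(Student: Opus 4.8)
The plan is to establish ergodicity via a Lyapunov/Foster--Harris argument, which is the standard route for exponential ergodicity in total variation but must be combined here with the regularity of the transition kernel coming from Hypotheses~\ref{hyp: regularity} and~\ref{hyp: blumental}. First I would fix $\e>0$ and construct a Lyapunov function. The natural candidate is $V(x) = 1 + |x|^{1\wedge\beta}$ (or $V(x)=(1+|x|^2)^{(1\wedge\beta)/2}$, which is $\cC^2$ and comparable). Applying It\^o's formula for jump processes to $V(X^{\e,x}_t)$, the drift contribution from $-b$ is handled by the coercivity Hypothesis~\ref{hyp: potential}: since $b(0)=0$, coercivity gives $\<x,b(x)\>\gqq\delta|x|^2$, so the deterministic part pushes $V$ downwards linearly in $|x|$; the jump part of the generator contributes a bounded term because, by Hypothesis~\ref{hyp: moment condition}, $\int_{|z|>1}|z|^\beta\,\nu(\ud z)<\infty$ and the map $x\mapsto V(x+z)-V(x)$ has at most $|z|^{1\wedge\beta}$-growth (using subadditivity of $t\mapsto t^{1\wedge\beta}$ for $\beta\lqq 1$, and a Taylor estimate together with the second-moment bound on small jumps for $\beta>1$). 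This yields a geometric-drift (Foster--Lyapunov) inequality of the form $\mathcal{A}^\e V \lqq -c_\e V + d_\e$ for constants $c_\e,d_\e>0$, and hence $\EE[V(X^{\e,x}_t)] \lqq e^{-c_\e t} V(x) + d_\e/c_\e$ for all $t\gqq 0$.

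Next I would verify a minorization (local Doeblin) condition on sublevel sets of $V$, i.e. on balls $\bar B_R(0)$. This is where Hypotheses~\ref{hyp: regularity} and~\ref{hyp: blumental} enter: the equator condition and the strongly locally layered stable structure guarantee, via the Orey--Masuda cone condition of Lemma~\ref{lem oreymasuda}, that the transition semigroup has smooth (strictly positive, jointly continuous) densities $p^\e_t(x,y)$ for $t>0$; this is exactly the regularization effect of the steep pole $q(r,\theta)\sim c_0(\theta) r^{-1-\alpha}$ at the origin, which makes the localized part of $X^\e$ behave like an $\alpha$-stable process with a bounded-below density. Positivity and continuity of $p^\e_{t_0}(x,y)$ on the compact set $\bar B_R(0)\times\bar B_R(0)$ for a fixed $t_0>0$ then give $p^\e_{t_0}(x,\cdot)\gqq \kappa_{\e,R}\,\rho(\cdot)$ uniformly in $x\in\bar B_R(0)$ for some probability measure $\rho$ supported in $\bar B_R(0)$ and some $\kappa_{\e,R}>0$ — the small-set condition. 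Together with the drift condition and the standard Harris/Meyn--Tweedie theorem (e.g. as in the references \cite{KULPAPER, WANG} already cited), this yields existence and uniqueness of the invariant measure $\mu^\e$ together with the geometric convergence rate $\norm{X^{\e,x}_t-\mu^\e}\lqq C_\e e^{-\theta_\e t}V(x)$, which is the claimed bound once $V(x)\lqq 1+|x|^{1\wedge\beta}$ (up to absorbing a constant into $C_\e$).

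The main obstacle, and the genuinely new point relative to \cite{Peng} (which required $\beta\gqq 2$), is proving the drift inequality for $V(x)=1+|x|^{1\wedge\beta}$ when $\beta$ is small: one cannot simply differentiate $|x|^\beta$ twice and control everything by a second moment. For $\beta\in(0,1]$ I would exploit the elementary subadditivity estimate $|x+z|^\beta - |x|^\beta \lqq |z|^\beta$ directly on the jump terms (avoiding Taylor expansion entirely), so that the only moment of $\nu$ needed is $\int_{|z|>1}|z|^\beta\nu(\ud z)$ from Hypothesis~\ref{hyp: moment condition}, while the small-jump part is controlled by $\int_{|z|\lqq 1}|z|^2\nu(\ud z)<\infty$ after a first-order Taylor bound valid away from the origin — or, cleaner still, by noting $\int_{|z|\lqq 1}(|z|^\beta\wedge|z|^2)\,\nu(\ud z)<\infty$. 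One must also be careful that $x\mapsto|x|^\beta$ is not smooth at $0$; this is circumvented by working with the regularized $V(x)=(1+|x|^2)^{\beta/2}$ for $\beta\lqq 1$ (respectively $(1+|x|^2)^{(1\wedge\beta)/2}$ in general), which is $\cC^2$ and satisfies the same growth and the same drift estimate by the coercivity of $b$. The dependence of $C_\e,\theta_\e$ on $\e$ is not quantified here and need not be — it is extracted later when the cutoff time scale is identified. Since the detailed computation is carried out in Appendix~\ref{Appendix D} (Theorem~\ref{Ext PengZhang}), here I would only indicate the Lyapunov function and the two inequalities and refer forward.
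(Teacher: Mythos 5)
Your proposal follows the same high-level route as the paper: a Foster--Lyapunov drift estimate for a smoothed power of $|x|$ combined with a local Doeblin/minorization condition and a Harris-type theorem. The Lyapunov function you pick, $(1+|x|^2)^{(1\wedge\beta)/2}$, is essentially identical to the paper's $|x|_c^\gamma$ with $|x|_c=\sqrt{|x|^2+c^2}$ and $\gamma\lqq\beta\wedge 1$ (the paper keeps $c$ as a free parameter to absorb constants), and the two key tricks you highlight --- subadditivity of $t\mapsto t^\gamma$ for the large jumps and a second-derivative bound for the small jumps --- are exactly how the paper verifies Condition LC of \cite{Peng} for $\beta\in(0,2)$. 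So the drift half of your sketch matches the paper's Appendix~\ref{Appendix D} in all essentials.

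The one point where you diverge, and where your sketch has a mild gap, is the minorization step. You propose to obtain a small-set condition by arguing that the transition kernel has a \emph{strictly positive}, jointly continuous density, attributed to the Orey--Masuda cone condition of Lemma~\ref{lem oreymasuda}. But Orey--Masuda gives integrability of the characteristic function and hence smoothness and boundedness of the density, not strict positivity; positivity needs an additional support/irreducibility argument tied to the controllability structure (the nonlinear Kalman/H\"ormander condition of Definition~\ref{def: controllability}, via \cite{SongZhang}). The paper sidesteps this by following Peng's framework and splitting the minorization into two pieces: Condition $\mathbf{H_1}$, a weak irreducibility statement proved purely from the Lyapunov contraction of the coupled pair $(X_t(x),X_t(y))$ (that they land in a fixed ball and get within any prescribed distance $\Delta$ with probability bounded away from zero, uniformly over $x,y$ in a ball), and Condition $\mathbf{H_2}$, the regularity of the kernel handled exactly as in \cite{Peng}. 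This coupling-based route is a bit more robust precisely because it never needs to establish density positivity. Your argument is salvageable if you replace ``strictly positive density'' with either the H\"ormander/controllability argument or the Peng-style coupling proximity, but as written the minorization step is not fully justified from the Orey--Masuda condition alone.
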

The proof is a direct corollary of Theorem~\ref{Ext PengZhang} given in Appendix \ref{Appendix D}. The tracking of the dependence $\e\mapsto (\theta_\e,C_\e)$ is typically hard to follow through the discretization procedure laid out by Meyn and Tweedie \cite{MeynTweedie}. In the special case of finite variation, the backtracking of $\e$ can be carried out partially, we refer to \cite{KULPAPER}. \\

We recall that in dimension $d=1$,  a classical result by Kulik (see Proposition~0.1 in \cite{KULPAPER}) implies that the solution of \eqref{dde1} enjoys exponential ergodicity without assumption \eqref{eq: gradientbound} and consequently Theorem~\ref{ergodicitytheorem} holds for general locally layered stable L\'evy measures in this case. Very recently, \cite{NersesyanRaquepas} contains exponential ergodicity by control theoretic methods for multidimensional compound Poisson noise with finite variance.
For higher dimensions, we use the sufficient conditions including
\eqref{eq: gradientbound} in \cite{Peng} and our generalizations of their results given in Appendix~\ref{Appendix D}. We point out that for the special case of symmetric $\alpha$-stable L\'evy processes,  
assumption \eqref{eq: gradientbound} is automatically satisfied
and \cite{WANGLETTERS} yields exponential ergodicity in any dimension.

\bigskip 

\subsubsection{\textbf{The main results: window cutoff (Thm. \ref{thm: main result}) and profile cutoff (Thm. \ref{thm: main result2})}} \hfill\\

\noindent 
Following \cite{BY} and the references therein, 
there are three notions of cutoff phenomenon with increasing strength.
The most restrictive notion is called profile cutoff which 
provides the precise asymptotic  shape of the collapse for the total variation distance.
Profile cutoff implies a weaker concept which is called window cutoff that states abrupt convergence within a precise time interval but losing the precise profile. Window cutoff is generalized  further to the notion of cutoff  in which we retain the abrupt convergence along time scale which corresponds to the center of the interval, however, without a quantification of the error.

\begin{defn}\label{def: cutoff}
For any $\e>0$ and $x\in \mathbb{R}^d$, let $X^{\e,x}$ be the solution of
\eqref{dde1} with a unique limiting distribution $\mu^\e$.
 We say that for $x\in \mathbb{R}^d$ the family $(X^{\e,x})_{\e\in (0,1]}$ exhibits
\begin{itemize}
\item[a)] a {\bf cutoff phenomenon} at the time scale $(t^x_{\e})_{\e\in(0,1]}$,  where 
$t^x_{\e}\to\infty$, as $\e\to 0$, if it satisfies
\begin{eqnarray*}
\lim\limits_{\e\rightarrow 0 }
\norm{{X}^{\e,x}_{\delta \cdot t^x_{\e}}-\mu^{\e}}
= \left\{ \begin{array}{lcc}
             1 &  \textrm{ if }  & \delta\in (0,1), \\
             \\ 0 & \textrm{ if } & \delta\in (1,\infty). \\
             \end{array}
   \right.
\end{eqnarray*}
\item[b)]  a {\bf window cutoff phenomenon} at the enhanced time scale
$(t^x_{\e}, w^x_{\e})_{\e\in(0,1]}$, 
where $t^x_{\e}\to \infty$ and 
\mbox{$\nicefrac{w^x_{\e}}{t^x_{\e}}\to 0$,} as $\e\to 0$, if it satisfies
\[
\lim\limits_{\rho \rightarrow -\infty}{\liminf\limits_{\e\rightarrow 0}
\norm{{X}^{\e,x}_{ t^x_{\e}+\rho\cdot w^x_\e}-\mu^{\e}}}=1\quad \textrm{ and } \quad
\lim\limits_{\rho \rightarrow \infty}{\limsup\limits_{\e\rightarrow 0}
\norm{{X}^{\e,x}_{ t^x_{\e}+\rho\cdot w^x_\e}-\mu^{\e}}}=0.
\]

\item[c)] a {\bf profile cutoff phenomenon} at the enhanced time scale
$(t^x_{\e}, w^x_{\e})_{\e\in(0,1]}$ with the profile function $G_{x}$, where $t^x_{\e}\to \infty$ and 
$\nicefrac{w^x_{\e}}{t^x_{\e}}\to 0$, as $\e \to 0$, if the limit 
\[
 G_x(\rho):=\lim\limits_{\e \rightarrow 0}\norm{{X}^{\e,x}_{ t^x_{\e}+\rho\cdot w^x_\e}-\mu^{\e}}
\]
is well-defined for all $\rho\in \mathbb{R}$ and $G_{x}$ satisfies 
\[
\lim\limits_{\rho \rightarrow -\infty}{G_x(\rho)}=1\quad \textrm{ and } \quad \lim\limits_{\rho \rightarrow \infty}{G_x(\rho)}=0.
\]
\end{itemize}
\end{defn}
\noindent The cut-off time scale $t^x_\e$ is sometimes referred to as the center of the cutoff window  and $w^x_\e$ as its width. As mentioned above  iii) implies ii) and ii) implies i).

The first main result of this study reads as follows. 
\begin{thm}[Generic window cutoff phenomenon]\label{thm: main result}\hfill\\
Assume Hypotheses~\ref{hyp: potential},~\ref{hyp: moment condition},~\ref{hyp: regularity} and ~\ref{hyp: blumental} are satisfied for some $\alpha\in (3/2,2)$ and
 $\beta>0$. 
For any $\e>0$ and $x\in \RR^d\setminus\{0\}$, let  $X^{\e,x}$ be the unique strong solution of
\eqref{dde1} with a unique limiting distribution $\mu^\e$.
Then the family $(X^{\e,x})_{\e\in (0,1]}$ exhibits a window cutoff phenomenon as $\e \to 0$ at the enhanced time scale $(t^x_\e,w^x_\e)$
given by
\begin{equation}\label{eq:tiempos}
t^{x}_\e=\frac{1}{\lambda_x}\ln\left(\nicefrac{1}{\e}\right)+
\frac{\ell_x-1}{\lambda_x}
\ln\left(\ln\left(\nicefrac{1}{\e}\right)\right)
\quad \textrm{ and } \quad
w^{x}_\e=\frac{1}{\lambda_x}+o_\e(1),
\end{equation}
where $\lambda_x>0$ and $\ell_x \in \{1,\ldots,d-1\}$ are the constants appearing in
the Hartman-Grobman decomposition of Lemma~\ref{asymp}.
\end{thm}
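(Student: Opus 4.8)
The plan is to decompose the proof of Theorem~\ref{thm: main result} into a linear skeleton plus two coupling estimates, exactly along the lines sketched in the introduction and organized as Propositions~\ref{prop: windows}, \ref{prop: stc}, \ref{prop: lic} and \ref{prop: equil}. First I would introduce the linear inhomogeneous Ornstein-Uhlenbeck approximation $Y^{\e}_\cdot(x)$ obtained by replacing the nonlinear drift $-b$ in \eqref{dde1} by its linearization $-Db(0)$ along the deterministic flow $\varphi^x$ (the Freidlin-Wentzell first-order approximation). For this linear process the stochastic convolution is explicit, and by Proposition~\ref{prop: windows} (proven in Subsection~\ref{sec: OUP}) one obtains a window cutoff at precisely the enhanced time scale $(t^x_\e, w^x_\e)$ of \eqref{eq:tiempos}; here the constants $\lambda_x$ and $\ell_x$ enter through the Hartman-Grobman description in Lemma~\ref{asymp}, which governs the rate at which $\varphi^x_t\to 0$ and hence the size of the deterministic centering term that competes with the $\e$-small noise.

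Next I would transfer this cutoff from $Y^\e$ to $X^\e$ by a triangle inequality in the total variation distance: for $t = t^x_\e + \rho\, w^x_\e$,
\[
\norm{X^{\e,x}_{t}-\mu^\e} \lqq \norm{X^{\e,x}_{t}-Y^\e_{t}(x)} + \norm{Y^\e_{t}(x)-\nu^\e_{t}} + \norm{\nu^\e_{t}-\mu^\e},
\]
with the reverse inequality giving the lower bound, where $\nu^\e_t$ is the limiting law of the linear model (or an appropriate reference Gaussian-free stable law). The third term is controlled by the ergodicity Theorem~\ref{ergodicitytheorem} together with its linear analogue, on the time scale $t^x_\e \to \infty$. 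The first term — the genuinely nonlinear error — is where the work lies: it is split as in the introduction into (i) a short-time coupling between the nonlinear process and the linear inhomogeneous OU process, controlled by Proposition~\ref{prop: stc}, and (ii) the total variation distance between the short-time linear OU approximations issued from linear versus nonlinear initial data, controlled by Proposition~\ref{prop: lic} via the stable local limit theorem (the extension of Theorem~3.1 in \cite{HOU}). One runs the dynamics up to an intermediate time $s_\e$ with $1 \ll s_\e \ll t^x_\e$, applies the short-coupling estimates on $[0,s_\e]$ and $[s_\e, t]$ respectively, and uses that after time $s_\e$ both the nonlinear flow $\varphi^x_{s_\e}$ and its linearization have contracted enough (by the coercivity-driven bound $|\varphi^x_t|\lqq e^{-\delta t}|x|$) that the residual nonlinear term is negligible relative to $\e$.

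The main obstacle will be Proposition~\ref{prop: stc}, the short-time coupling of the nonlinear SDE with its linear OU approximation in total variation. Because there is no Girsanov transform for pure-jump noise and no Slutsky lemma for total variation (the counterexample in Subsubsection~\ref{subsub:counter}), the argument cannot mimic the Gaussian proofs of \cite{BJ,BJ1}; instead one must pass through Fourier analysis. The route I would take is: strongly localize $X^\e$ (truncate large jumps and confine the process to a ball), compute the characteristic function of the localized process, and derive a differential inequality for it using the Orey-Masuda cone lower bound of Lemma~\ref{lem oreymasuda}, which gives the $|v|^\alpha$-coercivity of the real part of the characteristic exponent and hence the smoothing. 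This is precisely where the restriction $\alpha > 3/2$ is forced: one needs enough decay of the characteristic function to apply Plancherel's theorem (the $L^2$ isometry) and bound $\norm{X^{\e,x}_t - Y^\e_t(x)}$ by an $L^2$-norm of the difference of characteristic functions, and the Taylor remainder of the nonlinearity together with the available Hölder regularity of the characteristic exponent (Proposition~\ref{lem:Holdercontinuity}) only closes the estimate when $\alpha$ exceeds $3/2$. Assembling these pieces — and verifying that the errors are $o_\e(1)$ uniformly on compact $\rho$-intervals, which is what the window cutoff in Definition~\ref{def: cutoff}(b) demands — completes the proof.
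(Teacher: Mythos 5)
Your proposal correctly identifies the four-proposition skeleton --- window and profile cutoff for the inhomogeneous Ornstein-Uhlenbeck linearization (Proposition~\ref{prop: windows}), the nonlinear short-time coupling (Proposition~\ref{prop: stc}), the linear inhomogeneous coupling (Proposition~\ref{prop: lic}), and the equilibrium approximation (Proposition~\ref{prop: equil}) --- and it correctly names the technical tools: Plancherel, the Orey-Masuda cone condition, the Houdr\'e-Kawai local limit theorem for layered stable noise, jump truncation and spatial localization, and the role of $\alpha>3/2$ through the integrability of the characteristic function. That is the right high-level inventory.

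However, the time decomposition you describe is not what the paper uses, and as stated it does not close. You propose to bound $\norm{X^{\e,x}_t - Y^\e_t(x)}$ directly at time $t = t^x_\e + \rho\, w^x_\e$ and to split at an intermediate time $s_\e$ with $1 \ll s_\e \ll t^x_\e$, ``applying short-coupling estimates on $[0,s_\e]$ and $[s_\e, t]$ respectively''. Neither interval is short: both have length diverging as $\e \to 0$. The short-time coupling between the nonlinear dynamics and its linearization in total variation --- which is all Proposition~\ref{prop: stc} delivers, and all the Plancherel/characteristic-function argument can yield --- is only available on a window of length $\Delta_\e = \e^{\alpha/2}\to 0$. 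Moreover, the direct bound on $\norm{X^{\e,x}_t - Y^\e_t(x)}$ is exactly the quantity the paper is at pains to \emph{avoid}: since there is no Slutsky lemma for the total variation distance (Subsubsection~\ref{subsub:counter}), the Freidlin-Wentzell estimate $\PP(|X^{\e,x}_t - Y^\e_t(x)|\gqq \e^{3/2})\to 0$ does not transfer to total variation. Your claim that ``after time $s_\e$ both the nonlinear flow and its linearization have contracted enough'' is also quantitatively off: at a time $s_\e\ll t^x_\e$ one has $|\varphi^x_{s_\e}|\sim e^{-\la_x s_\e}\gg\e$, so the deterministic centering has not yet reached the $\e$-scale.

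The actual decomposition, given in inequality \eqref{ine5}, runs the dynamics to $T^x_\e = t^x_\e - \Delta_\e + \rho\, w^x_\e$ (so $T^x_\e\sim t^x_\e$, not $\ll t^x_\e$), uses the strong Markov and flow properties to write $X^\e_{t^x_\e + \rho w^x_\e}(x) = X^\e_{\Delta_\e}(X^\e_{T^x_\e}(x))$ and $Y^{\e,x}(t^x_\e+\rho w^x_\e;0,x) = Y^{\e,x}(\Delta_\e; T^x_\e, Y^{\e,x}(T^x_\e;0,x))$, and only then compares the two dynamics on the terminal window $[T^x_\e, T^x_\e + \Delta_\e]$ of vanishing length. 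The error $E_1$ compares the nonlinear and linearized dynamics over this short window from the \emph{same} random initial datum $X^\e_{T^x_\e}(x)$; the error $E_2$ compares the linear dynamics over this window from the \emph{different} random points $X^\e_{T^x_\e}(x)$ and $Y^{\e,x}(T^x_\e;0,x)$, and it is in $E_2$ that the local limit theorem and the $L^1$-shift continuity of the stable density convert the probability estimate of Lemma~\ref{lem: dependence} into a total variation estimate --- this is the substitute for Slutsky's lemma. Without this Markov-property reduction to the terminal $\Delta_\e$-window, the Fourier argument you sketch has no foothold and the error terms cannot be controlled on the cutoff time scale.
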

Note that $x=0$ in Theorem \ref{thm: main result} is essential. 
\begin{rem}
For $x=0$, there is no cutoff phenomenon since the linearization vanishes and 
intuitively  cannot compete with the ergodicity. For details see Remark \ref{rem:nocutoff0}. For a complete discussion of the easier case of the Wasserstein distance, we refer to Section 3.2 in~\cite{BHP}.
\end{rem}

\noindent
Assume the hypotheses of Theorem~\ref{thm: main result} are satisfied for some $x\in \RR^d\setminus\{0\}$. Let $v(t,x)=\sum_{k=1}^{m}e^{i \theta^k_x t}v^k_x$ and $\la_x$, $\ell_x$, $\theta^1_x,\ldots, \theta^m_x$ and $v^1_x,\ldots, v^m_x$  given  in Lemma~\ref{asymp}.
We define the $\omega$-limit set for the dynamics of $(v(t,x))_{t\gqq 0}$ by
\begin{equation}\label{eq: omegasetx}
\omega(x):=\{v\in \mathbb{R}^d: \textrm{ there exists a sequence $(t_j)\to \infty$ and } \lim\limits_{j\to \infty}v(t_j,x)=v\},
\end{equation}
which due to the left-hand side of \eqref{eq: outsidezero} does not include the null vector, i.e. $ 0\not \in \omega(x)$.

\begin{rem}
Note that $\omega(x)\neq \emptyset$. Indeed,  
a Cantor diagonal argument 
for any limiting sequence in \eqref{eq: deterministic exponential convergence} 
yields the existence of a subsequence $(t_j)_{j\in \NN}$ with $t_j\ra \infty$, as $j \ra \infty$, such that for any $k=1,\ldots,m$ the limit 
$\lim\limits_{j\to \infty }e^{it_j\theta^k}= \vartheta_k$ exist. Moreover, $|\vartheta_k|=1$ for all $k$.
Since $v^1,\ldots,v^m$ are linearly independent vector in $\CC^d$, we deduce 
$v=\sum_{j=1}^{m}\vartheta_j v^j\in \omega(x)$.
\end{rem}

\noindent In an abuse of notation let $Z_\infty$ denote a parametrization of the unique 
invariant distribution of the Ornstein-Uhlenbeck process
\[
\ud Z_t=-Db(0)Z_t\ud t +\ud L_t.
\]
We have the following characterization of profile cutoff.

\begin{thm}[A dynamical characterization of a profile cutoff phenomenon]\label{thm: main result2}\hfill\\
Assume the hypotheses of Theorem~\ref{thm: main result} are satisfied for some $x\in \RR^d\setminus\{0\}$. 
Recall the $\omega$-limit set $\omega(x)$ given in \eqref{eq: omegasetx}.
Then the family $(X^{\e,x})_{\e\in (0,1]}$ exhibits a profile cutoff phenomenon as $\e \to 0$ at the enhanced time scale $(t^x_\e,w^x_\e)$
given by Theorem~\ref{thm: main result} with profile function 
\[
G_{x}(\rho)=
\norm{\left(
e^{-\rho}\cdot \frac{e^{-\la_x \tau_x}}{
\lambda_x^{\ell_x-1}}
v+ Z_\infty \right)- Z_\infty}
\quad \textrm{ for any } \rho\in \RR,\;  v\in \omega(x)
\]
if and only if for any  $a>0$ the map
\begin{equation}\label{eq:charac}
\omega(x)\ni v\mapsto 
\norm{(a v+ Z_\infty) - Z_\infty}\quad
\textrm{ is constant}.  
\end{equation}
\end{thm}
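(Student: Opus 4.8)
The plan is to derive Theorem~\ref{thm: main result2} from Theorem~\ref{thm: main result} by upgrading the window cutoff to a profile cutoff through an explicit computation of the limit $\lim_{\e\to 0}\norm{X^{\e,x}_{t^x_\e+\rho w^x_\e}-\mu^\e}$ along the chosen time scale. First I would use the coupling machinery already set up in the paper: by Proposition~\ref{prop: stc} (short-time coupling of $X^\e$ with its inhomogeneous Ornstein--Uhlenbeck linearization $Y^\e(x)$) and Proposition~\ref{prop: lic} (the linear coupling via the stable local limit theorem), together with Proposition~\ref{prop: equil} identifying $\mu^\e$ with the equilibrium of the linearized flow, the total variation distance at time $t=t^x_\e+\rho w^x_\e$ is asymptotically equal to $\norm{\,\varphi^x_t/\e + Z_\infty - Z_\infty\,}$ up to an error tending to $0$ (after the usual rescaling that makes the noise $O(1)$). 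Here one substitutes the Hartman--Grobman asymptotics of Lemma~\ref{asymp}: along $t=t^x_\e+\rho w^x_\e$ one has $\e^{-1}\varphi^x_{t+\tau_x}\sim e^{-\rho}\,\lambda_x^{-(\ell_x-1)}e^{-\lambda_x\tau_x}\,v(t_\e,x)$, where $v(t,x)=\sum_k e^{i\theta^k t}v^k$ lives (asymptotically) in $\omega(x)$. The one subtlety is that $v(t_\e,x)$ need not converge as $\e\to 0$; it only accumulates on $\omega(x)$.

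The second step is the equivalence itself. For the ``if'' direction: assume the map in \eqref{eq:charac} is constant on $\omega(x)$ for every $a>0$. Then even though $v(t_\e,x)$ oscillates through $\omega(x)$, the quantity $\norm{(a\,v(t_\e,x)+Z_\infty)-Z_\infty}$ does not feel this oscillation, so the limit exists and equals $\norm{(e^{-\rho}\lambda_x^{-(\ell_x-1)}e^{-\lambda_x\tau_x}v+Z_\infty)-Z_\infty}$ for any fixed $v\in\omega(x)$; the boundary behavior $G_x(-\infty)=1$, $G_x(+\infty)=0$ is then inherited from Theorem~\ref{thm: main result} (or re-derived from $\norm{a v+Z_\infty-Z_\infty}\to 1$ as $a\to\infty$, which follows since $a v\to\infty$ and the mass escapes, and $\to 0$ as $a\to 0$ by continuity of translation in total variation for the absolutely continuous law of $Z_\infty$, guaranteed by Hypothesis~\ref{hyp: regularity}/\ref{hyp: blumental}). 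For the ``only if'' direction, I would argue contrapositively: if for some $a_0>0$ the map $v\mapsto\norm{(a_0 v+Z_\infty)-Z_\infty}$ takes two distinct values $g_1<g_2$ at points $v_1,v_2\in\omega(x)$, then by choosing $\rho=\rho_0$ so that $e^{-\rho_0}\lambda_x^{-(\ell_x-1)}e^{-\lambda_x\tau_x}=a_0$, and by extracting two subsequences $\e_j\to 0$ along which $v(t_{\e_j},x)\to v_1$ respectively $\to v_2$ (possible because $\omega(x)$ is exactly the accumulation set and the map $t\mapsto v(t,x)$ is continuous, so it passes arbitrarily close to each of $v_1,v_2$ on arbitrarily late time intervals), one gets two different subsequential limits of $\norm{X^{\e,x}_{t^x_\e+\rho_0 w^x_\e}-\mu^\e}$, contradicting the existence of $G_x(\rho_0)$. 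Hence no profile can exist unless \eqref{eq:charac} holds; and the homogeneity of \eqref{eq:charac} in $a$ is genuinely needed because $\rho$ ranges over all of $\RR$, i.e. $a=e^{-\rho}\lambda_x^{-(\ell_x-1)}e^{-\lambda_x\tau_x}$ ranges over all of $(0,\infty)$.

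A technical point to handle carefully is the passage from the true process to the reduced expression $\norm{a\,v(t_\e,x)+Z_\infty-Z_\infty}$: I would invoke Proposition~\ref{prop: windows} together with Propositions~\ref{prop: stc}, \ref{prop: lic}, \ref{prop: equil} in exactly the configuration used to prove Theorem~\ref{thm: main result}, but now keeping the $\rho w^x_\e$ shift explicit rather than sending $\rho\to\pm\infty$; the error terms there are $o_\e(1)$ uniformly in $\rho$ on compact sets, which is what licenses reading off the limit pointwise in $\rho$. I would also record that $Z_\infty$ is well-defined and absolutely continuous: existence of the invariant law of $\ud Z_t=-Db(0)Z_t\,\ud t+\ud L_t$ follows from $\Re\operatorname{spec}(Db(0))\ge\delta>0$ (Hypothesis~\ref{hyp: potential}) plus Hypothesis~\ref{hyp: moment condition}, and smoothness from Hypothesis~\ref{hyp: regularity} via Lemma~\ref{lem oreymasuda} (Orey--Masuda), so that $v\mapsto\norm{(av+Z_\infty)-Z_\infty}$ is continuous in $v$ and the value-continuity arguments above are legitimate.

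The main obstacle I anticipate is precisely the non-convergence of $v(t_\e,x)$: one must argue both that the profile limit \emph{can} fail (the ``only if'' direction, requiring a careful density argument that $t\mapsto v(t,x)$ visits neighborhoods of \emph{every} point of $\omega(x)$ on arbitrarily late times — this uses that the $\theta^k$ generate a sub-torus and that $\omega(x)$ is the closure of $\{v(t,x):t\ge 0\}$'s tail, together with minimality/equidistribution of the linear flow on that torus) and that it \emph{can} succeed under \eqref{eq:charac} (the ``if'' direction, where constancy on $\omega(x)$ neutralizes the oscillation). Making the ``visits every point of $\omega(x)$ infinitely often, arbitrarily closely'' claim rigorous — rather than merely ``accumulates somewhere in $\omega(x)$'' — is the crux, and I expect it to rest on the structure already encoded in Lemma~\ref{asymp}(ii)--(iii) (the angles come in conjugate pairs, the $v^k$ are linearly independent) so that the orbit closure of $t\mapsto(e^{i\theta^1 t},\dots,e^{i\theta^m t})$ is a compact connected subgroup of the torus on which the flow is minimal, and the continuous image of that orbit closure under $(\zeta_k)\mapsto\sum_k\zeta_k v^k$ is exactly $\omega(x)$.
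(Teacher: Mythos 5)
Your plan is essentially the paper's plan: reduce to the inhomogeneous linearization $Y^\e_\cdot(x)$ via Propositions~\ref{prop: stc}, \ref{prop: lic}, \ref{prop: equil} and the key estimate \eqref{ine5}, then read off the profile limit from the Hartman--Grobman asymptotics and the $\omega$-limit set. Your ``if'' direction (constancy on $\omega(x)$ neutralizes the oscillation of $v(t_\e,x)$, combined with boundedness of $v(\cdot,x)$ by $\sum_k|v^k|$ and continuity of $v\mapsto\norm{(av+Z_\infty)-Z_\infty}$) is exactly the paper's argument via the $\limsup/\liminf$ representatives $\hat v_\rho(x),\check v_\rho(x)\in\omega(x)$.

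The one place where you go wrong is in identifying a ``crux'' that does not exist. You say the ``only if'' direction requires proving that the flow $t\mapsto v(t,x)$ ``visits every point of $\omega(x)$ infinitely often, arbitrarily closely'', and you plan to derive this from minimality/equidistribution of a linear flow on a sub-torus generated by the angles $\theta^k$. None of this is needed. The set $\omega(x)$ is \emph{defined} in \eqref{eq: omegasetx} as the set of accumulation points of $t\mapsto v(t,x)$: a point $v$ belongs to $\omega(x)$ if and only if there exists $t_j\to\infty$ with $v(t_j,x)\to v$. So for any two points $v_1,v_2\in\omega(x)$, the required sequences are handed to you by the definition — there is nothing to prove. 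The paper's proof of the ``only if'' direction is then direct rather than contrapositive: given $v\in\omega(x)$ with witnessing sequence $(t_j)$, one solves $t^x_{\e_j}+\rho\cdot w^x_{\e_j}-\tau_x=t_j$ for $\e_j\to 0$ (the map $\e\mapsto t^x_\e+\rho w^x_\e-\tau_x$ is continuous and increases to $\infty$ as $\e\to 0$), uses \eqref{eq: limite} and Scheff\'e's lemma to get $\tilde D^{\e_j,x}(t^x_{\e_j}+\rho w^x_{\e_j})\to\norm{(e^{-\rho-\la_x\tau_x}\la_x^{1-\ell_x}v+Z_\infty)-Z_\infty}$, and then observes that under profile cutoff this subsequential limit must agree with the full limit, which is itself expressed via $\hat v_\rho(x),\check v_\rho(x)\in\omega(x)$. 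Since $\rho$ ranges over $\RR$, the coefficient $e^{-\rho-\la_x\tau_x}\la_x^{1-\ell_x}$ ranges over $(0,\infty)$, which is exactly why the hypothesis \eqref{eq:charac} is required for \emph{every} $a>0$. Your contrapositive formulation is fine once you drop the torus-minimality digression, but as written you flag a gap that is in fact closed tautologically by the definition of $\omega(x)$.
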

\noindent Observe that $\omega(x)=\{v_x\}$ immediately implies profile cutoff by 
the preceding theorem. The latter, indeed, is satisfied in the subsequent case of a gradient potential.

The following special case of gradient systems is particularly of interest in applications, such as for instance the Fermi-Ulam-Pasta-Tsingou-potential treated in Subsection \ref{ss:Fermi}.
\begin{cor} 
\label{cor:profile20}
Let the assumptions of Theorem~\ref{thm: main result} be satisfied and 
assume $b(x)=\nabla \vV (x)$, $x\in \RR^d$, for a potential function $\vV:\RR^d\to [0,\infty)$.
Then the family $(X^{\e,x})_{\e\in (0,1]}$ exhibits a profile cutoff as $\e \to 0$ at the enhanced time scale $(t^x_\e,w^x_\e)$
given by
\[
t^{x}_\e=\frac{1}{\lambda_x}\ln\left(\nicefrac{1}{\e}\right)
\quad \textrm{ and } \quad
w^{x}_\e=\frac{1}{\lambda_x}+o_\e(1),
\]
where $\lambda_x>0$ and $\tau^x$ are the positive constants in the Hartman-Grobman decomposition of Lemma~\ref{asymp} such that
\begin{equation}\label{limitgrad}
\lim\limits_{t\to \infty }e^{\lambda_x t}\varphi^{x}_{t+\tau^x}=v^{x}\not=0
\end{equation}
and the profile function is given by
\[
G_x(\rho)=\norm{(e^{-\rho}\cdot e^{-\la_x \tau_x}v^x+Z_\infty)-Z_\infty}, \qquad \rho \in \RR.
\]
\end{cor}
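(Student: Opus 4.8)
The plan is to deduce the corollary as a direct specialization of Theorem~\ref{thm: main result2}, the point being that for a gradient vector field the Hartman--Grobman data of Lemma~\ref{asymp} degenerate completely. First I would note that $Db(0)=D^2\vV(0)$ is a \emph{symmetric} matrix, and that evaluating the coercivity inequality in its equivalent form $\<Db(0)y,y\>\gqq \delta|y|^2$ shows $Db(0)$ is symmetric positive definite. Hence all eigenvalues of $Db(0)$ are real and $\gqq\delta>0$, and $Db(0)$ is diagonalizable over $\RR$, so every Jordan block has size one. (In particular the Ornstein--Uhlenbeck process $\ud Z_t=-Db(0)Z_t\,\ud t+\ud L_t$ has a well-defined unique invariant law $Z_\infty$, as already used in Theorem~\ref{thm: main result2}.)

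Next I would read off the consequences for Lemma~\ref{asymp}. The polynomial factor $t^{\ell_x-1}$ in \eqref{eq: deterministic exponential convergence} records the maximal Jordan block size encountered along the flag of eigenspaces through $x$; diagonalizability forces $\ell_x=1$ for every $x\neq 0$. Therefore $\lambda_x^{\ell_x-1}=1$ and the double-logarithm term $\tfrac{\ell_x-1}{\lambda_x}\ln\ln(\nicefrac1\e)$ in \eqref{eq:tiempos} vanishes, which already produces the announced time scale $t^x_\e=\tfrac{1}{\lambda_x}\ln(\nicefrac1\e)$ and window $w^x_\e=\tfrac{1}{\lambda_x}+o_\e(1)$. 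Moreover the angles $\theta^1_x,\dots,\theta^m_x$ encode the imaginary parts of the eigenvalues of $Db(0)$, which all vanish by symmetry; hence $v(t,x)=\sum_{k=1}^m e^{i\theta^k_x t}v^k_x=\sum_{k=1}^m v^k_x=:v^x$ does not depend on $t$, so \eqref{eq: deterministic exponential convergence} becomes $\lim_{t\to\infty}e^{\lambda_x t}\varphi^x_{t+\tau_x}=v^x$, and the left-hand inequality in \eqref{eq: outsidezero} gives $v^x\neq 0$; this is exactly \eqref{limitgrad}.

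Consequently the $\omega$-limit set \eqref{eq: omegasetx} collapses to the singleton $\omega(x)=\{v^x\}$. The characterization condition \eqref{eq:charac} of Theorem~\ref{thm: main result2}---that $v\mapsto \norm{(av+Z_\infty)-Z_\infty}$ be constant on $\omega(x)$ for each $a>0$---is then vacuously satisfied, a function on a one-point set being constant. Invoking Theorem~\ref{thm: main result2} yields the profile cutoff at $(t^x_\e,w^x_\e)$ with profile
\[
G_x(\rho)=\norm{\Big(e^{-\rho}\,\frac{e^{-\lambda_x\tau_x}}{\lambda_x^{\ell_x-1}}\,v^x+Z_\infty\Big)-Z_\infty},\qquad \rho\in\RR,
\]
and substituting $\ell_x=1$ turns this into $G_x(\rho)=\norm{(e^{-\rho}e^{-\lambda_x\tau_x}v^x+Z_\infty)-Z_\infty}$, as claimed.

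There is essentially no analytic obstacle here: the corollary is a substitution into Theorems~\ref{thm: main result} and~\ref{thm: main result2}. The only step requiring care is the bookkeeping that identifies $\ell_x$ with the Jordan block size and $\theta^k_x$ with the imaginary parts of the eigenvalues in Lemma~\ref{asymp}, so that symmetry of $D^2\vV(0)$ genuinely collapses both simultaneously; once that is made explicit (via the remark following Lemma~\ref{asymp}), the rest is immediate.
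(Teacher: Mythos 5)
Your proof is correct and follows exactly the line the paper takes: the paper dispenses with Corollary~\ref{cor:profile20} by the one-sentence remark following Theorem~\ref{thm: main result2} that $\omega(x)$ is a singleton for gradient systems, and you have supplied the missing linear-algebra bookkeeping (symmetry of $D^2\vV(0)$ forces real spectrum, diagonalizability, hence $\ell_x=1$ and all $\theta^k_x=0$, so $v(t,x)\equiv v^x$ and $\omega(x)=\{v^x\}$), after which Theorem~\ref{thm: main result2} applies with the condition \eqref{eq:charac} holding trivially.
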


\begin{rem}
Note that the dependence of $\lambda_x$ of $x$
can be complicated, however,
 it is rather weak in the following qualitative sense: $\lambda_x=\lambda$ for Lebesgue almost every $x \in\RR^d$, where $\lambda$ is the smallest eigenvalue of the positive definite symmetric matrix $D^2 \vV(0)$. 
\end{rem}
We give a more general sufficient conditions for the existence of a cutoff profile in terms of 
a symmetry condition. 
\begin{cor}\label{cor: profilelineal}
Assume the hypotheses of Theorem~\ref{thm: main result} are satisfied for some $x\in \RR^d\setminus\{0\}$. If there exists an invertible $d\times d$-square matrix $M$  such that the distribution of $MZ_\infty$ is rotationally invariant and the image set satisfies $M\omega(x)\subset \{|z|=r\}$ for some $r=r_x>0$, then
the family $(X^{\e,x})_{\e\in (0,1]}$ exhibits a profile cutoff phenomenon as $\e \to 0$ at the enhanced time scale $(t^x_\e,w^x_\e)$.
\end{cor}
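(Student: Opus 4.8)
The plan is to reduce the rotational-symmetry hypothesis to the dynamical characterization of Theorem~\ref{thm: main result2}, i.e.\ to verify that for every $a>0$ the map $\omega(x)\ni v\mapsto \norm{(av+Z_\infty)-Z_\infty}$ is constant. Once this is done, Theorem~\ref{thm: main result2} immediately yields the profile cutoff at the stated enhanced time scale $(t^x_\e,w^x_\e)$, so the entire content of the corollary is the constancy of that map. The main tool is the behaviour of the total variation distance under an invertible linear change of variables: for any invertible $d\times d$ matrix $M$ and any two $\RR^d$-valued random vectors $U,V$ one has $\norm{MU-MV}=\norm{U-V}$, since $A\mapsto MA$ is a bijection of $\bB(\RR^d)$ and the total variation distance is defined as a supremum over Borel sets. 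Hence, writing $W:=MZ_\infty$, for any $v\in\omega(x)$ and any $a>0$,
\[
\norm{(av+Z_\infty)-Z_\infty}
=\norm{M(av+Z_\infty)-MZ_\infty}
=\norm{(a\,Mv+W)-W}.
\]
So it suffices to show that the right-hand side is independent of $v\in\omega(x)$.

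First I would invoke the hypothesis that $M\omega(x)\subset\{|z|=r\}$: every $Mv$ with $v\in\omega(x)$ has the same Euclidean norm $r$. Thus for $v,v'\in\omega(x)$ the vectors $Mv$ and $Mv'$ differ by an orthogonal transformation, i.e.\ there exists $O\in O(d)$ with $OMv=Mv'$ (pick any orthogonal map sending the sphere point $Mv/r$ to $Mv'/r$). Now I would use that $W=MZ_\infty$ is rotationally invariant, meaning $OW\overset{d}{=}W$ for every $O\in O(d)$; applying the $O$ just chosen and again the invariance of total variation under the invertible (orthogonal) map $O$,
\[
\norm{(a\,Mv'+W)-W}
=\norm{(a\,OMv+W)-W}
=\norm{O\big((a\,Mv+O^{-1}W)-O^{-1}W\big)}
=\norm{(a\,Mv+W)-W},
\]
where in the last step I used $O^{-1}W=O^{\top}W\overset{d}{=}W$ (rotational invariance) together with the fact that replacing $W$ by an equal-in-law random vector does not change the law of $(a\,Mv+W)$ when $W$ is the only source of randomness, hence does not change the total variation distance. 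This establishes that $v\mapsto\norm{(av+Z_\infty)-Z_\infty}$ is constant on $\omega(x)$ for each fixed $a>0$, which is exactly condition \eqref{eq:charac}.

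Applying Theorem~\ref{thm: main result2} then gives the profile cutoff with profile $G_x(\rho)=\norm{(e^{-\rho}e^{-\la_x\tau_x}\la_x^{-(\ell_x-1)}v+Z_\infty)-Z_\infty}$ for any representative $v\in\omega(x)$, the value being independent of the choice of $v$; in particular the limit defining $G_x$ exists for all $\rho\in\RR$ and has the correct limits $1$ and $0$ as $\rho\to\mp\infty$, as guaranteed by Theorem~\ref{thm: main result2}. The step I expect to require the most care is the bookkeeping around ``replacing $W$ by an equal-in-law copy'': one must phrase the argument purely in terms of laws, observing that $\cL(aMv+W)$ depends on $W$ only through $\cL(W)$, so that $\cL(W)=\cL(OW)$ genuinely implies $\norm{(aMv+W)-W}=\norm{(aMv+OW)-OW}=\norm{O(aO^{-1}Mv+W)-OW}$; everything else is the elementary invariance of $\norm{\cdot}$ under invertible linear maps, which should be stated as a one-line lemma (or cited from Chapter~2 of \cite{KUL}) and then applied twice, once for $M$ and once for the orthogonal $O$.
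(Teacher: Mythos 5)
Your proposal is correct and follows essentially the same route as the paper: both reduce to the characterization of Theorem~\ref{thm: main result2}, conjugate by the invertible matrix $M$ to pass to $MZ_\infty$, pick an orthogonal map $O$ sending $Mv$ to $Mv'$ on the sphere $\{|z|=r\}$, and exploit rotational invariance together with the invariance of the total variation distance under invertible linear changes of variables. The only cosmetic difference is that you prove the linear-invariance of $\norm{\cdot}$ from the definition, whereas the paper cites Theorem~5.2 of \cite{DEVLUG}.
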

In the Gaussian case we have the following picture. 
\begin{rem}
For the non-degenerate Gaussian case we refer to Lemma A.2 in \cite{BJ}. There, the law of
$Z_\infty$ is $\nN(0,\Sigma)$, where $\Sigma$ satisfies
\[
Db(0)\Sigma+\Sigma Db(0)^*=I_d.
\]
The choice of $M=\Sigma^{-1/2}$ yields that $MZ_\infty\stackrel{d}= \nN(0,I_d)$ is rotationally invariant. 
Hence the sphere condition $M \omega(x)\subset \{|z|=r\}$ for some $r=r_x>0$ is equivalent to the profile cutoff, see Corollary 2.11 in \cite{BJ}.
 However, in the generic L\'evy case, no symmetry
on the law of $Z_\infty$ can be expected. 
Note that we always find an invertible bi-measurable map $\tT:\RR^d\to \RR^d$
such that the push-forward $\tT(Z_\infty)$ is rotationally invariant (for instance $\nN(0,I_d)$),
however, it is  highly nonlinear and irregular, and therefore the proof of Corollary~\ref{cor: profilelineal} breaks down.\\
\end{rem}
A sufficient condition for the hypotheses of Corollary \ref{cor: profilelineal} to be satisfied 
can be given in terms of the following density condition on the invariant limiting measure of the Ornstein-Uhlenbeck process $Z$. 
\begin{cor}[Geometric profile characterization under rotational invariant $Z_\infty$]\label{cor:charprofile}\hfill\\
Assume the hypotheses of Theorem~\ref{thm: main result} are satisfied for some $x\in \RR^d\setminus\{0\}$. If in addition, the law of $Z_\infty$ is rotationally invariant 
and its density  $f\in \mathcal{C}^1(\RR^d,(0,\infty))$  is unimodal in the sense that 
$f(z) = g(|z|)$
for some function $g\in \mathcal{C}^1((0, \infty), (0, \infty))$  with  $g'(s) < 0$ for all $s > 0$  and  $g'\in L^1(\RR^d)$.
Then the image set satisfies $\omega(x)\subset \{|z|=r\}$ for some $r=r_x>0$ if and only if
the family $(X^{\e,x})_{\e\in (0,1]}$ exhibits a profile cutoff as $\e \to 0$ at the enhanced time scale $(t^x_\e,w^x_\e)$.
\end{cor}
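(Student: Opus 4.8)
The plan is to read off the profile cutoff from the dynamical characterization of Theorem~\ref{thm: main result2}, and to convert its constancy condition into a statement about the norms of the vectors in $\omega(x)$ by evaluating the relevant total variation distance explicitly, using rotational invariance and unimodality.

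The implication ``$\omega(x)\subset\{|z|=r\}$ $\Rightarrow$ profile cutoff'' is a special case of Corollary~\ref{cor: profilelineal} with $M=I_d$: since $\mathcal{L}(Z_\infty)$ is rotationally invariant by hypothesis and $I_d\,\omega(x)=\omega(x)\subset\{|z|=r\}$, the family $(X^{\e,x})_{\e\in(0,1]}$ exhibits a profile cutoff at $(t^x_\e,w^x_\e)$. For the converse, suppose the family exhibits a profile cutoff. By Theorem~\ref{thm: main result2} this is equivalent to the map $\omega(x)\ni v\mapsto\norm{(av+Z_\infty)-Z_\infty}$ being constant for every fixed $a>0$. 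Rotational invariance of $\mathcal{L}(Z_\infty)$ gives $\norm{(av+Z_\infty)-Z_\infty}=\norm{(a|v|e_1+Z_\infty)-Z_\infty}=:h(a|v|)$ for a fixed $e_1\in\SSS^{d-1}$, so taking $a=1$ the map $v\mapsto h(|v|)$ is constant on $\omega(x)$; if $h$ is strictly increasing on $[0,\infty)$, this forces $|v|$ to take a single value $r=r_x$ on $\omega(x)$, and $r_x>0$ because $0\notin\omega(x)$ by the left inequality in \eqref{eq: outsidezero}. Hence everything reduces to the strict monotonicity of $h$.

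To establish it, let $f$ denote the density of $Z_\infty$. Using $\norm{\mathbb{P}-\mathbb{Q}}=1-\int(\tfrac{\ud\mathbb{P}}{\ud z}\wedge\tfrac{\ud\mathbb{Q}}{\ud z})\,\ud z$ for probability measures with densities, one has
\[
h(s)=1-\int_{\RR^d}\min\{f(z-se_1),\,f(z)\}\,\ud z .
\]
Since $f(z)=g(|z|)$ with $g$ non-increasing, $\min\{f(z-se_1),f(z)\}=g(\max\{|z-se_1|,|z|\})$; splitting $\RR^d$ along the hyperplane $\{z_1=s/2\}$ (across which $|z|$ and $|z-se_1|$ swap order), substituting $z\mapsto z-se_1$ on the half-space $\{z_1<s/2\}$, and using the reflection symmetry of $z\mapsto g(|z|)$ in the $e_1$-direction yields
\[
\int_{\RR^d}\min\{f(z-se_1),\,f(z)\}\,\ud z=\PP\big(|\langle e_1,Z_\infty\rangle|\gqq s/2\big),
\]
hence $h(s)=\PP(|\langle e_1,Z_\infty\rangle|\lqq s/2)$ for $s\gqq 0$. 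The first marginal $\langle e_1,Z_\infty\rangle$ has density $t\mapsto\int_{\RR^{d-1}}g(\sqrt{t^2+|y|^2})\,\ud y$, which is strictly positive on all of $\RR$ because $f>0$ everywhere; consequently $s\mapsto\PP(|\langle e_1,Z_\infty\rangle|\lqq s/2)$ is strictly increasing on $[0,\infty)$, and the unimodality hypotheses $g\in\mathcal{C}^1$, $g'<0$, $g'\in L^1$ ensure in addition that this distribution function is $\mathcal{C}^1$, so $h$ is genuinely strictly increasing. This completes the argument.

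The main obstacle is the explicit identity $h(s)=\PP(|\langle e_1,Z_\infty\rangle|\lqq s/2)$ and, above all, upgrading the resulting weak monotonicity of $h$ to \emph{strict} monotonicity: this is exactly where the non-degeneracy built into the unimodality assumption ($f>0$ everywhere, $g'<0$) is indispensable, since without it ``$h(|v|)$ constant on $\omega(x)$'' would not entail that $\omega(x)$ lies on a single sphere, and the ``only if'' part of the equivalence would break down.
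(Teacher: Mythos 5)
Your proof is correct and follows the paper's overall structure — the easy direction via Corollary~\ref{cor: profilelineal} with $M=I_d$, then Theorem~\ref{thm: main result2} plus rotational invariance to reduce the converse to strict monotonicity of $r\mapsto\norm{(re_1+Z_\infty)-Z_\infty}$ — but you establish that monotonicity by a genuinely different and arguably cleaner route. The paper proves it as Lemma~\ref{lem:monotoniaTV} by differentiating $r\mapsto\int_{\RR^d}\bigl(f(z+re_1)\wedge f(z)\bigr)\ud z$ under the integral sign, identifying the half-space $\{z_1>-r/2\}$ on which the $r$-derivative of the minimum is nonzero, and then showing the resulting integrand has a definite sign; that argument leans on the Leibniz rule together with the hypotheses $g\in\cC^1$, $g'<0$, $g'\in L^1$ to justify the differentiation. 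You instead observe that, splitting along $\{z_1=s/2\}$ and using the reflection symmetry $f(z)=g(|z|)$, the overlap integral collapses to a one-dimensional probability, $\int\bigl(f(z-se_1)\wedge f(z)\bigr)\ud z=\PP(|\<e_1,Z_\infty\>|\gqq s/2)$, so that the map in question is literally the centered one-dimensional distribution function $s\mapsto\PP(|\<e_1,Z_\infty\>|\lqq s/2)$, whose strict monotonicity is immediate from the everywhere-positivity of $f$ (hence of the marginal density). This replaces the differential-calculus argument with an elementary exact identity, and as you correctly note, only uses the monotonicity $g'\lqq 0$ and positivity of $f$ in an essential way — the $\cC^1$ and integrability hypotheses are there to make the derivative computation legitimate in the paper's version but are not needed for strict monotonicity in yours. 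Both proofs are sound; yours localizes the non-degeneracy needed for the implication more transparently.
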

In case of a pure jump L\'evy noise $L$ the sufficient condition of Corollary \ref{cor: profilelineal} 
can be almost characterized (up to a non-resonance condition) in terms of the following normal growth condition, 
which is discussed in detail in \cite{BHP}. 
\begin{rem}[Generic normal growth profile characterization]\label{rem:linalg}
In the sequel, we characterize when the function
\[
\omega(x)\ni u\mapsto |u|
\]
is constant for the generic case of the setting in Corollary~\ref{cor:charprofile}. We enumerate $v^1, \dots, v^m$ given in Lemma~\ref{asymp} as follows. Without loss of generality we assume that $\theta^1=0$. Otherwise we take $v^1=0$ and eliminate it from the sum $\sum_{k=1}^{m}e^{i\theta^k t}v^k$. Without loss of generality let $m=2n+1$ for some $n\in \NN$.
We assume that 
$v^k$ and $v^{k+1}=\bar v^k$ are complex conjugate for all even number $k\in \{2,\ldots,m\}$.
For $k\in \{2,\ldots,m\}$ we write $
v^k=\hat v^k+i\check v^k$ where 
$\hat v^k,\check v^k\in \RR^d$. 
\begin{enumerate}
 \item  If the real parts and the imaginary parts of the (complex) vectors $v^{2},v^4, \dots, v^{2n}$ in the Hartman-Grobman Lemma~\ref{asymp} form an orthogonal family and $|\mathsf{Re}(v^{2k})| =|\mathsf{Im}(v^{2k})|$ for all~$k$.  
Then Lemma~E.1 in \cite{BHP} implies that $\omega(x)\subset \{|z|=r\}$ for some $r=r_x>0$ and 
hence Corollary~\ref{cor: profilelineal} yields a profile cutoff. 
 \item Assume the angles $\theta^2,\theta^4, \dots, \theta^{2n}$ given in the Hartman-Grobman Lemma~\ref{asymp} are rationally independent from $2\pi$. If $\omega(x)\subset \{|z|=r\}$ for some $r=r_x>0$ then Lemma~E.2 implies that the real parts and the imaginary parts of the (complex) vectors $v^{2},v^{4}, \dots, v^{2n}$ in the Hartman-Grobman Lemma~\ref{asymp} form an orthogonal family and $|\mathsf{Re}(v^{2k})| =|\mathsf{Im}(v^{2k})|$ for all $k$. 
 \end{enumerate}
\end{rem}

\begin{proof}[Proof of Corollary~\ref{cor: profilelineal}:]
We apply the characterization given in Theorem~\ref{thm: main result2}.
Let $v_1,v_2\in \omega(x)$ and $a>0$. For $M$ given in the statement,
we have $|Mv_1|=|Mv_2|=r$. Then there exists an orthogonal matrix $\oO$ such that $\oO(Mv_1)=Mv_2$.
Theorem~5.2 of \cite{DEVLUG} and $\oO$,$M$ being invertible implies
\begin{align*}
\norm{\big(a v_1+ Z_\infty\big) - Z_\infty}&=\norm{\big(a Mv_1+ MZ_\infty\big) - MZ_\infty}\\
&=\norm{\big(a \oO (M v_1)+ \oO MZ_\infty\big) - \oO MZ_\infty}.
\end{align*}
Since $\oO(Mv_1)=Mv_2$ and $\oO$ is orthogonal, the rotational invariance of  $MZ_\infty$ implies
\begin{align*}
\norm{\big(a \oO (M v_1)+ \oO MZ_\infty\big) - \oO MZ_\infty}=\norm{\big(a Mv_2+ MZ_\infty\big) -MZ_\infty}.
\end{align*}
Again, Theorem~5.2 of \cite{DEVLUG} yields 
\begin{align*}
\norm{\big(a M v_1+ MZ_\infty\big) - MZ_\infty}=\norm{\big(a v_2+ Z_\infty\big) -Z_\infty}.
\end{align*}
Combining the preceding equalities we obtain 
\[
\norm{\big(a v_1+ Z_\infty\big) -Z_\infty}=\norm{\big(a v_2+ Z_\infty\big) -Z_\infty}
\]
for any $v_1,v_2\in \omega(x)$ and $a>0$ which yields \eqref{eq:charac} and hence the desired profile cutoff.
\end{proof}

\begin{proof}[Proof of Corollary \ref{cor:charprofile}:]
By Corollary \ref{cor: profilelineal} ($M = I_d$) it is enough to prove the converse implication.
Since the family $(X^{\e,x})_{\e\in (0,1]}$ exhibits a profile cutoff as $\e \to 0$ at the enhanced time scale $(t^x_\e,w^x_\e)$, Theorem~\ref{thm: main result2} implies for all $a>0$ that the map
$v\in \omega(x)\mapsto 
\norm{(a v+ Z_\infty) - Z_\infty}$
is constant.
Since the law of $Z_\infty$ is rotationally invariant, we have
\[
\norm{(a v+ Z_\infty) - Z_\infty}=\norm{(a|v|e_1+ Z_\infty) - Z_\infty},
\]
where $e_1=(1,0,\ldots,0)^*$.
By Lemma \ref{lem:monotoniaTV} in Appendix~\ref{Appendix C} we have that 
$\omega(x)\ni v\mapsto a|v|$ is constant. 
That is to say,  $\omega(x)\subset \{|z|=r_x\}$ for some $r_x>0$. This finishes the proof.
\end{proof}

\bigskip 

\subsection{\textbf{Examples}}

\subsubsection{\textbf{More general linear dynamics}}\hfill\\

\noindent When the vector field is given by $b(x)=Qx$, $x\in \RR^d$ for  a general deterministic  $d\times d $ matrix $Q$ whose eigenvalues have positive real parts, the cutoff phenomenon is completely discussed in \cite{BP}, Theorem~2.3 under Hypothesis~(H), which is covered by Hypothesis~\ref{hyp: blumental}.
It is well-known that such linear systems are more general than linear systems satisfying Hypothesis~\ref{hyp: potential}.
For instance, the classical linear oscillator with friction $\gamma>0$ has negative real parts in $(-\infty,-\gamma/2]$  but fails to be coercive, \cite{BHP}.

The case of pure Brownian motion is covered in detail in Section 3.3 in \cite{BJ1}.
For degenerate driving noise processes $L$ and general cutoff results in the Wasserstein distance, we refer to \cite{BHP}. There, complex systems of linear oscillators in a thermal bath are covered. 

\bigskip 

\subsubsection{\textbf{Gradient systems: Fermi-Ulam-Pasta-Tsingou}}\label{ss:Fermi}\hfill\\

\noindent In the sequel, we consider the generalized Fermi-Ulam-Pasta-Tsingou potential
\cite{FERMI,DAUX}
\begin{equation}
\vV(x)=|Ax|^2/2+|Bx|^4/4+\eta(x), \qquad x\in \RR^d, 
\end{equation}
where $A$ and $B$ are $d\times d$ deterministic matrices satisfying for some $\delta_1>0$ 
\begin{equation}\label{eq:AB matrices}
\<Ax,x\>\gqq \delta_1 |x|^2
\qquad \textrm{ and } \qquad
\<Bx,x\>\gqq 0\qquad \textrm{ for all }x\in \RR^d
\end{equation}
and some $\eta:\RR^d \to \RR$ with
$\eta\in C^2_b$, $\nabla \eta(0)=0$, and for $H_\eta$ being the Hessian of $\eta$ 
\[
\<H_\eta(x)y,y\>\gqq -\delta_2 |y|^2 
\]
for all $x,y\in \RR^d$ and some $\delta_2<\delta_1$.
Note that $\eta$ needs not be convex.
Set $b(x)= \nabla \vV(x), x\in \RR^d$. 
Then for all $x\in \RR^d$ we have
\begin{equation}\label{eq:exampleb}
b(x)=A^*A x+\<Bx,Bx\>B^*Bx+\nabla \eta(x)
\end{equation}
and  satisfies Hypothesis~\ref{hyp: potential}.
Indeed, the Jacobian of $b$ at $x$ is given by
\[
Db(x)=A^*A+3\<Bx,Bx\>B^*B+H_{\eta}(x),
\]
where $H_\eta(x)$ denote the Hessian matrix at $x$.
By \eqref{eq:AB matrices} we obtain
for any $x,y\in \RR^d$
\[
\<y,Db(x)y\>=
\<Ay,Ay\>+3\<Bx,Bx\>\<By,By\>
+\<H_\eta(x)y,y\>
\gqq \delta|y|^2,
\]
where $\delta=\delta_1-\delta_2>0$. Hence the vector field $b = \nabla \vV$ satisfies Hypothesis~\ref{hyp: potential}. We consider the solution of \eqref{dde1.1} with vector field $b = \nabla \vV$. Note that in this case $|\omega(x)| = 1$, where $\omega(x)$ is given in \eqref{eq: omegasetx}. 

Note that generically equation \eqref{dde1}  
does not have an explicitly known solution for the Kolmogorov forward equation of the densities, not even in simplest case of 
$d=1$, $L=W$, a standard Wiener process, $A=B=1$ and $\eta\equiv 0$.
While for any dimension $d$, $L=W$ a standard Brownian motion the invariant density is well-known to be proportional to 
$
\exp\left(-\nicefrac{2\vV(x)}{\e^2}\right)$, 
for a complete discussion, see for instance Section 2.2 in \cite{Siegert}. 
For the case of dimension $d=1$, nonlinear $b$ satisfying Hypothesis~\ref{hyp: potential}, $L=W$ a standard Brownian motion the authors prove profile cutoff for \eqref{dde1} in \cite{BJ}. For higher dimensions, window cutoff is established in this case and the existence of profile cutoff is characterized, we refer to \cite{BJ1}. We remark that the authors  strongly use the hypo-ellipticity property and the resulting regularization by the generator of the Brownian diffusion.

For a strongly locally layered stable noise $L$ satisfying Hypotheses~\ref{hyp: moment condition}, \ref{hyp: regularity} and \ref{hyp: blumental},   Corollary~\ref{cor:profile20} implies for $\al>3/2$ the presence of a cutoff profile. In particular, the system exhibits cutoff in the sense of equation (18.3) in Chapter 18 of the monograph \cite{LPW} as follows  
 \[
\lim_{\e\to 0}\frac{{\tT^{x,\e}_{\mathrm{mix}}}(\eta)}{{\tT^{x,\e}_{\mathrm{mix}}}(1-\eta)}=1
\]
for any $\eta\in (0,1)$, where the mixing time is given by
\begin{align*}
{\tT^{x,\e}_{\mathrm{mix}}}(\eta)=\inf\{t\gqq 0: \norm{X^\e_t(x)-\mu^\e}\lqq \eta\}.\\
\end{align*}

\bigskip 
\subsubsection{\textbf{Profile vs Window cutoff for nonlinear oscillations}}
\label{ss:nonlinearosc}\hfill\\

\noindent In the sequel we analyze a class of nonlinear oscillators for which the existence of a cutoff profile is studied in detail.
We consider the nonlinear system \eqref{dde1} in
 $\RR^2$,
where  $b:\RR^2 \to \RR^2$ is given by
\begin{equation*}
b(x_1, x_2)=\left(
\begin{array}{c}
\eta x_2+\partial_1 \hH(x_1,x_2) \\
-\eta x_1+\partial_2 \hH(x_1,x_2)
\end{array}
\right), 
\end{equation*}
for  some $\eta\in \RR$, 
\begin{align*}
\hH(x_1,x_2)=\delta_1x^2_1+\delta_2 x^2_2+\gG(x_1,x_2)
\end{align*}
for any $x_1,x_2\in \RR$
and some positive constant $\delta_1,\delta_2$, and $\gG\in C^2(\RR^2,\RR)$.
Assume that $b(0,0) = (0,0)^*$. 
We verify that $b$ is 
a non-gradient vector field. 
The Jacobian matrix of $b$ is given by 
\begin{equation}\label{eq:Jacobian}
Db(x_1, x_2)
 =\left(
\begin{array}{cc} 
2\delta_1 +\partial_{11} \gG(x_1,x_2) &\eta+ \partial_{12} \gG(x_1,x_2) \\
-\eta +\partial_{12} \gG(x_1,x_2)  & 2\delta_2+\partial_{22} \gG(x_1,x_2)
 \end{array}
 \right).
\end{equation}
Since for any $\eta\neq 0$ the Jacobian $Db$ matrix is asymmetric, there is no $\mathcal{C}^2$-function $\vV:\RR^2\to \RR$ such that 
$b(x_1, x_2) =\nabla \vV(x_1, x_2)$ and consequently $b$ is non-gradient.
Under the assumption that
\begin{align*}
x_1^2 \partial^2_{11} \gG(u_1,u_2)+x_2^2 \partial^2_{22} \gG(u_1,u_2) 
+2x_1x_2\partial^2_{12} \gG(u_1,u_2)\gqq 
-\delta_3 (x^2_1+x^2_2)
\end{align*}
for some 
$\delta_3<2\min\{\delta_1,\delta_2\}$
and any $x_1,x_2,u_1,u_2\in \RR$, the vector field $b$ satisfies Hypothesis~\ref{hyp: potential}
\begin{align*}
(x_1,x_2)&Db(u_1, u_2)(x_1,x_2)^*\\ 
&=
2\delta_1 x_1^2 +2\delta_2 x_2^2+
x_1^2 \partial^2_{11} \gG(u_1,u_2)+x_2^2 \partial^2_{22} \gG(u_1,u_2) 
+2x_1x_2\partial^2_{12} \gG(u_1,u_2) \\
&\gqq (2\delta_1-\delta_3)x^2_1+(2\delta_2-\delta_3)x^2_2=\delta(x^2_1+x^2_2).
\end{align*}
For a rotationally invariant
$\alpha$-stable noise $L$ in $\RR^2$
with $\alpha>\nicefrac{3}{2}$, 
 Theorem~\ref{thm: main result} yields window cutoff.
In the sequel, we study the presence of a cutoff profile.
We claim that $Z_\infty$ is rotationally invariant. Indeed, there is $K_\alpha>0$ such that the characteristic function of $Z_\infty$ reads
\begin{align*}
z\mapsto &\exp(-K_\alpha\int_{0}^{\infty} |e^{-Db(0,0)t} z|^\alpha \ud t)
=\exp(-K_\alpha |z|^\alpha\int_{0}^{\infty} e^{-2\delta_1 \alpha t} \ud t)=\exp\left(-\frac{K_\alpha}{2\delta_1 \alpha} |z|^\alpha\right).
\end{align*}
For $a:=\partial_{11} \gG(0,0)=\partial_{22} \gG(0,0)$ and $\partial_{12} \gG(0,0)=0$ we have
\begin{equation}\label{eq:Jacobian1}
Db(0, 0)
 =\left(
\begin{array}{cc} 
2\delta_1 +a &\eta
 \\
-\eta   & 2\delta_2+a
 \end{array}
 \right).
\end{equation}
Assume a negative discriminant $
\Delta:=(2\delta_2-2\delta_1)^2-4\eta^2<0$
 and $\delta_1+\delta_2+a>0$.
Then the complex eigenvectors associated to the eigenvalues  
\[
\lambda_1=\delta_1+\delta_2+a+
\frac{\sqrt{\Delta}}{2}\quad  \textrm{ and }\quad
\lambda_2=\delta_1+\delta_2+a-\frac{\sqrt{\Delta}}{2}
\]
are 
given by
\[
v_1=\Big(1, \frac{2(\delta_2-\delta_1)+\sqrt{\Delta}}{2\eta}\Big)\quad  \textrm{ and }\quad
v_2=\Big(1, \frac{2(\delta_2-\delta_1)-\sqrt{\Delta}}{2\eta}\Big).
\]
The respective family real and imaginary part vectors are given 
\[
\hat v_1=\hat v_2=\Big(1,\frac{\delta_2-\delta_1}{\eta}\Big)\quad \textrm{and} \quad 
\check v_1=-\check v_2=\Big(0,\frac{\sqrt{|\Delta|}}{2\eta}\Big).
\]
\begin{enumerate}
\item \textbf{Nonlinear nongradient system with a cutoff profile:} For $\delta_1=\delta_2$ we obtain
 \[
e^{(2\delta_1+a) t} |e^{-Db(0,0)t}x|=|\oO(\eta t)x|=|x|
 \]
 for all $t\gqq 0$, where $\oO(\eta t)$ is an orthogonal matrix.
Therefore, whenever  $2\delta_1+a>0$, Corollary~\ref{cor: profilelineal} for $M=I_2$ yields profile cutoff.
\item \textbf{Nonlinear counterexample to a cutoff profile:}
Note that $\hat v_2$ is orthogonal to $\check v_2$ if and only if $\delta_1=\delta_2$.
Define $\theta_2=\arg(\lambda_2)$.
For $\delta_1<\delta_2$, 
Corollary \ref{cor:charprofile} and Remark \ref{rem:linalg} for $\theta_2\not \in \mathbb{Q}\cdot \pi $ yield the absence of a cutoff profile.
For further examples in the linear case for the Wasserstein distance we refer to \cite{BHP}.
\end{enumerate}

\bigskip 
\subsubsection{\textbf{{The shape of cutoff profiles: Gaussian vs $\alpha$-stable}}}\hfill\\

\noindent Since $Z_\infty$ is the limiting distribution as $t\to \infty$ of the Ornstein-Uhlenbeck process $(Z_t)_{t\gqq 0}$,
Lemma~\ref{lem oreymasuda} implies that $Z_\infty$ has a $\cC^\infty$ density $f_\infty$. 
In the sequel we study the unidimensional case. 
Theorem~53.1 in \cite{Sa} yields that
$f_\infty$ is unimodal with mode $\fm$, 
that is to say, it is increasing on $(-\infty, \fm)$ and decreasing on 
$(\fm,+\infty)$ and hence $f_\infty(\fm)>0$.
In the sequel, we determine the asymptotics of the profile function $\rho \mapsto \norm{(e^{-\rho}e^{-\la_x\tau_x}v^x+Z_\infty)-Z_\infty}$ in zero and  at infinity for some special cases.
The density $f_\infty$ of $Z_\infty$ is explicitly  accessible  only in a limited number of cases. 

We start with the asymptotics  for $\rho\ll -1$.
Without loss of generality we assume that $f_\infty$ is smooth. 
Then for any $z>0$ there exists $\fm_z\in (\fm-z, \fm)$ such that 
\begin{align}\label{eq:fasympinf}
1-\norm{(z+Z_\infty)-Z_\infty}&=\PP(Z_\infty\lqq   \fm_z)+
\PP(Z_\infty\gqq   \fm_z+z)\nonumber\\
&=\int_{-\infty}^{\fm_z}f_\infty(u)\ud u
+\int_{\fm_z+z}^{\infty}f_\infty(u)\ud u.
\end{align}
Assume that $f_\infty(u)>0$ for all $u\in \RR$.
By Scheff\'e's lemma for densities, see Lemma~3.3.1 in \cite{Reiss}, the left-hand side of the preceding equality tends to zero as $z\to \infty$. Hence the right-hand side implies $\fm_z\to -\infty$ and $\fm_z+z\to \infty$, as $z\to \infty$. We have
\begin{align*}
&1-\norm{(e^{-\rho}e^{-\la_x\tau_x}v^x+Z_\infty)-Z_\infty} \nonumber\\
&\qquad\qquad= F_{\infty}(
\fm(e^{-\rho}e^{-\la_x\tau_x}|v^x|))+
(1-F_{\infty}(
\fm(e^{-\rho}e^{-\la_x\tau_x}|v^x|)+e^{-\rho}e^{-\la_x\tau_x}|v^x|)),
\end{align*}
 which reduces in the symmetric case to
\begin{align*}
1-\norm{(e^{-\rho}e^{-\la_x\tau_x}v^x+Z_\infty)-Z_\infty}= 2(1-F_{\infty}(e^{-\rho}e^{-\la_x\tau_x}|v^x|/2)),
\end{align*}
where $F_\infty$ is the cumulative function of $Z_\infty$. \\

\noindent We compare the prototypical shapes of the tails of the profile functions. 
\begin{enumerate}
 \item[I)] For the \textit{symmetric $\alpha$-stable} process $L$ we obtain the exponential profile function
\begin{align*}
1-\norm{(e^{-(\rho+\la_x\tau_x)}v^x+Z_\infty)-Z_\infty}&\sim \frac{2^{\alpha+1}C_\alpha 
e^{(\rho+\la_x\tau_x)  \alpha}
}{|v^x|^\alpha}
 \propto e^{\rho \alpha}
,\quad \textrm{ as } \rho\to -\infty,
\end{align*}
where $C_\alpha$ is an explicit constant.
 \item[II)] The asymptotically doubly exponential shape of the profile for 
the case of \textit{Gaussian tails $F_\infty$} is discussed in Remark 2.3 in \cite{BA} which reads  in our setting as follows
\begin{align}\label{eq:formulagausiana}
2(1-F_{\infty}(e^{-(\rho+\la_x\tau_x)}|v^x|/2))& \sim \frac{4}{\sqrt{2\pi}|v^x|}
\exp\left(
-\exp
\big(-2(\rho+\la_x\tau_x) \big)|v^x|^2/8+
\rho+\la_x\tau_x
\right)
\end{align}
for $\rho\to -\infty$.
In particular, \eqref{eq:formulagausiana} yields the doubly exponential asymptotic ($\rho \to -\infty$) leading term
\[
\frac{2\sqrt{2}}{\sqrt{\pi}|v^x|}
\exp\left(
-\exp
\big(-2(\rho+\la_x\tau_x) \big)|v^x|^2/8
\right)
\propto
\exp\left(-K_x
\exp
\big(-2\rho \big)
\right)
\]  
for some positive $K_x$.
\end{enumerate}
\noindent We continue with the asymptotics at zero of the profile function and show 
\begin{align*}
\frac{\norm{(z+Z_\infty)-Z_\infty}}{|z|}\to f_\infty(\fm), \quad \textrm{ as } z\to 0,
\end{align*}
where $\fm=\frac{a}{b^\prime(0)}$, where $b$ is the vector field of \eqref{dde1.1} and  $(a,0,\nu)$ is the characteristic triplet of~$L$. 
By \eqref{eq:fasympinf} we have 
\begin{align*}
\norm{(z+Z_\infty)-Z_\infty}&=\int_{\fm_z}^{\fm_z+z}f_{\infty}(u)\ud u,
\end{align*}
where $\fm_z\in (\fm-z, \fm)$. 
Since $\fm_z\to \fm$ as $z\to 0$ and $\fm$ is a Lebesgue point of $f_\infty$, it follows
\[
\frac{\norm{(z+Z_\infty)-Z_\infty}}{z}\to f_\infty (\fm), \quad \textrm{ as } z\to 0.
\]
As a consequence of the preceding limit we obtain
\begin{align*}
\lim\limits_{\rho \to \infty}\frac{\norm{(e^{-(\rho+\la_x\tau_x)}v^x+Z_\infty)-Z_\infty}}{e^{-(\rho+\la_x\tau_x)}|v^x|}=f_\infty(\fm). 
\end{align*}
In particular, as $\rho\ra \infty$, the profile is asymptotically proportional to the respective Wasserstein profile \cite{BHP}. 

\bigskip 
\subsubsection{\textbf{Counterexample  to Slutsky's lemma in total variation distance}}
\label{subsub:counter}
\hfill\\

\noindent
The following example is
the main motivation for Hypothesis \ref{hyp: regularity}. It is given for completeness since we are not aware of a reference in the 
 literature.
It is based on private communication with 
professors M. Jara (IMPA) and R. Imbuzeiro Oliveira (IMPA).
\begin{lem}
Let $(U_n)_{n\in\NN}$ be a sequence of random variable with  the discrete uniform  distribution supported on the set 
$\{\nicefrac{j}{n}: j=1,\ldots, n\}$.
Let 
$(R_n)_{n\in \mathbb{N}}$ be a sequence of random variables independent of $(U_n)_{n\in\NN}$ with the continuous uniform distribution supported on $[0,a_n]$, where $(a_n)_{n\in \mathbb{N}}$ is any sequence of positive numbers such that $n\cdot a_n\to 0$ and $a_n\to 0$, as $n\to \infty$.
For each $n\in \mathbb{N}$, we define $X_n=U_n+R_n$ and $Y_n=-R_n$. 
Then we have:
\begin{enumerate}
\item $X_n$ and $Y_n$ are absolutely continuous with respect to the Lebesgue measure on $\RR$.
\item $\lim\limits_{n\to \infty}U_n\stackrel{d}=U$, where $U$ is   (continuously) uniformly distributed on $[0,1]$.
\item  $Y_n\to 0$, as $n\to
 \infty$
 in
probability.
\item 
  $\lim\limits_{n\to \infty}
 \norm{X_n-U}=0$.
 \item
 $\norm{(X_n+Y_n)-U}=1$ for all $n\in \mathbb{N}$.
\end{enumerate}
\end{lem} 
\begin{proof}
Items (1), (2), (3) and (5) are straightforward.
In the sequel we verify (4).
Since  $U_n$ and $R_n$ are independent, the  convolution formula yields that the density of $X_n$, $f_{n}$, is given by
\[
\RR\ni z\mapsto
f_{n}(z)=\frac{1}{n}\sum\limits_{j=1}^{n}g(z-\nicefrac{j}{n}), \quad \textrm{ where }\quad  g(z)=(\nicefrac{1}{a_n})\,\ind_{[0,a_n]}(z).
\]
First, for $z\lqq 0$, it follows that $f_n(z)=0$ for all $n\in \mathbb{N}$.
Next, for $z>1$ there exists $n_0=n_0(z)\in \mathbb{N}$ such that $0<a_n<z-1$ for all $n\gqq n_0$. Hence,
for all $j=1,\ldots, n$ we have 
 $0<a_n<z-\nicefrac{j}{n}$ for all $n\gqq n_0$. Consequently, 
  $f_n(z)=0$ for all $n\gqq n_0$.
We continue with the case $z\in (0,1]$.
Then there exists $n_1:=n_1(z)\in \mathbb{N}$ such that $\nicefrac{z}{2}<z-a_n<z$ for all $n\gqq n_1$.
Then we have for all $n\gqq n_1$
\[
f_n(z)=\frac{1}{n}\sum\limits_{j= \lceil n(z-a_n)\rceil 
}^{\lfloor nz \rfloor}g(z-\nicefrac{j}{n})=
\frac{na_n}{na_n}+\frac{C_n(z)}{na_n}
=1+\frac{C_n(z)}{na_n},
\]
where $0<C_n(z)\lqq 2$. Since $na_n\to \infty$, $n\to \infty$, we have  $f_n(z)\to 1$, as $n\to \infty$.
In summary, it is shown for all $z\in \RR$ that $f_n(z)\to \ind_{(0,1]}(z)$, as $n\to \infty$.
Scheff\'e's lemma for densities implies 
$\norm{X_n-U}\to 0$, as $n\to \infty$.
\end{proof}

\bigskip

\subsection{\textbf{{Global steps of the proofs of Theorem~\ref{thm: main result} and Theorem~\ref{thm: main result2}}}}\label{subsec: outline}
\hfill\\

\noindent The fundamental idea of the proofs of Theorem~\ref{thm: main result} and Theorem~\ref{thm: main result2} is to carry out a quantitative asymptotic expansion in $\e$ by probabilistic
methods. It turns out that the hyperbolic contracting nature of the underlying deterministic dynamics 
$\varphi^{x}$ can be used to show that the correct first order expansion of $X^{\e,x}$
of the sense of Freidlin-Wentzell \cite{FW} Chapter~2.2
 given by 
the inhomogeneous Ornstein-Uhlenbeck  defined in \eqref{eq: Yxte} provides an asymptotic 
description of $X^{\e,x}$ which is effective for time scales beyond
the cutoff time scale.

\bigskip 
\subsubsection{\textbf{Freidlin-Wentzell first order expansion}}\hfill\\

\noindent It is not hard to see that for any $\eta>0$ and $t\gqq 0$ the law of large numbers implies
\begin{equation}\label{eq:zeroapprox}
\PP\Big(\sup_{0\lqq s\lqq t}|X^{\e,x}_s-\varphi^x_s|\gqq \eta\Big)\to 0,\quad \textrm{ as } \e\to 0.
\end{equation}
In the sequel, we analyze the asymptotic fluctuations of $X^{\e,x}_t-\varphi^{x}_t$.
Let 
\[
Z^{\e,x}_t:=\frac{X^{\e,x}_t-\varphi^{x}_t}{\e}, \quad t\gqq 0.
\] 
Then the process $(Z^{\e,x}_t)_{t\gqq 0}$ is the unique strong solution of the stochastic differential equation
\begin{align*}
\left\{
\begin{array}{r@{\;=\;}l}
\ud Z^{\e,x}_t & -\frac{1}{\e}\left( b(X^{\e,x}_t)-b(\varphi^{x}_t) \right)\ud t+\ud L_t \quad \textrm{ for any } t\gqq 0,\\
Z^{\e,x}_0 & 0.
\end{array}
\right.
\end{align*}
The mean value theorem yields
\begin{align}\label{eq:appearlinear}
\ud Z^{\e,x}_t=-\Big(\int_{0}^{1} Db(\varphi^{x}_t+\theta (X^{\e,x}_t-\varphi^{x}_t))\ud \theta\Big) Z^{\e,x}_t\ud t+\ud L_t\quad \textrm{ for any } t\gqq 0.
\end{align}
By construction, $X^{\e,x}_t=\varphi^{x}_t+\e Z^{\e,x}_t$ for any $t\gqq 0$. However,  \eqref{eq:appearlinear} has the same level of complexity as \eqref{dde1}.
Using \eqref{eq:zeroapprox} in \eqref{eq:appearlinear}
we derive the linear inhomogeneous approximation of \eqref{eq:appearlinear} as follows. Let $(Y^{x}_t)_{t\gqq 0}$ be the unique strong solution of the linear inhomogeneous stochastic differential equation
\begin{align}\label{eq: Yxt}
\left\{
\begin{array}{r@{\;=\;}l}
\ud Y^{x}_t & -Db(\varphi^{x}_t) Y^{x}_t\ud t+\ud L_t \quad \textrm{ for any } t\gqq 0,\\
Y^{x}_0 & 0.
\end{array}
\right.
\end{align}
Instead of \eqref{eq:zeroapprox} we claim the following stronger result, that is, the first order approximation in the sense of Section 2, Chapter 2 in \cite{FW}
\[
\mathbb{P}\big(|X^{\e,x}_t- (\varphi^{x}_t+\e Y^{x}_t)|\gqq \e^{3/2}\big)\to 0, \quad \textrm{ as } \e\to 0
\]
for times $t\gg t^x_\e$, where $t^x_\e$ is given in \eqref{eq:tiempos}.
For a concise quantification of the approximation, see Lemma~\ref{prop: secondorder} in 
Appendix~\ref{ap: multiscale}. 
Next, we define the first order approximation
\begin{equation}\label{eq: Yxte}
Y^{\e}_t(x): = \varphi_t^x + \e Y^x_t \quad \textrm{ for any } t\gqq 0.
\end{equation}
It is not hard to see that for any $\e$ there exists a limiting distribution $\mu^{\e}_*$ such that for any $x\in \RR^d$, the process $(Y^{\e}_t(x))_{t\gqq 0}$ converges  to $\mu^{\e}_*$ in the total variation distance  as $t$ tends to infinity. For further details see Lemma~\ref{lem: cvtlineal} in Appendix~\ref{Appendix C}. Moreover, it is shown there that $\mu^{\e}_*$ is  the unique invariant distribution of the homogeneous Ornstein-Uhlenbeck process
\[
\ud Z^{\e}_t=-Db(0)Z^{\e}_t\ud t+\e \ud L_t,
\]
and has a $\cC^{\infty}$-density with respect to the Lebesgue measure on $\RR^d$.
Note that $(Y^{\e}_t(x))_{t\gqq 0}$ satisfies the inhomogeneous equation
\begin{equation}\label{eq: SDE Y}
\ud Y^{\e}_t(x)=\left(-b(\varphi^{x}_t)+Db(\varphi^x_t)\varphi^x_t-Db(\varphi^x_t)Y^{\e}_t(x)\right)\ud t+\e\ud L_t, \quad Y^{\e}_0(x)=x.
\end{equation}
Since we need to compare solutions of  stochastic differential equations with different initial conditions, we introduce the following notation.
Let $\fT$ be a positive number and
$\xi $ be a given random vector on $\mathbb R^d$. 
We assume that $\xi$ is $\mathcal{F}_{\fT}$-measurable for $(\mathcal{F}_t)_{t\gqq 0}$ defined in Subsection~\ref{subsec:stochasticperturbation}.
Let $(Y^{\e,x}(t;\fT,\xi))_{t\gqq 0}$ be the unique strong solution of the stochastic differential equation
\begin{align}\label{eq: linealhomogenea}
\left\{
\begin{array}{r@{\;=\;}l}
\ud Y^{\e,x}(t;\fT,\xi) & \left(-b(\varphi^{x}_{t+\fT})+Db(\varphi^x_{t+\fT})\varphi^x_{t+\fT}-Db(\varphi^x_{t+\fT})Y^{\e,x}(t;\fT,\xi)\right)\ud t+\e\ud L_{t+\fT},\\
Y^{\e,x}(0;\fT,\xi) & \xi.
\end{array}
\right.
\end{align}
Let $\Delta_\e>0$ (independently of $x$) with $\lim\limits_{\e \ra 0} \Delta_\e=0$.
For any $\rho\in \mathbb{R}$, we define 
\[
T^x_\e:=t^x_\e-\Delta_\e+\rho\cdot w^x_\e,
\] where  $t^x_\e$ and $w^x_\e$  are given in Theorem~\ref{thm: main result}.
In what follows, we always take $\fT=T^x_\e$. 
Then $T^x_\e>0$ for $0<\e\ll 1$.

\bigskip 
\subsubsection{\textbf{Key cutoff estimate}} \hfill\\

\noindent The proofs of the main results Theorem~\ref{thm: main result} and 
Theorem~\ref{thm: main result2} are based on the following fundamental inequality. 
On the one hand, note that  for any $\rho\in \mathbb{R}$ and $\e$ small enough we have
\begin{align*}
\norm{X^{\e}
_{t^x_\e+\rho\cdot w^x_\e}(x)-\mu^{\e}}
&=
\norm{X^{\e}_{\Delta_\e}(X^{\e}_{T^x_\e}(x))-\mu^{\e}}\\
&\lqq 
 \norm{X^{\e}_{\Delta_\e}(X^{\e}_{T^x_\e}(x))-
Y^{\e,x}(\Delta_\e;T^x_\e, X^{\e}_{T^x_\e}(x))}\\
&\qquad +\norm{Y^{\e,x}(\Delta_\e;T^x_\e,
X^{\e}_{T^x_\e}(x))-
Y^{\e,x}(\Delta_\e;T^x_\e,Y^{\e,x}(T^x_\e;0,x))}\\
&\qquad +\norm{Y^{\e,x}(\Delta_\e;T^x_\e,Y^{\e,x}(T^x_\e;0,x))-\mu^{\e}_*}
+\norm{\mu^{\e}_*-\mu^{\e}}.
\end{align*}
Conversely, we obtain
\begin{align*}
&\norm{Y^{\e,x}(\Delta_\e;T^x_\e,Y^{\e,x}(T^x_\e;0,x))-\mu^{\e}_*} \\
&\quad \lqq  
 \norm{Y^{\e,x}(\Delta_\e;T^x_\e,Y^{\e,x}(T^x_\e;0,x))-Y^{\e,x}(\Delta_\e;T^x_\e,X^{\e}_{T^x_\e}(x))}\\
&\qquad +\norm{Y^{\e,x}(\Delta_\e;T^x_\e,X^{\e}_{T^x_\e}(x))-X^{\e}_{\Delta_\e}(X^{\e}_{T^x_\e}(x))
} +\norm{X^{\e}_{\Delta_\e}(X^{\e}_{T^x_\e}(x))-\mu^{\e}}
+\norm{\mu^{\e}-\mu^{\e}_*}.
\end{align*}
Note that $
Y^{\e,x}(\Delta_\e, T^x_\e,Y^{\e,x}(T^x_\e;0,x))=Y^{\e,x}(t^x_\e+\rho\cdot w^x_\e;0,x)$.
Combining both preceding inequalities we deduce 
\begin{align}\label{ine5}
&\left|\norm{X^{\e}_{t^x_\e+\rho\cdot w^x_\e}(x)-\mu^{\e}}-
\norm{Y^{\e,x}(t^x_\e+\rho\cdot w^x_\e;0,x)-\mu^{\e}_*}\right|\nonumber\\
&\qquad \qquad\lqq \norm{X^{\e}_{\Delta_\e}(X^{\e}_{T^x_\e}(x))
-Y^{\e,x}(\Delta_\e;T^x_\e,X^{\e}_{T^x_\e}(x))}\\
&\quad \qquad \qquad+ \norm{Y^{\e,x}(\Delta_\e;T^x_\e,X^{\e}_{T^x_\e}(x))
-
Y^{\e,x}(\Delta_\e;T^x_\e,Y^{\e,x}(T^x_\e;0,x))}+\norm{\mu^{\e}_*-\mu^{\e}}.\nonumber\\
& \qquad \qquad=E_1+E_2+E_3,\nonumber
\end{align}
where 
\begin{align*}
E_1:=&\norm{X^{\e}_{\Delta_\e}(X^{\e}_{T^x_\e}(x))
-Y^{\e,x}(\Delta_\e;T^x_\e,X^{\e}_{T^x_\e}(x))},\\
E_2:=&\norm{Y^{\e,x}(\Delta_\e;T^x_\e,X^{\e}_{T^x_\e}(x))
-
Y^{\e,x}(\Delta_\e;T^x_\e,Y^{\e,x}(T^x_\e;0,x))},\\
E_3:=&\norm{\mu^{\e}_*-\mu^{\e}}.
\end{align*}
Roughly speaking, 
it turns out that the processes $(X^{\e}_t(x))_{t\gqq 0}$
and $(Y^{\e}_t(x))_{t\gqq 0}$ are close enough for time scales of order
$\mathcal{O}(\ln(\nicefrac{1}{\e}))$  in order to carry out the following quantitative coupling procedure.
Since $(X^{\e}_t(x))_{t\gqq 0}$ and $(Y^{\e}_t(x))_{t\gqq 0}$ have different (inhomogeneous) drifts, couplings which dominate the total variation distance 
typically only hold for short-time horizons. For an excellent introduction on the subject in the diffusive case we refer to \cite{EBERLE}.
Since the process $(Y^{\e}_t(x))_{t\gqq 0}$ is linear, the precise cutoff behavior (cutoff, window cutoff and profile cutoff) is derived from it in the spirit of \cite{BP}. However, it is inhomogeneous such that the results of \cite{BP} cannot be applied directly. They are adapted in Subsection~\ref{sec: OUP}. Recall that $\mu^{\e}_*$ is the limiting distribution of the process
$(Y^{\e}(t;0,x))_{t\gqq 0}$.

\begin{prop}[Window and profile cutoff phenomenon for the first order approximation $Y^{\e}$]\label{prop: windows}
\noindent
Assume Hypotheses~\ref{hyp: potential},
~\ref{hyp: moment condition}, 
~\ref{hyp: regularity} and ~\ref{hyp: blumental}  are satisfied for
$\alpha\in (0,2)$,
 $\beta>0$ and $x\in \RR^d\setminus \{0\}$. Let 
 $(Y^{\e,x})_{\e\in (0,1]}$ be the 
 family of inhomogeneous Ornstein-Uhlenbeck processes given by
 $Y^{\e,x}:=(Y^{\e}(t;0,x))_{t\gqq 0}$ in  \eqref{eq: Yxte}.
\begin{enumerate}
\item Then
$(Y^{\e,x})_{\e\in (0,1]}$ exhibits a window cutoff phenomenon with respect to $\mu^{\e}_*$ as $\e \to 0$ 
at the enhanced time scale $(t^x_\e,w^x_\e)$
given by
\begin{equation}\label{eq:escalastw}
t^{x}_\e=\frac{1}{\lambda_x}\ln\left(\nicefrac{1}{\e}\right)+
\frac{\ell_x-1}{\lambda_x}
\ln\left(\ln\left(\nicefrac{1}{\e}\right)\right)
\quad \textrm{ and } \quad
w^{x}_\e=\frac{1}{\lambda_x}+o_{\e}(1), 
\end{equation}
where $\lambda_x>0$ and $\ell_x \in \{1,\ldots,d-1\}$ are the constants appearing in
the Hartman-Grobman decomposition of Lemma~\ref{asymp}.
\item Then $(Y^{\e,x})_{\e\in (0,1]}$ exhibits a profile cutoff phenomenon
with respect to $\mu^{\e}_*$
 as $\e \to 0$ at the enhanced time scale $(t^x_\e,w^x_\e)$
given by \eqref{eq:escalastw} with profile function 
\[
G_{x}(\rho)=
\norm{\left(
e^{-\rho}\cdot \frac{e^{-\la_x \tau_x}}{\lambda_x^{\ell_x-1}}
v+ Z_\infty \right)- Z_\infty}
\quad \textrm{ for any } \rho\in \RR,\;  v\in \omega(x),
\]
where $\tau_x$ is given in Lemma \ref{asymp} and $\omega(x)$ is defined in \eqref{eq: omegasetx} 
if and only if for any $a>0$ the map
\begin{equation*}
\omega(x)\ni v\mapsto 
\norm{(a v+ Z_\infty) - Z_\infty}\quad
\textrm{ is constant}. 
\end{equation*} 
\end{enumerate}
\end{prop}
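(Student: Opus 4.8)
The plan is to reduce Proposition~\ref{prop: windows} to a translation problem for the single fixed law of $Z_\infty$ by a deterministic rescaling, and then to extract the cutoff window and profile from the precise exponential decay of the deterministic orbit $\varphi^x$ supplied by Lemma~\ref{asymp}, in the spirit of the linear analysis of \cite{BP} adapted in Subsection~\ref{sec: OUP}. First I would represent $Y^x$ explicitly: writing $\Phi^x(t,s)$, $0\lqq s\lqq t$, for the fundamental matrix of the nonautonomous system $\dot u=-Db(\varphi^x_t)u$, Hypothesis~\ref{hyp: potential} gives the contraction $\|\Phi^x(t,s)\|\lqq e^{-\delta(t-s)}$ (Appendix~\ref{A}), and variation of constants yields $Y^x_t=\int_0^t\Phi^x(t,s)\,\ud L_s$, so that $Y^\e_t(x)=\varphi^x_t+\e\int_0^t\Phi^x(t,s)\,\ud L_s$. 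Since $\norm{\cdot}$ is invariant under the affine bijection $y\mapsto\e^{-1}y$ and, by Lemma~\ref{lem: cvtlineal} together with the elementary scaling of the homogeneous Ornstein--Uhlenbeck equation, $\mu^\e_*=\mathcal L(\e Z_\infty)$, one obtains for every $t\gqq0$ that $\norm{Y^\e_t(x)-\mu^\e_*}=\norm{(\e^{-1}\varphi^x_t+Y^x_t)-Z_\infty}$.

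Next I would decouple the stochastic and the deterministic contributions. Because $\varphi^x_s\to0$, a time reversal of the increments of $L$ shows that the recent past of the convolution $Y^x_t$ is governed by $e^{-Db(0)u}$ up to a vanishing error while the far past is exponentially small by the contraction bound above; the smoothing provided by Hypothesis~\ref{hyp: regularity} (the locally layered stable structure and the Orey--Masuda cone condition of Lemma~\ref{lem oreymasuda}) upgrades this to $\norm{Y^x_t-Z_\infty}\to0$, which is the content of Lemma~\ref{lem: cvtlineal} for $\e=1$. By the triangle inequality and the translation invariance of $\norm{\cdot}$,
\[
\Big|\norm{(\e^{-1}\varphi^x_t+Y^x_t)-Z_\infty}-\norm{(\e^{-1}\varphi^x_t+Z_\infty)-Z_\infty}\Big|\lqq\norm{Y^x_t-Z_\infty}\lra0
\]
along any $t=t_\e\to\infty$, in particular for $t=t^x_\e+\rho w^x_\e$. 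The whole question is thus reduced to the deterministic shift $h_\e(\rho):=\e^{-1}\varphi^x_{t^x_\e+\rho w^x_\e}$ seen through the map $h\mapsto\norm{(h+Z_\infty)-Z_\infty}$. Evaluating Lemma~\ref{asymp} at the scales \eqref{eq:escalastw} and using $\lambda_x w^x_\e\to1$ shows that
\[
h_\e(\rho)=e^{-\rho}\cdot\frac{e^{-\la_x\tau_x}}{\lambda_x^{\ell_x-1}}\,u_\e(\rho)+o_\e(1),
\]
where the vector $u_\e(\rho)$ is of the form $\sum_{k=1}^m e^{i\theta^k_x s}v^k_x$ evaluated at $s=s_\e\to\infty$; since $s\mapsto\sum_{k=1}^m e^{i\theta^k_x s}v^k_x$ is almost periodic, the limit points of $(u_\e(\rho))_\e$ as $\e\to0$ exhaust $\omega(x)$, and by \eqref{eq: outsidezero} one has $0<\liminf_\e|u_\e(\rho)|\lqq\limsup_\e|u_\e(\rho)|<\infty$.

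For part~(1) I would invoke the two elementary $L^1$ facts valid because $Z_\infty$ has a smooth strictly positive Lebesgue density: $\sup_{|h|\lqq r}\norm{(h+Z_\infty)-Z_\infty}\to0$ as $r\to0$ and $\inf_{|h|\gqq R}\norm{(h+Z_\infty)-Z_\infty}\to1$ as $R\to\infty$, both uniform in the direction of $h$. Combining them with the two displays above and the uniform bounds on $|u_\e(\rho)|$ gives $\limsup_{\e\to0}\norm{Y^\e_{t^x_\e+\rho w^x_\e}(x)-\mu^\e_*}\to0$ as $\rho\to+\infty$ and $\liminf_{\e\to0}\norm{Y^\e_{t^x_\e+\rho w^x_\e}(x)-\mu^\e_*}\to1$ as $\rho\to-\infty$, which is exactly the window cutoff at $(t^x_\e,w^x_\e)$; the precise center and width are inherited from \cite{BP} once the inhomogeneity has been absorbed into $\varphi^x$ as above. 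For part~(2), the uniform continuity of $(a,v)\mapsto\norm{(av+Z_\infty)-Z_\infty}$ on compact sets reduces, for each fixed $\rho$, the existence of $\lim_{\e\to0}\norm{Y^\e_{t^x_\e+\rho w^x_\e}(x)-\mu^\e_*}$ to the convergence as $s\to\infty$ of the almost periodic function $s\mapsto\norm{(c(\rho)\sum_{k=1}^m e^{i\theta^k_x s}v^k_x+Z_\infty)-Z_\infty}$, where $c(\rho):=e^{-\rho}e^{-\la_x\tau_x}\lambda_x^{-(\ell_x-1)}$; an almost periodic function converges iff it is constant, i.e.\ (by continuity and since the orbit is dense in $\omega(x)$) iff $v\mapsto\norm{(c(\rho)v+Z_\infty)-Z_\infty}$ is constant on $\omega(x)$. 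Letting $\rho$ run over $\RR$ makes $c(\rho)$ run over $(0,\infty)$, so a profile exists for all $\rho$ precisely under \eqref{eq:charac}, in which case the common value equals $\norm{(c(\rho)v+Z_\infty)-Z_\infty}$ for any $v\in\omega(x)$, which is the asserted $G_x(\rho)$, with $G_x(-\infty)=1$ and $G_x(+\infty)=0$ following from the two $L^1$ facts together with $0\notin\omega(x)$ and the compactness of $\omega(x)$.

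The hard part will be Lemma~\ref{lem: cvtlineal}: upgrading the distributional convergence of the inhomogeneous stochastic convolution $Y^x_t$ to convergence in total variation towards $Z_\infty$ is exactly where the locally layered stable smoothing (Hypothesis~\ref{hyp: regularity}, via Lemma~\ref{lem oreymasuda}) and the fine matrix-exponential and flow estimates of Appendix~\ref{A} controlling $\Phi^x(t,s)-e^{-Db(0)(t-s)}$ uniformly are indispensable; everything else is linear algebra imported from \cite{BP}/Subsection~\ref{sec: OUP} together with soft almost-periodicity and $L^1$ arguments.
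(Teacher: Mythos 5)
Your proposal is correct and follows essentially the same route as the paper's proof: reduce $\norm{Y^\e_t(x)-\mu^\e_*}$ by the triangle inequality to the deterministic shift term $\norm{(\e^{-1}\varphi^x_t+Z_\infty)-Z_\infty}$ plus an error $\norm{Y^x_t-Z_\infty}\to 0$ (Lemma~\ref{lem: cvtlineal}), substitute the Hartman--Grobman asymptotics of $\varphi^x_t$ at the scale $(t^x_\e,w^x_\e)$, and read off window/profile cutoff from the shift map $h\mapsto\norm{(h+Z_\infty)-Z_\infty}$ acting on the compact set $\omega(x)$. The paper carries this out via Lemma~\ref{lem:cutoff-linearization}, an explicit Bolzano--Weierstrass subsequence extraction producing $\hat v_\rho,\check v_\rho\in\omega(x)$, Scheff\'e's lemma and Lemma~A.3 of \cite{BP}; your almost-periodicity phrasing (``converges iff constant on $\omega(x)$'') and the two $L^1$ facts are the same content in equivalent language.
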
 
\noindent The proof is given in Subsection~\ref{sec: OUP} and relies on the Hartman-Grobman decomposition of Lemma~\ref{asymp}.
In what follows, we  argue that the upper bound of inequality \eqref{ine5} tends to zero as $\e\rightarrow 0$. To be precise, we  show the following.

\begin{prop}[Error term $E_1$: the nonlinear short time coupling]\label{prop: stc}
Assume Hypotheses~\ref{hyp: potential},
~\ref{hyp: moment condition}, 
~\ref{hyp: regularity} and ~\ref{hyp: blumental} are satisfied for $\alpha\in (3/2,2)$ and $\beta>0$.
Let $\Delta_\e=\e^{\nicefrac{\alpha}{2}}$. For any $x\in \RR^d$ it follows
\[
\lim\limits_{\e\rightarrow 0}
\norm{X^{\e}_{\Delta_\e}(X^{\e}_{T^x_\e}(x))
-Y^{\e,x}(\Delta_\e;T^x_\e,X^{\e}_{T^x_\e}(x))}
=0.
\]
\end{prop}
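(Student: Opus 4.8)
The goal is to dominate, uniformly over the random initial condition $\xi := X^\e_{T^x_\e}(x)$, the total variation distance between the genuine nonlinear flow $X^\e_{\Delta_\e}(\xi)$ run for the short time $\Delta_\e = \e^{\alpha/2}$ and its linear inhomogeneous Ornstein--Uhlenbeck surrogate $Y^{\e,x}(\Delta_\e;T^x_\e,\xi)$. Since the initial datum is $\fF_{T^x_\e}$-measurable and the noise increments $(L_{t+T^x_\e}-L_{T^x_\e})_{t\gqq 0}$ are independent of $\fF_{T^x_\e}$, I would first condition on $\xi$ and reduce the claim to a deterministic bound of the form
\[
\norm{X^{\e}_{\Delta_\e}(z) - Y^{\e,x}(\Delta_\e;T^x_\e,z)} \lqq \Theta(\e,z),\qquad z\in\RR^d,
\]
with $\Theta$ explicit, and then take expectations against the law of $\xi$. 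Using Theorem~\ref{ergodicitytheorem} and the subexponential growth of $t^x_\e$, the random variable $|\xi|$ has moments that grow at most polynomially in $\ln(1/\e)$, so any bound $\Theta(\e,z)$ that is $o_\e(1)$ times a polynomial in $|z|$ (with coefficients polynomial in $\ln(1/\e)$) will suffice after taking expectations.

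\textbf{Main step: the Fourier/Plancherel estimate.} The heart of the argument is the deterministic bound, and here I would follow the Plancherel route advertised in the introduction rather than a coupling construction. Write $U^\e_t := X^\e_{t}(z) - Y^{\e,x}(t;T^x_\e,z)$ for $t\in[0,\Delta_\e]$; subtracting \eqref{eq: linealhomogenea} from \eqref{dde1} (with time shifted by $T^x_\e$) the noise terms cancel exactly, so $U^\e$ solves a \emph{random ODE} driven by the difference of drifts, namely $\ud U^\e_t = \big(-b(X^\e_t) + b(\varphi^x_{t+T^x_\e}) + Db(\varphi^x_{t+T^x_\e})(Y^{\e,x}(t) - \varphi^x_{t+T^x_\e})\big)\ud t$, which is a Taylor remainder of order two in $X^\e_t - \varphi^x_{t+T^x_\e}$ plus a linear term in $U^\e_t$. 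The plan is to pass to characteristic functions: for the strongly localized version of $X^\e$ whose small jumps carry the stable-type pole (Hypothesis~\ref{hyp: regularity}), Lemma~\ref{lem oreymasuda} gives the Orey--Masuda cone lower bound $\int_{|\<v,z\>|\lqq 1}|\<v,z\>|^2\nu(\ud z)\gqq c_\sphericalangle|v|^\alpha$, which yields a differential inequality $\tfrac{\ud}{\ud t}|\widehat{p^\e_t}(v)| \lqq -c\,\e^{\alpha}|v|^{\alpha}|\widehat{p^\e_t}(v)| + (\textrm{error})$ for the density $p^\e_t$ of $X^\e_t$, and in particular the crucial time-integrated tail bound $\int_{\RR^d}|\widehat{p^\e_t}(v)|\,\ud v \lqq C(\e t)^{-d/\alpha}$. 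Then, writing the two densities $p^\e_t$ and $q^\e_t$ (of $X^\e_t(z)$ and $Y^{\e,x}(t;T^x_\e,z)$) and using $\norm{p^\e_t - q^\e_t} \lqq \tfrac{1}{2}\|p^\e_t-q^\e_t\|_{L^1}$ followed by Cauchy--Schwarz and the $L^2$-isometry $\|p^\e_t - q^\e_t\|_{L^2} = (2\pi)^{-d/2}\|\widehat{p^\e_t}-\widehat{q^\e_t}\|_{L^2}$, localized on $\{|v|\lqq R\}$ and controlling the complement by the tail bound above with a suitable $R=R_\e\to\infty$, I would reduce everything to estimating $|\widehat{p^\e_t}(v)-\widehat{q^\e_t}(v)|$ for moderate $|v|$. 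For that difference one differentiates in $t$ and uses that $\widehat{p^\e_t}-\widehat{q^\e_t}$ satisfies a Duhamel-type identity whose source is the Fourier transform of the quadratic Taylor remainder, which is $O(\e^2\|U^\e\|^2 + \e^2 |Z^{\e,z}|^2)$ in an $L^1$ sense; Grönwall on $[0,\Delta_\e]$ together with $\Delta_\e = \e^{\alpha/2}$ and the constraint $\alpha>3/2$ makes the accumulated error $o_\e(1)$. The condition $\alpha>3/2$ enters precisely in balancing the smoothing rate $(\e^\alpha t)^{-d/\alpha}$ at $t=\Delta_\e = \e^{\alpha/2}$ against the drift discrepancy accumulated over that window.

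\textbf{Assembling and taking expectations.} Once the conditional bound $\norm{X^\e_{\Delta_\e}(z) - Y^{\e,x}(\Delta_\e;T^x_\e,z)} \lqq C_\e(1+|z|^{p})$ with $C_\e\to 0$ is in hand, I would use the tower property: $\norm{X^{\e}_{\Delta_\e}(\xi) - Y^{\e,x}(\Delta_\e;T^x_\e,\xi)} \lqq \EE\big[C_\e(1+|\xi|^{p})\big] = C_\e\big(1+\EE|\xi|^p\big)$, where the expectation is controlled via Theorem~\ref{ergodicitytheorem} (more precisely its moment counterpart, or directly the $\beta$-moment hypothesis combined with the explicit polynomial-in-$\ln(1/\e)$ growth of $t^x_\e$ from \eqref{eq:tiempos}), giving $\EE|\xi|^p \lqq \textrm{poly}(\ln(1/\e))$. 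Since $C_\e$ decays like a positive power of $\e$, the product tends to zero and the proposition follows.

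\textbf{Main obstacle.} The delicate point is obtaining the \emph{correct} (polynomial, time-integrable) decay of $\int |\widehat{p^\e_t}(v)|\,\ud v$ uniformly in the random initial condition on the short window $[0,\Delta_\e]$: the Orey--Masuda lower bound is only effective for $|\e^{?}v|$ large, so one must carefully split the Fourier variable, handle the region of small $|v|$ by the trivial bound $|\widehat{p^\e_t}|\lqq 1$, and ensure the quadratic drift remainder does not destroy the differential inequality before the smoothing has kicked in — this is exactly the tension the authors flag as \emph{hard even in the scalar Gaussian case}, and is why $\alpha>3/2$ rather than $\alpha>0$ is required.
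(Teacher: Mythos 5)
Your plan correctly identifies the two load-bearing ideas of the paper's proof of this proposition — passing to characteristic functions via Plancherel's isometry on $L^2$, and using the Orey--Masuda cone bound (Lemma~\ref{lem oreymasuda}) to force an $\alpha$-stable-type decay of the Fourier transform on the short window $[0,\Delta_\e]$, with $\alpha>\nicefrac{3}{2}$ entering exactly in balancing that decay against the accumulated drift error. The disintegration over the law of $\xi=X^\e_{T^x_\e}(x)$ and the Itô/Duhamel-on-the-characteristic-function computation are also consonant with what the paper does in Step 3 (Proposition~\ref{p:localcoupling}). That said, there are two structural components of the actual argument which your proposal omits, and one of them is a genuine gap rather than a mere shortcut.

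First, the gap: before the Plancherel step can be performed at all, you must know that the density of $X^\e_{\Delta_\e}(z)$ lies in $L^2(\RR^d)$, and this is not automatic for the unlocalized SDE. The paper first localizes the jumps (stopping at $\tT_\e = \inf\{t:|\e\Delta L_t|>1\}$, Lemma~\ref{lem: TVlocalization}), and then spatially truncates the vector field with a mollifier $h$ (Lemma~\ref{lem:spatloc}), so that the resulting process $\fX^\e$ is strong Feller with bounded $\cC^1$ density (Remark~\ref{rem:localization}); only then is $f_\e\in L^2$ guaranteed and the Plancherel identity applicable. Your sketch jumps directly to $\|p^\e_t-q^\e_t\|_{L^2}=C\|\widehat{p^\e_t}-\widehat{q^\e_t}\|_{L^2}$ without establishing square-integrability of the densities. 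This is precisely the point the paper treats as a prerequisite, and without it the whole Fourier argument is ungrounded. Similarly, the jump-size localization is needed so that all polynomial moments are finite and the Itô-formula manipulations of $\EE[\cos\<\theta_\e,\xX^\e_t\>]$ and its derivative are legitimate.

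Second, a significant structural difference: you compare $X^\e_{\Delta_\e}(z)$ directly against the \emph{inhomogeneous} Ornstein--Uhlenbeck process $Y^{\e,x}(\cdot;T^x_\e,z)$, whose drift $Db(\varphi^x_{T^x_\e+t})$ is time-dependent. The paper instead inserts the \emph{homogeneous} linearization $Z^\e_{\Delta_\e}(z)$ (drift $-Db(0)$) as an intermediate and writes the distance as $G_1+G_2$; the term $G_2$, which compares the two \emph{linear} processes, is handled not by Fourier methods but by the short-time local limit theorem for strongly locally layered stable noise (Proposition~\ref{tomate}), together with explicit flow estimates on $\Phi$ and $\Psi$ from Lemma~\ref{lem: cotainfsup}. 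Your route avoids introducing $Z^\e$ but would then have to carry the time-varying coefficient $Db(\varphi^x_{T^x_\e+t})$ through the differential inequality for $|\phi_t(\theta_\e)|^2$; this is doable in principle because $\varphi^x_{T^x_\e+\cdot}$ is uniformly of order $\e$, but it is more cumbersome and obscures the clean separation of error sources. Finally, the way you take expectations — pushing a bound of the form $C_\e(1+|z|^p)$ against $\EE|\xi|^p=\mathrm{poly}(\ln(1/\e))$ — is not quite how the paper closes the loop: since the Taylor remainder estimate $|\ti b(y)|\lqq C|y|^2$ only helps when $y$ is small, the paper instead restricts the disintegration to $\{|z|\lqq r_\e\}$ with $r_\e=\e^{1-\vt}$ and controls the complement by Corollary~\ref{cor: demomento}. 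A genuine polynomial bound in $|z|$ with $o(1)$ coefficient would not come out of the Taylor-remainder argument for large $|z|$, so you would in any case be forced into the small-ball restriction.
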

\noindent
The complete proof can be found in Subsection~\ref{sec: short coupling}
and it is based on the local limit theorem for strongly locally layered stable L\'evy measures on the short-time scale $\Delta_\e\to 0$. The limitation of $\alpha\in (3/2,2)$ is due to the tail integrability of the characteristic function of $X^\e_{\Delta_\e}(x)$. It is of technical nature, but it seems difficult to remove.  

\begin{prop}[Error term $E_2$: the linear inhomogeneous coupling]\label{prop: lic}
Assume Hypotheses~\ref{hyp: potential},
~\ref{hyp: moment condition}, 
~\ref{hyp: regularity} and ~\ref{hyp: blumental} are satisfied for
$\alpha\in (0,2)$ and
 $\beta>0$. 
Let $\Delta_\e=\e^{\nicefrac{\alpha}{2}}$. For any $x\in \RR^d$ it follows
\[
\lim\limits_{\e\rightarrow 0}\norm{Y^{\e,x}(\Delta_\e;T^x_\e,X^{\e}_{T^x_\e}(x))
-
Y^{\e,x}(\Delta_\e;T^x_\e,Y^{\e,x}(T^x_\e;0,x))}
=0.
\]
\end{prop}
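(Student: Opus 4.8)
\noindent\emph{Proof plan for Proposition~\ref{prop: lic}.} The plan is to exploit that \eqref{eq: linealhomogenea} is affine in its initial datum and that, for $\fT=T^x_\e$, the driving noise after time $T^x_\e$ is independent of $\fF_{T^x_\e}$; this reduces the bound on $E_2$ to a shift‑in‑total‑variation estimate for the short‑time stochastic convolution, which is then controlled by a stable local limit theorem on the short range of $L$. Concretely, since $T^x_\e$ is deterministic, so is $u\mapsto Db(\varphi^{x}_{u+T^x_\e})$, hence the fundamental matrix $\Phi_\e(\cdot)$ solving $\dot\Phi_\e(u)=-Db(\varphi^{x}_{u+T^x_\e})\Phi_\e(u)$, $\Phi_\e(0)=I_d$, is deterministic and invertible. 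Setting $g_\e(u):=-b(\varphi^{x}_{u+T^x_\e})+Db(\varphi^{x}_{u+T^x_\e})\varphi^{x}_{u+T^x_\e}$ and $\Psi_\e(u):=\Phi_\e(\Delta_\e)\Phi_\e(u)^{-1}$, the variation‑of‑constants formula gives, for every $\fF_{T^x_\e}$‑measurable $\xi$,
\[
Y^{\e,x}(\Delta_\e;T^x_\e,\xi)=\Phi_\e(\Delta_\e)\,\xi+N_\e,\qquad
N_\e:=\int_{0}^{\Delta_\e}\Psi_\e(u)\,g_\e(u)\,\ud u+\e\int_{0}^{\Delta_\e}\Psi_\e(u)\,\ud L_{u+T^x_\e}.
\]
The vector $N_\e$ is built only from the increments of $L$ on $[T^x_\e,T^x_\e+\Delta_\e]$, hence is independent of $\fF_{T^x_\e}$, in particular of the pair $(\xi_1,\xi_2):=\big(X^{\e}_{T^x_\e}(x),\,Y^{\e,x}(T^x_\e;0,x)\big)$. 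Since $\varphi^{x}_{u+T^x_\e}\to0$ and $T^x_\e\to\infty$, the deterministic flow estimates of Appendix~\ref{A} yield $\Phi_\e(\Delta_\e)\to I_d$ and $\sup_{0\lqq u\lqq\Delta_\e}\|\Psi_\e(u)\|\lqq C$ as $\e\to0$, and in particular $\Psi_\e(u)\to I_d$ uniformly in $u$.

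The second step is a disintegration on $(\xi_1,\xi_2)$. As $N_\e$ is independent of $(\xi_1,\xi_2)$, the law of $Y^{\e,x}(\Delta_\e;T^x_\e,\xi_i)$ is the mixture of $\mathcal L\big(\Phi_\e(\Delta_\e)s+N_\e\big)$ over $s$ against the marginal law of $\xi_i$. Disintegrating against the joint law of $(\xi_1,\xi_2)$ and using the translation invariance and the sub‑additivity of $\norm{\cdot}$, one obtains
\[
E_2\lqq \EE\big[\Gamma_\e\big(\|\Phi_\e(\Delta_\e)\|\cdot|\xi_1-\xi_2|\big)\big],\qquad
\Gamma_\e(\delta):=\sup_{|v|\lqq\delta}\norm{\mathcal L(N_\e)-\mathcal L(N_\e+v)},
\]
where $\Gamma_\e$ is nondecreasing and bounded by $1$. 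By Lemma~\ref{prop: secondorder} (the Freidlin--Wentzell first order estimate) there is a deterministic $\rho_\e$ with $\rho_\e/(\e\Delta_\e^{1/\alpha})\to0$ such that, with $A_\e:=\{|X^{\e}_{T^x_\e}(x)-Y^{\e,x}(T^x_\e;0,x)|\lqq\rho_\e\}$, one has $\PP(A_\e^{c})\to0$; splitting the expectation on $A_\e$ and $A_\e^{c}$ gives $E_2\lqq \Gamma_\e(C\rho_\e)+\PP(A_\e^{c})$, so that it remains to prove $\Gamma_\e(C\rho_\e)\to0$.

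For this last step, the deterministic drift part of $N_\e$ only translates both laws in $\Gamma_\e$ and can be discarded, so it suffices to study $M_\e:=\int_0^{\Delta_\e}\Psi_\e(u)\,\ud L_{u+T^x_\e}$, which, since $\Psi_\e(u)\to I_d$ uniformly, is a small perturbation of $L_{\Delta_\e}$. Under Hypotheses~\ref{hyp: regularity} and~\ref{hyp: blumental}, the extension of Theorem~3.1 of \cite{HOU} to strongly locally layered stable L\'evy measures (Definition~\ref{hyp: layered}) --- which uses the small‑jump symmetry~\eqref{eq:locsym}, the uniform radial convergence~\eqref{layered0} and the gradient estimate~\eqref{eq: gradientbound} --- combined with the Orey--Masuda cone lower bound of Lemma~\ref{lem oreymasuda}, gives that the rescaled vector $\Delta_\e^{-1/\alpha}M_\e$ has a density $q_\e$ converging to a nondegenerate $\alpha$‑stable density, with $\sup_{\e}\int_{\RR^d}|\nabla q_\e(z)|\,\ud z<\infty$. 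Hence $N_\e$ has a density $p_\e$ with $\int_{\RR^d}|\nabla p_\e|\lqq C\,(\e\Delta_\e^{1/\alpha})^{-1}$, so $\Gamma_\e(\delta)\lqq \frac{1}{2}\,C\,\delta\,(\e\Delta_\e^{1/\alpha})^{-1}$ for every $\delta>0$; taking $\delta=C\rho_\e$ and using $\rho_\e/(\e\Delta_\e^{1/\alpha})\to0$ gives $\Gamma_\e(C\rho_\e)\to0$, whence $E_2\to0$.

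The main obstacle is the $W^{1,1}$‑type control $\sup_\e\int|\nabla q_\e|<\infty$ for the short‑time, inhomogeneous stochastic convolution $M_\e$: one must upgrade the local limit theorem of \cite{HOU} from convergence in distribution (or in total variation) to convergence of densities in $W^{1,1}$ uniformly in $\e$, and this is exactly where the strong local layering hypotheses and the cone condition of Lemma~\ref{lem oreymasuda} --- guaranteeing enough ellipticity of the localized generator to produce uniform density‑gradient bounds --- become essential. A secondary technical point is to verify that Lemma~\ref{prop: secondorder} indeed furnishes a displacement $\rho_\e$ that is genuinely $o(\e\Delta_\e^{1/\alpha})$, uniformly over the range of $T^x_\e=t^x_\e-\Delta_\e+\rho\,w^x_\e$ relevant to the proof of the main theorems.
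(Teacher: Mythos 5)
Your plan shares the paper's backbone: variation-of-constants representation \eqref{eq: representation}, independence of the stochastic convolution over $[T^x_\e, T^x_\e + \Delta_\e]$ from $\fF_{T^x_\e}$, disintegration against the joint law of the pair $(X^{\e}_{T^x_\e}(x), Y^{\e,x}(T^x_\e;0,x))$, and control of the initial-value gap via the Freidlin--Wentzell estimate of Lemma~\ref{prop: secondorder}. Where you diverge is at the decisive shift-sensitivity step, and there is a genuine gap there.

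You propose to bound the shift modulus $\Gamma_\e(\delta)\lqq \tfrac{1}{2} C\,\delta\,(\e\Delta_\e^{1/\alpha})^{-1}$ via a uniform $W^{1,1}$ estimate $\sup_\e\int_{\RR^d}|\nabla q_\e|<\infty$ on the rescaled density of the inhomogeneous stochastic convolution $M_\e$. This is a substantially stronger property than anything the paper proves, and it does not follow from the ingredients you cite. The Orey--Masuda cone condition (Lemma~\ref{lem oreymasuda}) gives exponential decay of $\hat q_\e(\theta)$ for large $|\theta|$, uniformly in $\e$; by Fourier inversion this yields uniform $L^\infty$ bounds on $q_\e$ and its derivatives, but an $L^1$ bound on $\nabla q_\e$ requires separate control of the spatial tails of the gradient, which the small-jump gradient estimate \eqref{eq: gradientbound} and the layered-stable structure do not immediately supply. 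Proposition~\ref{tomate} establishes only total-variation convergence $\norm{(\gamma_\e U^x_\e + a^x_\e) - U}\to 0$, and one cannot pass from TV (or $L^1$) convergence of densities to uniform $W^{1,1}$ control of the approximating family. The paper avoids this entirely: it splits on the event $\{\gamma_\e|z-\ti z|\lqq \eta\e\}$ versus its complement, handles the small-gap part by the \emph{qualitative} $L^1$-continuity of translation applied to the fixed limit density of $U$ (requiring no gradient integrability at all), transfers back to the $\e$-dependent law via Proposition~\ref{tomate} and the cancellation property of independent shifts, and handles the large-gap part by Lemma~\ref{lem: dependence}. You should either prove the $W^{1,1}$ bound from scratch (non-trivial, and not available in the paper's toolbox) or switch to this softer two-step argument.

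A secondary, fixable point: you ask Lemma~\ref{prop: secondorder} for a deterministic $\rho_\e$ with $\rho_\e/(\e\Delta_\e^{1/\alpha})\to 0$, but the lemma's statement only controls the displacement probability at the scale $\e\Delta_\e^{1/\alpha}$, not at a strictly smaller one. Inspecting its proof one sees it works uniformly for any fixed multiple $\eta\,\e\Delta_\e^{1/\alpha}$ with $\eta>0$, from which a diagonal argument extracts a sequence $\eta_\e\downarrow 0$ (hence $\rho_\e=\eta_\e\,\e\Delta_\e^{1/\alpha}$); the paper sidesteps this by keeping $\eta>0$ fixed through the estimate and sending $\eta\to 0$ only at the very end, which you should emulate if you insist on the quantitative route.
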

The proof is given Subsection~\ref{sec: linear coupling} and relies on a version of the local limit theorem by \cite{HOU} for strongly locally layered stable distributions and small times $\Delta_\e$. 

\noindent We approximate the invariant distribution $\mu^{\e}$ of $X^{\e}_\cdot(x)$ by the limiting distribution $\mu^{\e}_*$ of the inhomogeneous Ornstein-Uhlenbeck $Y^{\e}_\cdot(x)$ in the total variation distance.
\begin{prop}[Error term $E_3$: the equilibrium asymptotics]\label{prop: equil}
Assume Hypotheses~\ref{hyp: potential},
~\ref{hyp: moment condition}, 
~\ref{hyp: regularity} and ~\ref{hyp: blumental} are satisfied for
$\alpha\in (3/2,2)$ and
 $\beta>0$.
It follows
\[
\lim\limits_{\e\rightarrow 0}\norm{\mu^{\e}_*-\mu^{\e}}=0.
\]
\end{prop}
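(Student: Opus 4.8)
The plan is to show that the limiting distribution $\mu^\e$ of the nonlinear process $X^\e_\cdot(x)$ and the limiting distribution $\mu^\e_*$ of the Freidlin--Wentzell first order approximation $Y^\e_\cdot(x)$ collapse onto each other in total variation as $\e \to 0$. The key observation is that both distributions are obtained as $t \to \infty$ limits of the respective time marginals, and that these time marginals are close along a suitable time scale. Concretely, I would fix an auxiliary time scale $t_\e = \kappa \ln(\nicefrac{1}{\e})$ for a sufficiently large constant $\kappa$ (to be chosen larger than $t^x_\e$, e.g. $\kappa > \nicefrac{1}{\lambda_x}$), so that $t_\e \gg t^x_\e$ and in particular $t_\e \to \infty$. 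Then by the exponential ergodicity of $X^\e$ (Theorem~\ref{ergodicitytheorem}) and the analogous convergence of $Y^\e_\cdot(x) \to \mu^\e_*$ (Lemma~\ref{lem: cvtlineal} in Appendix~\ref{Appendix C}), I would write the triangle inequality
\[
\norm{\mu^\e_* - \mu^\e} \lqq \norm{\mu^\e_* - Y^\e_{t_\e}(x)} + \norm{Y^\e_{t_\e}(x) - X^\e_{t_\e}(x)} + \norm{X^\e_{t_\e}(x) - \mu^\e}.
\]

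For the first term, Lemma~\ref{lem: cvtlineal} gives an explicit rate: the inhomogeneous Ornstein--Uhlenbeck process converges to $\mu^\e_*$ in total variation exponentially fast, and tracking the $\e$-dependence one sees that at time $t_\e = \kappa\ln(\nicefrac{1}{\e})$ this distance is dominated by something like $\e^{c}$ for a positive exponent $c$ proportional to $\kappa$ — hence $\to 0$. The third term is handled by Theorem~\ref{ergodicitytheorem}, $\norm{X^\e_{t_\e}(x) - \mu^\e} \lqq C_\e e^{-\theta_\e t_\e}(1 + |x|^{1\wedge\beta})$; here one needs that the product $\theta_\e \cdot \kappa\ln(\nicefrac{1}{\e})$ beats $\ln C_\e$, which is the one place where the awkward $\e$-dependence of the ergodicity constants (discussed after Theorem~\ref{ergodicitytheorem}) enters. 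The cleanest route is to avoid needing a quantitative bound here altogether: since $x$ is fixed, one may instead push $t_\e$ to infinity \emph{slowly} relative to $\e \to 0$ via a diagonal/subsequence argument — i.e. for each fixed $\e$ the two tail terms tend to $0$ as $t \to \infty$ at an $\e$-dependent speed, and one selects $t_\e$ growing slowly enough that both vanish, while still being large enough that the middle term is controlled.

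The middle term $\norm{Y^\e_{t_\e}(x) - X^\e_{t_\e}(x)}$ is exactly the object bounded by the key cutoff estimate \eqref{ine5} combined with Propositions~\ref{prop: stc} and \ref{prop: lic}: indeed, taking $\rho$ such that $t^x_\e + \rho w^x_\e = t_\e$ (equivalently, allowing $\rho = \rho_\e \to \infty$ slowly) and invoking that $E_1 \to 0$ (Proposition~\ref{prop: stc}) and $E_2 \to 0$ (Proposition~\ref{prop: lic}), inequality \eqref{ine5} yields
\[
\left|\norm{X^\e_{t_\e}(x) - \mu^\e} - \norm{Y^\e_{t_\e}(x) - \mu^\e_*}\right| \lqq E_1 + E_2 + E_3 .
\]
However, this introduces $E_3 = \norm{\mu^\e_* - \mu^\e}$ itself on the right, which would be circular; so instead I would use the \emph{pathwise} closeness of $X^\e$ and $Y^\e$ directly, namely the first-order Freidlin--Wentzell estimate $\PP(|X^{\e,x}_t - (\varphi^x_t + \e Y^x_t)| \gqq \e^{3/2}) \to 0$ for $t \gg t^x_\e$ (Lemma~\ref{prop: secondorder} in Appendix~\ref{ap: multiscale}), combined with a regularity/smoothing argument: since $\mu^\e$ has a smooth density (Hypothesis~\ref{hyp: regularity}, via Lemma~\ref{lem oreymasuda}), convergence in probability of the rescaled difference upgrades to convergence in total variation of the marginals after one more application of the short-time coupling of Proposition~\ref{prop: stc} at time $\Delta_\e$ — precisely the mechanism already used to handle $E_1$.

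The main obstacle I anticipate is the interplay between the three limits: $\e \to 0$, $t_\e \to \infty$, and the $\e$-dependence of the ergodicity rate $\theta_\e$ (which, as the authors note, is hard to track through the Meyn--Tweedie discretization). The safe resolution is the double-limit / subsequence bookkeeping: fix $x$, and for each $\e$ choose $t_\e$ large enough that $\norm{X^\e_{t_\e}(x) - \mu^\e} \lqq \e$ and $\norm{Y^\e_{t_\e}(x) - \mu^\e_*} \lqq \e$ (possible by the respective ergodic theorems, for \emph{each} fixed $\e$), and simultaneously verify that this $t_\e$ still lies in the regime where the pathwise first-order approximation plus the Proposition~\ref{prop: stc}-type smoothing forces $\norm{X^\e_{t_\e}(x) - Y^\e_{t_\e}(x)} \to 0$. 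Since $t^x_\e = \mathcal{O}(\ln(\nicefrac{1}{\e}))$ and the coupling estimates hold for \emph{all} times beyond $t^x_\e$ (uniformly, because the OU dynamics contracts), there is enough room: one only needs $t_\e \to \infty$ fast enough for the two ergodic tails and slow enough is not even required, since the coupling is valid on the whole half-line past cutoff. Assembling these three estimates into the triangle inequality and letting $\e \to 0$ then gives $\norm{\mu^\e_* - \mu^\e} \to 0$, which is the claim. $\qed$
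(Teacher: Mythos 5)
Your high-level plan (triangle inequality through a common time marginal $t_\e$, ergodicity on one side and the coupling estimates on the other) starts sensibly, and you correctly identify the central obstruction — the unknown $\e$-dependence of $(C_\e,\theta_\e)$ in Theorem~\ref{ergodicitytheorem}. However, the fix you propose does not work as stated, and the paper's proof is built precisely to avoid the dead end you run into.

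The claim at the end of your sketch that ``the coupling estimates hold for \emph{all} times beyond $t^x_\e$ (uniformly, because the OU dynamics contracts)'' is false. The first-order Freidlin--Wentzell comparison (Lemma~\ref{prop: secondorder}) requires the time scale to satisfy $t_\e\,\e^{1/(1+2(\beta\wedge 1))}\to 0$, i.e.\ it only works for $t_\e$ growing like a polynomial in $\ln(1/\e)$; Propositions~\ref{prop: stc} and \ref{prop: lic} inherit this constraint (indeed the paper takes $s_\e=\ln^2(\e)$). On the other hand, to make $\norm{X^\e_{t_\e}(x)-\mu^\e}$ small via Theorem~\ref{ergodicitytheorem} you need $\theta_\e t_\e$ to dominate $\ln C_\e$, and nothing in the paper (or in Meyn--Tweedie) rules out $\theta_\e\to 0$ or $C_\e\to\infty$ fast enough to force $t_\e$ beyond the regime where the coupling is available. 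So the ``double limit / subsequence'' step does not close: you cannot guarantee that a single $t_\e$ simultaneously satisfies the ergodicity requirement and stays in the coupling window. This is exactly the circularity/uncontrolled-rate problem you flag, and your proposal resolves it only by assertion.

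The paper's actual argument removes the quantitative ergodicity rate from the picture entirely. Instead of comparing $X^\e_{t}(x)$ to $\mu^\e$ via the ergodic theorem, it uses invariance of $\mu^\e$ to write
\[
\norm{\mu^\e - X^\e_{s_\e}(x_0)} \lqq \int_{\RR^d}\norm{X^\e_{s_\e}(u)-X^\e_{s_\e}(x_0)}\,\mu^\e(\ud u),
\]
and then compares two solutions of the \emph{same} SDE started at different points, each to its inhomogeneous linearization, all on the controllable time scale $s_\e=\ln^2(\e)$. The ergodicity theorem (Theorem~\ref{ergodicitytheorem}) is then used only qualitatively — to guarantee existence and uniqueness of $\mu^\e$ and its moment bound (for the tightness term $I_3$, via Corollary~\ref{lem: demomento}) — never as a rate. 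This disintegration step is the idea missing from your sketch; without it, the $\e$-dependence of $(C_\e,\theta_\e)$ is a genuine, unresolved obstruction.
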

\noindent The proof is given in Subsection~\ref{sec: local}. 

\noindent
\begin{proof}[\textbf{Proof of Theorem~\ref{thm: main result} and Theorem~\ref{thm: main result2}}:]
We apply
Propositions~\ref{prop: stc}, ~\ref{prop: lic} and~\ref{prop: equil} to the key estimate \eqref{ine5}  and obtain
\[
\lim\limits_{\e\to 0}\left|\norm{X^{\e}_{t^x_\e+\rho\cdot w^x_\e}(x)-\mu^{\e}}-
\norm{Y^{\e,x}(t^x_\e+\rho\cdot w^x_\e;0,x)-\mu^{\e}_*}\right|=0.
\]
Finally, Proposition~\ref{prop: windows} implies the main result in Theorem~\ref{thm: main result} and Theorem~\ref{thm: main result2}. 
\end{proof}

\bigskip 
\section{\textbf{The local results (Prop. 1 - 4) in the proofs of Theorem~\ref{thm: main result} and Theorem~\ref{thm: main result2}}}

\subsection{\textbf{Cutoff for the inhomogeneous linearization (Proposition~\ref{prop: windows})}}\label{sec: OUP}

\subsubsection{\textbf{{Cutoff linearization via Hartman-Grobman}}}\hfill\\

By Lemma~\ref{lem:convergenceindist} in Appendix~ \ref{Appendix C} we see that  $\mu^{\e}_*$ is the distribution  of $\e Z_\infty$, where $Z_\infty$ is the unique invariant distribution of the homogeneous Ornstein-Uhlenbeck process $Z = (Z_t)_{t\gqq 0}$ given by
\begin{equation*}
\ud Z_t = - Db(0) Z_t \ud t +\ud L_t.
\end{equation*}
As a consequence $\mu_*^\e \sim \e Z_\infty$ for any $\e\in (0,1)$. 
We start  with the observation that $Z_\infty$ is absolutely continuous. Indeed,
let $\zeta_t$ be the characteristic function of $Z_t$ and $\zeta_\infty$ be the characteristic function of $Z_\infty$. By Theorem~3.1 in Sato and Yamazato \cite{SaYa} we have for any $t\gqq 0$
\[
|\zeta_t(\theta)|=\exp\left(\int_{0}^{t} \mathsf{Re}(\psi(e^{-Db(0)s}\theta))\ud s \right), 
\quad \theta\in \RR^d,
\]
where 
\[
\mathsf{Re}(\psi(\vartheta))=\int_{\RR^d} \left(\cos(\<\vartheta,u\>)-1\right)\nu(\ud u)\lqq 0, 
\quad \vartheta\in \RR^d.
\] 
Hence,
$|\zeta_\infty(\theta)|\lqq |\zeta_t(\theta)|$ for all $\theta\in \mathbb{R}^d$. 
Then Item 3. in Section 4 of \cite{BP} implies that $Z_\infty$ has a bounded $\cC^{\infty}$-density  with respect to the Lebesgue measure on $\mathbb{R}^d$,
where
we take $\kappa(v)=c_{\sphericalangle}\;|v|^{\alpha}$ in their notation, and 
$c_{\sphericalangle}$, $\alpha$ being given in Lemma~\ref{lem oreymasuda}.
In particular, $Z_\infty$ is absolutely continuous on $\mathbb{R}^d$.

The following lemma reduces the cutoff phenomenon for the non-homogeneous linearization $Y^{\e}_\cdot(x)$ of $X^{\e, x}$ to the homogeneous linearization $Z$. 

\begin{lem}[Elimination of the inhomogeneity in the cutoff linearization]\label{lem:cutoff-linearization}
Let the hypotheses of Proposition~\ref{prop: windows} be satisfied for some $x\neq 0$, $\alpha\in (0,2)$ and $\beta>0$. 
We define 
\[
 d^{\e, x}(t) :=\norm{Y^{\e}_t(x) - \e Z_\infty} 
\]
and 
\begin{align*}
\tilde{D}^{\e, x}(t)&:= \norm{\Big(\frac{(t-\tau_x)^{\ell_x-1}e^{-\la_x (t-\tau_x)}}{\e}v(t-\tau_x,x) + Z_\infty\Big) - Z_\infty}, \quad t\gqq \tau_x,
\end{align*}
where $v(t,x)=\sum_{k=1}^{m}e^{i \theta^k_x t}v^k_x$ and $\la_x$, $\ell_x$, $\tau_x$, $\theta^1_x,\ldots, \theta^m_x$ and $v^1_x,\ldots, v^m_x$ are the quantities given by the Hartman-Grobman decomposition in Lemma~\ref{asymp}.\\
Then for any $\rho\in \RR$ 
\begin{align*}
\limsup_{\e\ra 0} d^{\e, x}(t^x_\e + \rho \cdot w_\e) &=  \limsup_{\e\ra 0} \tilde{D}^{\e, x}(t^x_\e + \rho \cdot w_\e)\quad \mbox{ and }\\
\liminf_{\e\ra 0} d^{\e, x}(t^x_\e + \rho \cdot w_\e) &=  \liminf_{\e\ra 0} \tilde{D}^{\e, x}(t^x_\e + \rho \cdot w_\e).
\end{align*}
\end{lem}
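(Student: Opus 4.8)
The idea is to peel off the deterministic Hartman--Grobman part of $Y^{\e}_t(x)=\varphi^x_t+\e Y^x_t$, which is exactly what produces the cutoff, from the genuinely stochastic part $Y^x_t$, which relaxes to $Z_\infty$ in total variation. After this separation $d^{\e,x}$ and $\ti D^{\e,x}$ will be shown to differ by an $o_\e(1)$ term, and since that bound is pointwise in $\e$ it transfers at once to the $\limsup$ and the $\liminf$ along any time scale.

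First I would use the invariance of $\norm{\cdot}$ under the linear bijection $z\mapsto z/\e$ of $\RR^d$ together with the representation $Y^{\e}_t(x)=\varphi^x_t+\e Y^x_t$ from \eqref{eq: Yxte} to write, for every $t\gqq 0$,
\[
 d^{\e, x}(t)=\norm{\mathcal{L}\Big(\tfrac{1}{\e}\varphi^x_t+Y^x_t\Big)-\mathcal{L}(Z_\infty)}.
\]
Now fix $\rho\in\RR$, set $t_\e:=t^x_\e+\rho\cdot w_\e$, and put $a_\e:=\e^{-1}(t_\e-\tau_x)^{\ell_x-1}e^{-\la_x(t_\e-\tau_x)}$ and $v_\e:=v(t_\e-\tau_x,x)$. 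Applying Lemma~\ref{asymp} with $t=t_\e-\tau_x\to\infty$ yields the exact identity $\tfrac{1}{\e}\varphi^x_{t_\e}=a_\e v_\e+r_\e$, where $r_\e:=a_\e\bigl(\tfrac{e^{\la_x(t_\e-\tau_x)}}{(t_\e-\tau_x)^{\ell_x-1}}\varphi^x_{t_\e}-v_\e\bigr)$ and the bracket tends to $0$. A direct computation from the explicit expressions \eqref{eq:tiempos} (using that $e^{-\la_x(t_\e-\tau_x)}$ is of order $\e\,(\ln(1/\e))^{-(\ell_x-1)}$ while $(t_\e-\tau_x)^{\ell_x-1}$ is of order $(\ln(1/\e))^{\ell_x-1}$) shows that $a_\e$ stays bounded as $\e\to0$ (in fact it converges); hence $r_\e\to0$.

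Next I would compare the two quantities by soft manipulations of the total variation distance. Subtracting the deterministic vector $a_\e v_\e$ inside both laws and using translation invariance,
\[
 d^{\e,x}(t_\e)=\norm{\mathcal{L}(Y^x_{t_\e}+r_\e)-\mathcal{L}(Z_\infty-a_\e v_\e)},\qquad \ti D^{\e,x}(t_\e)=\norm{\mathcal{L}(Z_\infty)-\mathcal{L}(Z_\infty-a_\e v_\e)},
\]
so the reverse triangle inequality, followed by a further use of translation invariance and the triangle inequality, gives
\[
 \bigl|d^{\e,x}(t_\e)-\ti D^{\e,x}(t_\e)\bigr|\lqq \norm{\mathcal{L}(Y^x_{t_\e}+r_\e)-\mathcal{L}(Z_\infty)}\lqq \norm{\mathcal{L}(Y^x_{t_\e})-\mathcal{L}(Z_\infty)}+\norm{\mathcal{L}(Z_\infty+r_\e)-\mathcal{L}(Z_\infty)}.
\]
Letting $\e\to0$: the first term vanishes because $t_\e\to\infty$ and the time-inhomogeneous Ornstein--Uhlenbeck process $Y^x$ of \eqref{eq: Yxt} converges to $Z_\infty$ in total variation (the convergence established in Appendix~\ref{Appendix C}, cf.\ Lemma~\ref{lem: cvtlineal}); the second term vanishes because $Z_\infty$ is absolutely continuous with density $f_\infty\in L^1(\RR^d)$ (established above in Subsection~\ref{sec: OUP} via Sato--Yamazato and the Orey--Masuda estimate of Lemma~\ref{lem oreymasuda}), so it equals a constant multiple of $\|f_\infty(\cdot-r_\e)-f_\infty\|_{L^1(\RR^d)}$, which tends to $0$ with $r_\e$ by continuity of translations in $L^1$. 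Thus $d^{\e,x}(t_\e)-\ti D^{\e,x}(t_\e)\to0$, which yields both asserted identities for $\limsup$ and $\liminf$.

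\textbf{Main obstacle.} The reduction above is elementary; its weight rests on the two cited inputs, the more serious of which is the \emph{total-variation} ergodicity $\norm{\mathcal{L}(Y^x_t)-\mathcal{L}(Z_\infty)}\to0$ for the SDE \eqref{eq: Yxt}, whose drift $Db(\varphi^x_t)$ only freezes to $Db(0)$ asymptotically; this genuinely nontrivial step relies on Hypothesis~\ref{hyp: regularity}, the equator condition, Lemma~\ref{lem oreymasuda} and the matrix-exponential bounds of Appendix~\ref{A}, and is invoked here as already available, as is the absolute continuity of $Z_\infty$. One point of care is that $v_\e=v(t_\e-\tau_x,x)$ need not converge but only accumulates on $\omega(x)$; this is harmless, since the bound on $|d^{\e,x}(t_\e)-\ti D^{\e,x}(t_\e)|$ holds for every $\e$ with right-hand side $o_\e(1)$, uniformly in the position of $v_\e$.
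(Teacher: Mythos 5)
Your proof is correct and follows essentially the same route as the paper: peel off the deterministic Hartman--Grobman contribution via scale and translation invariance of $\norm{\cdot}$, and reduce the comparison to the two vanishing error terms $\norm{Y^x_{t_\e}-Z_\infty}$ (from Lemma~\ref{lem: cvtlineal}) and the translation remainder driven by $r_\e\to0$ (via absolute continuity of $Z_\infty$). The only cosmetic difference is that you collapse the paper's two-step chain through the intermediate quantity $D^{\e,x}$ into a single reverse-triangle-inequality estimate.
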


\begin{proof}[Proof of Lemma~\ref{lem:cutoff-linearization}:]
\noindent Let $\e\in (0,1)$. We observe that $Y^{\e,x}(t;0,x)=Y^{\e}_t(x)$, $t\gqq 0$,
where $Y^{\e}_t(x)=\varphi^x_t+\e Y^x_t$
and $(Y^x_t)_{t\gqq 0}$ is the unique strong solution of \eqref{eq: Yxt}.
Due to scale and (deterministic) shift invariance of the total variation distance 
given in part ii) of Lemma A.1 of \cite{BP}, it follows for all $t\gqq 0$
\begin{align}
 d^{\e, x}(t) 
 &=\norm{Y^{\e}_t(x) - \mu^{\e}_*}=\norm{Y^{\e}_t(x) - \e Z_\infty} \nonumber\\[2mm]
 &\lqq \norm{(\varphi_t^x + \e Y^x_t) - (\varphi_t^x + \e Z_\infty)} 
 + \norm{(\varphi_t^x + \e Z_\infty) - \e Z_\infty}  \nonumber\\[2mm]
 &= \norm{Y^x_t - Z_\infty} + \underbrace{\norm{(\nicefrac{\varphi^x_t}{\e} + Z_\infty) - Z_\infty}}_{=:D^{\e, x}(t)} \label{ine: fine}.
\end{align}
That is,
$
 d^{\e, x}(t) - D^{\e, x}(t) \lqq \norm{Y^x_t - Z_\infty}.
$
Analogously, we obtain
\begin{align*}
D^{\e, x}(t) 
&= \norm{(\varphi^x_t + \e Z_\infty) - \e Z_\infty}\\
&\lqq \norm{(\varphi^x_t + \e Z_\infty) - (\varphi^x_t + \e Y^x_t)} + \norm{(\varphi^x_t + \e Y^x_t) - \e Z_\infty} \\
&= \norm{Y^x_t - Z_\infty} + d^{\e, x}(t), 
\end{align*}
and deduce that
\begin{equation}\label{eq: approx1}
|d^{\e, x}(t) - D^{\e, x}(t)| \lqq \norm{Y^x_t - Z_\infty}\quad \textrm{ for all } t\gqq 0.
\end{equation}
\begin{rem}
\label{rem:nocutoff0}
 Note that for $x=0$, $\varphi^{x}_t=0$ for any $t\gqq 0$ and consequently,
$D^{\e,x}(t)=0$. Since the right-hand side of inequality \eqref{eq: approx1} does not depend on $\e$ and tends to zero for $t\to \infty$, we have for any time scale $(s_\e)_{\e\in (0,1)}$ such that 
$s_\e\to \infty$, as $\e\to 0$,  the limit
$
\lim\limits_{\e\to 0}d^{\e,x}(s_\e)=0.
$
Hence the family $(Y^{\e,x})$ does not exhibit a cutoff phenomenon for any time scale.
\end{rem}
\noindent
As a consequence we continue with 
$x\neq 0$ and recall that
$
D^{\e, x}(t) = \norm{\left(\nicefrac{\varphi^x_{t}}{\e} + Z_\infty\right) - Z_\infty}.
$
In addition, let
\[
R^{\e,x}(t):=\norm{\Big(\nicefrac{\varphi^x_{t}}{\e} + Z_\infty\Big) - \Big(\frac{(t-\tau_x)^{\ell_x-1}e^{-\la_x (t-\tau_x)}}{\e}v(t-\tau_x,x)+Z_\infty\Big)}, \quad t\gqq \tau_x.
\]
By the triangle inequality it follows
\begin{align*}
 D^{\e,x}(t) &\lqq \norm{\big(\nicefrac{\varphi^x_{t}}{\e} + Z_\infty\big)-\Big(\frac{(t-\tau_x)^{\ell_x-1}e^{-\la_x (t-\tau_x)}}{\e}v(t-\tau_x,x) + Z_\infty\Big)}\\
 &\quad+
\norm{\Big(\frac{(t-\tau_x)^{\ell_x-1}e^{-\la_x (t-\tau_x)}}{\e}v(t-\tau_x,x) + Z_\infty\Big)-Z_\infty}=R^{\e,x}(t)+\tilde{D}^{\e,x}(t),
\end{align*}
and analogously 
$\tilde{D}^{\e,x}(t)\lqq R^{\e,x}(t)+{D}^{\e,x}(t)$ which yields 
\begin{equation}\label{ine: desi}
|D^{\e, x}(t)-\tilde{D}^{\e, x}(t)|\lqq R^{\e,x}(t)\quad \textrm{ for all } t\gqq \tau_x.
\end{equation}
Combining \eqref{eq: approx1} and \eqref{ine: desi}
we obtain
\begin{equation}\label{eq: master}
|d^{\e, x}(t)-\tilde{D}^{\e, x}(t)|\lqq R^{\e,x}(t)+\norm{Y^{x}_t-Z_\infty}\quad \textrm{ for all } t\gqq \tau_x.
\end{equation}
The limit \eqref{eq: tvd22} in Lemma~\ref{lem: cvtlineal} in Appendix~\ref{Appendix C} shows that
\[
\lim\limits_{t\to \infty}\norm{Y^{x}_t-Z_\infty}=0.
\]
In particular, for any $\rho\in \mathbb{R}$ we obtain
\begin{equation}\label{eq: master1}
\lim\limits_{\e\to 0}\norm{Y^{x}_{t^x_\e+\rho\cdot w^x_\e}-Z_\infty}=0.
\end{equation}
\textbf{Claim:} For any $\rho\in \mathbb{R}$ we have 
\begin{equation}\label{eq: master2}
\lim\limits_{\e\to 0}R^{\e,x}(t^x_\e+\rho\cdot w^x_\e)=0.
\end{equation}
First, we note that the scale and shift invariance of the total variation distance imply for all $t\gqq 0$
\[
R^{\e,x}(t)=\norm{\Big(\frac{(t-\tau_x)^{\ell_x-1}e^{-\la_x (t-\tau_x)}}{\e}\Big(\frac{e^{\la_x (t-\tau_x)}}{(t-\tau_x)^{\ell_x-1}}\varphi^x_{t}-v(t-\tau_x,x)\Big) + Z_\infty\Big) -Z_\infty}.
\]
Secondly, the Hartman-Grobman decomposition in Lemma~\ref{asymp} states
\begin{equation}\label{eq: limiting}
\lim\limits_{\e \to 0} \Big|\frac{e^{\la_x (t_\e+\rho\cdot w_\e-\tau_x)}}{(t_\e+\rho\cdot w_\e-\tau_x)^{\ell_x-1}}\varphi^x_{t_\e+\rho\cdot w_\e}-v(t_\e+\rho\cdot w_\e-\tau_x,x)\Big|=0,
\end{equation}
and  the very definition of $t^x_\e$ and $w^x_\e$ yields
\begin{equation}\label{eq: limite}
\lim\limits_{\e \to 0}\frac{(t^x_\e+\rho\cdot w^x_\e-\tau_x)^{\ell_x-1}e^{-\la_x (t^x_\e+\rho\cdot w^x_\e-\tau_x)}}{\e}=\frac{e^{-\rho-\la_x \tau_x}}{\la_x^{\ell_x-1}}.
\end{equation}
Combining  \eqref{eq: limiting}, \eqref{eq: limite} and the absolute continuity of $Z_\infty$
with the  Scheff\'e lemma for densities implies that
\mbox{$R^{\e,x}(t^x_\e+\rho\cdot w^x_\e)$} tends to zero as $\e \to 0$.
Joining \eqref{eq: master}, \eqref{eq: master1} and \eqref{eq: master2} yields that
any  cutoff phenomenon in the sense of Definition~\ref{def: cutoff} can be read off from
the simpler term $\tilde{D}^{\e,x}$.
\end{proof} 

\bigskip 

\subsubsection{\textbf{Window cutoff for the inhomogeneous O-U process (Proposition~\ref{prop: windows}, Item (1))}}\label{ss:Item1}\hfill\\

\begin{proof}[Proof of Proposition \ref{prop: windows}, Item (1):] 
\noindent By Lemma \ref{lem:cutoff-linearization} it is enough to show the window cutoff phenomenon for $\tilde{D}^{\e, x}$. We observe that $\lim\limits_{t\to \infty}v(t,x)$ may not exist in general.
Set
\[
\bar{D}^x_{\rho}:=\limsup\limits_{\e\to 0}\tilde{D}^{\e, x}(t^x_\e+\rho \cdot w^x_\e).
\]
Then there exists a subsequence $(t^x_{\e_j}+\rho \cdot w^x_{\e_j})_{j\in \NN}$ of $(t^x_\e+\rho\cdot w^x_\e)_{\e\in (0,1]}$ such that $\e_j\to 0$ as $j\to \infty$ and for which 
\[
\bar{D}^x_{\rho}=\lim\limits_{j\to \infty}\tilde{D}^{\e_j, x}(t^x_{\e_j}+\rho \cdot w^x_{\e_j}).
\]
Notice that the sequence  $(v(t^x_{\e_j}+\rho \cdot w^x_{\e_j}-\tau_x))_{j\in \NN}$ is bounded by $\sum_{k=1}^m |v^k|$. Then the Bolzano-Weierstrass theorem yields
the existence of a subsequence  $(\e_{j_n})_{n\in \NN}$ of
$(\e_{j})_{j\in \NN}$ such that 
\begin{equation}\label{eq: limitset}
\lim\limits_{n\to \infty}v(\e_{j_n},x)=:\hat{v}_{\rho}(x) \quad \textrm{ exists.}
\end{equation} 
By construction $\hat{v}_{\rho}(x)\in \omega(x)$.
Combining \eqref{eq: limite} and \eqref{eq: limitset} and using that the law of $Z_\infty$ is absolutely continuous with respect to the Lebesgue measure on $\RR^d$, Scheff\'e's lemma for densities implies
\begin{equation}
\begin{split}
&\bar{D}^x_\rho =\lim\limits_{n\to \infty}\tilde{D}^{\e_{j_n}, x}(t^x_{\e_{j_n}}+\rho \cdot w^x_{\e_{j_n}})\\
&=
\lim\limits_{n\to \infty}\norm{\Big(\frac{(t^x_{\e_{j_n}}+\rho \cdot w^x_{\e_{j_n}}-\tau_x)^{\ell_x-1}e^{-\la_x (t^x_{\e_{j_n}}+\rho \cdot w^x_{\e_{j_n}}-\tau_x)}}{\e_{j_n}}v(t^x_{\e_{j_n}}+\rho \cdot w^x_{\e_{j_n}}-\tau_x,x) + Z_\infty\Big) - Z_\infty}\\
&=\norm{\Big(\frac{e^{-\rho-\la_x \tau_x}}{\la_x^{\ell_x-1}}\hat{v}_{\rho}(x)+ Z_\infty\Big) - Z_\infty}.\label{eq:limsupd} 
\end{split}
\end{equation}
Analogously, we deduce
\begin{equation}\label{eq:liminfd}
\underbar{D}^x_{\rho}:=\liminf\limits_{\e\to 0}\tilde{D}^{\e, x}(t^x_\e+\rho \cdot w^x_\e)
=\norm{\Big(\frac{e^{-\rho-\la_x \tau_x}}{\la_x^{\ell_x-1}}\check{v}_{\rho}(x)+ Z_\infty\Big) - Z_\infty},
\end{equation} 
where $\check{v}_{\rho}(x)\in \omega(x)$. 
Let $\rho>0$.
In the sequel we send $\rho\to \infty$.
We observe that the upper limiting vector $\hat{v}_{\rho}(x)$ depends on $\rho$, however, it is  uniformly bounded by $\sum_{k=1}^{m}|v^k|$.
Hence, $e^{-\rho}\hat{v}_{\rho}(x)$ tends to zero as $\rho\to \infty$.
With the help of Scheff\'e's lemma for densities we obtain
\begin{equation}\label{eq: limsup0}
\lim\limits_{\rho\to \infty} \bar{D}^x_{\rho}=0.
\end{equation}
For $\rho<0$, by \eqref{eq: outsidezero} we observe that $|\check{v}_{\rho}(x)|\gqq \liminf\limits_{t\to \infty}|v(t,x)|>0$, where the right-hand side does not depend on $\rho$. Hence, $e^{-\rho}|\check{v}_{\rho}(x)|\to \infty$ as $\rho\to -\infty$.  
A standard version of Scheff\'e's lemma for densities with diverging drift (see Lemma A.3 in \cite{BP}) implies
\begin{equation}\label{eq: liminf1}
\lim\limits_{\rho\to -\infty} \underbar{D}^x_{\rho}=1.
\end{equation}
Combining \eqref{eq: limsup0} and \eqref{eq: liminf1} shows the window cutoff limits for $\tilde{D}^{\e,x}$ and hence the window cutoff phenomenon for the family $(Y^{\e}(x))_{\e\in (0,1)}$. 
\end{proof}

\bigskip 
\subsubsection{\textbf{Profile cutoff for the inhomogeneous O-U process (Proposition~\ref{prop: windows}, Item (2))}}\label{ss:Item2}\hfill\\

\begin{proof}[Proof of Proposition \ref{prop: windows}, Item (2):] 
\noindent By Lemma \ref{lem:cutoff-linearization} it is enough to show the window cutoff phenomenon for $\tilde{D}^{\e, x}$. 
By \eqref{eq:limsupd} and \eqref{eq:liminfd} we have for any $\rho\in \RR$
\[
\limsup\limits_{\e\to 0}\tilde{D}^{\e, x}(t^x_\e+\rho \cdot w^x_\e)=\norm{\Big(\frac{e^{-\rho-\la_x \tau_x}}{\la_x^{\ell_x-1}}\hat{v}_{\rho}(x)+ Z_\infty\Big) - Z_\infty}
\]
and 
\[
\liminf\limits_{\e\to 0}\tilde{D}^{\e, x}(t^x_\e+\rho \cdot w^x_\e)=\norm{\Big(\frac{e^{-\rho -\la_x \tau_x}}{\la_x^{\ell_x-1}}\check{v}_{\rho}(x)+ Z_\infty\Big) - Z_\infty},
\]
where $\hat{v}_{\rho}(x),\check{v}_{\rho}(x)\in \omega(x)$ defined in \eqref{eq: omegasetx}. 
The limit 
\[
\lim\limits_{\e\to 0}\tilde{D}^{\e, x}(t^x_\e+\rho \cdot w^x_\e)\quad \textrm{exists}
\]
if and only if 
\[
\norm{\Big(\frac{e^{-\rho-\la_x\tau_x}}{\la_x^{\ell_x-1}}\hat{v}_{\rho}(x)+ Z_\infty\Big) - Z_\infty}=
\norm{\Big(\frac{e^{-\rho-\la_x\tau_x}}{\la_x^{\ell_x-1}}\check{v}_{\rho}(x)+ Z_\infty\Big) - Z_\infty}.
\]
We start with the necessary condition for profile cutoff in Theorem~\ref{thm: main result2}.
If for  any  $a >0$ the map
\[
v\in \omega(x)\mapsto 
\norm{\Big(a  v+ Z_\infty\Big) - Z_\infty}\quad
\textrm{ is constant},
\]
then we have
\[
\lim\limits_{\e\to 0}\tilde{D}^{\e, x}(t^x_\e+\rho \cdot w^x_\e)=
\norm{\Big(\frac{e^{-\rho-\la_x\tau_x}}{\la_x^{\ell_x-1}}v+ Z_\infty\Big) - Z_\infty},
\]
where $v$ is any representative of $\omega(x)$.
This yields the desired profile cutoff phenomenon for the family $(Y^{\e,x})_{\e\in (0,1)}$.\\

\noindent We continue with the sufficient condition for profile cutoff in Theorem~\ref{thm: main result2}.
Let $v\in \omega(x)$, i.e. there exists a subsequence $(t_{j})_{j\in \NN}$ such that
\[
\lim\limits_{j\to \infty}v(t_j,x)=v.
\]
For any $x\in \RR^d$ and $\rho\in \RR$ 
consider the parametrization  $\e\mapsto  t^{x}_\e+\rho \cdot w^{x}_\e-\tau_x$ and set 
$t_{j}:=t^{x}_{\e_j}+\rho \cdot w^{x}_{\e_j}-\tau_x$ for all $j\in \NN$.
Limit \eqref{eq: limite} and Scheff\'e's lemma for densities imply
\begin{align}\label{eq: omegav}
\lim_{j\to \infty}\tilde{D}^{\e_{j}, x}(t^{x}_{\e_j}+\rho \cdot w^{x}_{\e_j})=
 \norm{\Big(\frac{e^{-\rho-\la_x\tau_x}}{\la_x^{\ell_x-1}} v + Z_\infty\Big) - Z_\infty}.
\end{align}
Since we are assuming profile cutoff, it follows
\begin{align*}
\lim_{\e\to 0}\tilde{D}^{\e, x}(t^{x}_{\e}+\rho \cdot w^{x}_{\e})&=
\norm{\Big(\frac{e^{-\rho-\la_x\tau_x}}{\la_x^{\ell_x-1}}\hat{v}_{\rho}(x)+ Z_\infty\Big) - Z_\infty}\\
&=
\norm{\Big(\frac{e^{-\rho-\la_x\tau_x}}{\la_x^{\ell_x-1}}\check{v}_{\rho}(x)+ Z_\infty\Big) - Z_\infty},
\end{align*}
where $\hat{v}_{\rho}(x),\check{v}_{\rho}(x)\in \omega(x)$.
That is, the function
\begin{align*}
v\in \omega(x)\mapsto\norm{\Big(e^{-\rho}\cdot \frac{e^{-\la_x\tau_x}}{\la_x^{\ell_x-1}}v+ Z_\infty\Big) - Z_\infty}\quad
\textrm{ is constant}. \\
\end{align*}
\end{proof}
\begin{proof}[Proof of Proposition \ref{prop: windows}: ]
Combining Lemma \ref{lem:cutoff-linearization} with Subsubsection \ref{ss:Item1} and \ref{ss:Item2} yields the Item (1) and (2) of Proposition \ref{prop: windows}. 
\end{proof}

\bigskip
\subsection{\textbf{Coupling for the inhomogeneous O-U processes (Proposition~\ref{prop: lic})}}\label{sec: linear coupling}
\hfill\\

\noindent We keep the notation introduced in Subsection~\ref{subsec: outline}.
Let $\Delta_\e=\e^{\nicefrac{\alpha}{2}}$.
For any $\rho\in \mathbb{R}$ and $x\neq 0$, recall that
$T^x_\e=t^x_\e-\Delta_\e+\rho\cdot w^x_\e$, where  $t^x_\e$ and $w^x_\e$  are given in Theorem~\ref{thm: main result}. For $x= 0$ any time $T^x_\e = O(|\ln(\e)|^2)$ can be taken (see Lemma \ref{lem: dependence}).  
We show the following limit
\begin{equation}\label{eq: limiteinhomogeneous}
\lim\limits_{\e\rightarrow 0}\norm{Y^{\e,x}(\Delta_\e;T^x_\e,X^{\e}_{T^x_\e}(x))-
Y^{\e,x}(\Delta_\e;T^x_\e,Y^{\e,x}(T^x_\e;0,x))}
=0.
\end{equation}

\bigskip 
\subsubsection{\textbf{{Coupling by the local limit theorem for locally layered stable drivers}}} \hfill \\

\noindent
We recall that $(\varphi^x_t)_{t\gqq 0}$ is the solution of \eqref{dde1.1}. By \eqref{eq: linealhomogenea} and
the variation of constants formula yields the explicit representation
\begin{align}\label{eq: representation}
Y^{\e,x}(t;T^x_\e,z)=(\Phi^\e_t(x))^{-1}z+
(\Phi^\e_t(x))^{-1}
\int_{0}^{\Delta_\e}
\Phi^\e_s(x)\big(Db(\varphi^x_{T^x_\e+s})\varphi^x_{T^x_\e+s}-b(\varphi^{x}_{T^x_\e+s})\big)\ud s+
\e U^{x}_\e,
\end{align}
where $(\Phi^\e_t(x))_{t\gqq 0}$ is the solution of the  matrix valued inhomogeneous differential equation 
\begin{equation}\label{d:SGInhom}
\frac{\ud}{\ud t} \Phi_t = \Phi_t\, Db(\varphi_{T^x_\e+t}^x), \qquad \Phi_0=I_d,
\end{equation}
and 
\begin{equation}\label{d:SConvInhom}
U^x_\e:=(\Phi^\e_{\Delta_\e}(x))^{-1}
\int_{0}^{\Delta_\e}  \Phi^\e_s(x) \ud L_{T^x_\e+s}.
\end{equation}
Since $\varphi^x_{T^x_\e+t}\ra 0$, as $\e \ra 0$,
$U^x_\e$ resembles  the respective homogeneous Ornstein-Uhlenbeck process.
We claim that there exists a scale $\gamma_\e$ (independent of $x$) and a deterministic vector $a^x_\e$ such that $\gamma_\e U^x_\e+a^x_\e$ converges in total variation distance to an absolutely continuous random vector as $\e \ra 0$. To be precise, we state it as Proposition \ref{tomate} below. 
\begin{rem}\label{rem: generalhoudre}
Assume that the L\'evy measure $\nu$ is strongly locally layered stable
 in the sense of Definition~\ref{hyp: layered}  with 
parameters $(\nu_0,\nu_\infty,\Lambda,q,c_0,\alpha)$.
Let $\alpha\in (0,2)$ and $\beta>0$, where $\alpha$ is given in Definition~\ref{hyp: layered}
and $\beta$ is given in Hypothesis~\ref{hyp: moment condition}.
It is not hard to adapt the proof of Theorem~3.1 in \cite{HOU} to deduce that
\begin{equation}\label{eq:slys0}
(h^{-\nicefrac{1}{\alpha}}(L_{sh}+sh\eta_{\alpha,\beta})-s \fb_{\alpha,\beta})_{s\gqq 0}\stackrel{d}{\longrightarrow} \mathcal{S}_{\alpha}(\Lambda_1)\quad \textrm{as } h\to 0,\; h>0,
\end{equation}
where $\mathcal{S}_{\alpha}(\Lambda_1)$ is a strictly $\alpha$-stable process with spectral density $\Lambda_1(\ud \theta)=c_0(\theta)\Lambda(\ud \theta)$. 
If in addition, we assume \eqref{eq:locsym} and \eqref{layered0}, then  $c_0$ is a  symmetric function and therefore  $\Lambda_1$, too.
The vectors $\eta_{\alpha,\beta}$ and $\fb_{\alpha,\beta}$ are explicit and their formulas are 
given in the statement  of Theorem~3.1 in \cite{HOU}.
To be precise, the authors in \cite{HOU} state the stronger tail condition (3.3) on the L\'evy 
measure~$\nu$. However,~in their proof of Theorem~3.1 in \cite{HOU} which treats short-range behavior, 
it is only used to guarantee the following (according to their notation):
for $f$ being a bounded continuous function  vanishing in a neighbourhood of the origin, and $h>0$, 
$\e>0$, that the iterated integral below
is bounded independently of $h$
\begin{align*}
\int_{\SSS^{d-1}} \Lambda(\ud \xi)\int_{\e}^{\infty} f(h^{-1/\al}\xi)q(r,\xi)\ud r.
\end{align*}
In our setting of Definition~\ref{hyp: layered}, it is bounded by 
\begin{align*}
|f|_\infty\int_{\SSS^{d-1}} \Lambda(\ud \xi)\int_{\e}^{1}  q(r,\xi)\ud r+
|f|_\infty \nu_\infty(B^c_1(0)),
\end{align*}
which is finite for any $\e\in (0,1)$. As a consequence, \eqref{eq:slys0} for $L$ being strongly locally layered stable.
\end{rem}
\begin{prop}[Local limit theorem for the inhomogeneous O-U approximation]\label{tomate}\hfill\\
Assume that  $\nu$ is a strongly locally layered stable L\'evy measure in the sense of Definition~\ref{hyp: layered} with 
parameters $(\nu_0,\nu_\infty,\Lambda,q,c_0,\alpha)$.
Let $\alpha\in (0,2)$ and $\beta>0$, where $\alpha$ and $\beta$ are given in Definition~\ref{hyp: layered}. 
Then for any $K>0$ we have 
\[
\lim\limits_{\e\ra 0} \sup_{|x|\lqq K} \norm{(\gamma_\e U^x_{\e}+a^x_\e)  - U} = 0,
\]
where $\gamma_\e:=\Delta^{-\nicefrac{1}{\alpha}}_\e$, the random vector $U$ has a symmetric $\alpha$-stable distribution
with spectral density $\Lambda_1(\ud \theta)=c_0(\theta)\Lambda(\ud \theta)$,
 and the deterministic vector $a^x_\e$ is given by
\begin{equation}\label{vectortomate}
a^x_\e=\Delta^{1-\nicefrac{1}{\alpha}}_{\e}\eta_{\alpha,\beta}-\fb_{\alpha,\beta}-
\gamma_\e \Big(\eta_{\alpha,\beta}-\frac{\fb_{\alpha,\beta}}{\Delta^{1-\nicefrac{1}{\alpha}}_\e}\Big)(\Phi^{\e}_{\Delta_\e}(x))^{-1}
\int_0^{\Delta_\e}\Phi^{\e}_s (x)Db(\varphi^x_{T^x_\e+s}) 
s \ud s.
\end{equation}
In particular, for $x = 0$ we have $\lim\limits_{\e\ra 0} \norm{(\gamma_\e U^0_{\e}+a^0_\e)  - U} = 0$. 
\end{prop}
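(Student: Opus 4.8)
The plan is to reduce the statement to the convergence in distribution already available from Remark~\ref{rem: generalhoudre} and then upgrade the mode of convergence to total variation with uniformity in $x$, exploiting the smoothing of the small-jump part. I would begin by expanding $U^x_\e$ using the explicit representation \eqref{d:SConvInhom}. Since $\Phi^\e_s(x) = I_d + \int_0^s \Phi^\e_u(x)\, Db(\varphi^x_{T^x_\e+u})\,\ud u$ and $\varphi^x_{T^x_\e+u}\to 0$ uniformly for $|x|\lqq K$ as $\e\to 0$ (by Lemma~\ref{asymp} and the exponential decay $|\varphi^x_t|\lqq e^{-\delta t}|x|$, together with $T^x_\e\to\infty$), one writes
\[
\gamma_\e U^x_\e = \gamma_\e \int_0^{\Delta_\e}\ud L_{T^x_\e+s} + \gamma_\e\big((\Phi^\e_{\Delta_\e}(x))^{-1}-I_d\big)\int_0^{\Delta_\e}\ud L_{T^x_\e+s} + \gamma_\e (\Phi^\e_{\Delta_\e}(x))^{-1}\int_0^{\Delta_\e}\big(\Phi^\e_s(x)-I_d\big)\ud L_{T^x_\e+s}.
\]
The first term, after recentering by the deterministic drifts $\eta_{\alpha,\beta}$ and $\fb_{\alpha,\beta}$ at scale $h=\Delta_\e$, is exactly $h^{-1/\alpha}(L_{\Delta_\e}+\Delta_\e\eta_{\alpha,\beta})-\fb_{\alpha,\beta}$ in the notation of \eqref{eq:slys0} (using stationarity of increments of $L$), which converges in distribution to $U\sim \mathcal{S}_\alpha(\Lambda_1)$; the remaining two terms are $O_\PP(\gamma_\e \cdot \|\Phi^\e_{\Delta_\e}(x)-I_d\| \cdot \Delta_\e^{1/\alpha})$ in magnitude, and since $\|\Phi^\e_s(x)-I_d\|\lqq C\Delta_\e$ on $[0,\Delta_\e]$ uniformly in $|x|\lqq K$ (Gronwall, using boundedness of $Db$ near $0$), these vanish. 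The deterministic correction $a^x_\e$ in \eqref{vectortomate} is precisely the bookkeeping term that compensates the interaction between the drift recentering and the factor $(\Phi^\e_{\Delta_\e}(x))^{-1}\int_0^{\Delta_\e}\Phi^\e_s(x)Db(\varphi^x_{T^x_\e+s})s\,\ud s$, so that after adding $a^x_\e$ one has genuine distributional convergence uniformly in $x$.

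Next I would promote this to total variation. The key point is that $\gamma_\e U^x_\e$ — or more precisely the leading term $\gamma_\e\int_0^{\Delta_\e}\ud L_{T^x_\e+s}$ — has a smooth density with good uniform control on its characteristic function, thanks to the Orey-Masuda cone condition of Lemma~\ref{lem oreymasuda}: for the rescaled increment, the real part of the characteristic exponent is bounded above by $-c_\sphericalangle |v|^\alpha$ for $|v|$ large. I would split $L$ near time $0$ into its small-jump part (jumps of size $\lqq 1$) and the compound-Poisson large-jump part; on the event that no large jump occurs in $[T^x_\e, T^x_\e+\Delta_\e]$ — which has probability $1-O(\Delta_\e)\to 1$ — the contribution is governed by the small jumps, whose rescaled law has a density converging in $L^1$ (hence in total variation) to that of $U$ by a local limit theorem argument, i.e. by Fourier inversion combined with the integrability of $|\widehat{\mathcal{L}(U)}|$ and the pointwise convergence of characteristic functions on compacts together with the uniform Gaussian-type (here $\alpha$-stable-type) tail bound from Lemma~\ref{lem oreymasuda}. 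The perturbation terms involving $\Phi^\e_{\Delta_\e}(x)-I_d$ are then absorbed: one conditions on the driving noise, notes that multiplying an absolutely continuous random vector by an invertible matrix close to the identity perturbs its law by $O(\|\Phi^\e_{\Delta_\e}(x)-I_d\|)$ in total variation when the density is $\mathcal{C}^1$ with controlled gradient (the estimate of \cite{CW04} type, or directly via the smoothness already established), and likewise adding the residual stochastic term — which is $o_\PP(1)$ and, crucially, can be handled here because we are adding it to an \emph{absolutely continuous, smooth} random vector rather than needing Slutsky in TV; concretely one uses that convolution with a vanishing perturbation of an $L^1$-continuous density is $L^1$-continuous.

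The uniformity in $|x|\lqq K$ comes for free from each estimate above being uniform: the bound $\|\Phi^\e_s(x)-I_d\|\lqq C(K)\Delta_\e$ is uniform, the drift term $Db(\varphi^x_{T^x_\e+s})\varphi^x_{T^x_\e+s}-b(\varphi^x_{T^x_\e+s})$ is $O(|\varphi^x_{T^x_\e+s}|^2) = O(e^{-2\delta T^x_\e}K^2)$ uniformly, and the local limit theorem for the small-jump part does not see $x$ at all. Finally the special case $x=0$ is immediate since then $\varphi^0_t\equiv 0$, $\Phi^\e_t(0)\equiv I_d$, the drift correction vanishes, and $a^0_\e$ reduces to $\Delta^{1-1/\alpha}_\e\eta_{\alpha,\beta}-\fb_{\alpha,\beta}$, so the statement collapses to the plain one-dimensional-in-time local limit theorem for $\gamma_\e\int_0^{\Delta_\e}\ud L_s$. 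I expect the main obstacle to be the total-variation (as opposed to weak) local limit theorem for the small-jump part with the required uniformity: one must carry out the Fourier inversion carefully, controlling the tail of the characteristic function via Lemma~\ref{lem oreymasuda} and ensuring the large-jump exceptional event is handled cleanly, and then propagate this through the matrix and drift perturbations without invoking a Slutsky-type statement in total variation, which by the counterexample in Subsubsection~\ref{subsub:counter} is unavailable — the saving grace being that here every perturbation is added to a random vector that is already smooth, so one works with $L^1$-continuity of densities throughout.
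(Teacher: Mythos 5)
Your plan makes the correct high-level moves (convergence in distribution via the local limit theorem of Remark~\ref{rem: generalhoudre}, then an $L^1$/Fourier upgrade to total variation using Lemma~\ref{lem oreymasuda}), but there is a concrete gap in the first half that you would hit immediately if you tried to execute it.

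The decomposition $\gamma_\e U^x_\e = \gamma_\e\int_0^{\Delta_\e}\ud L_{T^x_\e+s} + \gamma_\e\big((\Phi^\e_{\Delta_\e}(x))^{-1}-I_d\big)\int_0^{\Delta_\e}\ud L_{T^x_\e+s} + \gamma_\e(\Phi^\e_{\Delta_\e}(x))^{-1}\int_0^{\Delta_\e}(\Phi^\e_s(x)-I_d)\ud L_{T^x_\e+s}$ is algebraically correct, but the third summand is a stochastic integral against a general pure-jump L\'evy process, and the asserted magnitude $O_\PP(\gamma_\e\|\Phi^\e_{\Delta_\e}(x)-I_d\|\Delta_\e^{1/\alpha})$ is not justified by the deterministic bound on the integrand alone: $L$ need not have finite variation or finite second moment, so you cannot simply pull $\sup_s\|\Phi^\e_s-I_d\|$ out of $\ud L$. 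The paper avoids this by integrating by parts \emph{first}, writing $U^x_\e \stackrel{d}= L_{\Delta_\e} - (\Phi^\e_{\Delta_\e}(x))^{-1}\int_0^{\Delta_\e}\Phi^\e_s(x)Db(\varphi^x_{T^x_\e+s})L_s\,\ud s$; this converts the problematic stochastic integral into a pathwise Riemann integral whose supremum is controlled by $\Delta_\e\sup_{s\in[0,\Delta_\e]}|L_s + s\eta_{\alpha,\beta} - s\Delta_\e^{-(1-1/\alpha)}\fb_{\alpha,\beta}|$, which after rescaling converges in distribution via the continuous-mapping theorem. Crucially, \emph{this} is also where the third summand of $a^x_\e$ comes from: replacing $L_s$ inside the deterministic Riemann integral by the recentering drift $s\eta_{\alpha,\beta}-s\Delta_\e^{-(1-1/\alpha)}\fb_{\alpha,\beta}$ produces exactly the term $\gamma_\e\big(\eta_{\alpha,\beta}-\fb_{\alpha,\beta}/\Delta_\e^{1-1/\alpha}\big)(\Phi^\e_{\Delta_\e}(x))^{-1}\int_0^{\Delta_\e}\Phi^\e_s(x)Db(\varphi^x_{T^x_\e+s})s\,\ud s$. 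Calling $a^x_\e$ ``bookkeeping'' without exhibiting this cancellation leaves the specific formula \eqref{vectortomate} unaccounted for in your decomposition, and it will not fall out of your three-term splitting without eventually performing the same integration by parts.

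On the total-variation upgrade, your instinct (Fourier inversion, tail control of $|\hat f_\e|$ via Orey-Masuda, Scheff\'e) is the right one and matches the paper's, which simply refers back to the argument spelled out for Lemma~\ref{lem: cvtlineal}. However, the jump-size localization and $\mathcal{C}^1$-gradient/Wasserstein-to-TV estimates you bring in are superfluous here: $U^x_\e$ is a linear stochastic convolution with an explicitly computable infinitely divisible characteristic function, so one can work directly with that characteristic function (as the paper does), and no large-jump exceptional event needs to be carved out. That heavier machinery is needed later, in the proof of Proposition~\ref{prop: stc}, where the integrand is the genuinely nonlinear process $X^\e$ and there is no closed form for the characteristic function.
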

\begin{proof}[Proof of Proposition~\ref{tomate}:] By the continuity shown in Lemma~\ref{lem: continuidad} in Appendix~\ref{Appendix C} we have for any $\e>0$ a point $x_\e \in \RR^d$ with $|x_\e|\lqq K$ such that 
\[
\sup_{|x|\lqq K} \norm{(\gamma_\e U^x_{\e}+a^x_\e)  - U} 
= \norm{(\gamma_\e U^{x_\e}_{\e}+a^{x_\e}_\e)  - U}.
\]
In the sequel, we show that the right-hand side tends to $0$ as $\e\ra 0$. 
For simplicity, we drop the $\e$-dependence of $x_\e$ which is denoted by $x$. 
We stress that in the proof below the dependence of $x$ only enters in terms of 
$|x|$, which is uniformly bounded by $K$.  

\noindent We show the existence of the distributional limit $\lim_{\e\ra 0}(\gamma_\e U^x_\e+a^x_\e)$ for a suitable deterministic scale $\gamma_\e$ such that $\lim_{\e\ra 0}\gamma_\e=\infty$ and a deterministic vector $a^x_\e$.
By \eqref{d:SConvInhom} and since the process $L$ is additive,
it is not hard to deduce that its characteristic function has the following shape
\[
z\mapsto
\mathbb{E}\left[e^{i\<z,U^x_\e\>}\right]=\exp\left(\int_{0}^{\Delta_\e}\psi\left((\Phi^{\e}_s(x))^{*}((\Phi^{\e}_{\Delta_\e}(x))^{-1})^*z\right) \ud s\right),
\quad z\in \mathbb{R}^d.
\]
The translation invariance of the Lebesgue integral in the preceding exponent  implies that in distribution 
$U^x_\e=(\Phi^\e_{\Delta_\e}(x))^{-1}
\int_{0}^{\Delta_\e}  \Phi^\e_s(x) \ud L_{s}$. Integration by parts yields
\begin{align}\label{distri}
U^x_\e &\stackrel{d}{=}
 L_{\Delta_\e} - (\Phi^{\e}_{\Delta_\e}(x))^{-1} \int_0^{\Delta_\e} 
 \dot{\Phi}^{\e}_s(x) L_s \ud s \nonumber\\
&= L_{\Delta_\e} -( \Phi^{\e}_{\Delta_\e}(x))^{-1} \int_0^{\Delta_\e} \Phi^{\e}_s(x) Db(\varphi^x_{T^x_\e+s}) L_s \ud s=J_1 -J_2, 
\end{align}
where 
\begin{align*}
J_1:= L_{\Delta_\e} \qquad \mbox{ and }\qquad J_2:=  ( \Phi^{\e}_{\Delta_\e}(x))^{-1} \int_0^{\Delta_\e} \Phi^{\e}_s(x) Db(\varphi^x_{T^x_\e+s}) L_s \ud s.
\end{align*}
We start with the second term. Since $|\varphi^x_t|\lqq |x|$ for any $t\gqq 0$, it follows that
\begin{align}\label{eq: J2}
&\left|J_2+ \int_0^{\Delta_\e}(\Phi^{\e}_{\Delta_\e}(x))^{-1}\Phi^{\e}_s(x) Db(\varphi^x_{T^x_\e+s}) 
\Big(s\eta_{\alpha,\beta}-\frac{s}{\Delta^{1-\nicefrac{1}{\alpha}}_\e}\fb_{\alpha,\beta}\Big) \ud s \right|\nonumber\\
&\qquad\lqq  \int_0^{\Delta_\e} 
|(\Phi^{\e}_{\Delta_\e}(x))^{-1}\Phi^{\e}_s(x)Db(\varphi^x_{T^x_\e+s})| \big|L_s +s \eta_{\alpha,\beta}-\frac{s}{\Delta^{1-\nicefrac{1}{\alpha}}_\e}\fb_{\alpha,\beta}\big|\ud s \nonumber\\
&\qquad \lqq C(|x|)\Delta_\e\sup_{s\in [0,\Delta_\e]}|L_s +s \eta_{\alpha,\beta}-\frac{s}{\Delta^{1-\nicefrac{1}{\alpha}}_\e}\fb_{\alpha,\beta}|,\nonumber\\
&\qquad  = C(|x|)\Delta_\e\sup_{s\in [0,1]}|L_{s \Delta_\e } +s \Delta_\e  \eta_{\alpha,\beta}-\frac{s}{\Delta^{-\nicefrac{1}{\alpha}}_\e} \fb_{\alpha,\beta}|,
\end{align}
where the last inequality follows from inequality \eqref{cotacontractiva} in Lemma~\ref{lem: cotainfsup} in Appendix \ref{A}. 
Since $\gamma_\e=\Delta^{-1/\al}_\e$, we obtain
\begin{align}\label{eq: limitehou3}
\gamma_\e&\Big|J_2+ \int_0^{\Delta_\e}(\Phi^{\e}_{\Delta_\e}(x))^{-1}\Phi^{\e}_s(x) Db(\varphi^x_{T^x_\e+s}) 
\Big(s\eta_{\alpha,\beta}-\frac{s}{\Delta^{1-\nicefrac{1}{\alpha}}_\e}\fb_{\alpha,\beta}\Big) \ud s \Big| \nonumber\\[3mm]
&\hspace{5cm}\lqq 
C(|x|)\Delta_\e\sup_{s\in [0,1]}|\gamma_\e (L_{s \Delta_\e } +s \Delta_\e  \eta_{\alpha,\beta})-s \fb_{\alpha,\beta}|.
\end{align}
By Remark~\ref{rem: generalhoudre} we have
\begin{equation}\label{eq: limitehoudre2}
(\gamma_\e(L_{s\Delta_\e}+s\Delta_\e \eta_{\alpha,\beta})-s\fb_{\alpha,\beta})_{s\gqq 0}\stackrel{d}{\longrightarrow} \mathcal{S}_{\alpha}(\Lambda_1), \quad \e\to 0,
\end{equation}
 where $\mathcal{S}_{\alpha}(\Lambda_1)$ is a symmetric $\alpha$-stable process with spectral measure $\Lambda_1$.
 It is well-known in the literature that  
the supremum norm is continuous with respect to the Skorokhod topology, 
see Theorem~7.4.1 in Chapter 7 of \cite{WHITT}. 
Hence the continuous mapping theorem implies
\[
\sup_{s\in [0,1]}
 |\gamma_\e(L_{s\Delta_\e}+s\Delta_\e \eta_{\alpha,\beta})-s\fb_{\alpha,\beta}|
\stackrel{d}{\longrightarrow} \sup_{[0,1]}|\mathcal{S}_{\alpha}(\Lambda_1)|,\quad \e \to 0.\]
Since $\Delta_\e \to 0$, Slutsky's lemma yields
\begin{equation}\label{eq28}
\Delta_\e\cdot \sup_{s\in [0,1]}
 |\gamma_\e(L_{s\Delta_\e}+s\Delta_\e \eta_{\alpha,\beta})-s\fb_{\alpha,\beta}|
\stackrel{d}{\longrightarrow} 0,\quad \e \to 0.
\end{equation}
As a consequence the right-hand side of \eqref{eq: limitehou3} tends to zero, as $\e \to 0$.

We continue with the first term $J_1$.
Since $J_1=L_{\Delta_\e}$, limit \eqref{eq: limitehoudre2} implies
\begin{equation}\label{layer}
\lim\limits_{\e \to 0}\big(\Delta^{-\nicefrac{1}{\alpha}}_{\e}L_{\Delta_\e}+\Delta^{1-\nicefrac{1}{\alpha}}_{\e}\eta_{\alpha,\beta}-\fb_{\alpha,\beta}\big) \stackrel{d}{=} U,
\end{equation}
where $\eta_{\alpha,\beta}$ and $\fb_{\alpha,\beta}$ are deterministic vectors on $\mathbb{R}^d$,
and $U$ has a symmetric $\alpha$-stable distribution with spectral measure $\Lambda_1$.
By \eqref{vectortomate} and \eqref{distri} we obtain
\[
\begin{split}
&\gamma_\e U^x_\e+a^{x}_\e\\
&\quad=\gamma_\e U^x_\e+\Delta^{1-\nicefrac{1}{\alpha}}_{\e}\eta_{\alpha,\beta}-\fb_{\alpha,\beta}-
\gamma_\e\int_0^{\Delta_\e}(\Phi^{\e}_{\Delta_\e}(x))^{-1}\Phi^{\e}_s(x) Db(\varphi^x_{T^x_\e+s}) 
\Big(s\eta_{\alpha,\beta}-\frac{s}{\Delta^{1-\nicefrac{1}{\alpha}}_\e}\fb_{\alpha,\beta}\Big) \ud s
\\
&\quad\stackrel{d}{=}
(\gamma_\e L_{\Delta_\e}+\Delta^{1-\nicefrac{1}{\alpha}}_{\e}\eta_{\alpha,\beta}-\fb_{\alpha,\beta})\\
& \qquad -\gamma_\e\Big( J_2+
\int_0^{\Delta_\e}(\Phi^{\e}_{\Delta_\e}(x))^{-1}\Phi^{\e}_s(x) Db(\varphi^x_{T^x_\e+s}) 
\Big(s\eta_{\alpha,\beta}-\frac{s}{\Delta^{1-\nicefrac{1}{\alpha}}_\e}\fb_{\alpha,\beta}\Big) \ud s\Big).
\end{split}
\]
By \eqref{eq: limitehou3}, \eqref{eq28} and \eqref{layer} we deduce with the help of Slutsky's lemma
\begin{equation}\label{eq: limiteresultado}
\lim\limits_{\e \to 0}(\gamma_\e U^x_\e+a^x_\e)\stackrel{d}{=}
U, 
\end{equation}
where $a^x_\e$ is given in\eqref{vectortomate}. 
We stress that the dependence of $x$ in the preceding limit only enters 
via $C(|x|)$ in \eqref{eq: limitehou3} and holds uniformly for $|x|\lqq K$. 

Finally, we strengthen the convergence in distribution in \eqref{eq: limiteresultado}
to the convergence in total variation distance, using the regularity of the densities and showing their convergence in $L^1{(\RR^d)}$. 
This can be carried out using the Fourier inversion formula of the explicit 
characteristic function of the linear process $\gamma_\e U^x_\e+a^x_\e$
and the Orey-Masuda condition in Lemma~\ref{lem oreymasuda}, analogously as in the proof of Lemma~\ref{lem: cvtlineal} in Appendix~\ref{Appendix C}. Since this procedure is spelt out in full detail in Lemma~\ref{lem: cvtlineal} for the limit $\lim\limits_{t\to \infty}Y^{\e}_t(x)\stackrel{d}= Z_\infty$ established in Lemma~\ref{lem:convergenceindist}  in Appendix~\ref{Appendix C} we refrain from repeating it here. 
\end{proof}

\bigskip 

\subsubsection{\textbf{{Proof of Proposition~\ref{prop: lic}}}}\hfill\\

\noindent In this subsection we establish an upper bound  of
\[\norm{Y^{\e,x}(\Delta_\e;T^x_\e,X^{\e}_{T^x_\e}(x)) - Y^{\e,x}(\Delta_\e; T^x_\e,
Y^{\e,x}(T^x_\e;0,x))}\] 
with the help of Proposition~\ref{tomate}, 
which tends to zero as $\e\to 0$. 

\begin{proof}[Proof of Proposition~\ref{prop: lic}:]
\noindent For short, let $z=X^{\e}_{T^x_\e}(x)$ and $\ti z = Y^{\e,x}(T^x_\e;0,x)$.  
The shift and scale invariance of the total variation distance and representation \eqref{eq: representation} yield
\begin{align}
&\norm{Y^{\e,x}(\Delta_\e;T^x_\e,z) - Y^{\e,x}(\Delta_\e;T^x_\e,\ti z)}\nonumber\\
&\qquad=
\norm{\Big(\frac{\gamma_\e}{\e }(\Phi^\e_{\Delta_\e}(x))^{-1}z+ \gamma_{\e} U^{x}_\e+
a^{x}_\e \Big) - \Big(\frac{\gamma_\e}{\e }(\Phi^\e_{\Delta_\e}(x))^{-1}\ti z+ \gamma_\e U^{x}_\e +a^{x}_\e \Big)}=:I_1,\label{e:P5I1}
\end{align}
where $a^x_\e$ is given in \eqref{vectortomate} and $\gamma_\e$ being given in Proposition~\ref{tomate}.
The triangle inequality yields
\begin{align*}
I_1& \lqq \norm{\Big(\frac{\gamma_\e}{\e }(\Phi^\e_{\Delta_\e}(x))^{-1}z+ \gamma_{\e} U^{x}_\e+
a^{x}_\e \Big)-\Big(\frac{\gamma_\e}{\e }(\Phi^\e_{\Delta_\e}(x))^{-1}z+ U \Big)}\\
&\quad+\norm{\Big(\frac{\gamma_\e}{\e }(\Phi^\e_{\Delta_\e}(x))^{-1}z+ U \Big)-\Big(\frac{\gamma_\e}{\e }(\Phi^\e_{\Delta_\e}(x))^{-1}\ti z+ U \Big)}\\
&\quad+\norm{\Big(\frac{\gamma_\e}{\e }(\Phi^\e_{\Delta_\e}(x))^{-1}\ti z+ U \Big)-\Big(\frac{\gamma_\e}{\e }(\Phi^\e_{\Delta_\e}(x))^{-1}\ti z+ \gamma_{\e} U^{x}_\e+
a^{x}_\e \Big)},
\end{align*}
where $U$ has a $\mathcal{S}_{\alpha}(\Lambda_1)$ distribution given in Proposition~\ref{tomate}.
The independence of the increments of $L$ yields that $(\Phi^\e_t(x))^{-1} z$ and $(\Phi^\e_t(x))^{-1}\ti z$ are independent of  $U^x_\e$ and $U$, respectively. 
Then
the cancellation property of independent shifts in the total variation distance
given in Item ii) of Lemma~A.2 of \cite{BP} yields
\begin{align}
I_1& \lqq 2\norm{\Big(\gamma_{\e} U^{x}_\e+
a^{x}_\e\Big)- U}
+\norm{\Big(\frac{\gamma_\e}{\e }(\Phi^\e_{\Delta_\e}(x))^{-1}z+ U \Big)-\Big(\frac{\gamma_\e}{\e }(\Phi^\e_{\Delta_\e}(x))^{-1}\ti z+ U \Big)}.\label{e:P5I11}
\end{align}
We prove that the right-hand side of the preceding inequality tends to zero as $\e \ra 0$. By Proposition~\ref{tomate} it  remains to prove that 
\[
\lim\limits_{\e \ra 0}\norm{\Big(\frac{\gamma_\e}{\e }(\Phi^\e_{\Delta_\e}(x))^{-1}z+ U \Big)-
\Big(\frac{\gamma_\e}{\e }(\Phi^\e_{\Delta_\e}(x))^{-1}\ti z+ U \Big)}=0.
\]
Let $\PP^x_\e(\ud u, \ud \tilde{u})$ denote the joint probability measure  
$\PP\left(X^{\e}_{T^x_\e}(x)\in \ud u, Y^{\e,x}({T^x_\e};0, x)\in \ud\ti u\right)$ and keep the notation 
$z=X^{\e}_{T^x_\e}(x)$ and $\ti z = Y^{\e,x}(T^x_\e; 0,x)$. 
Since $z$ and $\ti z$ are nondegenerate and mutually dependent random variables the {\it{shift property}} for the total variation distance cannot be applied directly. Nevertheless, the Markov property  and the shift invariance allow to disintegrate $\PP^x_\e$  as follows
\begin{align}\label{chile}
&\norm{\Big(\frac{\gamma_\e}{\e }(\Phi^\e_{\Delta_\e}(x))^{-1}z+ U \Big)-
\Big(\frac{\gamma_\e}{\e }(\Phi^\e_{\Delta_\e}(x))^{-1}\ti z+ U \Big)}\nonumber\\
&\hspace{1cm}\lqq \int A^{\e}_1(\zeta,\ti \zeta)
\PP^x_\e (\ud \zeta, \ud \ti \zeta)=\int A^{\e}_2(\zeta,\ti \zeta)
\PP^x_\e (\ud \zeta, \ud \ti \zeta),
\end{align}
where 
\begin{align*}
&A^{\e}_1(\zeta,\ti \zeta):=\norm{\Big(\frac{\gamma_\e}{\e }(\Phi^\e_{\Delta_\e}(x))^{-1}\zeta+ U \Big)-
\Big(\frac{\gamma_\e}{\e }(\Phi^\e_{\Delta_\e}(x))^{-1}\ti \zeta+ U \Big)},\\
&A^{\e}_2(\zeta,\ti \zeta):=\norm{\Big(\frac{\gamma_\e}{\e }(\Phi^\e_{\Delta_\e}(x))^{-1}(\zeta-\ti \zeta)+ U \Big)-
U }.
\end{align*}
We continue with the following split.  For any $ \eta>0$ we consider
\begin{equation}\label{fresa}
A^{\e}_2(\zeta,\ti \zeta)=A^{\e}_2(\zeta,\ti \zeta)\ind\{\gamma_\e |{\zeta - {\ti \zeta}}| > \eta \e \}
+A^{\e}_2(\zeta,\ti \zeta)\ind\{\gamma_\e |{\zeta - {\ti \zeta}}| \lqq \eta \e \}.
\end{equation}
We start with the second term on the right-hand side of \eqref{fresa}. Since the shift operator is continuous at $0$ in $L^1(\mathbb{R}^d)$ for any $\rho>0$, there exists $\eta=\eta(\rho)>0$ such that 
\begin{equation}\label{e: fresa}
\norm{(h+U)-U}<\rho\quad \textrm{whenever}\quad |h|\lqq \sqrt{d} \eta.
\end{equation}
By Lemma~\ref{lem: cotainfsup} in Appendix~\ref{A} we obtain $|(\Phi^\e_{\Delta_\e})^{-1}(x)|\lqq \sqrt{d}$ for any $\e\in (0,1]$ and $x\in \RR^d$, where  $|\cdot|$ denote the standard matrix $2$-norm which in abuse of notation we also denote by $|\cdot|$.
By  Hypothesis~\ref{hyp: potential}
the event $\{\gamma_\e |{\zeta - \ti \zeta}|\lqq  \eta \e \}$ implies 
\[
\Big|\frac{\gamma_\e}{\e }(\Phi^\e_{\Delta_\e}(x))^{-1}(\zeta-\ti \zeta)\Big|\lqq \sqrt{d} \eta
\quad\textrm{ for any } \e.
\]
The preceding  estimate implies
\begin{align}\label{cafe}
\int A^{\e}_2(\zeta,\ti \zeta)
\ind\{\gamma_\e |{\zeta - \ti \zeta}|\lqq  \eta \e \}
\PP^x_\e (\ud \zeta, \ud \ti \zeta) 
\lqq \rho\,\PP^x_\e(\gamma_\e |{\zeta - \ti \zeta}|\lqq  \eta \e)\lqq \rho.
\end{align}
On the other hand, for any $\eta>0$ we have
\begin{align}\label{cafecito}
\int A^{\e}_2(\zeta,\ti \zeta)
\ind\{\gamma_\e |{\zeta - \ti \zeta}|>  \eta \e \}
\PP^x_\e (\ud \zeta, \ud \ti \zeta)
\lqq  \PP^x_\e(\gamma_\e |{\zeta - \ti \zeta}|> \eta \e).
\end{align}
Combining \eqref{chile}-\eqref{cafecito} we obtain
\begin{align*}
\limsup_{\e \ra 0} \norm{\Big(\frac{\gamma_\e}{\e }(\Phi^\e_{\Delta_\e}(x))^{-1}z+ U \Big)-
\Big(\frac{\gamma_\e}{\e }(\Phi^\e_{\Delta_\e}(x))^{-1}\ti z+ U \Big)}\lqq \limsup_{\e \ra 0}\PP(\gamma_\e |z - \ti z|> \eta \e)  + \rho
\end{align*}
for any $\rho>0$. Note that $\eta$ depends on $\rho$.
Sending $\rho \ra 0$ we obtain
\begin{align}\label{cerveza}
\limsup_{\e \ra 0} \norm{\Big(\frac{\gamma_\e}{\e }(\Phi^\e_{\Delta_\e}(x))^{-1}z+ U \Big)-
\Big(\frac{\gamma_\e}{\e }(\Phi^\e_{\Delta_\e}(x))^{-1}\ti z+ U \Big)}\nonumber\\
\hspace{1cm}\lqq 
\limsup_{\rho \ra 0}\limsup_{\e \ra 0}\PP(\gamma_\e |z - \ti z|> \eta \e).
\end{align}
By Lemma \ref{lem: dependence} stated below we obtain that the upper bound on the right-hand side of \eqref{cerveza} tends to zero, $\e \to 0$, which together with inequality \eqref{e:P5I1}, \eqref{e:P5I11} and Proposition~\ref{tomate} implies \eqref{eq: limiteinhomogeneous}.
\end{proof}

\begin{lem}\label{lem: dependence}
Let $\gamma_\e={\Delta_\e}^{-\nicefrac{1}{\alpha}}$, where $\Delta_\e=\e^{\nicefrac{\alpha}{2}}$.
Then it follows
\[
\limsup_{\e \ra 0}\PP(\gamma_\e |z - \ti z|> \eta \e)=0 \quad \textrm{ for any } \eta>0.
\]
\end{lem}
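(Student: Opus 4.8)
The plan is to undo the scalings, recognise $z-\ti z$ as the second-order residual of the Freidlin--Wentzell expansion of $X^{\e,x}$ evaluated at the cutoff time, and then invoke the quantitative control of that residual from Appendix~\ref{ap: multiscale}. First I would simplify: since $\Delta_\e=\e^{\nicefrac{\alpha}{2}}$ we have $\gamma_\e=\Delta_\e^{-\nicefrac{1}{\alpha}}=\e^{-\nicefrac{1}{2}}$, so the event $\{\gamma_\e|z-\ti z|>\eta\e\}$ is exactly $\{|z-\ti z|>\eta\e^{\nicefrac{3}{2}}\}$. Next, recalling from Subsection~\ref{subsec: outline} that $Y^{\e,x}(t;0,x)=Y^{\e}_t(x)=\varphi^x_t+\e Y^x_t$, with $Y^x$ the solution of \eqref{eq: Yxt}, the quantity of interest is
\[
z-\ti z=X^{\e,x}_{T^x_\e}-Y^{\e}_{T^x_\e}(x)=X^{\e,x}_{T^x_\e}-\varphi^x_{T^x_\e}-\e Y^x_{T^x_\e},
\]
i.e.\ precisely the first-order Freidlin--Wentzell residual at time $t=T^x_\e$. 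Here $T^x_\e=t^x_\e-\Delta_\e+\rho\cdot w^x_\e=\nicefrac{1}{\lambda_x}\,\ln(\nicefrac{1}{\e})(1+o_\e(1))\to\infty$ for $x\neq0$, while $T^x_\e=O(|\ln\e|^2)$ for $x=0$; in either case $T^x_\e$ diverges only polylogarithmically in $\nicefrac{1}{\e}$.

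To see why this residual is negligible, I would write $R_t:=X^{\e,x}_t-Y^{\e}_t(x)$, note $R_0=0$, and observe that because the driving terms $\e\,\ud L$ cancel, $R$ solves the random ordinary differential equation
\[
\dot R_t=-Db(\varphi^x_t)\,R_t-\big(b(X^{\e,x}_t)-b(\varphi^x_t)-Db(\varphi^x_t)(X^{\e,x}_t-\varphi^x_t)\big),
\]
the last bracket being the second-order Taylor remainder of $b$, of size $O(|X^{\e,x}_t-\varphi^x_t|^2)=O(\e^2|Z^{\e,x}_t|^2)$ on bounded sets (since $b\in\cC^2$), where $Z^{\e,x}_t=(X^{\e,x}_t-\varphi^x_t)/\e$. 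Since Hypothesis~\ref{hyp: potential} gives $\tfrac{\ud}{\ud t}|\Psi_t\Psi_s^{-1}\xi|^2\lqq-2\delta|\Psi_t\Psi_s^{-1}\xi|^2$ for the principal solution $\Psi$ of $\dot Y_t=-Db(\varphi^x_t)Y_t$, the propagator satisfies $|\Psi_t\Psi_s^{-1}|\lqq e^{-\delta(t-s)}$ for $t\gqq s$ (for $x=0$ this is just $|e^{-Db(0)t}|\lqq e^{-\delta t}$); variation of constants then yields
\[
|R_{T^x_\e}|\lqq\e^2\int_0^{T^x_\e}e^{-\delta(T^x_\e-s)}\,C\,|Z^{\e,x}_s|^2\,\ud s\lqq\frac{C}{\delta}\,\e^2\sup_{0\lqq s\lqq T^x_\e}|Z^{\e,x}_s|^2 .
\]
Consequently $\PP(\gamma_\e|z-\ti z|>\eta\e)\lqq\PP\big(\sup_{0\lqq s\lqq T^x_\e}|Z^{\e,x}_s|>\sqrt{\eta\delta/C}\,\e^{-\nicefrac{1}{4}}\big)$, and it only remains to show this tends to $0$.

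This last step is exactly what the quantitative first-order expansion of Lemma~\ref{prop: secondorder} in Appendix~\ref{ap: multiscale} provides: under Hypothesis~\ref{hyp: moment condition}, a moment inequality of order $p:=\beta\wedge1$ for the $L$-driven stochastic convolution together with the dissipativity of the drift of $Z^{\e,x}$ gives $\EE[\sup_{0\lqq s\lqq T}|Z^{\e,x}_s|^{p}]\lqq C(x)(1+T)$, so by Markov's inequality $\PP(\sup_{s\lqq T^x_\e}|Z^{\e,x}_s|>c\,\e^{-\nicefrac{1}{4}})\lqq C(x)(1+T^x_\e)c^{-p}\e^{p/4}\to0$, since $T^x_\e$ grows at most polylogarithmically. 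Equivalently, Lemma~\ref{prop: secondorder} already yields $\PP(|X^{\e,x}_{T^x_\e}-\varphi^x_{T^x_\e}-\e Y^x_{T^x_\e}|>\e^{\nicefrac{3}{2}})\to0$, in fact with $\e^{\nicefrac{3}{2}}$ improvable to $C_\e\e^2$ for a polylogarithmic $C_\e$, so that the threshold $\eta\e^{\nicefrac{3}{2}}$ is dominated for every fixed $\eta>0$ and all small $\e$; the case $x=0$ is identical with $Db(\varphi^x_\cdot)$ replaced by $Db(0)$. The main obstacle is precisely this final estimate: because $T^x_\e$ is \emph{unbounded} as $\e\to0$ one cannot use a naive finite-horizon Gronwall bound with an $\e$-independent constant, and --- since Hypothesis~\ref{hyp: moment condition} only grants $\beta>0$ moments of $L$, hence in general no second moments --- the \emph{quadratic} Taylor remainder must be absorbed through a single power of $\sup_s|Z^{\e,x}_s|$ (squared deterministically) rather than through $\EE|Z^{\e,x}_s|^2$; both are resolved by trading the slow (polylogarithmic) growth of the horizon against the coercive exponential contraction.
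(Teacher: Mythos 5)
Your proof is correct and follows essentially the same route as the paper: rewrite the event (using $\gamma_\e=\e^{-\nicefrac12}$, hence $\nicefrac{\eta\e}{\gamma_\e}=\eta\e^{\nicefrac32}=\eta\,\e\Delta_\e^{\nicefrac1\alpha}$), recognise $z-\ti z$ as the first-order Freidlin--Wentzell residual $X^{\e,x}_{T^x_\e}-Y^{\e}_{T^x_\e}(x)$, and invoke the quantitative estimate of Lemma~\ref{prop: secondorder} together with the observation that $T^x_\e$ grows only polylogarithmically in $\nicefrac1\e$, so the hypothesis $t_\e\e^{\nicefrac1{1+2(\beta\wedge1)}}\to0$ holds. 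Your additional sketch of the internals of Lemma~\ref{prop: secondorder} is a reasonable paraphrase (the paper's proof controls the residual via $Y^x_s$ rather than $Z^{\e,x}_s$ and makes the $D^2b$-localisation explicit through the event $A^\e_M$), but since you ultimately cite the lemma directly the argument is complete.
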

\begin{proof}
Let $\eta>0$ and recall $\gamma_\e={\Delta_\e}^{-\nicefrac{1}{\alpha}}$ for some $\alpha\in (0,2)$ as 
in Proposition~\ref{tomate}. Then we observe
\begin{align}\label{tomillo}
\PP\left(|z - {\ti z}|>  \nicefrac{\eta\e }{\gamma_\e}\right)
= \PP\big(| X^{\e}_{T^x_\e}(x) - Y^{\e,x}(T^x_\e;0,x)|> \eta \e {\Delta_\e}^{\nicefrac{1}{\alpha}} \big)
\to 0, \quad \textrm{as } \e\to 0.
\end{align}
Since $T^x_\e={O}(|\ln(\e)|)$, Proposition~\ref{prop: secondorder} in Appendix \ref{ap: multiscale} yields
\[
\limsup_{\e \ra 0}\PP(\gamma_\e |z - \ti z|> \eta \e)=0 \quad \textrm{ for any } \eta>0. 
\]
\end{proof}

\bigskip

\subsection{\textbf{Nonlinear short-time coupling (Proposition~\ref{prop: stc})}}\label{sec: short coupling}
\hfill\\

\noindent We keep the notation introduced in Subsection~\ref{subsec: outline}.
Let $\Delta_\e=\e^{\nicefrac{\alpha}{2}}$.
For any $\rho\in \mathbb{R}$, recall that
$T^x_\e=t^x_\e-\Delta_\e+\rho\cdot w^x_\e$, where  $t^x_\e$ and $w^x_\e$  are given in Theorem~\ref{thm: main result}.
We show the following:
\[
\lim\limits_{\e\to 0}
\norm{X^{\e}_{\Delta_\e}(X^{\e}_{T^x_\e}(x))-
Y^{\e,x}(\Delta_\e;T^x_\e, X^{\e}_{T^x_\e}(x))}=0.
\]
\noindent
Recall that $(\varphi^x_t)_{t\gqq 0}$ is the solution of \eqref{dde1.1}. By \eqref{eq: linealhomogenea}
the variation of constants formula yields the explicit representation
\begin{align}\label{eq: representation0}
Y^{\e,x}(t;T^x_\e,z)=(\Phi^\e_t(x))^{-1}z+
(\Phi^\e_t(x))^{-1}
\int_{0}^{\Delta_\e}
\Phi^\e_s(x)\big(Db(\varphi^x_{T^x_\e+s})\varphi^x_{T^x_\e+s}-b(\varphi^{x}_{T^x_\e+s})\big)\ud s+
\e U^{x}_\e,
\end{align}
where $(\Phi^\e_t(x))_{t\gqq 0}$ is the solution of the  matrix valued inhomogeneous differential equation given in \eqref{d:SGInhom} 
and the random vector $U^x_\e$ is defined by \eqref{d:SConvInhom}.
For any $z\in \RR^d$ we consider the unique strong solution $(Z^{\e}_t(z))_{t\gqq 0}$ of
\[
\ud Z^{\e}_t=-Db(0)Z^{\e}_t\ud t+\e \ud L_t\quad \textrm{ with } Z^{\e}_0=z.
\]
The variation of constant formula yields the representation
\[
Z^\e_{\Delta_\e}(X^\e_{T^x_\e}(x)) 
= \Psi_{\Delta_\e}^{-1} X^\e_{T^x_\e}(x) + \e\ti U^x_\e,  \quad \textrm{ where } \quad
\ti U^x_\e = (\Psi_{\Delta_\e}^{-1}) \int_0^{\Delta_\e} \Psi_s \ud L_{T^x_\e +s}
\]
and $\Psi_t = e^{Db(0) t}, t\in \RR$. It is easily seen that $\Psi^{-1}_t = \Psi_{-t}$.   
We start with the estimate 
\begin{align}
 &\norm{X^\e_{\Delta_\e}(X^\e_{T^x_\e}(x)) - Y^{\e,x}(\Delta_\e;T^x_\e,X^\e_{T^x_\e}(x))} \nonumber\\
 &\qquad \lqq \norm{X^\e_{\Delta_\e}(X^\e_{T^x_\e}(x)) - Z^\e_{\Delta_\e}(X^\e_{T^x_\e}(x))}
 + \norm{Z^\e_{\Delta_\e}(X^\e_{T^x_\e}(x)) - Y^{\e,x}(\Delta_\e;T^x_\e,X^\e_{T^x_\e}(x))}\nonumber\\[2mm]
 &\qquad = G_1 +G_2,\label{e:G1G2}
 \end{align}
where 
\begin{align*}
G_1&:=\norm{X^\e_{\Delta_\e}(X^\e_{T^x_\e}(x)) - Z^\e_{\Delta_\e}(X^\e_{T^x_\e}(x))},\\
G_2&:=\norm{Z^\e_{\Delta_\e}(X^\e_{T^x_\e}(x)) - Y^{\e,x}(\Delta_\e;T^x_\e,X^\e_{T^x_\e}(x))}.
\end{align*}

\bigskip 
\subsubsection{\textbf{{Step 1: Domination of the error term $G_2$}}}\label{sss: Step1} \hfill\\

\noindent In this subsubsection  
we estimate the second term on the right-hand side of \eqref{e:G1G2}. 

\begin{lem}\label{lem:G2} $G_2\ra 0$ as $\e \ra 0$. \end{lem}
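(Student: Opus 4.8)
The goal is to show that $G_2=\norm{Z^\e_{\Delta_\e}(X^\e_{T^x_\e}(x)) - Y^{\e,x}(\Delta_\e;T^x_\e,X^\e_{T^x_\e}(x))}\ra 0$. Both processes are (conditionally on the $\fF_{T^x_\e}$-measurable initial data $z = X^\e_{T^x_\e}(x)$) linear inhomogeneous equations driven by $\e\,\ud L_{T^x_\e+\cdot}$; the difference is that $Z^\e$ uses the \emph{frozen} coefficient matrix $Db(0)$ while $Y^{\e,x}$ uses the \emph{time-dependent} matrix $Db(\varphi^x_{T^x_\e+s})$. Since $T^x_\e\to\infty$, we have $\varphi^x_{T^x_\e+s}\to 0$ uniformly for $s\in[0,\Delta_\e]$ (indeed $|\varphi^x_{T^x_\e+s}|\le e^{-\delta T^x_\e}|x|$ by the coercivity bound), so the two coefficient matrices become close, and both $\Delta_\e\to 0$ and the smallness of the drift perturbation should force the two laws together.

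\textbf{Key steps.} First I would use the scale and (deterministic, conditional) shift invariance of the total variation distance (Lemma~A.2 of \cite{BP}) to reduce $G_2$, just as in the proof of Proposition~\ref{prop: lic}: disintegrating over the law $\PP(X^\e_{T^x_\e}(x)\in\ud z)$ and using that $\ti U^x_\e$ and $U^x_\e$ are independent of $z$, one bounds $G_2$ by $\sup_{|z|\le K}$ of a total variation distance between $\e\ti U^x_\e$ shifted by a deterministic vector and $\e U^x_\e$ shifted by another deterministic vector, plus a tail term $\PP(|X^\e_{T^x_\e}(x)|>K)$ which is handled by Theorem~\ref{ergodicitytheorem} (or the a priori moment bound) and letting $K\to\infty$. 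Second, I would rescale: by the same argument as in Proposition~\ref{tomate}, $\gamma_\e \e^{-1}$ times the relevant increments are of order one, so it suffices to show that the rescaled versions $\gamma_\e U^x_\e + a^x_\e$ and $\gamma_\e\ti U^x_\e + \ti a^x_\e$ both converge in total variation to the \emph{same} symmetric $\alpha$-stable law $U$ with spectral density $\Lambda_1$. For $\gamma_\e U^x_\e + a^x_\e$ this is exactly Proposition~\ref{tomate}. For $\gamma_\e\ti U^x_\e + \ti a^x_\e$, since $\Psi_s = e^{Db(0)s}$ is just the constant-coefficient special case ($\varphi\equiv 0$), the very same proof of Proposition~\ref{tomate} applies verbatim (the estimate \eqref{eq: J2} only used $|\varphi^x_t|\le|x|$ and the bound on $\Phi^\e$, both of which hold with $\Phi$ replaced by $\Psi$ and with $\varphi\equiv 0$), giving convergence to the same $U$. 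Third, I would control the difference of the deterministic centering vectors $a^x_\e - \ti a^x_\e$ after multiplication by $\gamma_\e/\e$: since $a^x_\e$ and $\ti a^x_\e$ differ only through $\Phi^\e$ versus $\Psi$ and through $Db(\varphi^x_{T^x_\e+s})$ versus $Db(0)$, and since $|\Phi^\e_s(x)-\Psi_s|$ and $|Db(\varphi^x_{T^x_\e+s})-Db(0)|$ are $o(1)$ uniformly on $[0,\Delta_\e]$ (using the exponential decay of $\varphi^x$ together with $T^x_\e = O(|\ln\e|)$ and $\Delta_\e=\e^{\alpha/2}$), the centering vectors coincide in the limit; the resulting deterministic shift then vanishes by $L^1$-continuity of translation. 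Combining, both laws converge in total variation to $U$, hence their total variation distance, and therefore $G_2$, tends to $0$.

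\textbf{Main obstacle.} The delicate point is not the abstract structure but the quantitative bookkeeping in the third step: one must verify that after multiplying by the large factor $\gamma_\e/\e = \Delta_\e^{-1/\alpha}\e^{-1} = \e^{-1/2-1}$ the discrepancy between the two centering/convolution structures still vanishes. Concretely one needs $\gamma_\e\cdot|{\e}^{-1}(\text{difference of stochastic convolutions with } \Phi^\e \text{ vs } \Psi)|\to 0$ in probability, which requires that $|\Phi^\e_s(x)-\Psi_s|$ and $|Db(\varphi^x_{T^x_\e+s})-Db(0)|$ decay fast enough — faster than $\gamma_\e^{-1}=\Delta_\e^{1/\alpha}$ can blow up — on $[0,\Delta_\e]$. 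This follows because $|\varphi^x_{T^x_\e+s}| \le e^{-\delta T^x_\e}|x| = O(\e^{\delta/\lambda_x}\cdot\mathrm{polylog})$, which is polynomially small in $\e$, beating any fixed power of $\Delta_\e$; the Grönwall-type estimates from Appendix~\ref{A} make $|\Phi^\e_s(x)-\Psi_s|$ inherit this smallness. Assembling these estimates carefully — ideally by invoking the quantitative matrix-flow comparison results of Appendix~\ref{A} and the same Skorokhod-topology/continuous-mapping argument used in Proposition~\ref{tomate} — is the crux, but it is routine given those inputs.
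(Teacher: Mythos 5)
Your overall decomposition matches the paper's: you disintegrate over the law of $X^\e_{T^x_\e}(x)$, invoke Proposition~\ref{tomate} once for the $\Phi$-flow (giving convergence of $\gamma_\e U^x_\e + a^x_\e$ to $U$) and once for the constant-coefficient $\Psi$-flow (giving convergence of $\gamma_\e\ti U^x_\e + a^0_\e$ to $U$), then control the deterministic residuals (difference of centering vectors, $z$-dependent matrix shift, and $\Gamma^x_\e$) using the matrix-flow comparison from Appendix~\ref{A}. This is exactly how the paper organizes the proof (the pivotal terms $B^\e_0,\dots,B^\e_4$).

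There is, however, a genuine quantitative gap in your \textbf{Main obstacle} paragraph. You invoke only the coercivity decay $|\varphi^x_{T^x_\e+s}|\lqq e^{-\delta T^x_\e}|x| = O(\e^{\delta/\lambda_x}\cdot\mathrm{polylog})$ and assert that this ``beats any fixed power of $\Delta_\e$.'' That assertion is false: $\Delta_\e=\e^{\alpha/2}$ is itself a fixed power of $\e$, so $\e^{\delta/\lambda_x}$ competes with $\gamma_\e=\Delta_\e^{-1/\alpha}=\e^{-1/2}$ on equal footing, and since $\delta/\lambda_x\lqq 1$ can well be $\lqq 1/2$, the coarse bound is not guaranteed to dominate. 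Concretely, the $z$-dependent shift contributes $\sim \e^{\delta/\lambda_x-1}\Delta_\e^{1-1/\alpha}K = \e^{\delta/\lambda_x+\alpha/2-3/2}K$, whose exponent is not positive for all $\alpha\in(\nicefrac32,2)$ and all $\delta/\lambda_x\in(0,1]$ if $K$ is held fixed; and you compound the problem by proposing to send $K\to\infty$, which is the wrong direction — the supremum term over $|z|\lqq K$ only gets worse. The paper's proof closes for exactly the reasons you are missing: it replaces the coercivity bound by the Hartman--Grobman asymptotics of Lemma~\ref{lem:ordenepsilon}, which gives the sharp order $|\varphi^x_{T^x_\e}|\lqq C(|x|,\rho)\,\e$ (not merely $\e^{\delta/\lambda_x}$); and it truncates at a shrinking radius $r_\e=\e^{1-\vt}$, $\vt\in(0,\nicefrac14)$, whose tail $\PP(|X^\e_{T^x_\e}(x)|>r_\e)$ still vanishes by the $O(\e^\gamma)$ moment bound (Corollary~\ref{cor: demomento}), not by ergodicity. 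With those two inputs the exponents close cleanly; with your inputs they do not.
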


\begin{proof}[Proof of Lemma~\ref{lem:G2}:] Let 
\begin{equation}\label{def:Gamma}
\Gamma^x_{\e}:=
(\Phi^{\e}_{\Delta_\e}(x))^{-1}\int_{0}^{\Delta_\e}
\Phi^{\e}_s(x)\big(
b(\varphi^{x}_{T^x_\e+s})-Db(\varphi^{x}_{T^x_\e+s}) \varphi^x_{T^x_\e+s} \big)\ud s.
\end{equation}
By disintegration combined with the translation and scale invariance of the total variation distance, we obtain 
\begin{align*}
&G_2 =\norm{\Big(\Psi_{\Delta_\e}^{-1} X^\e_{T^x_\e}(x) + \e \ti U^x_\e \Big) - 
  \Big((\Phi^{\e}_{\Delta_\e}(x))^{-1}X^\e_{T^x_\e}(x)-\Gamma^x_{\e} + \e U^x_\e \Big)}\\
&\qquad\lqq \int_{\RR^d } \norm{\Big(\Psi_{\Delta_\e}^{-1} z+ \e \ti U^x_\e \Big) - 
  \Big((\Phi^{\e}_{\Delta_\e}(x))^{-1}z-\Gamma^x_{\e} + \e U^x_\e \Big)} \PP^x_\e(\ud z)\\
&\qquad=\int_{\RR^d} \norm{\bigg(
\frac{\big(\Psi_{\Delta_\e}^{-1}-(\Phi^{\e}_{\Delta_\e}(x))^{-1}\big) z+\Gamma^x_{\e}}{\e}+ \ti U^x_\e \bigg) - 
  U^x_\e} \PP^x_\e(\ud z), 
  \end{align*}
where $\PP^x_\e(\ud z):= \PP(X^\e_{T^x_\e}(x) \in \ud z)$. 
By Proposition~\ref{tomate} 
there exists a random variable 
$U\stackrel{d}= \mathcal{S}_\alpha(\Lambda_1)$ and the deterministic vector  
$a^x_\e\in \RR^d$  defined in \eqref{vectortomate} such that
\begin{equation}\label{driftsintilde}
\norm{(\Delta_\e^{-\nicefrac{1}{\alpha}} U^x_\e+a^x_{\e})- U}\to 0,\quad
\textrm{ as } \e \ra 0.
\end{equation}
Repeating the same argument of  Proposition~\ref{tomate}, we have that
there exists a random variable $\tilde{U}\stackrel{d}= \mathcal{S}_\alpha(\Lambda_1)$ and the deterministic vector  
$a^0_\e\in \RR^d$  given by
\[
a^0_\e=\Delta^{1-\nicefrac{1}{\alpha}}_{\e}\eta_{\alpha,\beta}-\fb_{\alpha,\beta}-
\gamma_\e \Big(\eta_{\alpha,\beta}-\frac{\fb_{\alpha,\beta}}{\Delta^{1-\nicefrac{1}{\alpha}}_\e}\Big)(\Psi_{\Delta_\e})^{-1}
\int_0^{\Delta_\e}\Psi_sDb(0) 
s \ud s
\]
 such that
\begin{equation}\label{drifttilde}
\norm{(\Delta_\e^{-\nicefrac{1}{\alpha}} \tilde{U}^x_\e+a^0_{\e})- \tilde{U}}\to 0,\quad
\textrm{ as } \e \ra 0.
\end{equation} 
We define the deterministic function
\begin{equation}\label{fg}
g^{x}_\e (z)=\frac{
\left(
(\Psi_{\Delta_\e})^{-1}
-(\Phi^{\e}_{\Delta_\e}(x))^{-1}
\right) z+\Gamma^x_{\e}}
{\e},\qquad z \in \RR^d
\end{equation}
and the pivotal terms
\begin{align*}
&B^{\e}_0(z):=\norm{\Big(g^x_{\e}(z)+\ti U^x_\e\Big) - 
  U^x_\e },\\
&B^{\e}_1(z):=\norm{\Big(\Delta_\e^{-\nicefrac{1}{\alpha}}g^x_{\e}(z)+\Delta_\e^{-\nicefrac{1}{\alpha}}\ti U^x_\e+a^0_\e\Big) - 
  \Big(\Delta_\e^{-\nicefrac{1}{\alpha}}U^x_\e+ a^0_\e\Big)},\\ 
&B^{\e}_2(z):= \norm{\Big(\Delta_\e^{-\nicefrac{1}{\alpha}}g^x_{\e}(z)+\Delta_\e^{-\nicefrac{1}{\alpha}}\ti U^x_\e+a^0_\e\Big) -
\Big(\Delta_\e^{-\nicefrac{1}{\alpha}}g^x_{\e}(z)+ \ti U\Big)},\\  
&B^{\e}_3(z):= \norm{
\Big(\Delta_\e^{-\nicefrac{1}{\alpha}}g^x_{\e}(z)+ \ti U\Big)
-U
},\\  
&B^{\e}_4(z):= \norm{U-
\Big(\Delta_\e^{-\nicefrac{1}{\alpha}}U^x_\e+a^0_\e\Big)}.
\end{align*}
The scale and shift invariance of the total variation distance combined with the triangle inequality yield
\begin{align}\label{e:pivpiv}
B^{\e}_0(z)=B^{\e}_1(z)\lqq B^{\e}_2(z)+B^{\e}_3(z)+B^{\e}_4(z).
\end{align}
\noindent
\textbf{Estimate of $B^{\e}_2(z)$ in \eqref{e:pivpiv}.}
By the cancellation property of independent increments in the total variation distance we have
\[
B^{\e}_2(z)\lqq  \norm{\Big(\Delta_\e^{-\nicefrac{1}{\alpha}}\ti U^x_\e+a^0_\e\Big) - \ti U^x}\to 0,\quad \textrm{ as } \e\to 0,
\]
due to \eqref{drifttilde}.
As a consequence, we have
\[
\int_{\RR^d} B^{\e}_2(z)\PP^x_{\e}(\ud z)\lqq \norm{\Big(\Delta_\e^{-\nicefrac{1}{\alpha}}\ti U_\e+a^0_\e\Big) - \ti U}\to 0,\quad \textrm{ as } \e\to 0.
\]
\noindent
\textbf{Estimate of $B^{\e}_4(z)$ in \eqref{e:pivpiv}.}
Analogously to $B^\e_2(z)$ we have
\begin{align*}
B^{\e}_4(z)&=  \norm{\Big(\Delta_\e^{-\nicefrac{1}{\alpha}}U^x_\e+ a^0_\e\Big) -  U}=
\norm{\Big(\Delta_\e^{-\nicefrac{1}{\alpha}}U^x_\e+a^x_\e+a^0_\e\Big) - \Big(U+a^x_\e\Big)}\\
&\lqq \norm{\Big(\Delta_\e^{-\nicefrac{1}{\alpha}}U^x_\e+a^x_\e+a^0_\e\Big) - \Big(U+a^0_\e\Big)}+
\norm{\Big(U+ a^0_\e\Big)-\Big(U+a^x_\e\Big)}\\
&= \norm{\Big(\Delta_\e^{-\nicefrac{1}{\alpha}}U^x_\e+a^x_\e\Big) -U}+
\norm{\Big(a^0_\e-a^x_\e+U\Big)-U}.
\end{align*}
Due to \eqref{driftsintilde} we obtain 
\[
\norm{\Big(\Delta_\e^{-\nicefrac{1}{\alpha}}U^x_\e+a_\e^x\Big) -U}\to 0,\quad \textrm{ as  } \e \to 0.
\]
Proposition~\ref{tomate} yields
\begin{align*}
&a^x_\e-a^0_\e\\
&=
-\Delta_\e^{-\nicefrac{1}{\alpha}}
\Big(\eta_{\alpha,\beta}-\frac{1}{\Delta^{1-\nicefrac{1}{\alpha}}_\e}\fb_{\alpha,\beta}\Big)
\int_0^{\Delta_\e} s
\Big((\Phi^{\e}_{\Delta_\e}(x))^{-1}\Phi^{\e}_s(x) Db(\varphi^x_{T^x_{\e}+s})-(\Psi_{\Delta_\e})^{-1}\Psi_s Db(0)\Big)\ud s,
\end{align*}
such that    
\begin{equation}\label{est:desi}
\begin{split}
&|a_\e-a^0_\e|\\
&\lqq \Delta_\e^{1-\nicefrac{1}{\alpha}}
\Big |\eta_{\alpha,\beta}-\frac{1}{\Delta^{1-\nicefrac{1}{\alpha}}_\e}\fb_{\alpha,\beta}\Big|
\int_0^{\Delta_\e}
\Big|(\Phi^{\e}_{\Delta_\e}(x))^{-1}\Phi^{\e}_s(x) Db(\varphi^x_{T^x_{\e}+s})-(\Psi_{\Delta_\e})^{-1}\Psi_s Db(0)\Big| \ud s \\
& \lqq \Delta_\e^{1-\nicefrac{1}{\alpha}}
\Big |\eta_{\alpha,\beta}-\frac{1}{\Delta^{1-\nicefrac{1}{\alpha}}_\e}\fb_{\alpha,\beta}\Big|\int_0^{\Delta_\e}
\Big|(\Phi^{\e}_{\Delta_\e}(x))^{-1}\Phi^{\e}_s(x)\Big| \Big|Db(\varphi^x_{T^x_{\e}+s})-Db(0)\Big| \ud s\\
&\quad+\Delta_\e^{1-\nicefrac{1}{\alpha}}
\Big |\eta_{\alpha,\beta}-\frac{1}{\Delta^{1-\nicefrac{1}{\alpha}}_\e}\fb_{\alpha,\beta}\Big||Db(0)|
\int_0^{\Delta_\e}
\Big|(\Phi^{\e}_{\Delta_\e}(x))^{-1}\Phi^{\e}_s(x) -(\Psi_{\Delta_\e})^{-1}\Psi_s\Big| \ud s.
\end{split}
\end{equation}
We start with the estimate of the first term on the right-hand side. 
By Lemma~\ref{lem:ordenepsilon} in Appendix~\ref{A} there exists a positive constant $C(|x|)$ depending continuously on $|x|$
such that
\begin{equation}\label{eq: epsilonC}
|\varphi^x_{T^x_\e}|\lqq C(|x|)\e\quad \textrm{ for all } \e\ll 1.
\end{equation}
With the help of inequality \eqref{cotacontractiva} in Lemma~\ref{lem: cotainfsup} in Appendix~\ref{A}, the mean value theorem and the fact that 
$|\varphi^{x}_t|\lqq |x|$, $t\gqq 0$, we have
\begin{align*}
&\Delta_\e^{1-\nicefrac{1}{\alpha}}
\left |\eta_{\alpha,\beta}-\frac{\fb_{\alpha,\beta}}{\Delta^{1-\nicefrac{1}{\alpha}}_\e}\right|\int_0^{\Delta_\e}
\left|(\Phi^{\e}_{\Delta_\e}(x))^{-1}\Phi^{\e}_s(x)\right| \left|Db(\varphi^x_{T^x_{\e}+s})-Db(0)\right| \ud s\\
&\hspace{1cm} \lqq \sqrt{d}\Delta_\e^{1-\nicefrac{1}{\alpha}}
\left |\eta_{\alpha,\beta}-\frac{\fb_{\alpha,\beta}}{\Delta^{1-\nicefrac{1}{\alpha}}_\e}\right|
\int_0^{\Delta_\e} \left|Db(\varphi^x_{T^x_{\e}+s})-Db(0)\right| \ud s\\
&\hspace{1cm}\lqq 
C(|x|,d)\e \Delta_\e^{2-\nicefrac{1}{\alpha}}
\left |\eta_{\alpha,\beta}-\frac{\fb_{\alpha,\beta}}{\Delta^{1-\nicefrac{1}{\alpha}}_\e}\right| 
\\
&\hspace{1cm}\lqq 
C(|x|,d)\e \Delta_\e^{2-\nicefrac{1}{\alpha}}
|\eta_{\alpha,\beta}|+C(|x|,d)\e \Delta_\e|\fb_{\alpha,\beta}|.
\end{align*}
Since $\Delta_\e=\e^{\al/2}$, both preceding terms on the right-hand side tend to zero as $\e \to 0$.

\noindent
We continue with the second term on the right-hand side of \eqref{est:desi}. 
By Lemma~\ref{lem: cotainfsup}.v) in Appendix~\ref{A} we have  for $\e$ sufficiently small that
\begin{align*}
|\Phi^{-1}_{\Delta_\e}(x)\Phi_s(x) -\Psi^{-1}_{\Delta_\e}\Psi_s |& \lqq 
\frac{C_1(|x|) d^2}{2\delta}
|\varphi^x_{T^x_\e}|
e^{-\frac{\delta}{2}{\Delta_\e}}\sqrt{1-e^{-4\delta ({\Delta_\e}-s)}}\\
&\lqq 
C_1(|x|)\e,
\quad s\in [0,\Delta_\e],
\end{align*}
where $C_1(|x|)$ is a constant that depends continuously on $|x|$. 
Then for small values of $\Delta_\e$ we have
\begin{align*}
&\Delta_\e^{1-\nicefrac{1}{\alpha}}
\left |\eta_{\alpha,\beta}-\frac{\fb_{\alpha,\beta}}{\Delta^{1-\nicefrac{1}{\alpha}}_\e}\right||Db(0)|
\int_0^{\Delta_\e}
\left|(\Phi^{\e}_{\Delta_\e})^{-1}\Phi^{\e}_s -(\Psi_{\Delta_\e})^{-1}\Psi_s\right| \ud s\\
&\hspace{1cm}\lqq |Db(0)|C_1(|x|)\e\Delta_\e^{2-\nicefrac{1}{\alpha}}
\left |\eta_{\alpha,\beta}-\frac{\fb_{\alpha,\beta}}{\Delta^{1-\nicefrac{1}{\alpha}}_\e}\right|\\
&\hspace{1cm}\lqq |Db(0)|C_1(|x|)\e\Delta_\e^{2-\nicefrac{1}{\alpha}}|\eta_{\alpha,\beta}|+
|Db(0)|C_1(|x|)\e\Delta_\e|\fb_{\alpha,\beta}|.
\end{align*}
Since $\Delta_\e=\e^{\al/2}$, we have  $|a_\e-a^0_\e|\to 0$, as $\e \to 0$ and consequently by the Scheff\'e lemma for densities we obtain
\[
\norm{\big(a_\e-a^0_\e+U\big)-U}\to 0, \quad \textrm{ as } \e\to 0.
\]
With the same reasoning we get 
\[
\int_{\RR^d} B^{\e}_4(z)\PP^x_{\e}(\ud z)\to 0,\quad \textrm{ as } \e\to 0.
\]
\noindent
\textbf{Estimate of $B^{\e}_3(z)$ in \eqref{e:pivpiv}.}
The remainder of Step 1 is dedicated to show that
\[
\int_{\RR^d} B^{\e}_3(z)\PP^x_{\e}(\ud z)\to 0,\quad \textrm{ as } \e\to 0.
\]
For $\vartheta\in (0,\nicefrac{1}{4})$ we define $r_\e:=\e^{1-\vt}$ and estimate
\[
\int_{\RR^d} B^\e_3(z)\PP^x_{\e}(\ud z)\lqq \int_{|z|\lqq r_\e} B^\e_3(z)\PP^x_{\e}(\ud z)+ \mathbb{P}\big(|X^{\e}_{T^x_\e}(x)|> r_\e\big).
\]
By Lemma~\ref{cor: demomento} in Appendix~\ref{Appendix D} we have for the second term 
\[
\mathbb{P}\big(|X^{\e}_{T^x_\e}(x)|\gqq r_\e\big)\to 0, \quad
\textrm{ as } \e\to 0.
\]
We continue with the first term of the right-hand side of the preceding inequality. 
Recall that
\begin{align*}
B^{\e}_3(z)&= \norm{
\big(\Delta^{-\nicefrac{1}{\alpha}}_\e g^x_{\e}(z)+ \ti U\big)
-U},
\end{align*}
where $U$ and $\ti U$ are $\mathcal{S}_\alpha(\Lambda_1)$ distributed, and 
\[
g^x_{\e}(z)=\frac{\left((\Psi_{\Delta_\e})^{-1}-(\Phi^{\e}_{\Delta_\e}(x))^{-1}\right) z+\Gamma^x_{\e}}{\e},\qquad z\in \mathbb{R}^d,
\]
where $\Gamma^x_{\e}$ was defined in \eqref{def:Gamma}.
By Lemma~\ref{lem: cotainfsup}.v) in Appendix~\ref{A} there exists a positive constant $C(|x|)$ depending continuously on $|x|$ such that 
\[
|(\Phi^{\e}_{\Delta_\e}(x))^{-1} -\Psi^{-1}_{\Delta_\e}|\lqq 
\frac{C(|x|) d^2}{2\delta}
|\varphi^x_{T^x_\e}|
e^{-\frac{\delta}{2} \Delta_\e}(1-e^{-4\delta \Delta_\e}).
\]
The preceding inequality combined with inequality \eqref{eq: epsilonC} 
yields for $\e$ sufficiently small
\[
|(\Phi^{\e}_{\Delta_\e}(x))^{-1}-(\Psi_{\Delta_\e})^{-1}|\lqq C_1(|x|)\e\Delta_\e.
\]
Therefore,
\begin{align}\label{eq:estim1}
&\sup_{|z|\lqq r_\e}\frac{\Delta^{-\nicefrac{1}{\alpha}}_\e
|(\Phi^{\e}_{\Delta_\e}(x))^{-1}z-(\Psi_{\Delta_\e})^{-1}z|}{\e}\nonumber\\
&\qquad\lqq 
\sup_{|z|\lqq r_\e}\frac{\Delta^{-\nicefrac{1}{\alpha}}_\e
|(\Phi^{\e}_{\Delta_\e}(x))^{-1}-(\Psi_{\Delta_\e})^{-1}|
|z|}{\e}\lqq C_1(|x|)\Delta^{1-\nicefrac{1}{\alpha}}_\e r_\e.
\end{align}
It remains to estimate
\[
\begin{split}
&|\Gamma^x_\e|=\Big|
\int_{0}^{\Delta_\e}
\big[(\Phi^{\e}_{\Delta_\e}(x))^{-1}\Phi^{\e}_s(x)-(\Psi_{\Delta_\e})^{-1}\Psi_s\big]\big[
b(\varphi^{x}_{T^x_\e+s})-Db(\varphi^{x}_{T^x_\e+s}) \varphi^x_{T^x_\e+s} \big]\ud s 
\Big|
\\
& \lqq C(|x|,d)\int_{0}^{\Delta_\e}
\big|
b(\varphi^{x}_{T^x_\e+s})-Db(\varphi^{x}_{T^x_\e+s}) \varphi^x_{T^x_\e+s} \big|\ud s \lqq C_1(|x|,d)\int_{0}^{\Delta_\e}
\big|\varphi^x_{T^x_\e+s} \big|^2\ud s\lqq C\e^2 \Delta_\e,
\end{split}
\]
where the last inequality follows from Lemma~\ref{lem:ordenepsilon} in Appendix~\ref{A}.
As a consequence we have
\begin{equation}\label{eq:estim2}
\frac{\Delta^{-\nicefrac{1}{\alpha}}_\e |\Gamma^x_\e|}{\e}\lqq C_1(|x|,d)\e\Delta^{1-\nicefrac{1}{\alpha}}_\e.
\end{equation}
Finally we estimate 
\[
\int_{|z|\lqq r_\e} B^\e_3(z)\mathbb{P}^x_{\e}(\ud z)\lqq 
\sup_{|z|\lqq r_\e}B^\e_3(z).
\]
The  continuity of the shift operator in $L^{1}$ and the compactness of the Euclidean closed ball imply
\[
\sup_{|z|\lqq r_\e}B^\e_3(z)=B^{\e}_{3}(z_\e)\quad  \textrm{ for some } |z_\e| \lqq r_\e.
\]
Since $\Delta_\e=\e^{\al/2}$, the preceding inequality combined with estimates \eqref{eq:estim1} and
\eqref{eq:estim2} yields 
\[
\int_{|z|\lqq r_\e} B^\e_3(z)\mathbb{P}^x_{\e}(\ud z)\to 0, \quad \textrm{ as }
\e\to 0.
\]
This finishes the proof of Lemma~\ref{lem:G2}. 
\end{proof}

\bigskip
\subsubsection{\textbf{Step 2: Domination of the error term $G_1$ up to a term in distribution}}\hfill\\

In the sequel we treat the error term $G_1$ in two consecutive steps (Step 2 and Step 3). 
By the end of Step 3 (Subsubsection \ref{ss:Step3}) we obtain the desired result $G_1 \ra 0$ as $\e \ra 0$ by a suitable localization procedure combined with the Fourier inversion technique applied to the result of Step 2. 

\noindent First note that by disintegration we have 
\begin{equation} \label{eq: pivot disintegration}
\begin{split}
G_1&=\norm{X^{\e}_{\Delta_\e}(X^{\e}_{T^x_\e}(x))-Z^{\e}_{\Delta_\e}(X^{\e}_{T^x_\e}(x))}\lqq 
\int_{\RR^{d}}
\norm{X^{\e}_{\Delta_\e}(z)-Z^{\e}_{\Delta_\e}(z) }\PP^x_{\e}(\ud z)\\
&=
\int_{|z|\lqq r_\e}
\norm{X^{\e}_{\Delta_\e}(z)-Z^{\e}_{\Delta_\e}(z) }\PP^x_{\e}(\ud z)+
\int_{|z|>r_\e}
\norm{X^{\e}_{\Delta_\e}(z)-Z^{\e}_{\Delta_\e}(z) }\PP^x_{\e}(\ud z)\\
&\lqq 
\sup_{|z|\lqq r_\e}
\norm{X^{\e}_{\Delta_\e}(z)-Z^{\e}_{\Delta_\e}(z) }+
\PP(|X^{\e}_{T^x_\e}(x)|\gqq r_\e),
\end{split}
\end{equation}
where 
$\PP^x_{\e}(\ud z)=\PP(X^{\e}_{T^x_\e}(x)
\in \ud z)$ and $r_\e=\e^{1-\vt}$, $\vartheta\in (0,\nicefrac{1}{4})$. 
By Lemma~\ref{cor: demomento} in Appendix~\ref{Appendix D} we have 
\begin{equation}\label{e:secondorder}
\PP(|X^{\e}_{T^x_\e}(x)|\gqq r_\e)\to 0,\quad \textrm{ as } \e\to 0.
 \end{equation}
 It remains to treat the first term on the right-hand side of \eqref{eq: pivot disintegration}. 
 The variation of constant formula yields
\begin{align}
X^{\e}_{t}(z)&=\Psi^{-1}_{t}z+\Psi^{-1}_{t}\int_{0}^{t}\Psi_{s}\ti {b}(X^{\e}_s(z))\ud s +\e
\Psi^{-1}_{t}\int_{0}^{t}\Psi_{s}\ud L_s,\label{e:VOCX}
\end{align}
and 
\begin{align}\label{e:VOCZ}
Z^{\e}_{t}(z)=\Psi^{-1}_t z+\Psi^{-1}_t\int_{0}^{t}\Psi_{s}\ud L_s,
\end{align}
where $\ti b(x)=b(x)-Db(0)x$, $x\in \RR^d$ and $\Psi_t = e^{Db(0) t}$. 
We denote 
\[
U_{\e} := \Psi^{-1}_{\Delta_\e} \int_{0}^{\Delta_\e}\Psi_s \ud  L_s
\quad \textrm{ and } \quad
D_{\e}(z) :=\Psi^{-1}_{\Delta_\e}\int_{0}^{\Delta_\e}\Psi_s \ti b( X^\e_s(z))\ud s,
\]
such that 
\begin{align*}
\frac{X^\e_{\Delta_\e}(z) - Z^\e_{\Delta_\e}(z)}{\e \Delta^{1/\alpha}_{\e}} = \Big(\frac{D_{\e}(z)}{\e \Delta^{1/\alpha}_{\e}}+\frac{1}{\Delta^{1/\al}_\e}U_{\e}+a_\e^0 -U\Big)-\Big(\frac{1}{\Delta^{1/\alpha}_{\e}}U_{\e}+a_\e^0 -U \Big),
\end{align*}
where $a_\e^0$ is given in Proposition~\ref{tomate} 
and $U$ is $\mathcal{S}_\alpha(\Lambda_1)$-distributed.
By Proposition~\ref{tomate} we have
\begin{equation}\label{e:termterm2}
\norm{U-\Big(\frac{1}{\Delta^{1/\alpha}_{\e}}U_{\e}+a_{\e}^0\Big)}\to 0,\quad \textrm{ as } \e\to 0.
\end{equation}
In this subsection we show the following. 
\begin{lem}\label{lem:indistri}
Assume Hypotheses~\ref{hyp: potential},
~\ref{hyp: moment condition}, 
~\ref{hyp: regularity} and ~\ref{hyp: blumental} are satisfied for $\alpha \in(0,2)$ and $\beta>0$.
Then 
\begin{equation}\label{eq:indistri}
\frac{D_{\e}(z)}{\e \Delta^{1/\alpha}_{\e}}+\frac{1}{\Delta^{1/\al}_\e}U_{\e}+a_\e^0 \ra U, \qquad \mbox{ as } \e \ra 0. 
\end{equation}
\end{lem}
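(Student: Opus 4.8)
The plan is to reduce \eqref{eq:indistri} by Slutsky's lemma to the negligibility of the nonlinear correction $D_\e(z)$ after the rescaling by $\e\Delta_\e^{1/\al}=\e^{3/2}$. Indeed, the variation of constants formulas \eqref{e:VOCX} and \eqref{e:VOCZ} give the exact identity $X^\e_{\Delta_\e}(z)-Z^\e_{\Delta_\e}(z)=D_\e(z)$, so the left-hand side of \eqref{eq:indistri} equals $\frac{D_\e(z)}{\e\Delta_\e^{1/\al}}+\bigl(\frac{1}{\Delta_\e^{1/\al}}U_\e+a^0_\e\bigr)$. The bracketed term converges to $U$ in total variation distance, hence in distribution, by \eqref{e:termterm2}, which is Proposition~\ref{tomate} applied with $x=0$. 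Therefore it suffices to prove that $\frac{D_\e(z)}{\e\Delta_\e^{1/\al}}\lra 0$ in probability as $\e\to0$, in the localization regime $|z|\lqq r_\e=\e^{1-\vt}$, $\vt\in(0,\nicefrac{1}{4})$, that is actually used in \eqref{eq: pivot disintegration}.

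For the key estimate I would first bound $D_\e(z)$ pathwise. Since $\Delta_\e\to0$, the matrix factors $\Psi_{\Delta_\e}^{-1}\Psi_s=e^{-Db(0)(\Delta_\e-s)}$ are uniformly bounded for $s\in[0,\Delta_\e]$ and $\e$ small. Because $b\in\cC^2$ with $b(0)=0$, the field $\ti b(x)=b(x)-Db(0)x$ satisfies $\ti b(0)=0$ and $D\ti b(0)=0$, so Taylor's theorem provides a constant $L_K$ with $|\ti b(w)|\lqq L_K|w|^2$ whenever $|w|\lqq K$. Consequently, on the event $\{\sup_{s\in[0,\Delta_\e]}|X^\e_s(z)|\lqq K\}$,
\[
|D_\e(z)|\lqq C\,\Delta_\e\,\sup_{s\in[0,\Delta_\e]}|X^\e_s(z)|^2 .
\]

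It then remains to control $\sup_{s\in[0,\Delta_\e]}|X^\e_s(z)|$ for $|z|\lqq r_\e$. Writing $X^\e_s(z)=z-\int_0^s b(X^\e_u(z))\,\ud u+\e L_s$, using the local Lipschitz property of $b$ near $0$ and a Gronwall argument over the short interval $[0,\Delta_\e]$, one obtains, with probability tending to one,
\[
\sup_{s\in[0,\Delta_\e]}|X^\e_s(z)|\lqq 2\Bigl(|z|+\e\sup_{s\in[0,\Delta_\e]}|L_s|\Bigr).
\]
Since $L$ is c\`adl\`ag with $L_0=0$ and $\Delta_\e\to0$, the quantity $\e\sup_{s\in[0,\Delta_\e]}|L_s|$ is of smaller order than $\e$ in probability, hence negligible compared to $r_\e=\e^{1-\vt}$; together with $|z|\lqq\e^{1-\vt}$ this yields $\sup_{s\in[0,\Delta_\e]}|X^\e_s(z)|=O(\e^{1-\vt})$ in probability. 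Plugging this into the previous bound gives $|D_\e(z)|=O\bigl(\Delta_\e\,\e^{2(1-\vt)}\bigr)=O\bigl(\e^{\al/2+2-2\vt}\bigr)$ in probability, and dividing by $\e\Delta_\e^{1/\al}=\e^{3/2}$ leaves $O\bigl(\e^{\al/2+1/2-2\vt}\bigr)$; the exponent $\al/2+1/2-2\vt$ is strictly positive for every $\al\in(0,2)$ and $\vt<\nicefrac{1}{4}$, so $D_\e(z)/(\e\Delta_\e^{1/\al})\to0$ in probability and \eqref{eq:indistri} follows.

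The delicate point, and the reason one rescales by $\e\Delta_\e^{1/\al}$ rather than by $\e$ alone, is obtaining the a priori control of $\sup_{s\in[0,\Delta_\e]}|X^\e_s(z)|$ at the correct order: the crude bound that this supremum merely stays bounded yields only $|D_\e(z)|=O(\Delta_\e)=O(\e^{\al/2})$, which is not $o(\e^{3/2})$ when $\al<3$. It is the smallness $|z|\lqq r_\e$ coming from the cutoff via \eqref{e:secondorder} (Lemma~\ref{cor: demomento}) together with the quadratic gain in the Taylor estimate for $\ti b$ that makes the argument work; without the smallness of $z$ — for a generic fixed starting point $z\neq0$ with $\ti b(z)\neq0$ — the displayed convergence does not hold, which is precisely why the choice $\Delta_\e=\e^{\al/2}$ and the localization radius $r_\e$ must be used in tandem.
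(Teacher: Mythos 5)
Your proposal is correct and shares its top-level skeleton with the paper's proof: reduce via \eqref{e:termterm2} and Slutsky to the negligibility in probability of $D_\e(z)/(\e\Delta_\e^{1/\al})$, exploit the quadratic Taylor bound $|\ti b(w)|\lqq C|w|^2$ (which encodes $\ti b(0)=0$, $D\ti b(0)=0$), and reduce everything to a control of the running supremum $\sup_{s\lqq\Delta_\e}|X^\e_s(z)|$ at order $r_\e=\e^{1-\vt}$. Your identification of why the rescaling must be $\e\Delta_\e^{1/\al}$ rather than $\e$, and of the role of the localization radius $r_\e$, is exactly right and mirrors the paper's computation.

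Where you diverge is in the control of the running supremum, and here the two proofs differ genuinely. You invoke a Gr\"onwall argument on the integral equation, which requires the local Lipschitz property of $b$ near $0$, and close the estimate with the soft observation that $\sup_{s\lqq\Delta_\e}|L_s|\to 0$ a.s.\ by c\`adl\`ag regularity of $L$ at $0$. This is elementary and suffices for the qualitative conclusion, but note two caveats: (i) $b$ is only locally Lipschitz, so the Gr\"onwall step needs a stopping-time bootstrap to ensure the solution stays inside the region where the Lipschitz constant is controlled, and the presence of jumps means one has to argue separately that the first exit is not triggered by a jump from a small state; (ii) the soft convergence $\sup_{s\lqq\Delta_\e}|L_s|\to 0$ provides no rate. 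The paper instead uses the pathwise Burkholder--Davis--Gundy estimate of Siorpaes together with the coercivity of the drift (which forces the drift integral to contribute with a favorable sign and removes any need for Lipschitz control or Gr\"onwall) and then bounds the quadratic variation and the small-jump/large-jump Poisson integrals separately via Markov's inequality and Hypothesis~\ref{hyp: moment condition}. This yields the explicit polynomial-in-$\e$ rates in \eqref{e: bterm1}--\eqref{e: bterm3b}, which are re-used later (in particular for \eqref{eq:siorpaes99X} inside the spatial localization Lemma~\ref{lem:spatloc}). For the statement of Lemma~\ref{lem:indistri} alone, your softer argument gets the job done; the paper's quantitative version is chosen because the same bound is needed with rates elsewhere in Step~3.
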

\noindent This convergence is strengthened to the total variation distance in Step 3 below. 

\begin{proof}[Proof of Lemma \ref{lem:indistri}:]
By \eqref{e:termterm2} and Slutsky's lemma we have the following statement: 
\begin{equation}\label{e: convergence in d}
\mbox{If }\quad \frac{|D_{\e}(z)|}{\e \Delta^{1/\alpha}_{\e}}\stackrel{\PP}{\lra} 0\quad \mbox{ as }\e \ra 0\mbox{,}\qquad \mbox{ then }\quad 
\frac{D_{\e}(z)}{\e \Delta^{1/\alpha}_{\e}}+\frac{1}{\Delta^{1/\al}_\e}U_{\e}+a_\e^0 \stackrel{d}{\lra} U
\quad\mbox{ as } \e \ra 0.
\end{equation}
Consequently, the remainder of the proof is dedicated to the verification of 
\begin{align}\label{e: drift in P}
\frac{|D_{\e}(z)|}{\e \Delta^{1/\alpha}_{\e}}\stackrel{\PP}{\lra} 0,\quad \mbox{ as }\e \ra 0.
\end{align}
For $\eta>0$, $r_\e = \e^{1-\vt}$, $\vt \in (0, \nicefrac{1}{4})$, and $|z|\lqq r_\e$ we have 
\begin{align}
&\PP\Big(\frac{|D_{\e}(z)|}{\e \Delta^{1/\alpha}_{\e}} \gqq \eta\Big)
\lqq \PP\Big(\frac{1}{\e\Delta^{1/\al}_\e}
\int_{0}^{\Delta_\e }
\Big| \ti b(X^{\e}_s(z)) \Big|\ud s\gqq \eta\Big)\nonumber\\
&\qquad \lqq \PP\Big(\frac{1}{\e\Delta^{1/\al}_\e}
\int_{0}^{\Delta_\e }
\Big| \ti b(X^{\e}_s(z))\Big|\ud s\gqq \eta, 
\sup_{0\lqq s\lqq \Delta_\e}|X^{\e}_s(z)|\lqq 2 {r_\e}\Big) \label{e:twoterms}
+
\PP\Big(\sup_{0\lqq s\lqq \Delta_\e}|X^{\e}_s(z)|> 2 {r_\e}\Big).
\end{align}
We start with the first term of the preceding inequality.
Since $\ti b\in \cC^2$, there are positive constants $C, r$ such that
\[
|\ti b(y)| = |b(y)-Db(0)y|\lqq C|y|^2\quad \textrm{ for any } |y|\lqq r.
\]
Bearing in mind that $r_\e \ra 0$ we have  
\begin{align*}
&\PP\Big(\frac{1}{\e\Delta^{1/\al}_\e}
\int_{0}^{\Delta_\e}
\Big| \ti b(X^{\e}_s(z)) \Big|\ud s\gqq \eta, \sup_{0\lqq s\lqq \Delta_\e}|X^{\e}_s(z)|\lqq 2{r_\e}\Big)\\
&\hspace{5cm} \lqq 
\PP\Big(\frac{4C}{\e\Delta^{1/\al}_\e}\Delta_\e {r^2_\e}
\gqq \eta, \sup_{0\lqq s\lqq \Delta_\e}|X^{\e}_s(z)|\lqq 2 {r_\e}\Big)= 0,
\end{align*}
for all $\e$ small enough, since the choice $r_\e=\e^{1-\vartheta}$, $\vt \in (0, \nicefrac{1}{4})$ and $\Delta_\e = \e^{\frac{\al}{2}}$ yields 
\[
\frac{1}{\e\Delta^{1/\al}_\e}\Delta_\e {r^2_\e}\lqq \Delta^{1-1/\alpha}_\e \e^{1-2\vartheta} = \e^{\al/2}\e^{1/2-2\vartheta}\ra 0, \quad 
\mbox{ as } \e \ra 0.
\]
It remains to treat the second term on the right-hand side of \eqref{e:twoterms}. 
More precisely we show 
\begin{equation}\label{eq:siorpaes99X}
\PP\Big(\sup_{0\lqq s\lqq \Delta_\e}|X^{\e}_s(z)|> 2{r_\e}\Big) \ra 0, \quad \mbox{ as } \e \ra 0. 
\end{equation}
By Theorem~1 in \cite{SIORPAES}, we have the following almost sure estimate 
\begin{align*}
\sup_{0\lqq s\lqq \Delta_\e}|X^\e_s(z)|
&\lqq 6\sqrt{[X^\e(z)]_{\Delta_\e}} +2 \int_0^{\Delta_\e}  H^\e_{s-}(z) \ud X^{\e}_s(z),
\end{align*}
where 
\begin{align*}
H^\e_s(z) = \frac{X^\e_{s-}(z)}{\sqrt{\sup\limits_{0 \lqq u \lqq s} |X^{\e}_{u-}(z)|^2 + [X^\e(z)]_{s-}}}. 
\end{align*}
Recall that by the L\'evy-It\^o decomposition \cite{Sa}, Chapter 4,  
the driving noise process $(L_t)_{t\gqq 0}$ under Hypotheses \ref{hyp: regularity}
has the following 
representation as Poisson random integrals 
\begin{equation*}
L_t = \int_{|z|\lqq 1} z \ti N(\ud s \ud z) + \int_{|z| > 1} z N(\ud s \ud z), 
\end{equation*}
where $N$ is the Poisson random measure associated to the L\'evy measure $\nu$ on $\RR^d \setminus \{0\}$ and $\ti N$ is its compensated 
counterpart 
\[
\ti N([a, b] \times A) = N([a, b] \times A) - (b-a) \nu(A), \qquad a < b, \quad A \in \bB(\RR^d). 
\]
In particular, we have the representation of the quadratic variation of $X^\e$ given by 
\begin{align*}
&[X^\e(z)]_{t} = [L]_t = \e^2\int_0^t \int_{|u|\lqq 1} |u|^2 N(\ud s\ud u).
\end{align*}
Furthermore, we have 
\begin{align*}
&\int_0^{t}  H_{s-}^\e(z) \ud X_s^\e(z) = \int_0^t \< H^\e_{s-}(z), -b(X^\e_s(z)) \> \ud s \\
&\qquad + 
\int_0^t \int_{|u|\lqq 1}\< H^\e_{s-}(z), \e u\> \ti N(\ud s\ud u) + 
\int_0^t \int_{|u|> 1}\< H^\e_{s-}(z), \e u\> N(\ud s\ud u).
\end{align*}
Since $b(0)=0$, Hypothesis~\ref{hyp: potential} yields   
\[
\int_0^t \< H^\e_{s-}(z), -b(X^\e_s(z)) \> \ud s \lqq 0, \quad  a.s. 
\]  
Hence \begin{align*}
&\PP\Big( \sup_{0\lqq s\lqq \Delta_\e}|X^\e_s(z)| > 2{r_\e}\Big) 
\lqq \PP\Big(6\e \sqrt{\int_0^{\Delta_\e} \int_{|u|\lqq 1} |u|^2 N(\ud s\ud u)} \\
&\qquad+  2\int_0^{\Delta_\e} \int_{|u|\lqq 1}\< H^\e_{s-}(z), u\> \ti N(\ud s\ud u) + 
2\int_0^{\Delta_\e} \int_{|u|> 1}\< H^\e_{s-}(z), u\> N(\ud s\ud u)  > 2{r_\e}\Big) \\
&\lqq \PP\Big(\e^2\int_0^{\Delta_\e} \int_{|u|\lqq 1} |u|^2 N(\ud s\ud u) > \frac{r^2_\e}{9^2}\Big) 
+ \PP\Big(\int_0^{\Delta_\e} \int_{|u|\lqq 1}\< H^\e_{s-}(z), \e u\> \ti N(\ud s\ud u) > \frac{1}{3} 
{r_\e}\Big) \\
&\qquad + \PP\Big(\int_0^{\Delta_\e} \int_{|u|> 1}\< H^\e_{s-}(z), \e u\> N(\ud s\ud u) > \frac{1}{3} {r_\e}\Big)\\
&=\PP\Big(\int_0^{\Delta_\e} \int_{|u|\lqq 1} |u|^2 N(\ud s\ud u) > \frac{\e^{-2\vartheta}}{9^2}\Big)
 + \PP\Big(\int_0^{\Delta_\e} \int_{|u|\lqq 1}\< H^\e_{s-}(z), u\> \ti N(\ud s\ud u) > 
\frac{1}{3} \e^{-\vartheta}\Big) \\
&\qquad + \PP\Big(\int_0^{\Delta_\e} \int_{|u|> 1}\< H^\e_{s-}(z),  u\> N(\ud s\ud u) > \frac{1}{3} \e^{-\vartheta}\Big).
\end{align*}
We continue term by term. The first term on the right side of the preceding inequality satisfies   
\begin{align}
\PP\Big(\int_0^{\Delta_\e} \int_{|u|\lqq 1} |u|^2 N(\ud s\ud u) > \frac{\e^{-2\vartheta}}{9^2}\Big)
&\lqq 9^2\Delta_\e \e^{2\vartheta}\int_{|u|\lqq 1} |u|^2 \nu(\ud u) \ra 0, \mbox{ as } \e\ra 0.\label{e: bterm1}
\end{align}
The second term can be estimated as follows 
\begin{align}
&\PP\Big(\int_0^{\Delta_\e} \int_{|z|\lqq 1}\< H^\e_{s-}(z), z\> \ti N(\ud s\ud z) > 
\frac{1}{3} \e^{-\vartheta}\Big)
\lqq 9\e^{2\vartheta}\EE\Big[\Big(\int_0^{\Delta_\e} \int_{|u|\lqq 1}\< H^\e_{s-}(z), u\> \ti N(\ud s\ud u\Big)^2\Big]\nonumber\\
&\qquad =  9\e^{2\vartheta}\EE\Big[\int_0^{\Delta_\e} \int_{|u|\lqq 1}\< H^\e_{s-}(z), u\>^2 \nu(\ud u)\ud s\Big]
= 9\e^{2\vartheta}\Delta_\e \int_{|u|\lqq 1} |u|^2 \nu(\ud u)\ra 0, \mbox{ as } \e \ra 0.\label{e: bterm2}
\end{align}
Finally, the third term is treated as follows. For $\beta\gqq 1$ we have 
\begin{align}
\PP\Big(\int_0^{\Delta_\e} \int_{|u|> 1}\< H_{s-}^\e(z), u\> N(\ud s\ud u) > 
\frac{1}{3} \e^{-\vt}\Big) &\lqq 
\PP\Big(\int_0^{\Delta_\e} \int_{|u|> 1} |u| N(\ud s\ud u) > \frac{1}{3} \e^{-\vt}\Big) \nonumber\\
&\lqq 3 \e^{\vt} \Delta_\e \int_{|u|>1} |u| \nu(du) \ra 0,\quad \mbox{ as } \e\ra 0.\label{e: bterm3a}
\end{align}
For $\beta \in (0, 1)$ we use the subadditivity of the root for sums of nonnegative terms 
(see \cite{Bie}), Markov's inequality and Hypothesis~\ref{hyp: moment condition} and obtain 
\begin{align}
&\PP\Big(\int_0^{\Delta_\e} \int_{|u|> 1}\< H_{s-}^\e(z), u\> N(\ud s\ud u) > 
\frac{1}{3} \e^{-\vt}\Big) 
\lqq 
\PP\Big(\big(\int_0^{\Delta_\e} \int_{|u|> 1} |u| N(\ud s\ud u)\big)^\beta > \frac{1}{3^\beta} \e^{-\beta\vt}\Big) \nonumber\\
&\lqq 
\PP\Big(\int_0^{\Delta_\e} \int_{|u|> 1} |u|^\beta N(\ud s\ud u) > \frac{1}{3^\beta} \e^{-\beta\vt}\Big) \lqq 3^\beta \e^{\beta \vt} \Delta_\e \int_{|u|>1} |u|^\beta \nu(du) \ra 0, \quad\mbox{ as } \e\ra 0.\label{e: bterm3b}
\end{align}
This finishes the proof of Lemma \ref{lem:indistri}.
\end{proof}

\bigskip 
\subsubsection{\textbf{Step 3: Strengthening Step 2 to $\norm{\cdot}$ by localization for $\al> 3/2$}}\label{ss:Step3} \hfill\\

\noindent In this step we prove that $G_1\to 0$, $\e\to 0$. More precisely, we show
for $\Delta_\e=\e^{\al/2}$, $\alpha \in (3/2,2)$, $\beta>0$ the following limit
\begin{equation}\label{eq: short coupling}
\lim\limits_{\e\rightarrow 0}
\norm{X^{\e}_{\Delta_\e}(X^{\e}_{T^x_\e}(x))-
Z^{\e}_{\Delta_\e}(X^{\e}_{T^x_\e}(x))}=0.
\end{equation}
Note that the scale and shift invariance property of the total variation imply 
\begin{align}
&\norm{X^\e_{\Delta_\e}(z) - Z^\e_{\Delta_\e}(z)}\nonumber\\
&=\norm{\Big(\frac{X^\e_{\Delta_\e}(z) - e^{-Db(0)\Delta_\e}z}{\e \Delta^{1/\alpha}_{\e}}+a_\e^0\Big)- 
\Big(\frac{Z^\e_{\Delta_\e}(z) - e^{-Db(0)\Delta_\e}z}{\e \Delta^{1/\alpha}_{\e}}+a_\e^0\Big)}\nonumber\\
&\lqq \norm{\Big(\frac{X^\e_{\Delta_\e}(z) - e^{-Db(0)\Delta_\e}z}{\e \Delta^{1/\alpha}_{\e}}+a_\e^0\Big)-U}- 
\norm{\Big(\frac{Z^\e_{\Delta_\e}(z) - e^{-Db(0)\Delta_\e}z}{\e \Delta^{1/\alpha}_{\e}}+a_\e^0\Big)-U}\label{eq:uptodist}
\end{align}
By \eqref{e:termterm2} it remains to prove the following result. 
\begin{prop}[Nonlinear local short-time coupling]\label{p:localcoupling}
For $\al\in (\nicefrac{3}{2},2)$ and $\beta>0$ it follows 
\begin{equation}\label{eq:desiredlimit}
\sup_{|z| \lqq r_\e} \norm{\Big(\frac{X^{\e}_{\Delta_\e}(z)-e^{-Db(0)\Delta_\e}z}{\e \Delta^{1/\alpha}_{\e}} + a_\e^0\Big)- U} \ra 0, \quad \mbox{ as } \e \ra 0,  
\end{equation}
where $a_\e^0$ is given in Proposition~\ref{tomate}, $U\stackrel{d}= \mathcal{S}_\al(\Lambda_1)$ and $r_\e= \e^{1-\vt}$, $\vt \in (0, \nicefrac{1}{4})$, defined below \eqref{e: drift in P}.
\end{prop}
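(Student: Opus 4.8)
The plan is to strengthen the distributional convergence from Step 2 (Lemma~\ref{lem:indistri}) to convergence in total variation, uniformly over the localization ball $\{|z|\lqq r_\e\}$, by the Fourier inversion method. The strategy parallels the proof of convergence in total variation in Proposition~\ref{tomate} and Lemma~\ref{lem: cvtlineal}, the crucial new ingredient being an \emph{a~priori} control of the tails of the characteristic function of the normalized nonlinear increment. Concretely, write
\[
W^{x}_\e(z):=\frac{X^{\e}_{\Delta_\e}(z)-e^{-Db(0)\Delta_\e}z}{\e \Delta^{1/\alpha}_{\e}} + a_\e^0,
\]
and let $\phi^{x}_{\e,z}$ denote its characteristic function and $\phi_U$ the characteristic function of $U\stackrel{d}=\mathcal{S}_\al(\Lambda_1)$. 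Since by the Orey--Masuda cone condition (Lemma~\ref{lem oreymasuda}) the limit law $U$ has a bounded $\cC^\infty$ density and $\phi_U\in L^1(\RR^d)$ with Gaussian-type decay $|\phi_U(\xi)|\lqq e^{-c|\xi|^\al}$, it suffices to show that (i) $\phi^{x}_{\e,z}(\xi)\to \phi_U(\xi)$ pointwise for every $\xi$, uniformly in $|z|\lqq r_\e$, and (ii) $\{\phi^{x}_{\e,z}\}$ is dominated in $L^1(\RR^d)$ uniformly in $\e$ small and $|z|\lqq r_\e$; then Scheff\'e's lemma applied to the inverse-Fourier densities gives \eqref{eq:desiredlimit}. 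Pointwise convergence (i) follows directly from Lemma~\ref{lem:indistri} together with L\'evy's continuity theorem, so the entire difficulty is concentrated in the uniform $L^1$-domination (ii).

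For (ii) the plan is to derive a differential inequality for $|\phi^{x}_{\e,z}(\xi)|$ along rays $\xi = t\bar\xi$, $\bar\xi\in\SSS^{d-1}$, $t>0$. Using the SDE \eqref{dde1} for the strongly localized process $X^\e_\cdot(z)$ (localized to the ball of radius $2r_\e$, which we may do up to the negligible event \eqref{eq:siorpaes99X}), one computes $\partial_t$ of the characteristic function and splits the generator into the deterministic drift part and the jump part. The drift part contributes a term controlled by $|Db|$ on the relevant ball (hence bounded) times $|\nabla_\xi \phi^x_{\e,z}|$; the jump part, by the Orey--Masuda lower bound of Lemma~\ref{lem oreymasuda}, contributes a strictly negative term $\lqq -c_{\sphericalangle}|\xi|^\al\,|\phi^x_{\e,z}(\xi)|$ for $|\xi|$ large. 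Integrating along the ray and using the scaling $\Delta_\e = \e^{\al/2}$ together with $r_\e = \e^{1-\vt}$, $\vt\in(0,\nicefrac14)$, one obtains a bound of the form $|\phi^{x}_{\e,z}(t\bar\xi)|\lqq C\exp(-c't^\al + \text{lower order in }t)$ valid for $t$ larger than some threshold independent of $\e$ and $z$; for $t$ below the threshold one uses the trivial bound $|\phi^x_{\e,z}|\lqq 1$. The exponent $\al>\nicefrac32$ is exactly what is needed so that the drift-generated ``lower order'' term (which scales like $t^{3/2}$ in the worst case, coming from one derivative of the stable-type exponent against the linear drift) is genuinely dominated by $-c't^\al$ for large $t$; this is the technical origin of the restriction $\al\in(\nicefrac32,2)$, cf.\ the remark after Proposition~\ref{prop: stc} and condition~(a), p.~345 of \cite{FoPr10}.

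The main obstacle, as signalled in the introduction, is precisely step (ii): obtaining the \emph{correct exponential integrability of the tails of the characteristic function} of the nonlinear, localized increment with constants uniform in $\e$ and in $|z|\lqq r_\e$. The point is that $W^x_\e(z)$ is not infinitely divisible --- the nonlinear drift $\ti b(x)=b(x)-Db(0)x$ introduces an $O(|x|^2)$ correction into the evolution --- so one cannot simply read off the characteristic exponent; one must run the Fourier-side differential inequality (Plancherel / $L^2$-isometry on the density, together with Gr\"onwall-type estimates for the resulting ODE in $t$) and carefully track that all the constants depend on $z$ only through $|z|\lqq r_\e\to0$, so that the nonlinear correction is negligible on the time scale $\Delta_\e$. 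Once (ii) is in hand, the supremum over $|z|\lqq r_\e$ in \eqref{eq:desiredlimit} is handled by compactness exactly as in the proof of Proposition~\ref{tomate} (continuity in $z$ of the total variation distance, Lemma~\ref{lem: continuidad}), attaining the supremum at some $z_\e$ with $|z_\e|\lqq r_\e$, and the uniform bounds then force the whole expression to $0$. Combining this with \eqref{e:secondorder}, \eqref{e: convergence in d}--\eqref{e: drift in P} and \eqref{eq:uptodist} yields $G_1\to0$, and together with Lemma~\ref{lem:G2} ($G_2\to0$) and the decomposition \eqref{e:G1G2} completes the proof of Proposition~\ref{prop: stc}.
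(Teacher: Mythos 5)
Your plan correctly identifies most of the skeleton: the two localizations (bounded jumps and a bounded, cut-off vector field), the Fourier-side differential inequality obtained via It\^o's formula, the Orey--Masuda lower bound producing the $-c_\sphericalangle|\theta|^\al$ decay, the Young-inequality split that makes $\al>\nicefrac32$ the exact threshold, and the compactness/continuity argument (Lemma~\ref{lem: continuidad}) to attain the supremum over $|z|\lqq r_\e$ at some $z_\e$. What is missing, however, is the precise functional-analytic pivot that makes the argument close, and this is not cosmetic. You propose to show pointwise convergence of the characteristic functions plus \emph{$L^1$-domination} of $\phi^x_{\e,z}$ over $\RR^d$, and then invoke Fourier inversion followed by Scheff\'e. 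The paper does \emph{not} do this; it works with Plancherel's identity, i.e.\ with $L^2(\RR^d)$ of both the densities and the characteristic functions: it shows $\int|\hat f_\e-\hat f_0|^2\to 0$, from which $f_\e\to f_0$ in $L^2$ and hence (along subsequences) a.e., and only then Scheff\'e. This is the reason the paper's differential inequality is set up for $|\phi_t(\theta_\e)|^2$ (writing it as $(\EE\cos)^2+(\EE\sin)^2$ and differentiating the squares), not for $|\phi_t(\theta_\e)|$ as in your plan.

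The distinction matters because the bound the differential inequality actually produces is of the form
\[
|\phi_{\Delta_\e}(\theta_\e)|^2 \lqq e^{-C|\theta|^\al} + (\text{small}_\e)\,|\theta|^{-(\al-\frac12)},
\]
with an \emph{additive} polynomial correction coming from the inhomogeneous term in the variation-of-constants formula (the nonlinear drift contributes an inhomogeneity that cannot be absorbed multiplicatively into the exponential). Your claimed bound $|\phi^x_{\e,z}(t\bar\xi)|\lqq C\exp(-c't^\al+\text{lower order in }t)$ is not what comes out, and in fact the paper's introduction explicitly signals that the exponential $L^1$-type tail bound ``seems hard to derive with this technique\ldots\ even in the linear, scalar Gaussian case''; avoiding it is exactly why Plancherel is used here instead of the $L^1$/Scheff\'e route used for the linear object in Lemma~\ref{lem: cvtlineal}. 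Taking the square root of the paper's bound to recover an $L^1$-dominating function for $|\phi|$ worsens the polynomial tail to $|\theta|^{-(\al-\frac12)/2}$, which is not integrable, so the $L^1$ plan as stated does not close. You should replace the Scheff\'e/$L^1$-domination step by the Plancherel/$L^2$ argument and carry the squared modulus through the differential inequality.
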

The proof is given after the subsequent localization results. 
Note that \eqref{eq:indistri} states exactly \eqref{eq:desiredlimit} in distribution in a slightly different notation. In order to strengthen the result to the total variation   
we apply the following consecutive localization procedures to bounded jumps and a bounded vector field. 
The proof relies on the Plancherel isometry and Fourier inversion.  
We stress that the following two lemmas are true in full generality, that is, 
for any $\alpha\in (0,2)$ and $\beta>0$.

\begin{lem}[Jump size localization] \label{lem: TVlocalization}
Let 
\begin{equation}\label{eq:jumplocalization}
\tT_\e := \inf\{t>0~|~|\e (L_t - L_{t-})|> 1\}, \qquad \e \in (0, 1).
\end{equation}
Then for any $z\in \RR^d$,
\[
X^{\e, \tT_\e}_{t}(z) := X^{\e}_{t\wedge \tT_\e}(z) \qquad \mbox{ and } \qquad Z^{\e, \tT_\e}_{t}(z) := Z^{\e}_{t\wedge \tT_\e}(z),
\]
we have 
\[
\left|
\norm{X^{\e}_{\Delta_\e}(z)-Z^{\e}_{\Delta_\e}(z)}-\norm{X^{\e, \tT_\e}_{\Delta_\e}(z)-Z^{\e, \tT_\e}_{\Delta_\e}(z)} \right|\lqq 2\PP(\tT_\e\lqq \Delta_\e).
\]
In addition, $\beta>0$ and Hypothesis~\ref{hyp: moment condition} imply, for $\e$ small enough, 
\begin{equation}\label{e:localizandocantitativo}
\left|
\norm{X^{\e}_{\Delta_\e}(z)-Z^{\e}_{\Delta_\e}(z)}-\norm{X^{\e, \tT_\e}_{\Delta_\e}(z)-Z^{\e, \tT_\e}_{\Delta_\e}(z)} \right|\lqq 2\Delta_\e \e^\beta.
\end{equation}
\end{lem}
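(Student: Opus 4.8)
The plan is to establish a coupling between the original processes $X^\e(z), Z^\e(z)$ and their stopped versions $X^{\e,\tT_\e}(z), Z^{\e,\tT_\e}(z)$ that is exact on the event $\{\tT_\e > \Delta_\e\}$ and then control the probability of the complementary event. First I would observe that, by the very definition of $\tT_\e$ in \eqref{eq:jumplocalization}, the driving noise $L$ and the stopped noise $L^{\tT_\e}$ have identical paths on $[0,\Delta_\e]$ on the event $\{\tT_\e > \Delta_\e\}$. Since both pairs $(X^\e(z), X^{\e,\tT_\e}(z))$ and $(Z^\e(z), Z^{\e,\tT_\e}(z))$ are driven by the \emph{same} noise (there is no extra randomness in the stopped equations, merely the path is frozen after $\tT_\e$), pathwise uniqueness for \eqref{dde1} and for the Ornstein--Uhlenbeck equation forces $X^\e_t(z) = X^{\e,\tT_\e}_t(z)$ and $Z^\e_t(z) = Z^{\e,\tT_\e}_t(z)$ for all $t \lqq \Delta_\e$ on $\{\tT_\e > \Delta_\e\}$. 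Hence the laws of $(X^\e_{\Delta_\e}(z), Z^\e_{\Delta_\e}(z))$ and $(X^{\e,\tT_\e}_{\Delta_\e}(z), Z^{\e,\tT_\e}_{\Delta_\e}(z))$ can be realized on a common space so that they agree with probability at least $1 - \PP(\tT_\e \lqq \Delta_\e)$.

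From this coupling the first inequality follows by a standard total-variation comparison: for any random vectors $W_1, W_2, W_1', W_2'$ on a common space with $\PP(W_1 = W_1', W_2 = W_2') \gqq 1 - p$, one has
\[
\big|\norm{W_1 - W_2} - \norm{W_1' - W_2'}\big| \lqq 2p.
\]
I would prove this auxiliary inequality by writing $\norm{W_1 - W_2} \lqq \norm{W_1 - W_1'} + \norm{W_1' - W_2'} + \norm{W_2' - W_2}$ and noting $\norm{W_i - W_i'} \lqq \PP(W_i \neq W_i') \lqq p$ (the coupling inequality for total variation), then symmetrizing. Applying it with $p = \PP(\tT_\e \lqq \Delta_\e)$ gives
\[
\left|\norm{X^{\e}_{\Delta_\e}(z)-Z^{\e}_{\Delta_\e}(z)}-\norm{X^{\e, \tT_\e}_{\Delta_\e}(z)-Z^{\e, \tT_\e}_{\Delta_\e}(z)} \right|\lqq 2\PP(\tT_\e\lqq \Delta_\e).
\]

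For the quantitative bound \eqref{e:localizandocantitativo} I would estimate $\PP(\tT_\e \lqq \Delta_\e)$ directly. The event $\{\tT_\e \lqq \Delta_\e\}$ means that the Poisson random measure $N$ associated with the L\'evy measure $\nu$ has at least one atom in $[0,\Delta_\e]$ of size $|u| > 1/\e$, so
\[
\PP(\tT_\e \lqq \Delta_\e) = 1 - e^{-\Delta_\e\, \nu(\{|u| > 1/\e\})} \lqq \Delta_\e\, \nu(\{|u| > 1/\e\}).
\]
By Hypothesis~\ref{hyp: moment condition} and Markov's inequality applied to the measure $\nu$ restricted to $\{|u|>1\}$, for $\e \lqq 1$ we have $\nu(\{|u| > 1/\e\}) \lqq \e^\beta \int_{|u|>1} |u|^\beta \nu(\ud u)$. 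Absorbing the finite constant $\int_{|u|>1}|u|^\beta\nu(\ud u)$ (one may rescale $\beta$ slightly or simply note the constant is harmless for $\e$ small enough, as stated) yields $2\PP(\tT_\e \lqq \Delta_\e) \lqq 2\Delta_\e \e^\beta$, which is exactly \eqref{e:localizandocantitativo}.

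The only delicate point is the pathwise-uniqueness argument justifying that stopping the noise leaves the solution unchanged up to time $\Delta_\e$ on $\{\tT_\e > \Delta_\e\}$; this requires that $\tT_\e$ is an $(\fF_t)$-stopping time (clear, since it is a hitting time of the jump process $\Delta L$, which is adapted) and that the SDE \eqref{dde1} is well-posed, which holds under Hypothesis~\ref{hyp: potential} as recalled in Subsection~\ref{subsec:stochasticperturbation}. I expect no genuine obstacle here; the lemma is essentially bookkeeping for the localization that makes the subsequent Fourier/Plancherel estimates in Step~3 tractable. Everything else is the elementary coupling inequality and a one-line Poisson tail estimate.
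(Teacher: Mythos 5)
Your proof is correct in substance and takes the same essential route as the paper's: the processes coincide on the event $\{\tT_\e > \Delta_\e\}$, the bad event has small probability, and total variation is insensitive to modifications on small-probability events. Your packaging differs slightly---you isolate a general coupling comparison lemma ($\PP(W_1=W_1',W_2=W_2')\gqq 1-p$ implies $|\norm{W_1-W_2}-\norm{W_1'-W_2'}|\lqq 2p$), whereas the paper carries out a disintegration over $\tT_\e$ followed by a triangle inequality and the coupling bound $\norm{X^\e-X^{\e,\tT_\e}}\lqq\PP(\tT_\e\lqq\Delta_\e)$---but the content is the same, and your formulation is arguably cleaner. Two small remarks. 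First, the detour through pathwise uniqueness is unnecessary: since $X^{\e,\tT_\e}_t(z):=X^\e_{t\wedge\tT_\e}(z)$ \emph{by definition}, the identity $X^{\e,\tT_\e}_{\Delta_\e}(z)=X^\e_{\Delta_\e}(z)$ on $\{\tT_\e>\Delta_\e\}$ is immediate. Second, and more substantively, your Markov step gives $\nu(\{|u|>\nicefrac{1}{\e}\})\lqq\e^\beta\int_{|u|>1}|u|^\beta\nu(\ud u)$, which carries a fixed constant $C:=\int_{|u|>1}|u|^\beta\nu(\ud u)$ that may exceed $1$; ``the constant is harmless for $\e$ small'' is not literally true for a fixed multiplicative constant, and ``rescale $\beta$'' would change the stated exponent. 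The clean fix---and what the paper does---is to apply Markov on the truncated tail, $\nu(\{|u|>r\})\lqq r^{-\beta}\int_{|u|>r}|u|^\beta\nu(\ud u)$, and observe that $\int_{|u|>r}|u|^\beta\nu(\ud u)\to 0$ as $r\to\infty$ by dominated convergence, so that for $\e$ small enough $\nu(\{|u|>\nicefrac{1}{\e}\})\lqq\e^\beta$ with no spare constant. With that one-line adjustment your argument recovers \eqref{e:localizandocantitativo} exactly.
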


\begin{proof}
It is well-known that $\tT_\e$ and $(X^{\e, \tT_\e}_{t}(z))_{t\gqq 0}$ are conditionally independent 
($(Z^{\e, \tT_\e}_{t}(z))_{t\gqq 0}$, respectively). Hence disintegration yields 
\begin{align*}
&\norm{X^{\e, \tT_\e}_{\Delta_\e \wedge \tT_\e}(z)-Z^{\e, \tT_\e}_{\Delta_\e \wedge \tT_\e}(z)}\lqq \int_{0}^{\infty}
\EE\left[\norm{X^{\e, \tT_\e}_{\Delta_\e \wedge \tT_\e}(z)-Z^{\e, \tT_\e}_{\Delta_\e \wedge \tT_\e}(z)}\big| \tT_\e=s\right]\PP(\tT_\e \in \ud s)\\
&\hspace{3cm} = \int_{0}^{\infty}
\norm{X^{\e}_{\Delta_\e \wedge \tT_\e}(z)-Z^{\e}_{\Delta_\e \wedge \tT_\e}(z)}\PP(\tT_\e \in \ud s)\\
&\hspace{3cm} =\int_{0}^{\Delta_\e}\norm{X^{\e}_{s}(z)-Z^{\e}_{s}(z)}\PP(\tT_\e \in \ud s)
    +\norm{X^\e_{\Delta_\e}(z)-Y^\e_{\Delta_\e}(z)}\int_{\Delta_\e}^{\infty}\PP(\tT_\e \in \ud s)\\
&\hspace{3cm} \lqq \PP(\tT_\e \lqq \Delta_\e) + \norm{X^\e_{\Delta_\e}(z)-Z^\e_{\Delta_\e}(z)}.
\end{align*}
On the other hand, we notice
\begin{align*}
&\norm{X^\e_{\Delta_\e}(z)-Z^\e_{\Delta_\e}(z)}\\
&\hspace{2cm} \lqq \norm{X^\e_{\Delta_\e}(z)-X^{\e, \tT_\e}_{\Delta_\e}(z)}+\norm{X^{\e, \tT_\e}_{\Delta_\e}(z)-Z^{\e, \tT_\e}_{\Delta_\e}(z)}+\norm{Z^{\e, \tT_\e}_{\Delta_\e}(z)-Z^{\e}_{\Delta_\e}(z)}\\
&\hspace{2cm} \lqq 2\PP(\tT_\e \lqq \Delta_\e) +\norm{X^{\e, \tT_\e}_{\Delta_\e}(z)-Z^{\e, \tT_\e}_{\Delta_\e}(z)}.
\end{align*}
Consequently, it follows 
\begin{equation}\label{e:e1}
\Big| \norm{X^{\e, \tT_\e}_{\Delta_\e \wedge \tT_\e}(z)-Z^{\e, \tT_\e}_{\Delta_\e \wedge \tT_\e}(z)} - \norm{X^\e_{\Delta_\e}(z)-Z^\e_{\Delta_\e}(z)} \Big| \lqq 2 \PP(\tT_\e \lqq \Delta_\e).
\end{equation}
Finally we calculate 
 \begin{align}\label{e:e2}
\PP(\tT_\e \lqq \Delta_\e) = 1- \PP(\tT_\e > \Delta_\e) = 1- e^{-\Delta_\e \nu(\frac{1}{\e} B_1^c(0))}.
\end{align}
Since $\beta>0$, Hypothesis~\ref{hyp: moment condition} 
implies $
\lim \limits_{r\to \infty} r^\beta \nu(r B_1^c(0)) = 0$, 
which yields
\begin{equation}\label{e:e3}
\limsup_{\e \ra 0} \Delta_\e \nu\Big(\frac{1}{\e} B_1^c(0)\Big) \lqq \limsup_{\e \ra 0} \e^\beta \Delta_\e = 0. 
\end{equation}
Combining \eqref{e:e1}-\eqref{e:e3} we obtain \eqref{e:localizandocantitativo}.
\end{proof}
\noindent Since $\tT_\e > \Delta_\e$ with high probability, 
we can assume without loss of generality 
the presence of only bounded jumps even 
in the total variation distance. 

\begin{lem}[Spatial localization]\label{lem:spatloc}\hfill\\
Let $h\in \cC^\infty_b(\RR^d,[0,1])$ be given by
\begin{equation}\label{eq:mollifier}
h(\zeta)=
\begin{cases}
1 & \mathrm{ for }\;\;|\zeta|\lqq 1,  \\
\in (0,1) &  \mathrm{ for }\;\;1<|\zeta|< 2,\\
0  &  \mathrm{ for }\;\; |\zeta|\gqq 2.
\end{cases}
\end{equation}
Consider the following localized solutions
\begin{align*}
\ud \hat X^{\e}_t(z)&=-b(\hat X^{\e}_t(z))h(\hat X^{\e}_t(z))\ud t+\e \ud L_t, \qquad  \hat X^{\e}_0(z)=z,\\
\ud \hat Z^{\e}_t(z)&=-Db(0)\hat Z^{\e}_t(z)h(\hat Z^{\e}_t(z))\ud t+\e \ud L_t, \qquad  \hat Z^{\e}_0(z)=z
\end{align*}
of $(X^\e_t)_{t \gqq 0}$ and $(Z^\e_t)_{t\gqq 0}$ defined in \eqref{e:VOCX} and \eqref{e:VOCZ}, respectively. 
Then for $|z|<1/2$ we have for all $t\gqq 0$
\begin{align}\label{eq:localspatial}
\Big|\norm{ X^{\e}_t(z)-Z^{\e}_t(z)}
-\norm{ \hat X^{\e}_t(z)-\hat Z^{\e}_t(z)}\Big |
\lqq \PP(\hat \tau^{\e}(z)<t) +\PP(\hat \sigma^\e(z)<t), 
\end{align}
where 
\begin{align*}
\hat\tau^\e(z)=\inf\{s\gqq 0: |X^{\e}_s(z)|>1 \} \quad \textrm{ and} \quad
\hat\sigma^\e(z)=\inf\{s\gqq 0: 
|Z^{\e}_s(z)|>1\}.
\end{align*}
In particular, we have
\begin{align}\label{eq:spatialcoupling}
\lim_{\e\to 0}\Big|\norm{X^{\e}_{\Delta_\e}(z)-Z^{\e}_{\Delta_\e}(z)}-
\norm{\hat X^{\e}_{\Delta_\e}(z)-\hat Z^{\e}_{\Delta_\e}(z)}\Big|
=0.
\end{align}
\end{lem}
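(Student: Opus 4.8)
The plan is to carry out the standard localization-via-coupling argument. I drive the pairs $(X^\e_\cdot(z),\hat X^\e_\cdot(z))$ and $(Z^\e_\cdot(z),\hat Z^\e_\cdot(z))$ by the \emph{same} L\'evy process $L$. Since $h\in\cC^\infty_b(\RR^d,[0,1])$ has compact support and $b\in\cC^2$, the drift $x\mapsto -b(x)h(x)$ is bounded and globally Lipschitz, so the SDE defining $\hat X^\e$ has a unique strong solution on $[0,\infty)$ and enjoys pathwise uniqueness; likewise for $\hat Z^\e$ with drift $-Db(0)\,x\,h(x)$. The key point is that $-b(\cdot)h(\cdot)$ and $-b(\cdot)$ coincide on $\bar B_1(0)$, and $|z|<1/2<1$.

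Next I identify the processes up to the exit time. On the stochastic interval $[0,\hat\tau^\e(z))$ the path of $X^\e_\cdot(z)$ remains in $\bar B_1(0)$, so $h(X^\e_s(z))=1$ there, i.e.\ $X^\e_\cdot(z)$ solves the same SDE as $\hat X^\e_\cdot(z)$ on $[0,\hat\tau^\e(z))$; pathwise uniqueness up to the stopping time $\hat\tau^\e(z)$ gives $X^\e_s(z)=\hat X^\e_s(z)$ for $s<\hat\tau^\e(z)$, and since both processes jump only where $L$ does and with the same jump $\e\,\Delta L$, the equality closes at $s=\hat\tau^\e(z)$ as well. Hence $\{X^\e_t(z)\neq\hat X^\e_t(z)\}\subseteq\{\hat\tau^\e(z)<t\}$, and the coupling inequality for the total variation distance gives
\[
\norm{X^\e_t(z)-\hat X^\e_t(z)}\lqq \PP\big(X^\e_t(z)\neq\hat X^\e_t(z)\big)\lqq \PP\big(\hat\tau^\e(z)<t\big),
\]
with the completely analogous bound $\norm{Z^\e_t(z)-\hat Z^\e_t(z)}\lqq\PP(\hat\sigma^\e(z)<t)$ for the linear pair (whose exit time from $\bar B_1(0)$ is $\hat\sigma^\e(z)$). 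Inserting these into the reverse triangle inequality
\[
\Big|\norm{X^\e_t(z)-Z^\e_t(z)}-\norm{\hat X^\e_t(z)-\hat Z^\e_t(z)}\Big|\lqq\norm{X^\e_t(z)-\hat X^\e_t(z)}+\norm{Z^\e_t(z)-\hat Z^\e_t(z)}
\]
yields \eqref{eq:localspatial}.

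For \eqref{eq:spatialcoupling} I apply \eqref{eq:localspatial} at $t=\Delta_\e$ and show both probabilities on the right vanish as $\e\to0$. Since $\Delta_\e\to0$ we may take $\Delta_\e\lqq1$, and because $|\varphi^z_s|\lqq e^{-\delta s}|z|\lqq|z|<1/2$ for all $s\gqq0$ we have
\[
\big\{\hat\tau^\e(z)<\Delta_\e\big\}\subseteq\Big\{\sup_{0\lqq s\lqq\Delta_\e}|X^\e_s(z)|>1\Big\}\subseteq\Big\{\sup_{0\lqq s\lqq1}|X^\e_s(z)-\varphi^z_s|>1-|z|\Big\},
\]
whose probability tends to $0$ by \eqref{eq:zeroapprox} since $1-|z|>1/2>0$. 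For $\hat\sigma^\e(z)$, the linear flow $s\mapsto e^{-Db(0)s}z$ satisfies $\tfrac{\ud}{\ud s}|e^{-Db(0)s}z|^2=-2\langle e^{-Db(0)s}z,Db(0)e^{-Db(0)s}z\rangle\lqq-2\delta|e^{-Db(0)s}z|^2$ by the coercivity-equivalent bound $\langle Db(0)y,y\rangle\gqq\delta|y|^2$, so $|e^{-Db(0)s}z|\lqq e^{-\delta s}|z|<1/2$; combined with $Z^\e_\cdot(z)\to e^{-Db(0)\cdot}z$ uniformly on $[0,1]$ in probability (which follows from $Z^\e_t(z)=e^{-Db(0)t}z+\e\int_0^te^{-Db(0)(t-s)}\ud L_s$ by letting $\e\to0$, the stochastic term being $\e$ times an a.s.\ bounded quantity on $[0,1]$) the same estimate gives $\PP(\hat\sigma^\e(z)<\Delta_\e)\to0$. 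This proves \eqref{eq:spatialcoupling}.

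The main obstacle is not conceptual but a matter of c\`adl\`ag bookkeeping: one must ensure the localized SDE genuinely admits pathwise uniqueness (here guaranteed by the boundedness and global Lipschitz continuity of $-b(\cdot)h(\cdot)$) and that the matching of jumps closes the identification of $X^\e$ with $\hat X^\e$ \emph{at} the random exit time $\hat\tau^\e(z)$, so that the excess event is exactly $\{\hat\tau^\e(z)<t\}$ and not $\{\hat\tau^\e(z)\lqq t\}$.
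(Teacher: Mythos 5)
Your proof is correct and follows essentially the same skeleton as the paper's: the identification of $X^{\e}$ with $\hat X^{\e}$ (and $Z^{\e}$ with $\hat Z^{\e}$) up to the respective exit times via pathwise uniqueness for the coupled SDEs, the coupling bound $\norm{X^{\e}_t(z)-\hat X^{\e}_t(z)}\lqq\PP(X^{\e}_t(z)\neq\hat X^{\e}_t(z))\lqq\PP(\hat\tau^{\e}(z)<t)$, and then the (reverse) triangle inequality to obtain \eqref{eq:localspatial}; the jump-bookkeeping observation at $\hat\tau^\e$ that closes the inclusion to $\{\hat\tau^{\e}(z)<t\}$ rather than $\{\hat\tau^{\e}(z)\lqq t\}$ is a clean justification that the paper leaves implicit. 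The difference is in controlling the two exit-time probabilities for \eqref{eq:spatialcoupling}: the paper reuses the quantitative supremum estimate \eqref{eq:siorpaes99X}, proved earlier via Siorpaes' pathwise BDG-type argument, to bound $\PP(\hat\tau^{\e}(z)<\Delta_\e)$, and simply asserts the analogous bound for the linear process $Z^{\e}$; you instead invoke the law-of-large-numbers statement \eqref{eq:zeroapprox} for $X^\e$ and give an elementary direct argument for $Z^\e$ from the variation-of-constants representation $Z^{\e}_t(z)=e^{-Db(0)t}z+\e\int_0^t e^{-Db(0)(t-s)}\ud L_s$, using a.s.\ boundedness of the c\`adl\`ag stochastic convolution on $[0,1]$. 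Your route is more self-contained and matches the lemma as stated (fixed $z$ with $|z|<1/2$), whereas the paper's appeal to \eqref{eq:siorpaes99X} tacitly uses $|z|\lqq r_\e$, which is the regime in which the lemma is later applied; the paper's choice buys uniformity in $z$ over $\{|z|\lqq r_\e\}$ for free, which it exploits downstream, while your qualitative argument would need a small extra observation (monotonicity in the initial condition, or simply that $r_\e\to0$) to recover that uniformity.
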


\begin{proof}
By the triangle inequality and the coupling representation of the total variation distance, we have
\begin{align*}
\norm{ X^{\e}_t(z)-Z^{\e}_t(z)}& \lqq 
\norm{ X^{\e}_t(z)-\hat X^{\e}_t(z)}+\norm{ \hat X^{\e}_t(z)-\hat Z^{\e}_t(z)}+
\norm{ \hat Z^{\e}_t(z)-Z^{\e}_t(z)}\\
&\lqq \PP(X^{\e}_t(z)\neq \hat X^{\e}_t(z))+\norm{ \hat X^{\e}_t(z)-\hat Z^{\e}_t(z)}+
\PP(Z^{\e}_t(z)\neq \hat Z^{\e}_t(z))\\
& \lqq 
\PP(\hat \tau^{\e}(z)<t)+
\norm{ \hat X^{\e}_t(z)-\hat Z^{\e}_t(z)}+
\PP(\hat \sigma^{\e}(z)<t).
\end{align*}
Exchanging the roles of $X^{\e}_t(z)$ and $Z^{\e}_t(z)$ with $\hat X^{\e}_t(z)$ and $\hat Z^{\e}_t(z)$ yields \eqref{eq:localspatial}.
Since $\Delta_\e\to 0$, we have 
\[ 
\PP\Big(\hat \tau^{\e}(z)<\Delta_\e\Big)\lqq 
\PP\Big(\sup_{s\in [0,\Delta_\e]}|X^{\e}_s(z)|>1/2\Big)
\]
 which tends to zero due to \eqref{eq:siorpaes99X}.
Analogously, the same result holds true for the linear process
$(Z^{\e}_s(z))_{t\gqq 0}$ and 
$\sigma^{\e}(z)$.
This implies the desired result
 \eqref{eq:spatialcoupling}.
\end{proof}

\begin{rem}\label{rem:localization}~
\begin{enumerate}
\item 
We stress the following 
intentional abuse of notation.
Lemma \ref{lem: TVlocalization} 
yields that it is enough to prove \eqref{eq:desiredlimit} for $X^\e$ being replaced by $X^{\e, \tT_\e}$. In other words, we may assume that $X^{\e}$ 
has bounded jumps beforehand and consequently all polynomial moments finite. 
\item In addition, Lemma \ref{lem:spatloc} allows us to consider bounded vector fields in the spirit of Section 4 in \cite{Dong}. That is to say, it is enough to prove \eqref{eq:desiredlimit} for $X^\e$ being replaced by $\fX^\e_t(z)=\hat X^{\e,\tT^\e}_t(z)$.
\item We emphasize that due to 
Hypothesis~\ref{hyp: regularity}, in particular, \eqref{eq: gradientbound} the process
 $(\fX^\e_t(z))_{t\gqq 0}$ is a strong Feller process with $\mathcal{C}^1_b$ density $f_\e$. Since $|f_\e|_\infty<\infty$ , we have
 $f_\e\in L^2(\RR^d)$. 
 See Theorem~1.1 and Theorem~1.3 in \cite{SongZhang} for details. 
\end{enumerate}
\end{rem}

\begin{proof}[Proof of Proposition \ref{p:localcoupling}:]
For simplicity we keep the same notation 
except for the driving noise which we denote by $\ti L$. 
We set
\[
\ti L_t = \int_0^t \int_{|u|\lqq 1} u \ti N(\ud s \ud u).  
\]
\noindent Note that since $\Psi_t = e^{Db(0)t}$ we have
\begin{align}\label{e:cambiodenotacion}
\frac{\fX^{\e}_{\Delta_\e}(z)-\Psi^{-1}_{\Delta_\e}z}{\e \Delta^{1/\alpha}_{\e}} + a_\e^0 
= \frac{\fD_\e(z)}{\e \Delta_\e^{1/\al}} + \frac{1}{\Delta_\e^{1/\al}} \mathfrak{U}_\e + a_\e^0,
\end{align}
where 
\begin{align*}
\mathfrak{U}_{\e} &:= \Psi^{-1}_{\Delta_\e}\int_{0}^{\Delta_\e}\Psi_s\ud  \ti L_s, 
\qquad \fD_{\e}(z) :=\Psi^{-1}_{\Delta_\e} \int_{0}^{\Delta_\e}\Psi_s\ti b( \fX^\e_s(z))\ud s
\end{align*}
and  $\ti b(x)=b(x)h(x)-Db(0)x$, $x\in \RR^d$,
where $h$ is given in \eqref{eq:mollifier}.
Note that the limit \eqref{e: drift in P} is shown for $X^\e$. It is easily seen - going through the proof of Lemma \ref{lem:indistri} line by line  - that with the help of \eqref{e:cambiodenotacion} the limit \eqref{eq:indistri} remains valid  for $X^\e$ being replaced by $\fX^\e$, i.e., 
\begin{equation}\label{e: debil}
\frac{\fX^{\e}_{\Delta_\e}(z)-\Psi^{-1}_{\Delta_\e}z}{\e \Delta_\e^\frac{1}{\al}} + a_\e^0 \stackrel{d}{\lra} \mathcal{S}_\alpha(\Lambda_1), \qquad \mbox{ as } \e \ra 0. 
\end{equation}
Recall
\[
\fX^{\e}_t(z)=\Psi^{-1}_t z-\Psi^{-1}_t\int_{0}^{t}\Psi_s\ti {b}(\fX^{\e}_s(z))\ud s+\e
\Psi^{-1}_t\int_{0}^{t}\Psi_s\ud \ti L_s,
\] 
and set $\xX^{\e}_t(z)= \fX^{\e}_t(z)-\Psi^{-1}_tz$ which satisfies 
\begin{equation}\label{eq: Xdescontada}
\xX^{\e}_t(z)=-\Psi^{-1}_t\int_{0}^{t}\Psi_s\ti {b}(\xX^{\e}_s(z)+
\Psi^{-1}_s z)\ud s+\e
\Psi^{-1}_t\int_{0}^{t}\Psi_s\ud \ti L_s.
\end{equation}
In the sequel, we strengthen the convergence of \eqref{e: debil} 
to the convergence in the total variation distance. 
Since $\ti L$ has absolutely continuous marginals and 
$\fX^\e_{\Delta_\e}$ is a continuous push-forward of $\ti L$, 
it retains the absolute continuity 
property. In addition, it is not hard to see that Lemma \ref{lem oreymasuda} yields a $\cC^\infty$-density for $\mathcal{S}_\al(\Lambda_1)$. Hence it is enough to prove 
\[
\int_{\RR^d} |f_\e(u) - f_0(u)|\ud u \ra 0, \quad \mbox{ as } \e \ra 0, 
\]
where $f_\e$ is the density of $\nicefrac{\xX^{\e}_{\Delta_\e}(z)}{(\e \Delta_\e^\frac{1}{\al})} +a_\e$ 
and $f_0$ is the density of $\mathcal{S}_\al(\Lambda_1)$. 
By Scheff\'e's lemma for densities it is sufficient show that $f_\e \ra f_0$, as $\e \ra 0$, Lebesgue almost everywhere in $\RR^d$. 
For this sake, it is sufficient to prove that 
\[
\int_{\RR^d} |f_\e(u) - f_0(u)|^2 \ud u \ra 0,\quad \mbox{ as } \e \ra 0.
\]
Since $f_\e,f_0\in L^2(\RR^d)$,
by Plancherel's identity we have a positive constant $C_\pi$ such that 
\[
\int_{\RR^d} |f_\e(u) - f_0(u)|^2 \ud u = C_\pi \int_{\RR^d} |\hat f_\e(\theta) - \hat f_0(\theta)|^2 \ud \theta.
\]
Since the weak convergence \eqref{e: debil} implies that $\hat f_\e \ra \hat f_0$ uniformly on compacts, we have for any $K>0$  
\begin{align*}
\limsup_{\e \ra 0} \int_{\RR^d} |\hat f_\e(\theta) - \hat f_0(\theta)|^2 \ud \theta 
&\lqq \limsup_{\e \ra 0} \int_{|\theta|>K} |\hat f_\e(\theta) - \hat f_0(\theta)|^2 \ud \theta \\
&\lqq 2 \limsup_{\e \ra 0}  \int_{|\theta|>K} |\hat f_\e(\theta)|^2 \ud \theta+  2 \int_{|\theta|>K} |\hat f_0(\theta)|^2 \ud \theta.
\end{align*}
The exponential decay of $\hat f_0$ yields $\hat f_0 \in L^2(\RR^d)$. 
Sending $K$ to infinity we deduce that 
\begin{align}\label{e: UI} 
\limsup_{\e \ra 0} \int_{\RR^d} |\hat f_\e(\theta) - \hat f_0(\theta)|^2 \ud \theta 
&\lqq 2 \limsup_{K\ra \infty} \limsup_{\e \ra 0}  \int_{|\theta|>K} |\hat f_\e(\theta)|^2 \ud \theta.
\end{align}
In order to conclude, it remains to show that the right-hand side of the preceding inequality is~$0$. 
Recall the differential version of \eqref{eq: Xdescontada}  
\[
\ud \xX^{\e}_t(z)=-Db(0) \xX^{\e}_t(z) \ud t 
-\ti {b}(\xX^{\e}_t(z)+\Psi^{-1}_t z)\ud t+\e \ud \ti L_t 
\]
with initial datum $\xX^{\e}_0(z)=0$. In the sequel, we calculate
$\phi_t(\theta) := \EE\big[e^{i \<\theta, \xX^\e_{t}(z)\>}\big]$. 
It\^o's formula yields 
\begin{align*}
\exp\Big(i \<\theta, \xX^\e_{t}(z)\>\Big)&=1+\int_0^t \exp\Big(i \<\theta,\xX^\e_{s}(z)\>\Big) i \< \theta,  -Db(0)\xX^\e_{s}(z)-\ti b(\xX^\e_{s}(z)+\Psi_s^{-1}z)\> \ud s \\
&\quad + \int_0^t \int_{|z|\lqq 1}\bigg(\exp\Big(i \<\theta,  \xX^\e_{s-}(z)+ \e u\>\Big) -\exp(\Big(i \<\theta, \xX^\e_{s-}(z)\>\Big)\bigg) \ti N(\ud s \ud u)  \\
&\quad + \int_0^t \exp\Big(i \<\theta,  \xX^\e_{s-}(z)\>\Big)\int_{|z|\lqq 1}
\Big(\exp\big(i \<\theta, \e u \>\big)-1-i \< \theta, \e u\>\Big) \nu(\ud u) 
\ud s.
\end{align*}
Since the process $\fX^\e$ has finite first moment, 
taking expectation and using Fubini's  theorem we obtain
\begin{align*}
&\phi_t(\theta) 
= \EE\Big[\exp\Big(i \<\theta, \xX^\e_{t}(z)\>\Big)\Big]\\
&=1 +\int_0^t \EE\Big[\exp\Big(i \<\theta,\xX^\e_{s}(z)\>\Big) i \< \theta,  -Db(0)\xX^\e_{s}(z)-\ti b(\xX^\e_{s}(z)+\Psi_s^{-1}z)\> \Big]\ud s \\
&\quad + \int_0^t \EE\Big[\exp\Big(i \<\theta,  \xX^\e_{s-}(z)\>\Big)\int_{|z|\lqq 1}
\Big(\exp\big(i \<\theta, \e u \>\big)-1-i \< \theta, \e u\>\Big) \nu(\ud u) 
\Big]\ud s.
\end{align*}
Note that
\[
\psi(\e \theta)=\int_{\RR^d}
\Big(\exp\big(i \<\theta, \e u \>\big)-1-i \< \theta, \e u\>\Big) \nu(\ud u),
\]
where $\psi$ is the characteristic exponent of the L\'evy measure $\nu$. 
We set $\theta_\e := \nicefrac{\theta}{(\e \Delta_\e^\frac{1}{\al})}$ for $|\theta|\gqq K$. 
For the real and the imaginary part of $\phi_t(\theta)$ we have the equalities
\begin{align*}
&\EE\Big[\cos \Big(\<\theta_{\e}, \xX^\e_{t}(z)\>\Big)\Big]=1 -\int_0^t \EE\Big[\sin \Big(\<\theta_{\e}, \xX^\e_{s}(z)\>\Big) 
\< \theta_{\e},  -Db(0)\xX^\e_{s}(z)-\ti b(\xX^\e_{s}(z)+\Psi_s^{-1}z)\> \Big]\ud s \\
&\hspace{2.8cm} +\mathsf{Re}\psi(\e \theta_\e) 
\int_0^t \EE\Big[\cos \Big(\<\theta_{\e}, \xX^\e_{s}(z)\>\Big)\Big]\ud s 
- \mathsf{Im}\psi(\e \theta_\e) 
\int_0^t \EE\Big[\sin \Big(\<\theta_{\e}, \xX^\e_{s}(z)\>\Big)\Big]\ud s
\end{align*}
and 
\begin{align*}
&\EE\Big[\sin \Big(\<\theta_{\e}, \xX^\e_{t}(z)\>\Big)\Big]
=1 +\int_0^t \EE\Big[\cos \Big(\<\theta_{\e}, \xX^\e_{s}(z)\>\Big) 
\< \theta_{\e},  -Db(0)\xX^\e_{s}(z)-\ti b(\xX^\e_{s}(z)+\Psi_s^{-1} z)\> \Big]\ud s \\
&\hspace{2.8cm} +\mathsf{Re}\psi(\e \theta_\e) 
\int_0^t \EE\Big[\sin \Big(\<\theta_{\e}, \xX^\e_{s}(z)\>\Big)\Big]\ud s 
+ \mathsf{Im} \psi(\e \theta_\e) 
\int_0^t \EE\Big[\cos \Big(\<\theta_{\e}, \xX^\e_{s}(z)\>\Big)\Big]\ud s.
\end{align*}
The chain rule for the respective differential forms reads as follows
\begin{align*}
&\frac{\ud }{\ud t}\left(\EE\Big[\cos \Big(\<\theta_{\e}, \xX^\e_{t}(z)\>\Big)\Big]\right)^2
=2\EE\Big[\cos \Big(\<\theta_{\e}, \xX^\e_{t}(z)\>\Big)\Big]\frac{\ud}{\ud t}\EE\Big[\cos \Big(\<\theta_{\e}, \xX^\e_{t}(z)\>\Big)\Big]
\\
&=-2\EE\Big[\cos \Big(\<\theta_{\e}, \xX^\e_{t}(z)\>\Big)\Big]\EE\Big[\sin \Big(\<\theta_{\e}, \xX^\e_{t}(z)\>\Big) 
\< \theta_{\e},  -Db(0)\xX^\e_{t}(z)-\ti b(\xX^\e_{t}(z)+\Psi_t^{-1}z)\> \Big] \\
&\qquad  +2\mathsf{Re}\psi(\e \theta_\e)\left(\EE\Big[\cos \Big(\<\theta_{\e}, \xX^\e_{t}(z)\>\Big)\Big]\right)^2 \\
&\qquad - 2\mathsf{Im}\psi(\e \theta_\e) 
\EE\Big[\cos \Big(\<\theta_{\e}, \xX^\e_{t}(z)\>\Big)\Big] \EE\Big[\sin \Big(\<\theta_{\e}, \xX^\e_{t}(z)\>\Big)\Big],
\end{align*}
with $\left(\EE\Big[\cos \Big(\<\theta_{\e}, \xX^\e_{0}(z)\>\Big)\Big]\right)^2 = 1$ and 
\begin{align*}
&\frac{\ud }{\ud t}\left(\EE\Big[\sin \Big(\<\theta_{\e}, \xX^\e_{t}(z)\>\Big)\Big]\right)^2=2\EE\Big[\cos \Big(\<\theta_{\e}, \xX^\e_{t}(z)\>\Big)\Big]\frac{\ud}{\ud t}\EE\Big[\cos \Big(\<\theta_{\e}, \xX^\e_{t}(z)\>\Big)\Big]
\\
&=2\EE\Big[\sin \Big(\<\theta_{\e}, \xX^\e_{t}(z)\>\Big)\Big]\EE\Big[\cos \Big(\<\theta_{\e}, \xX^\e_{t}(z)\>\Big) 
\< \theta_{\e},  -Db(0)\xX^\e_{t}(z)-\ti b(\xX^\e_{t}(z)+\Psi_t^{-1}z)\> \Big] \\
&\qquad +2\mathsf{Re}\psi(\e \theta_\e)\left(\EE\Big[\sin \Big(\<\theta_{\e}, \xX^\e_{t}(z)\>\Big)\Big]\right)^2 \\
&\qquad +2\mathsf{Im}\psi(\e \theta_\e) 
\EE\Big[\sin \Big(\<\theta_{\e}, \xX^\e_{t}(z)\>\Big)\Big] \EE\Big[\cos \Big(\<\theta_{\e}, \xX^\e_{t}(z)\>\Big)\Big],
\end{align*}
with $\left(\EE\Big[\sin \Big(\<\theta_{\e}, \xX^\e_{0}(z)\>\Big)\Big]\right)^2 = 0$. 
We sum up the preceding equations and obtain 
\begin{align*}
&\frac{\ud}{\ud t} |\phi_t(\theta_\e)|^2 
= 2 \mathsf{Re} \psi(\e \theta_\e) |\phi_t(\theta_\e)|^2 \\
& -2\EE\Big[\cos \Big(\<\theta_{\e}, \xX^\e_{t}(z)\>\Big)\Big]\EE\Big[\sin \Big(\<\theta_{\e}, \xX^\e_{t}(z)\>\Big) 
\< \theta_{\e},  -Db(0)\xX^\e_{t}(z)-\ti b(\xX^\e_{t}(z)+\Psi_t^{-1}z)\> \Big] \\
&+2\EE\Big[\sin \Big(\<\theta_{\e}, \xX^\e_{t}(z)\>\Big)\Big]\EE\Big[\cos \Big(\<\theta_{\e}, \xX^\e_{t}(z)\>\Big) 
\< \theta_{\e},  -Db(0)\xX^\e_{t}(z)-\ti b(\xX^\e_{t}(z)+\Psi_t^{-1}z)\> \Big].
\end{align*}
We start with the first term on the right-hand side. 
By $|\theta|\gqq K$ and Lemma \ref{lem oreymasuda}
we have for small $\e$ (where the smallness of $\e$ only depends of $K$ and $C_\sphericalangle$) the estimate
\begin{align*}
 \mathsf{Re} \psi(\e\theta_\e) 
 &= \int_{|u|\lqq 1} \Big(\cos\Big(\Big\< \frac{\theta}{\Delta_\e^{1/\al}}, u\Big\>\Big) - 1\Big)\nu(\ud u) \lqq \int_{\substack{|\< \frac{\theta}{\Delta_\e^{1/\al}}, u\>|\lqq 1\\
|u|\lqq 1
} } \Big(\cos\Big(\Big\< \frac{\theta}{\Delta_\e^{1/\al}}, u\Big\>\Big) - 1\Big)\nu(\ud u)\\
 &\lqq - \frac{2}{\pi^2}\int_{\substack{|\< \frac{\theta}{\Delta_\e^{1/\al}}, u\>|\lqq 1\\
|u|\lqq 1
} } \Big|\Big\< \frac{\theta}{\Delta_\e^{1/\al}}, u\Big\>\Big|^2\nu(\ud u) \lqq -\frac{2}{\pi^2} c_\sphericalangle \Big|\frac{\theta}{\Delta_\e^{1/\al}}\Big|^{\al}
 = -C \frac{|\theta|^{\alpha}}{\Delta_\e}.
\end{align*}
We continue with the second term. Recall that $\alpha \in (3/2,2)$. 
By the Cauchy-Schwarz inequality  and the classical Young inequality for $p = p^* = 2$  we have 
\begin{align*}
&\Big|-2\EE\Big[\cos \Big(\<\theta_{\e}, \xX^\e_{t}(z)\>\Big)\Big]\EE\Big[\sin \Big(\<\theta_{\e}, \xX^\e_{t}(z)\>\Big) 
\< \theta_{\e},  -Db(0)\xX^\e_{t}(z)-\ti b(\xX^\e_{t}(z)+\Psi_t^{-1}z)\> \Big]\Big| \\
&\lqq 2\big|\EE\Big[\cos \Big(\<\theta_{\e}, \xX^\e_{t}(z)\>\Big)\Big]\big|~
|\e \theta_{\e}|^{3/4} \frac{|\theta_\e|^{1/4}}{\e^{3/4}}
~\EE\Big[ | -Db(0)\xX^\e_{t}(z)-\ti b(\xX^\e_{t}(z)+\Psi_t^{-1}z)| \Big]\\
&\lqq |\e \theta_{\e}|^{3/2}~\EE\Big[\cos \Big(\<\theta_{\e}, \xX^\e_{t}(z)\>\Big)\Big]^2  + \frac{|\theta_\e|^{1/2}}{\e^{3/2}} ~\EE\Big[|-Db(0)\xX^\e_{t}(z)-\ti b(\xX^\e_{t}(z)+\Psi_t^{-1}z)| \Big]^2.
\end{align*}
For the third term we infer analogously 
\begin{align*}
&\Big|2\EE\Big[\sin \Big(\<\theta_{\e}, \xX^\e_{t}(z)\>\Big)\Big]\EE\Big[\cos \Big(\<\theta_{\e}, \xX^\e_{t}(z)\>\Big) 
\< \theta_{\e},  -Db(0)\xX^\e_{t}(z)-\ti b(\xX^\e_{t}(z)+\Psi_t^{-1}z)\> \Big]\Big|\\
&\lqq |\e \theta_{\e}|^{3/2}~\EE\Big[\sin \Big(\<\theta_{\e}, \xX^\e_{t}(z)\>\Big)\Big]^2  + 
\frac{|\theta_\e|^{1/2}}{\e^{3/2}} ~\EE\Big[|-Db(0)\xX^\e_{t}(z)-\ti b(\xX^\e_{t}(z)+\Psi_t^{-1}z)| \Big]^2.
\end{align*}
Since $\al \in (\nicefrac{3}{2}, 2)$ we obtain for sufficiently small $\e$ and $|\theta|\gqq K$ 
\begin{align*}
\mathsf{Re} \psi(\e \theta_\e) + |\e \theta_\e|^{3/2}  \lqq - 2 C |\e \theta_\e|^{\alpha} + |\e \theta_\e|^{3/2} 
\lqq - C \frac{|\theta|^\al}{\Delta_\e}.
\end{align*}
Therefore for $|\theta|\gqq K$ and $\e$ small enough 
we have the following differential inequality 
\begin{align*}
\frac{\ud}{\ud t} |\phi_t(\theta_\e)|^2 
&\lqq \Big(\mathsf{Re} \psi(\e \theta_\e)  + |\e \theta_{\e}|^{3/2}\Big) |\phi_t(\theta_\e)|^2 + 2\frac{|\theta_\e|^{1/2}}{\e^{3/2}} ~ \EE\Big[|-Db(0)\xX^\e_{t}(z)-\ti b(\xX^\e_{t}(z)+\Psi_t^{-1}z)| \Big]^2\\
&\lqq - C \frac{|\theta|^\al}{\Delta_\e} |\phi_t(\theta_\e)|^2  + 2\frac{|\theta_\e|^{1/2}}{\e^{3/2}} ~ \EE\Big[|-Db(0)\xX^\e_{t}(z)-\ti b(\xX^\e_{t}(z)+\Psi_t^{-1}z)| \Big]^2.
\end{align*}
In the sequel, we dominate the term $\EE\Big[|-Db(0)\xX^\e_{t}(z)-\ti b(\xX^\e_{t}(z)+\Psi_t^{-1}z)| \Big]^2$. 
Recall the definition of $\ti b(x)=b(x)h(x)-Db(0)x$, $x\in \RR^d$,
where $h$ is given in \eqref{eq:mollifier}.
Note that
\[
\begin{split}
&-Db(0)\xX^\e_{t}(z)-\ti b(\xX^\e_{t}(z)+\Psi_t^{-1}z)\\
&\quad =-Db(0)\xX^\e_{t}(z) - b(\xX^\e_{t}(z)+\Psi_t^{-1}z)h(\xX^\e_{t}(z)+\Psi_t^{-1}z)+Db(0)(\xX^\e_{t}(z)+\Psi_t^{-1}z)\\
&\quad =-b(\xX^\e_{t}(z)+Db(0)\Psi_t^{-1}z)h(\xX^\e_{t}(z)+\Psi_t^{-1}z)+Db(0)\Psi_t^{-1}z.
\end{split}
\]
Since  $\xX^{\e}_t(z)=\fX^{\e}_t(z)-\Psi_t^{-1}z$, \eqref{eq:siorpaes99X}
implies
\begin{align*}
\lim_{\e\to 0}\PP\Big(\sup_{s\in [0, \Delta_\e]} |\xX^\e_s(z)+Db(0)\Psi_s^{-1}z|>  2 r_\e\Big)=\lim_{\e\to 0}\PP\Big(\sup_{s\in [0, \Delta_\e]} |\xX^\e_s(z)|>  2 r_\e\Big)=0
\end{align*}
for $|z|\lqq r_\e$ for sufficiently small $\e$. 
Taylor's theorem combined with
the jump size and spatial localizations yields
for $\e$ sufficiently small 
\begin{align*}
\sup_{s\in [0, \Delta_\e]}\EE\Big[\Big|-Db(0)\xX^\e_{s}(z)-\ti b(\xX^\e_{s}(z)+\Psi_s^{-1}z)h(\xX^\e_{s}(z)+\Psi_s^{-1}z)\Big| \Big]^2  \lqq C_2 r^2_\e,
\end{align*}
where $C_2>0$ only depends $\max_{|u|\lqq 2}|b(u)|$, $\max_{|u|\lqq 2}|Db(u)|$ and $\max_{|u|\lqq 2}|D^2b(u)|$. 
Hence the variation of constants formula yields 
\begin{align}
\nonumber
|\phi_{\Delta_\e}(\theta_\e)|^2
&\lqq e^{-C \frac{|\theta|^\al}{\Delta_\e} \Delta_\e} 1 + C_2 \e^{2(1-\vartheta)}  \frac{|\theta_\e|^{1/2}}{\e^{3/2}} 
e^{-C \frac{|\theta|^\al}{\Delta_\e} \Delta_\e} 
\int_0^{\Delta_\e}  e^{C \frac{|\theta|^\al}{\Delta_\e} s} ds \nonumber\\
&= e^{-C |\theta|^\al}  + C_3\frac{\Delta_\e^{1-\frac{1}{2\al}} }{\e^{-2 \vt} |\theta|^{\al-\frac{1}{2}}}  
\Big(1- e^{-C \frac{|\theta|^\al}{\Delta_\e} \Delta_\e}\Big)\lqq e^{-C |\theta|^\al}  + C_3\frac{\Delta_\e^{1-\frac{1}{2\al}} }{\e^{-2 \vt} |\theta|^{\al-\frac{1}{2}}}\label{ine: fourieroder}. 
\end{align}
Note that the shift $a_\e^0$ does not change the modulus of $\hat f_\e(\theta)$ and 
hence the integrability in $\theta$. 
The parameter value $\al \in (\nicefrac{3}{2}, 2)$ implies that 
\begin{align*}
\int_{|\theta|>K} |\hat f_\e(\theta)|^2 \ud \theta 
\lqq \int_{|\theta|>K } 
e^{-C |\theta|^\al} \ud \theta  + 2 C_3\frac{\Delta_\e^{1-\frac{1}{2\al}} }{\e^{-2 \vt}} \int_{|\theta| > K} \frac{1}{|\theta|^{\al-\frac{1}{2}}} \ud \theta < \infty.  
\end{align*}
Since $\Delta_\e = \e^{\frac{\al}{2}}$ we have the desired limit \eqref{e: UI} for any $|z|\lqq {r_\e}$ 
\begin{align*}
\lim_{K\ra \infty} \limsup_{\e \ra 0} \int_{|\theta|>K} |\hat f_\e(\theta)|^2 \ud \theta
&\lqq \lim_{K\ra \infty} \int_{|\theta|>K } 
e^{-C |\theta|^\al} \ud \theta = 0. 
\end{align*}
In order to see the uniformity we refer to the continuity of the map
\[
z \mapsto  \norm{\frac{X^{\e}_{\Delta_\e}(z)-\Psi^{-1}_{\Delta_\e}z}{\e \Delta^{1/\alpha}_{\e}} + a_\e^0- U}.
\]
That is, the supremum is taken at some value $z_\e$ such that 
\begin{align*}
\sup_{|z|\lqq {r_\e}} \norm{\frac{X^{\e}_{\Delta_\e}(z)-\Psi^{-1}_{\Delta_\e}z}{\e \Delta^{1/\alpha}_{\e}} + a_\e^0- U} 
= \norm{\frac{X^{\e}_{\Delta_\e}(z_\e)-\Psi^{-1}_{\Delta_\e}z_\e}{\e \Delta^{1/\alpha}_{\e}} + a_\e^0- U}. 
\end{align*}
In the previous calculation the only property of $z$ we use is that $|z|\lqq r_\e$. 
Hence all previous results remain valid for $z$ being replaced by $z_\e$. 
This finishes the proof of Proposition \ref{p:localcoupling}.
\end{proof}

\begin{proof}[Proof of Proposition~\ref{prop: stc}:] 
The proof consists of the domination of the error terms $G_1$ and $G_2$ in \eqref{e:G1G2}. 
The result of Subsubsection \ref{sss: Step1} is the convergence $G_2\to 0$ as $\e \ra 0$. 
The term $G_1$ is estimated by inequality \eqref{eq: pivot disintegration} 
whose right-hand side is dominated by the terms given in \eqref{e:secondorder}, Proposition~\ref{tomate} and Proposition \ref{p:localcoupling}, all of which tend to $0$ as $\e \to 0$. 
This finishes the proof of Proposition~\ref{prop: stc}. 
\end{proof}

\bigskip

\subsection{\textbf{{Inhomogeneous O-U} approximation of the limiting distribution (Prop.~\ref{prop: equil})}}\label{sec: local}\hfill\\

\begin{proof}[Proof of Proposition~\ref{prop: equil}: ]
\noindent Let $x_0\in \RR^d$ and $t>0$. The triangle inequality yields
\begin{align}\label{main estimate}
\norm{\mu^{\e}-\mu^{\e}_*}\lqq \norm{\mu^{\e}-X^{\e}_{t}(x_0)}+
\norm{X^{\e}_{t}(x_0)-Y^{\e, x_0}(t; 0, x_0)}+\norm{Y^{\e, x_0}(t; 0, x_0)-\mu^{\e}_*}.
\end{align}
Here, we estimate the first-term of the right-hand side of inequality \eqref{main estimate}. By disintegration and the invariance property of $\mu^\e$ it follows
\[
\norm{\mu^{\e}-X^{\e}_{t}(x_0)}\lqq \int_{\RR^d}\norm{X^{\e}_t(u)-X^{\e}_{t}(x_0)}\mu^{\e}(\ud u).
\]
Let $s_\e\gg t^{x_0}_{\e}$ for sufficiently small $\e$.
The triangle inequality for the total variation distance implies 
\begin{align*}
&\int_{\RR^d}\norm{X^{\e}_{s_\e}(u)-X^{\e}_{s_\e}(x_0)}\mu^{\e}(\ud u)
\lqq \int_{\RR^d}\norm{X^{\e}_{s_\e}(u)-Y^{\e,u}(s_\e; 0, u)}\mu^{\e}(\ud u)\\
&\qquad+\int_{\RR^d}\norm{Y^{\e,u}(s_\e; 0, u)-Y^{\e,x_0}(s_\e; 0, x_0)}\mu^{\e}(\ud u)+\norm{Y^{\e,x_0}(s_\e; 0, x_0)-X^{\e}_{s_\e}(x_0)}.
\end{align*}
Since the total variation distance is bounded by one, we have for any $K>0$
\begin{align*}
\int_{\RR^d}\norm{X^{\e}_{s_\e}(u)-Y^{\e,u}(s_\e; 0, u)}\mu^{\e}(\ud u)
&\lqq 
\int_{|u|\lqq K}\norm{X^{\e}_{s_\e}(u)-Y^{\e,u}(s_\e; 0, u)}\mu^{\e}(\ud u) +\mu^{\e}(|u|\gqq K)
\end{align*}
and
\begin{align*}
&\int_{\RR^d}\norm{Y^{\e,u}(s_\e; 0, u)-Y^{\e,x_0}(s_\e; 0, x_0)}\mu^{\e}(\ud u)\\
&\qquad \lqq 
\int_{|u|\lqq K}\norm{Y^{\e,u}(s_\e; 0, u)-Y^{\e,x_0}(s_\e; 0, x_0)}\mu^{\e}(\ud u)
+\mu^{\e}(|u|\gqq K).
\end{align*}
Combining the preceding inequalities with inequality \eqref{main estimate} we obtain
\begin{align}
\norm{\mu^{\e}-\mu^{\e}_*}
&\lqq I_1+2I_2+2I_3+I_4+I_5,\label{e:pivpivpiv}
\end{align}
where 
\begin{align*}
I_1 &:= \norm{Y^{\e,x_0}(s_\e; 0, x_0)-\mu^{\e}_*},\qquad 
I_2 := \norm{Y^{\e,x_0}(s_\e; 0, x_0)-X^{\e}_{s_\e}(x_0)},\\
I_3 &:= \mu^{\e}(|u|\gqq K), \qquad I_4 := \int_{|u|\lqq K}\norm{X^{\e}_{s_\e}(u)-Y^{\e,u}(s_\e; 0, u)}\mu^{\e}(\ud u) \qquad \mbox{ and } \\
I_5 &:= \int_{|u|\lqq K}\norm{Y^{\e,u}(s_\e; 0, u)-Y^{\e,x_0}(s_\e; 0, x_0)}\mu^{\e}(\ud u),
\end{align*}
for any $x_0\in \RR^d$ and $s_{\e}\gg t^{x_0}_{\e}$ for sufficiently small $\e>0$. 
The remainder of the proof consists of showing that each of the terms $I_i \ra 0$ as $\e \ra 0$, $i=1, \dots, 5$. 
\\

\noindent
\textbf{Estimates for $I_1$ in \eqref{e:pivpivpiv}.}
Let $x_0\not=0$. By Proposition~\ref{prop: windows} we have 
\begin{equation}\label{eq: I1}
\lim\limits_{\e\to 0}\norm{Y^{\e,x_0}(s_\e; 0, x_0)-\mu^{\e}_*}=0\quad \textrm{ for any } s_{\e}\gg t^{x_0}_{\e}, \textrm{ as } \e\to 0.
\end{equation}
By \eqref{ine: fine} and Lemma~\ref{lem: continuidad} in Appendix~\ref{Appendix C} we have 
that for any $K>0$ 
\begin{align*}
\lim\limits_{\e\to 0}\sup_{|x_0|\lqq K}\norm{Y^{\e,x_0}(s_\e; 0, x_0)-\mu^{\e}_*}=0\quad \textrm{ for any } s_{\e}\gg \ln(1/\e) \textrm{ as } \e\to 0.\\
\end{align*}

\noindent
\textbf{Estimates for $I_2$ in \eqref{e:pivpivpiv}.}
We estimate the second term as follows
\begin{align}
\norm{X^{\e}_{s_\e}(x_0)-Y^{\e,x_0}(s_\e; 0, x_0)}\lqq &
\norm{X^{\e}_{\Delta_\e}(X^{\e}_{s_\e-\Delta_\e}(x_0))-Y^{\e, x_0}({\Delta_\e};
s_{\e}-\Delta_\e, X^{\e}_{s_\e-\Delta_\e}(x_0))}\nonumber\\
&\hspace{-4.5cm}+
\norm{Y^{\e, x_0}({\Delta_\e};s_{\e}-\Delta_\e, X^{\e}_{s_\e-\Delta_\e}(x_0))-Y^{\e, x_0}(\Delta_\e;s_\e-\Delta_\e, Y^{\e, x_0}(s_\e-\Delta_\e;0, x_0))}.\label{eq: estI2}
\end{align}
By Proposition~B.1 in Appendix \ref{ap: multiscale} our estimates in the previous sections remain valid up to times of order $\e^{-\vartheta}$ for some $\vartheta>0$. In the sequel, we set $s_\e:= \ln^2(\e)$. 

\noindent We start with the second term on the right-hand side of \eqref{eq: estI2} and lighten the notation. By Proposition~\ref{prop: secondorder} it is not hard to see that Lemma~\ref{lem: dependence} remains valid for 
\[
z=X^{\e}_{s_\e-\Delta_\e}(x_0) \qquad \mbox{ and } \qquad \ti z=Y^{\e, x_0}(s_\e-\Delta_\e;0, x_0). 
\]
For the convenience of the reader, we restate it here.
\begin{lem}
Let $\gamma_\e={\Delta_\e}^{-\nicefrac{1}{\alpha}}$, where $\Delta_\e=\e^{\nicefrac{\alpha}{2}}$.
Then
\begin{equation}\label{lim: siropaes}
\lim_{\e \ra 0}\PP_\e(\gamma_\e |z - \ti z|> \eta \e)=0 \quad \textrm{ for any } \eta>0,
\end{equation}
where $\PP_\e(\ud u, \ud \tilde{u})$ denotes the joint distribution   
$\PP\left(X^{\e}_{s_\e-\Delta_\e}(x_0)\in \ud u, Y^{\e, x_0}(s_\e-\Delta_\e;0, x_0)\in \ud\ti u\right)$. 
\end{lem}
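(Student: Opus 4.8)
The statement to be proven is a restatement of Lemma~\ref{lem: dependence} with the time horizon $T^x_\e$ (which is $O(\ln(1/\e))$) replaced by $s_\e - \Delta_\e$ with $s_\e = \ln^2(\e)$. So the proof strategy is identical to that of Lemma~\ref{lem: dependence}: first rewrite the event $\{\gamma_\e|z-\ti z| > \eta\e\}$ as $\{|X^\e_{s_\e-\Delta_\e}(x_0) - Y^{\e,x_0}(s_\e-\Delta_\e;0,x_0)| > \eta\e\, \Delta_\e^{1/\alpha}\}$ using $\gamma_\e = \Delta_\e^{-1/\alpha}$; then invoke the Freidlin--Wentzell first order approximation estimate from Proposition~\ref{prop: secondorder} in Appendix~\ref{ap: multiscale}, which controls $\PP(|X^{\e,x}_t - Y^\e_t(x)| \gqq \e^{3/2})$ for times $t \gg t^x_\e$. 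The only genuinely new point compared with Lemma~\ref{lem: dependence} is that here the time scale is $s_\e = \ln^2(\e) = O(|\ln\e|^2)$ rather than $O(|\ln\e|)$, so one needs the version of the first order approximation valid up to times of order $\e^{-\vartheta}$ for some $\vartheta > 0$ (in particular up to polylogarithmic times), which is exactly the content of Proposition~B.1 invoked just above the lemma.

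Concretely, I would proceed as follows. First I would observe that $\eta\e\,\Delta_\e^{1/\alpha} = \eta\, \e^{1 + \frac{1}{2}}= \eta\,\e^{3/2}$ since $\Delta_\e = \e^{\alpha/2}$ and hence $\Delta_\e^{1/\alpha} = \e^{1/2}$; thus the event in question is precisely $\{|X^\e_{s_\e-\Delta_\e}(x_0) - Y^{\e,x_0}(s_\e-\Delta_\e;0,x_0)| > \eta\,\e^{3/2}\}$, which matches the scaling $\e^{3/2}$ appearing in the Freidlin--Wentzell expansion in Subsection~\ref{subsec: outline}. Second I would check that $s_\e - \Delta_\e \gg t^{x_0}_\e$ for small $\e$ (which is clear since $t^{x_0}_\e = O(|\ln\e|)$ and $s_\e = \ln^2(\e)$, while $\Delta_\e \to 0$) and that $s_\e - \Delta_\e$ stays below the time horizon $\e^{-\vartheta}$ permitted by Proposition~B.1. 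Third, apply Proposition~\ref{prop: secondorder} (with the extended range of times from Proposition~B.1) to conclude
\[
\PP\big(|X^{\e}_{s_\e-\Delta_\e}(x_0) - Y^{\e,x_0}(s_\e-\Delta_\e;0,x_0)| > \eta\,\e^{3/2}\big) \lra 0, \qquad \e \ra 0,
\]
for every fixed $\eta > 0$, which is exactly \eqref{lim: siropaes}.

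The main (and essentially only) obstacle is verifying that the quantitative Freidlin--Wentzell estimate of Proposition~\ref{prop: secondorder} genuinely extends to the polylogarithmic time horizon $s_\e = \ln^2(\e)$; but this is precisely what Proposition~B.1 in Appendix~\ref{ap: multiscale} asserts (validity up to times of order $\e^{-\vartheta}$, which dwarfs $\ln^2(\e)$), so the argument reduces to citing it with the correct time parameter. Everything else — the identification of the scaling constant $\eta\e\gamma_\e^{-1} = \eta\e\Delta_\e^{1/\alpha} = \eta\e^{3/2}$, the comparison $s_\e - \Delta_\e \gg t^{x_0}_\e$, and the passage to the joint law $\PP_\e$ by bounding the probability of the relevant event under the coupling — is routine and follows the template already laid out in the proof of Lemma~\ref{lem: dependence} verbatim.
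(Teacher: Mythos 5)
Your proposal is correct and takes essentially the same route as the paper: the text introduces this lemma with the single remark that ``by Proposition~\ref{prop: secondorder} it is not hard to see that Lemma~\ref{lem: dependence} remains valid'' for the replacement $T^x_\e \rightsquigarrow s_\e - \Delta_\e$, and then restates it without further proof. You have simply unpacked that sentence: the scaling identity $\eta\e/\gamma_\e = \eta\e\Delta_\e^{1/\alpha} = \eta\e^{3/2}$, the observation that $s_\e - \Delta_\e = \ln^2(\e) - \e^{\alpha/2}$ satisfies $t_\e\,\e^{1/(1+2(\beta\wedge 1))}\to 0$ (the hypothesis of Lemma~\ref{prop: secondorder}, since polynomial decay beats polylogarithmic growth), and the passage from the joint law $\PP_\e$ to the probability of the event under the underlying coupling — all match what the paper does in the proof of Lemma~\ref{lem: dependence} verbatim.
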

\noindent 
By \eqref{e:P5I1}, \eqref{e:P5I11}, Proposition \ref{tomate} and the application the preceding statement to \eqref{cerveza} we deduce
\[
\lim\limits_{\e \to 0}\norm{Y^{\e, x_0}({\Delta_\e};s_{\e}-\Delta_\e, X^{\e}_{s_\e-\Delta_\e}(x_0))-Y^{\e, x_0}(\Delta_\e;s_\e-\Delta_\e, Y^{\e, x_0}(s_\e-\Delta_\e,0, x_0))}=0.
\]
In the sequel, we continue with the first term on the right-hand side of \eqref{eq: estI2}.
By Corollary~\ref{cor: demomento} in Appendix \ref{Appendix D} we have for any $\eta>0$, $\vartheta\in (0,1)$ and $K>0$ 
\[
\lim\limits_{\e\to 0} 
\sup_{|x_0|\lqq K}\mathbb{P}(|X^{\e}_{s_\e-\Delta_\e}(x_0)|\gqq \eta r_\e)=0.
\] 
Since $s_\e-\Delta_\e\gg t^{x_0}_\e$, it is straightforward to see that the
limit
\eqref{eq: short coupling} remains valid for $T^x_\e$ being replaced by $s_\e - \Delta_\e$. 
Consequently, we have
\begin{align*}
\lim\limits_{\e\to 0}\norm{X^{\e}_{\Delta_\e}(X^{\e}_{s_\e-\Delta_\e}(x_0))-Y^{\e, x_0}({\Delta_\e};s_{\e}-\Delta_\e, X^{\e}_{s_\e-\Delta_\e}(x_0))}=0.\\
\end{align*}

\noindent
\textbf{Estimates for $I_3$ in \eqref{e:pivpivpiv}.} 
By Corollary~\ref{lem: demomento} in Appendix \ref{Appendix D} we have for all $\beta' \lqq \beta\wedge 1$
a positive constant $C$ such that 
\[
\EE[|X^{\e}_t(x)|^{\beta'}]\lqq C \e^{\beta'} + |\varphi^x_t|^{\beta'} \lqq C \e^{\beta'} + e^{-\delta {\beta'} t} |x|^{\beta'} 
\quad \textrm{ for } t\gqq 0, ~x\in \RR^d.
\]
Let $n\in \mathbb{N}$. Then
\[
\EE[|X^{\e}_t(x)|^{\beta'} \wedge n]\lqq e^{-\delta{\beta'} t}|x|^{\beta'}\wedge n+ C\e^{\beta'} \wedge n \quad \textrm{ for } t\gqq 0, ~x\in \RR^d.
\]
Since $\mu^\e$ is stationary, we estimate for all $t\gqq 0$
\begin{align*}
\int_{\RR^d} (|u|^{\beta'}\wedge n) \mu^{\e}(\ud u)=
\int_{\RR^d} \EE[|X^{\e}_t(u)|^{\beta'}\wedge n] \mu^{\e}(\ud u)
\lqq \int_{\RR^d} (e^{-\delta {\beta'} t}|u|^{\beta'} \wedge n)\mu^{\e}(\ud u)+
C \e^{\beta'} \wedge n.
\end{align*}
By the dominated convergence theorem we infer 
\[
\lim\limits_{t\to \infty}\int_{\RR^d} (e^{-\delta {\beta'} t}|u|^{\beta'} \wedge n)\mu^{\e}(\ud u)=0\quad \textrm{ for all } n\in \NN, ~\e\in (0,1].
\]
Therefore,  we have for all $n\in \mathbb{N}$ such $n>C \e$
\[
\int_{\RR^d} (|u|^{\beta'} \wedge n) \mu^{\e}(\ud u)\lqq C\e^{\beta'} \wedge n\lqq C \e^{\beta'}.
\]
By the monotone convergence theorem we obtain
\begin{equation}\label{eq:invariant moment est}
\int_{\RR^d} |u|^{\beta'} \mu^{\e}(\ud u)\lqq C \e^{\beta'}.
\end{equation}
The Markov inequality and \eqref{eq:invariant moment est} imply 
\begin{align*}
\mu^{\e}(|u|\gqq K)\lqq \frac{\int_{\RR^d}|u|^{\beta'}\mu^{\e}(
\ud u)}{K^{\beta'}}\lqq \frac{C \e^{\beta'}}{K^{\beta'}} .\\
\end{align*}

\noindent
\textbf{Estimates for $I_5$ in \eqref{e:pivpivpiv}.}
We start with the triangle inequality
\begin{align*}
&\int_{|u|\lqq K}\norm{Y^{\e,u}(s_\e; 0, u)-Y^{\e,x_0}(s_\e; 0, x_0)}\mu^{\e}(\ud u)\\
&\qquad \lqq \sup_{|u|\lqq K} \norm{Y^{\e,u}(s_\e; 0, u)-\mu^{\e}_*}+
\norm{\mu^{\e}_*-Y^{\e,x_0}(s_\e; 0, x_0)}.
\end{align*}
The second term of the preceding inequality 
is equal to $I_1$ and tends to $0$ as $\e \ra 0$.
By \eqref{ine: fine} and since in Lemma \ref{lem:convergenceindist}, Item (1), in Appendix~\ref{Appendix C} it is shown that $\mu^*_\e$ is the law of $\e Z_\infty$ we have
\[
\norm{Y^{\e,u}(s_\e; 0, u)-\mu^{\e}_*} 
\lqq \norm{Y^{\e,u}(s_\e; 0, u)-Z_\infty}+\norm{\nicefrac{\varphi^u_{s_\e}}{\e}+Z_\infty-Z_\infty}\quad \textrm{ for any } u\in \RR^d.
\]
We start with the first term. 
By Lemma~\ref{lem: cvtlineal} in Appendix~\ref{Appendix C} we have
\[
\lim\limits_{\e\to 0}\sup_{|u|\lqq K}\norm{Y^{\e,u}(s_\e; 0, u)-Z_\infty}=0.
\]
We treat the second term.
Let $\eta>0$. By the shift-continuity of $L^1$ distance yields that there exists $\rho:=\rho(\eta)>0$ such that 
\[
\norm{u+Z_\infty-Z_\infty}\lqq \eta\quad \textrm{ whenever } |u|\lqq \rho.
\]
Note that for $|u|\lqq K$ we have
\[
\big|\frac{\varphi^u_{s_\e}}{\e}\big| \lqq \frac{e^{-\delta s_\e}|u|}{\e}\lqq 
\frac{e^{-\delta s_\e}K}{\e}<\rho \quad \textrm{ for sufficiently small } \e.
\]
Therefore 
\[
\limsup_{\e \to 0}\int_{|u|\lqq K}\norm{\frac{\varphi^u_{s_\e}}{\e}+Z_\infty-Z_\infty}\mu^{\e}(\ud u)\lqq \eta,
\]
and consequently 
\begin{align*}
\lim_{\e \to 0}\int_{|u|\lqq K}\norm{\frac{\varphi^u_{s_\e}}{\e}+Z_\infty-Z_\infty}\mu^{\e}(\ud u)=0.\\
\end{align*}

\noindent
\textbf{Estimates for $I_4$ in \eqref{e:pivpivpiv}.}
Note that
\begin{align}
&\norm{X^{\e}_{s_\e}(x)-Y^{\e, x}(s_\e;0, x)}\lqq 
\norm{X^{\e}_{\Delta_\e}(X^{\e}_{s_\e-\Delta_\e}(x))-Y^{\e, x}({\Delta_\e};s_{\e}-\Delta_\e, X^{\e}_{s_\e-\Delta_\e}(x))}\nonumber\\
&\qquad\quad +
\norm{Y^{\e, x}({\Delta_\e};s_{\e}-\Delta_\e, X^{\e}_{s_\e-\Delta_\e}(x))-Y^{\e,x}(\Delta_\e;s_\e-\Delta_\e, Y^{\e, x}(s_\e-\Delta_\e; 0, x))}.\label{ine: principalI4}
\end{align}
We start with the first term. 
Recall that $r_\e=\e^{1-\vt}$ for $\vt\in (0,\nicefrac{1}{4})$.
For \mbox{$\PP^{x}_\e(\ud z)=\PP(X^{\e}_{s_\e-\Delta_\e}(x)\in \ud z)$}
disintegration yields
\begin{align*}
&\int_{|u|\lqq K}\norm{X^{\e}_{\Delta_\e}(X^{\e}_{s_\e-\Delta_\e}(u))-Y^{\e, u}({\Delta_\e};s_{\e}-\Delta_\e, X^{\e}_{s_\e-\Delta_\e}(u))}\mu^{\e}(\ud u)\\
&\lqq  \int_{|u|\lqq K}\int_{|z|\lqq 2r_\e}\norm{X^{\e}_{\Delta_\e}(z)-Y^{\e, u}({\Delta_\e};s_{\e}-\Delta_\e, z)}\PP^{u}_\e(\ud z)\mu^{\e}(\ud u)+\sup_{|u|\lqq K}\PP(|X^{\e}_{s_\e-\Delta_\e}(u))|> 2r_\e)\\
& \lqq \sup_{|u|\lqq K}\sup_{|z|\lqq 2r_\e}\norm{X^{\e}_{\Delta_\e}(z)-Y^{\e, u}({\Delta_\e};s_{\e}-\Delta_\e, z)}
+\sup_{|u|\lqq K}\PP(|X^{\e}_{s_\e-\Delta_\e}(u)|> 2r_\e)\\
&=\norm{X^{\e}_{\Delta_\e}(z_\e)-Y^{\e, u_\e}({\Delta_\e};s_{\e}-\Delta_\e, z_\e)}
+\sup_{|u|\lqq K}\PP(|X^{\e}_{s_\e-\Delta_\e}(u)|> 2r_\e)
\end{align*}
for some  $|u_\e|\lqq K$ and $|z_\e|\lqq 2r_\e$.
The right-hand side of the preceding inequality tends to zero, as $\e\to 0$. This is due to~Proposition~\ref{prop: stc} and Corollary~\ref{cor: demomento} in Appendix \ref{Appendix D}.

\noindent We continue with the second term on the right-hand side of \eqref{ine: principalI4}.
Let 
\[
\PP^u_\e(\ud z, \ud \ti z)=\PP(X^{\e}_{s_\e-\Delta_\e}(u) \in \ud z, Y^{\e, u}(s_\e-\Delta_\e;0, u)\in \ud \ti z).\]
Using the shift continuity \eqref{e: fresa} we fix $\rho$ and choose $\eta>0$ accordingly. 
Again, by disintegration we have 
\begin{align}
&\int_{|u|\lqq K}\norm{Y^{\e, u}({\Delta_\e};s_{\e}-\Delta_\e, X^{\e}_{s_\e-\Delta_\e}(u))-Y^{\e, u}(\Delta_\e;s_\e-\Delta_\e, Y^{\e, u}(s_\e-\Delta_\e;0, u))}\mu^{\e}(\ud u)\nonumber\\
&\lqq 
\int\limits_{|u|\lqq K}\int_{\substack{|z-\ti z|\lqq \eta \e \Delta_\e^{1/\al},\nonumber\\
|z|\lqq 2r_\e,|\ti z|\lqq 2r_\e}}
\norm{Y^{\e, u}({\Delta_\e};s_{\e}-\Delta_\e, z)-Y^{\e, u}(\Delta_\e;s_\e-\Delta_\e, \ti z)}\PP^u_\e(\ud z, \ud \ti z)\mu^{\e}(\ud u)\nonumber\\
&+\sup_{|u|\lqq K}\PP(|X^{\e}_{s_\e-\Delta_\e}(u)-Y^{\e, u}(s_\e-\Delta_\e,u,0)|>\eta \e\Delta_\e^{1/\al})\nonumber\\
&+\sup_{|u|\lqq K} \PP(|X^{\e}_{s_\e-\Delta_\e}(u)|>2r_\e) +
\sup_{|u|\lqq K}\PP(|Y^{\e, u}(s_\e-\Delta_\e; 0, u)|>2r_\e).\label{e:I41}
\end{align}
where the first term on the right-hand side is estimated by 
\begin{equation}\label{e:I42}
\sup_{|u|\lqq K}
\sup_{\substack{|z-\ti z|\lqq \eta \e \Delta_\e^{1/\al},\\
|z|\lqq 2r_\e,|\ti z|\lqq 2r_\e}}
\norm{Y^{\e, u}({\Delta_\e};s_{\e}-\Delta_\e, z)-Y^{\e, u}(\Delta_\e;s_\e-\Delta_\e, \ti z)}.
\end{equation}
We prove that the right-hand sides of \eqref{e:I41} and \eqref{e:I42} tend to zero, as $\e\to 0$. 
Due to limit \eqref{lim: siropaes} it follows
\[
\lim_{\e\ra 0}\sup_{|u|\lqq K}\PP(|X^{\e}_{s_\e-\Delta_\e}(u)-Y^{\e, u}(s_\e-\Delta_\e,u,0)|>\eta \e\Delta_\e^{1/\al}) =0.
\]
By Corollary~\ref{cor: demomento} in Appendix \ref{Appendix D} and a straightforward adaptation 
for the linearization $Y^{\e, u}$, we have
\[
\lim\limits_{\e\to 0}\sup_{|u|\lqq K} \PP(|X^{\e}_{s_\e-\Delta_\e}(u)|>2r_\e)=
\lim\limits_{\e\to 0}\sup_{|u|\lqq K}\PP(|Y^{\e, u}(s_\e-\Delta_\e; 0, u)|>2r_\e)=0.
\]
We continue with the term \eqref{e:I42} 
\[
\sup_{|u|\lqq K}
\sup_{\substack{|z-\ti z|\lqq \eta \e \Delta_\e^{1/\al},\\
|z|\lqq 2r_\e,|\ti z|\lqq 2r_\e}}
\norm{Y^{\e, u}({\Delta_\e};s_{\e}-\Delta_\e, z)-Y^{\e, u}(\Delta_\e;s_\e-\Delta_\e, \ti z)}.\]
By \eqref{eq: representation0} we have 
\begin{align*}
&Y^{\e,u}(\Delta_\e;s_\e-\Delta_\e,z)\\
&=(\Phi^\e_{\Delta_\e}(u))^{-1}z+
(\Phi^\e_{\Delta_\e}(u))^{-1}
\int_{0}^{\Delta_\e}
\Phi^\e_s(u)\left(Db(\varphi^u_{s_\e-\Delta_\e+s})\varphi^u_{s_\e-\Delta_\e+s}-b(\varphi^{u}_{s_\e-\Delta_\e+s})\right)\ud s+
\e U_\e.
\end{align*}
By the shift and scale invariance of the total variation distance we obtain
\begin{align*}
&\norm{Y^{\e, u}({\Delta_\e};s_{\e}-\Delta_\e, z)-Y^{\e, u}(\Delta_\e;s_\e-\Delta_\e, \ti z)}\\[2mm]
&= \norm{(\Phi^\e_{\Delta_\e}(u))^{-1}z + \e U_\e) - (\Phi^\e_{\Delta_\e}(u))^{-1}\ti z + \e U_\e)}= \norm{\Big((\Phi^\e_{\Delta_\e}(u))^{-1}(z -\ti z)+ \e U_\e\Big) -  \e U_\e}\\
&= \norm{\Big(\frac{(\Phi^\e_{\Delta_\e}(u))^{-1}(z- \ti z)}{\e \Delta_\e^{\nicefrac{1}{\al}}}+\frac{U_\e}{\Delta_\e^{\nicefrac{1}{\al}}}+ a^u_\e\Big) - \Big(\frac{U_\e}{\Delta_\e^{\nicefrac{1}{\al}}} +a^u_\e\Big)
}.
\end{align*}
Hence 
\begin{align*}
&\norm{Y^{\e, u}({\Delta_\e};s_{\e}-\Delta_\e, z)-Y^{\e, u}(\Delta_\e;s_\e-\Delta_\e, \ti z)}\\
&\qquad \lqq \norm{\Big(\frac{(\Phi^\e_{\Delta_\e}(u))^{-1}(z- \ti z)}{\e \Delta_\e^{\nicefrac{1}{\al}}}+\frac{U_\e}{\Delta_\e^{\nicefrac{1}{\al}}}+ a^u_\e\Big) - \Big(\frac{(\Phi^\e_{\Delta_\e}(u))^{-1}(z- \ti z)}{\e \Delta_\e^{\nicefrac{1}{\al}}} +U\Big)
}\\
&\qquad \qquad +
\norm{\Big(\frac{(\Phi^\e_{\Delta_\e}(u))^{-1}(z- \ti z)}{\e \Delta_\e^{\nicefrac{1}{\al}}} +U\Big) - U
}
+~\norm{U  - \Big(\frac{U_\e}{\Delta_\e^{\nicefrac{1}{\al}}} +a^u_\e\Big)
}\\
&\qquad = 2 \norm{U  - \Big(\frac{U_\e}{\Delta_\e^{\nicefrac{1}{\al}}} +a^u_\e\Big)} 
+ \norm{\Big(\frac{(\Phi^\e_{\Delta_\e}(u))^{-1}(z- \ti z)}{\e \Delta_\e^{\nicefrac{1}{\al}}} +U\Big) - U
},
\end{align*}
where $U\stackrel{d}= \mathcal{S}_\alpha(\Lambda_1)$. 
Proposition~\ref{tomate} yields 
\[
\lim_{\e \ra 0} \sup_{|u|\lqq K} \norm{U  - \Big(\frac{U_\e}{\Delta_\e^{\nicefrac{1}{\al}}} +a^u_\e\Big)} =0. 
\]
It remains to show for $\eta>0$ 
\begin{equation}\label{e: stepa}
\sup_{|u|\lqq K}
\sup_{\substack{|z-\ti z|\lqq \eta \e \Delta_\e^{1/\al},\\
|z|\lqq 2r_\e,|\ti z|\lqq 2r_\e}} \norm{\Big(\frac{(\Phi^\e_{\Delta_\e}(u))^{-1}(z- \ti z)}{\e \Delta_\e^{\nicefrac{1}{\al}}} +U\Big) - U
}. 
\end{equation}
Recall that \eqref{e: fresa} implies that 
\[
\Big|\frac{(\Phi^\e_{\Delta_\e}(u))^{-1}(z- \ti z)}{\e \Delta_\e^{\nicefrac{1}{\al}}}\Big| \lqq \sqrt{d} \eta  
\]
yields that \eqref{e: stepa} is bounded from above by $\rho$. 
Sending first $\e\to 0$ and then~$\rho\to 0 $  yields the limit of \eqref{e: stepa} equals $0$. 

\noindent
By \eqref{cotacontractiva} we have for $|z- \ti z|\lqq \eta \e \Delta_\e^\frac{1}{\al}$ that
\[
\Big|\frac{(\Phi^\e_{\Delta_\e}(u))^{-1}(z- \ti z)}{\e \Delta_\e^{\nicefrac{1}{\al}}}\Big| 
\lqq \sqrt{d} \eta.
\]
Hence $I_4 \ra 0$ as $\e \ra 0$. This completes the proof of Proposition \ref{prop: equil}. 
\end{proof}

\bigskip

\appendix
\section{\textbf{The deterministic dynamics}} \label{A}\hfill

This section gathers all results concerning the deterministic fine dynamics 
of the solution $\varphi_t^x$ of \eqref{dde1.1} under Hypothesis \ref{hyp: potential}. 
The following lemma is of interest 
since it shows that the time scale $t_\e^x$ 
yields an estimate on the deterministic dynamics with 
of order exactly $\e$. 

\begin{lem}\label{lem:ordenepsilon}
Let $\Delta_\e>0$ such that 
$\lim\limits_{\e \to 0}\Delta_\e=0$.
Let $\rho\in \mathbb{R}$ we define
$T^x_\e=t^x_\e-\Delta_\e+\rho\cdot w^x_\e$,
where $t^x_\e $ and $w^x_\e$ are given in Theorem~\ref{thm: main result}. Then
there exists a positive constant $C(|x|,\rho)$ that depends continuously on $|x|$ such that
$|\varphi^{x}_{T^x_\e}|\lqq C(|x|,\rho)\e$.
\end{lem}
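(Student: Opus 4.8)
The plan is to use the exponential contraction estimate $|\varphi^x_t| \lqq e^{-\delta t}|x|$ established right after Hypothesis~\ref{hyp: potential}, together with the explicit form of the time scale $t^x_\e$ from \eqref{eq:tiempos}, and simply track the asymptotics of $e^{-\delta T^x_\e}$ as $\e \to 0$. The point is that $\lambda_x$ — a real part of an eigenvalue of $Db(0)$ — is always at least $\delta$ (this follows from the equivalent coercivity statement $\langle Db(x)y, y\rangle \gqq \delta|y|^2$ applied at $x = 0$ to a suitable eigenvector), so the deterministic decay rate $\delta$ in the crude bound dominates the decay rate $\lambda_x$ appearing in the definition of $t^x_\e$. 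Hence $e^{-\delta t^x_\e}$ decays at least as fast as $e^{-\delta t^x_\e} = \e^{\delta/\lambda_x} (\ln(1/\e))^{-\delta(\ell_x-1)/\lambda_x} \lqq \e \cdot (\text{something bounded})$ for small $\e$.

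Concretely, first I would write out
\[
T^x_\e = t^x_\e - \Delta_\e + \rho\cdot w^x_\e = \frac{1}{\lambda_x}\ln\!\left(\tfrac{1}{\e}\right) + \frac{\ell_x-1}{\lambda_x}\ln\!\left(\ln\!\left(\tfrac{1}{\e}\right)\right) - \Delta_\e + \rho\cdot w^x_\e,
\]
recalling that $w^x_\e = \lambda_x^{-1} + o_\e(1)$ is bounded and $\Delta_\e \to 0$, so the last two terms contribute only a bounded factor $e^{-\delta(\rho w^x_\e - \Delta_\e)}$ which stays in a compact interval for $\e$ small. Then
\[
|\varphi^x_{T^x_\e}| \lqq e^{-\delta T^x_\e}|x| = |x|\, e^{-\delta(\rho w^x_\e - \Delta_\e)} \cdot \e^{\delta/\lambda_x} \cdot \left(\ln\!\left(\tfrac{1}{\e}\right)\right)^{-\delta(\ell_x-1)/\lambda_x}.
\]
Since $\delta \lqq \lambda_x$ we have $\delta/\lambda_x \lqq 1$, which is the wrong direction if $\delta/\lambda_x < 1$ — so instead I should use $\delta \gqq \lambda_x$... no: the correct inequality is $\lambda_x \gqq \delta$ (the smallest real part of an eigenvalue of $Db(0)$ is $\gqq \delta$ by coercivity), hence $\delta/\lambda_x \lqq 1$, giving $\e^{\delta/\lambda_x} \gqq \e$, which is again the wrong direction. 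The resolution is that the crude bound $|\varphi^x_t| \lqq e^{-\delta t}|x|$ is too lossy; one must instead invoke the Hartman-Grobman decomposition Lemma~\ref{asymp}, which gives the \emph{sharp} asymptotics $|\varphi^x_{t+\tau_x}| \sim t^{\ell_x-1} e^{-\lambda_x t} |v(t,x)|$ with $|v(t,x)|$ bounded above and below away from $0$ by \eqref{eq: outsidezero}.

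Thus the real argument is: by Lemma~\ref{asymp}, there is a constant $C_1(|x|)$, continuous in $|x|$, such that $|\varphi^x_t| \lqq C_1(|x|)\, t^{\ell_x - 1} e^{-\lambda_x t}$ for all $t$ large enough. Substituting $t = T^x_\e$ and using $T^x_\e = \frac{1}{\lambda_x}\ln(1/\e) + \frac{\ell_x - 1}{\lambda_x}\ln\ln(1/\e) + O(1)$, one computes $e^{-\lambda_x T^x_\e} = \e \cdot (\ln(1/\e))^{-(\ell_x-1)} \cdot e^{O(1)}$ and $(T^x_\e)^{\ell_x-1} = (\frac{1}{\lambda_x}\ln(1/\e))^{\ell_x-1}(1 + o_\e(1))$, so the $(\ln(1/\e))^{\pm(\ell_x-1)}$ factors cancel exactly, leaving $|\varphi^x_{T^x_\e}| \lqq C(|x|,\rho)\,\e$ with $C(|x|,\rho)$ depending continuously on $|x|$ through $C_1(|x|)$ and on $\rho$ through the $e^{O(1)}$ factor $e^{-\lambda_x(\rho w^x_\e - \Delta_\e)}$, which is bounded for small $\e$. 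The main obstacle is being careful that Lemma~\ref{asymp} only provides the limit \eqref{eq: deterministic exponential convergence}, so one must first convert that convergence into a genuine two-sided bound $|\varphi^x_t| \lqq C_1(|x|) t^{\ell_x-1} e^{-\lambda_x t}$ valid for $t \gqq T_0(|x|)$ with $T_0$ locally bounded — and to check that $T^x_\e \gqq T_0(|x|)$ eventually, uniformly on compact sets of $x$, which follows since $T^x_\e \to \infty$ and $\ell_x, \lambda_x, \tau_x$ are controlled on such sets. A secondary point is tracking the continuous dependence of all constants on $|x|$, which is exactly what Appendix~\ref{A}'s preliminary lemmas on matrix exponentials are designed to supply.
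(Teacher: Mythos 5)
Your final argument is essentially the same as the paper's proof: both rely on the Hartman–Grobman Lemma~\ref{asymp} to extract the sharp asymptotics $t^{\ell_x-1}e^{-\lambda_x t}$ of $|\varphi^x_t|$, substitute $t = T^x_\e$ using the explicit form of $t^x_\e$ in \eqref{eq:tiempos}, and verify that the $(\ln(1/\e))^{\pm(\ell_x-1)}$ factors cancel. The paper phrases this via a triangle inequality against $(T^x_\e)^{\ell_x-1}e^{-\lambda_x T^x_\e}v(T^x_\e,x)$ combined with the limit $\lim_{\e\to 0}(T^x_\e)^{\ell_x-1}e^{-\lambda_x T^x_\e}/\e = \lambda_x^{1-\ell_x}e^{-\rho}$, which is the same content as your two-sided bound $|\varphi^x_t|\lqq C_1(|x|)t^{\ell_x-1}e^{-\lambda_x t}$ for $t$ large. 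Your opening paragraph — the false start with the crude bound $|\varphi^x_t|\lqq e^{-\delta t}|x|$ and the observation that $\lambda_x \gqq \delta$ makes it lossy in precisely the wrong direction — is a useful sanity check not in the paper, but it is not needed once you invoke the sharp Hartman–Grobman rate.
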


\begin{proof}
By Lemma~\ref{asymp} we have
\begin{equation}\label{eq: otravez}
\lim_{t \to \infty} 
\left| \frac{e^{\lambda_x t}}{t^{\ell_x-1}} \varphi^x_{\tau_x+t} - v(t,x) \right|=0,
\end{equation}
where $v(t,x)=\sum_{k=1}^m e^{i\theta^k_x t}v^k_x$.
A straightforward calculation shows that
\begin{equation}\label{eq: denuevo}
\lim\limits_{\e \to 0} 
\frac{(T^x_\e)^{\ell_x-1} e^{-\lambda_x T^x_\e}}{\e}=\lambda_x^{1-\ell_x}e^{-\rho}.
\end{equation}
Then the triangle inequality yields
\[
\begin{split}
|\varphi^{x}_{T^x_\e}|&\lqq |\varphi^{x}_{T^x_\e}- (T^x_\e)^{\ell_x-1} e^{-\lambda_x T^x_\e} v(T^x_\e,x)|+(T^x_\e)^{\ell_x-1} e^{-\lambda_x T^x_\e}|v(T^x_\e,x)|\\[2mm]
&=(T^x_\e)^{\ell_x-1} e^{-\lambda_x T^x_\e}\left|\frac{e^{\lambda_x T^x_\e}\varphi^{x}_{T^x_\e}}{(T^x_\e)^{\ell_x-1} }-v(T^x_\e,x)\right|+
(T^x_\e)^{\ell_x-1} e^{-\lambda_x T^x_\e}|v(T^x_\e,x)|
\lqq C(|x|,\rho)\e,
\end{split}
\]
where the last inequality follows from limit \eqref{eq: otravez} and limit \eqref{eq: denuevo}.
\end{proof}
The following strong version of the Gr\"onwall-Bellman lemma frequently used and given for completeness. 
\begin{lem}[Gr\"onwall-Bellman inequality]
\label{lem: gronwall}\hfill\\
Let $T> 0$ be fixed. Let $g:[0,T]\rightarrow \mathbb{R}$ be a  $\mathcal{C}^1$-function and $h:[0,T]\rightarrow \mathbb{R}$ be continuous.
If
\[
\frac{\ud}{\ud t}g(t)\lqq -ag(t)+h(t)\quad \textrm{ for any } t\in [0,T],
\] 
where $a\in \mathbb{R}$, and the derivative at $0$ and $T$ are understanding as the right and left derivatives, respectively.  Then 
\[
g(t)\lqq e^{-at}g(0)+e^{-at}\int_{0}^{t}{e^{as}h(s)}\ud s \quad \textrm{ for any }
t\in[0,T].
\]
Moreover, if $a\not=0$ we have
\[
|g(t)|\lqq e^{-at}|g(0)|+\frac{(1-e^{-at})}{a}\max_{s\in[0,t]}|h(s)|
\quad \textrm{ for any }
t\in[0,T].
\]
\end{lem}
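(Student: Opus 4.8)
The plan is to use the classical integrating-factor trick. First I would introduce the auxiliary function $t\mapsto \psi(t):=e^{at}g(t)$, which is $\mathcal{C}^1$ on $[0,T]$ (with one-sided derivatives at the endpoints, inherited from $g$). Differentiating gives $\psi'(t)=e^{at}\big(g'(t)+ag(t)\big)$, and the hypothesis $g'(t)\lqq -ag(t)+h(t)$ yields $\psi'(t)\lqq e^{at}h(t)$ for every $t\in[0,T]$. Since $\psi\in\mathcal{C}^1((0,T))\cap\mathcal{C}([0,T])$ with bounded derivative, it is Lipschitz, hence absolutely continuous, so the fundamental theorem of calculus applies; integrating from $0$ to $t$ gives $\psi(t)-\psi(0)\lqq \int_0^t e^{as}h(s)\,\ud s$, i.e. $e^{at}g(t)\lqq g(0)+\int_0^t e^{as}h(s)\,\ud s$. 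Multiplying by $e^{-at}>0$ produces the first claimed inequality $g(t)\lqq e^{-at}g(0)+e^{-at}\int_0^t e^{as}h(s)\,\ud s$.

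For the second estimate, assume $a\neq 0$. Starting from the first inequality I would bound $g(0)\lqq |g(0)|$ and, inside the integral, $h(s)\lqq |h(s)|\lqq \max_{u\in[0,t]}|h(u)|$, so that $g(t)\lqq e^{-at}|g(0)|+e^{-at}\big(\max_{u\in[0,t]}|h(u)|\big)\int_0^t e^{as}\,\ud s$. The elementary computation $\int_0^t e^{as}\,\ud s=\frac{e^{at}-1}{a}$ (valid for $a\neq 0$) then gives $e^{-at}\int_0^t e^{as}\,\ud s=\frac{1-e^{-at}}{a}$, whence $g(t)\lqq e^{-at}|g(0)|+\frac{1-e^{-at}}{a}\max_{u\in[0,t]}|h(u)|$. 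In the instances in which the lemma is invoked, $g$ is a (squared) norm and hence nonnegative, so $|g(t)|=g(t)$ and the stated inequality follows verbatim; more generally one records this one-sided bound on $g(t)$, which is exactly what is used downstream.

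The argument presents no genuine obstacle; the only points requiring a modicum of care are (i) the justification that $\psi=e^{at}g(t)$ obeys the fundamental theorem of calculus even though its derivative is only one-sided at $t=0$ and $t=T$ — handled by the Lipschitz/absolute-continuity remark above — and (ii) the sign bookkeeping in passing from the first to the second inequality, where one must note that $e^{-at}$ and $\frac{e^{at}-1}{a}$ are both strictly positive for every $a\neq 0$ in order to preserve the direction of the inequality after taking absolute values under the integral.
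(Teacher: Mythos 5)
The paper itself supplies no proof of this lemma; it simply cites Theorem~1.3.3 of Pachpatte \cite{PACH}, so your integrating-factor argument is a self-contained substitute rather than a reproduction of an argument from the paper. Your derivation of the first inequality (set $\psi(t)=e^{at}g(t)$, observe $\psi'(t)\lqq e^{at}h(t)$, integrate, multiply by $e^{-at}$) is the standard route and is correct; the absolute-continuity remark is unnecessary but harmless, since $\psi$ is $\mathcal{C}^1$ on $[0,T]$ with the obvious one-sided derivatives at the endpoints and the fundamental theorem of calculus already applies. More to the point, you have put your finger on a genuine gap in the lemma as stated: the hypothesis $g'\lqq -ag+h$ yields only a one-sided bound on $g$, so the conclusion $|g(t)|\lqq e^{-at}|g(0)|+\frac{1-e^{-at}}{a}\max_{[0,t]}|h|$ cannot hold without a lower bound on $g$. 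A quick counterexample is $a=1$, $h\equiv 0$, $g(t)=1-Ct$ with $C$ large: then $g'(t)=-C\lqq -g(t)$ on $[0,1]$ provided $C\gqq 1$, yet $|g(1)|=C-1$ can be made arbitrarily large while the asserted bound is $e^{-1}$. Your resolution — note that in every invocation the lemma is applied to a squared Euclidean norm, hence $g\gqq 0$ and $|g(t)|=g(t)$, so the one-sided bound suffices — is exactly right, and the sign bookkeeping you carry out ($e^{-at}>0$ and $\tfrac{e^{at}-1}{a}>0$ for all $a\neq 0$, $t>0$) is correct.
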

For the proof, see for instance Theorem~1.3.3 page 15 of \cite{PACH}. Due to the variation of constants formula, the proof of linear cutoff relies essentially on precise norm estimates of the homogeneous and inhomogeneous linear solution flow, which are gathered in the following lemma. 
\begin{lem}\label{lem: cotainfsup}
Let $(\varphi^x_t)_{t\gqq 0}$ be the solution of \eqref{dde1}. We
consider for any fixed $T\gqq 0$ the solution $\Phi=(\Phi_t(x))_{t\gqq 0}$  of the matrix differential equation 
\[
\frac{\ud}{\ud t} \Phi_t = \Phi_t Db(\varphi^x_{T+t})\quad \textrm{ with }  \Phi_0 = I_d,
\]
the solution $\Psi=(\Psi_t)_{t\gqq 0}$ of the matrix differential equation 
\[
\frac{\ud}{\ud t} \Psi_t = \Psi_t Db(0)\quad \textrm{ with }  \Psi_0 = I_d 
\]
and the standard matrix $2$-norm $|\cdot|$. Then
the following statements are valid for any $0\lqq s\lqq t$.
\begin{itemize}
\item[i)] It follows
\begin{equation}\label{cotacontractiva}
|\Phi^*_s(x)(\Phi^{-1}_t(x))^*|\lqq \sqrt{d}e^{-\delta(t-s)}
\quad \textrm{ and }\quad |\Psi^*_s(\Psi^{-1}_t)^*|\lqq \sqrt{d}e^{-\delta(t-s)}.
\end{equation}
\item[ii)] For $C(|x|)=\max\limits_{|u|\lqq |x|}|Db(u)|$ we have
\begin{equation*}
|\Phi^*_t(x)(\Phi^{-1}_s(x))^*|\lqq \sqrt{d}e^{-C(|x|)(t-s)}\quad \textrm{ and }\quad |\Psi^*_t(\Psi^{-1}_s)^*|\lqq \sqrt{d}e^{-|Db(0)|(t-s)}.
\end{equation*}
\item[iii)]
Let $c_1=\nicefrac{1}{\sqrt{d}}$, $c_2=C(|x|)$, $c_3=\sqrt{d}$ and $c_4=\delta$, where $C(|x|)$ is the constant obtained in item ii).
Then for all $z\in \RR^d$
\begin{equation*}
c_1e^{-c_2 (t-s)}|z| \lqq |\Phi^*_s(x)(\Phi^{-1}_t(x))^*z|\lqq c_3e^{-c_4 (t-s)}|z|.
\end{equation*}
\item[iv)] There exist positive constant $\ti c_1$, $\ti c_2$, $\ti c_3$ and $\ti c_4$ such that for all $z\in \RR^d$
\begin{equation*}
\ti c_1e^{-\ti c_2 (t-s)}|z| \lqq |\Psi^*_s(\Psi^{-1}_t)^*z|\lqq \ti c_3e^{-\ti c_4 (t-s)}|z|.
\end{equation*}
\item[v)] For $C(|x|)$ given in item iii) we have
\begin{equation}\label{eq: v)}
|\Phi^{-1}_t(x)\Phi_s(x) -\Psi^{-1}_t\Psi_s |^2\lqq 
\frac{C^2(|x|) d^3}{4\delta^2}
|\varphi^x_{T}|^2
e^{-\delta t}(1-e^{-4\delta (t-s)}).
\end{equation}
 In particular, 
\begin{align}\label{eq: v1}
|\Phi^{-1}_t(x)\Phi_s(x) -\Psi^{-1}_t\Psi_s |^2\lqq 
\frac{C^2(|x|) d^3}{4\delta^2}
|\varphi^x_{T}|^2
e^{-\delta t}.
\end{align}
\end{itemize}
\end{lem}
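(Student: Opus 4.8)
The plan is to reduce each of the five estimates to a one–dimensional Gr\"onwall comparison for the squared length of the image of a fixed vector under the relevant flow, exploiting that the equivalent form of Hypothesis~\ref{hyp: potential}, namely $\<Db(u)y,y\>\gqq\delta|y|^2$, together with the uniform decay $|\varphi^x_t|\lqq e^{-\delta t}|x|\lqq|x|$, pins the symmetric part of $Db(\varphi^x_{T+\cdot})$ from below by $\delta$ and the full matrix from above by $C(|x|)=\max_{|u|\lqq|x|}|Db(u)|$.

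First I would fix $t\gqq 0$ and a vector $w\in\RR^d$ and set $v(s):=\Phi^*_s(x)(\Phi^{-1}_t(x))^*w$. Differentiating the defining ODE $\dot\Phi_s=\Phi_s\,Db(\varphi^x_{T+s})$ gives $\dot v(s)=Db(\varphi^x_{T+s})^*v(s)$, whence
\[
\frac{\ud}{\ud s}|v(s)|^2=2\<Db(\varphi^x_{T+s})^*v(s),v(s)\>=2\<Db(\varphi^x_{T+s})v(s),v(s)\>,
\]
so that $2\delta|v(s)|^2\lqq\tfrac{\ud}{\ud s}|v(s)|^2\lqq 2C(|x|)|v(s)|^2$. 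Since $v(t)=w$, Gr\"onwall's lemma (Lemma~\ref{lem: gronwall}) and its elementary reformulation for lower bounds, applied on $[s,t]$, yield $e^{-C(|x|)(t-s)}|w|\lqq|v(s)|\lqq e^{-\delta(t-s)}|w|$ for $0\lqq s\lqq t$; bounding the operator norm column by column (which is where the harmless factor $\sqrt d$ enters) gives item~i) and, simultaneously, item~iii). Item~ii) is obtained the same way by instead freezing the lower index: with $v(r):=\Phi^*_r(x)(\Phi^{-1}_s(x))^*w$ on $[s,t]$ one has $v(s)=w$ and the same differential sandwich, so Lemma~\ref{lem: gronwall} run forward yields a two–sided exponential bound on $|v(t)|$ in terms of $e^{C(|x|)(t-s)}$ and $e^{\delta(t-s)}$; the statements for $\Psi$ in ii) and iv) are the specialization $\varphi\equiv 0$, so that $Db(\varphi^x_{T+\cdot})\equiv Db(0)$, which is still $\delta$–coercive and has norm $|Db(0)|$, giving the explicit constants $\ti c_i$.

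For item~v) I would compare the two transition families $S_{s,t}:=\Phi^{-1}_t(x)\Phi_s(x)$ and $\ti S_{s,t}:=\Psi^{-1}_t\Psi_s=e^{(s-t)Db(0)}$. Their transposes satisfy $\tfrac{\ud}{\ud s}S^*_{s,t}=Db(\varphi^x_{T+s})^*S^*_{s,t}$ and $\tfrac{\ud}{\ud s}\ti S^*_{s,t}=Db(0)^*\ti S^*_{s,t}$ with common terminal value $I_d$ at $s=t$, so for fixed $w$ the function $\xi(s):=(S_{s,t}-\ti S_{s,t})^*w$ solves
\[
\dot\xi(s)=Db(\varphi^x_{T+s})^*\xi(s)+\big(Db(\varphi^x_{T+s})-Db(0)\big)^*\ti S^*_{s,t}w,\qquad \xi(t)=0.
\]
Here the source term is estimated using $\cC^2$–regularity of $b$, namely $|Db(\varphi^x_{T+s})-Db(0)|\lqq\big(\max_{|u|\lqq|x|}|D^2b(u)|\big)|\varphi^x_{T+s}|$, together with the semigroup decay $|\varphi^x_{T+s}|\lqq e^{-\delta s}|\varphi^x_{T}|$ and the bound $|\ti S^*_{s,t}|\lqq\sqrt d\,e^{-\delta(t-s)}$ coming from item~i) applied to $\Psi$, so that the source at time $s$ has norm at most $\sqrt d\,C(|x|)\,|\varphi^x_{T}|\,e^{-\delta t}|w|$. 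A one–sided Gr\"onwall estimate on $|\xi(s)|$ run backward from $\xi(t)=0$, using $\<Db(\varphi^x_{T+s})^*\xi,\xi\>\gqq\delta|\xi|^2$, then produces $|\xi(s)|\lqq\tfrac{\sqrt d\,C(|x|)}{\delta}|\varphi^x_{T}|\,e^{-\delta t}\big(1-e^{-\delta(t-s)}\big)|w|$; squaring and using $(1-e^{-\delta(t-s)})^2\lqq 1-e^{-4\delta(t-s)}$ and $e^{-2\delta t}\lqq e^{-\delta t}$ gives \eqref{eq: v)}, while \eqref{eq: v1} follows at once since $1-e^{-4\delta(t-s)}\lqq 1$.

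The computations are routine Gr\"onwall bookkeeping; the two points that require care are (a) keeping track of which index is frozen and which runs, so that the sign of the exponent comes out correctly — the transition operator $\Phi^{-1}_t\Phi_s$ contracts as $t-s$ grows precisely because $-Db$ is dissipative — and (b) in v), extracting the genuine decay $e^{-\delta t}$ rather than merely $e^{-\delta(t-s)}$, which is why one must feed the sharper bound $|\varphi^x_{T+s}|\lqq e^{-\delta s}|\varphi^x_{T}|$ into the Duhamel integral instead of the crude $|\varphi^x_{T+s}|\lqq|\varphi^x_{T}|$. I expect the only real obstacle to be organizational: arranging the auxiliary matrix ODEs so that each of i)--v) drops out of the same two–line comparison; the dimensional constants $\sqrt d$, $d^3/4$ are deliberately non-sharp and arise from passing between the operator $2$-norm and componentwise bounds, and play no role in the applications.
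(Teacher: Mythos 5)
Your proof is correct and rests on the same underlying mechanism as the paper's — coercivity $\< Db(u)y,y\>\gqq\delta|y|^2$ combined with Gr\"onwall along the flow — but the implementation is cleaner and delivers slightly sharper constants. For items i)--iv) the paper squares the Frobenius norm of the whole matrix $\Pi^s_t=\Phi^{-1}_t\Phi_s$, differentiates, and feeds it into Gr\"onwall; the factor $\sqrt d$ then enters because $|I_d|_F=\sqrt d$. You instead track the action of the transition family on a fixed vector $w$, which produces the two-sided exponential bound $e^{-C(|x|)(t-s)}|w|\lqq|\Phi^*_s(\Phi^{-1}_t)^*w|\lqq e^{-\delta(t-s)}|w|$ with no dimensional prefactor at all for the operator norm, and i)--iv) follow at once. (Note that the sign in item ii) as printed, $e^{-C(|x|)(t-s)}$, must be $e^{+C(|x|)(t-s)}$: the paper's own proof ends with $|\Phi^{-1}_s\Phi_t|^2\lqq d\,e^{2C(|x|)(t-s)}$, and item iii)'s lower bound $c_1e^{-c_2(t-s)}|z|$ only comes out from the expanding inequality; you have silently used the corrected sign.) For item v) the paper differentiates the Frobenius norm of $\Delta^s_t=\Phi^{-1}_t\Phi_s-\Psi^{-1}_t\Psi_s$ in the upper index $t$ and invokes the integral Gr\"onwall of Mikami, whereas you write the Duhamel formula backward in the lower index from $\xi(t)=0$, feed in the sharper decay $|\varphi^x_{T+s}|\lqq e^{-\delta s}|\varphi^x_T|$ to extract the crucial $e^{-\delta t}$, and finish with the elementary inequality $(1-e^{-u})^2\lqq 1-e^{-4u}$. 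This recovers the exact structural form of \eqref{eq: v)}; your dimensional constant differs from $\nicefrac{d^3}{4}$ by a bounded factor depending on $d$, but as you note these constants are non-sharp by design and the bound is used only qualitatively. The only substantive caution is the one you already flag: in v), the constant playing the role of $C(|x|)$ must be a local Lipschitz constant for $Db$ (i.e.\ a bound on $D^2b$), not the $\max_{|u|\lqq|x|}|Db(u)|$ of item ii); the paper's proof makes the same switch without renaming.
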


\begin{proof}
Let  $t\gqq s\gqq 0$ be fixed.\\

\noindent\textit{Proof of item \textnormal{i)}.}
Define 
$\Pi^s_t(x):=\Phi^{-1}_t(x) \Phi_s(x)$.
Note that
\[
\frac{\ud}{\ud t}\Pi^s_t(x)=\frac{\ud}{\ud t}\Phi^{-1}_t(x) \Phi_s(x)=
-Db(\varphi^x_{T+t})\Phi^{-1}_t(x) \Phi_s(x)=-Db(\varphi^x_{T+t})\Pi^s_t(x).
\]
We denote by $\Pi^s_t(x)=((\Pi^s_t(x))_{i,j})_{i,j\in\{1,\ldots,d\}}$. Observe that
\[
\begin{split}
\frac{\ud}{\ud t}|\Pi^s_t(x)|^2&=-2\sum\limits_{i,j=1}^{d}(\Pi^s_t(x))_{i,j}\sum\limits_{k=1}^{d}(Db(\varphi^x_{T+t}))_{i,k}(\Pi^s_t(x))_{k,j}\\
&=-2\sum\limits_{j=1}^{d}\sum\limits_{i,k=1}^{d}
(\Pi^s_t(x))_{i,j}(Db(\varphi^x_{T+t}))_{i,k}(\Pi^s_t(x))_{k,j}
\lqq -2\delta |\Pi^s_t(x)|^2,
\end{split}
\]
where the last inequality follows from Hypothesis~\ref{hyp: potential}.
Since $\Pi^s_s(x)=I_d$, Lemma~\ref{lem: gronwall} yields
$|\Phi^{-1}_t(x) \Phi_s(x)|^2\lqq de^{-2\delta(t-s)}$.\\

\noindent
\textit{Proof of item \textnormal{ii)}.}
Let $\tilde{\Pi}^s_t(x):=\Phi^{-1}_s(x)\Phi_t(x)$.
Note that
\[
\frac{\ud}{\ud t}\tilde{\Pi}^s_t(x)=\Phi^{-1}_s(x)\frac{\ud}{\ud t}\Phi_t(x)=
\Phi^{-1}_s(x) \Phi_t(x) Db(\varphi^x_{T+t}) =\tilde{\Pi}^s_t(x) Db(\varphi^x_{T+t}).
\]
Observe that
$
|\<\tilde{z},Db(\varphi^x_{T+t})\tilde{z}\>|\lqq |Db(\varphi^x_{T+t})||\tilde{z}|^2
$  for  
$\tilde{z}\in \RR^d$.
By Hypothesis~\ref{hyp: potential} we obtain that $|\varphi^x_t|\lqq |x|e^{-\delta t}$ combined with $b\in \mathcal{C}^2$ implies $|Db(\varphi^x_t)|\lqq \max\limits_{|u|\lqq |x|}|Db(u)|$.
Let $C(|x|):=\max\limits_{|u|\lqq |x|}|Db(u)|$.
Here, we denote by
$\tilde{\Pi}^s_t(x)=((\tilde{\Pi}^s_t(x))_{i,j})_{i,j\in \{1,\ldots,d\}}$.
Then we have
\[
\begin{split}
\frac{\ud}{\ud t}|\tilde{\Pi}^s_t(x)|^2&=2\sum\limits_{i,j=1}^{d}(\tilde{\Pi}^s_t(x))_{i,j}\sum\limits_{k=1}^{d}
(\tilde{\Pi}^s_t(x))_{i,k}(Db(\varphi^x_{T+t}))_{k,j}\\
&=2\sum\limits_{i=1}^{d}\sum\limits_{k,j=1}^{d}
(\tilde{\Pi}^s_t(x))_{i,j}(Db(\varphi^x_{T+t}))_{k,j}(\tilde{\Pi}^s_t(x))_{i,k}
\lqq 2C|\tilde{\Pi}^s_t(x)|^2.
\end{split}
\]
Since $\tilde{\Pi}^s_s=I_d$, Lemma~\ref{lem: gronwall} yields
$|\Phi^{-1}_s(x)\Phi_t(x)|^2\lqq de^{2C(|x|)(t-s)}$.\\

\noindent
\textit{Proof of item \textnormal{iii)}.}
Let $z \in \mathbb{R}^d$ be fixed.
On the one hand, item i) yields
\[
|\Phi^*_s(x)(\Phi^{-1}_t(x))^*z|\lqq |\Phi^*_s(x)(\Phi^{-1}_t(x))^*||z|\lqq \sqrt{d}e^{-\delta(t-s)}|z|.
\]
On the other hand,
 we have
\begin{align*}
|z|=|(\Phi_t(x))^*(\Phi^{-1}_s(x))^*\Phi^*_s(x)(\Phi^{-1}_t(x))^*z|& \lqq 
|(\Phi_t(x))^*(\Phi^{-1}_s(x))^*|
|\Phi^*_s(x)(\Phi^{-1}_t(x))^*z|\\[2mm]
&\lqq 
\sqrt{d}e^{C(|x|)(t-s)}|\Phi^*_s(x)(\Phi^{-1}_t(x))^*z|,
\end{align*}
where the last inequality follows from item \textnormal{ii)}.
Consequently, 
\begin{align*}
|\Phi^*_s(x)(\Phi^{-1}_t(x))^*z|\gqq \frac{1}{\sqrt{d}}e^{-C(|x|)(t-s)}|z|.\\
\end{align*}

\noindent
\textit{Proof of item \textnormal{iv)}.} It follows analogously from item i) and ii). We omit the details.\\

\noindent
\textit{Proof of item \textnormal{v)}.}
Let $\Delta^{s}_t(x):=\Phi^{-1}_t(x)\Phi_s(x) -\Psi^{-1}_t\Psi_s $. Then
\[
\begin{split}
\frac{\ud}{\ud t} \Delta^{s}_t(x) &=\frac{\ud}{\ud t}\Phi^{-1}_t(x)\Phi_s(x)
-\frac{\ud}{\ud t}\Psi^{-1}_t\Psi_s \\[2mm]
&=- Db(\varphi^x_{T+t}) \Phi_t^{-1}(x)\Phi_s(x)+ 
Db(0) \Psi_t^{-1}\Psi_s\\[2mm]
&=- Db(\varphi^x_{T+t}) \Delta^{s}_t(x)+ 
(Db(0)-Db(\varphi^x_{T+t})) \Psi_t^{-1}\Psi_s.
\end{split}
\]
Here we denote by $\Delta^s_t(x)=((\Delta^s_t(x))_{i,j})_{i,j\in \{1,\ldots,d\}}$. Note that
\[
\begin{split}
\frac{\ud}{\ud t} |\Delta^{s}_t(x)|^2 &=2
\sum\limits_{i,j=1}^{d}(\Delta^s_t(x))_{i,j}\frac{\ud}{\ud t}(\Delta^s_t(x))_{i,j}\\
&\hspace{-1cm}=2\sum\limits_{i,j=1}^{d}(\Delta^s_t(x))_{i,j}\left(
\sum\limits_{k=1}^d 
- (Db(\varphi^x_{T+t}))_{i,k}(\Delta^{s}_t(x))_{k,j}+ 
(Db(0)-Db(\varphi^x_{T+t}))_{i,k} (\Psi_t^{-1}\Psi_s)_{k,j}
\right)\\
&\hspace{-1cm}=-2\sum\limits_{j=1}^{d}
\sum\limits_{i,k=1}^d (\Delta^s_t(x))_{i,j}
(Db(\varphi^x_{T+t}))_{i,k}(\Delta^{s}_t(x))_{k,j}\\
&+
2\sum\limits_{i,j,k=1}^{d}(\Delta^s_t)_{i,j}(Db(0)-Db(\varphi^x_{T+t}))_{i,k} (\Psi_t^{-1}\Psi_s)_{k,j}.
\end{split}
\]
By Hypothesis~\ref{hyp: potential} we obtain
\begin{equation}\label{qt0}
\begin{split}
\frac{\ud}{\ud t} |\Delta^{s}_t(x)|^2 
&\lqq -2\delta 
|\Delta^{s}_t(x)|^2+
2\sum\limits_{i,j,k=1}^{d}(\Delta^s_t(x))_{i,j}(Db(0)-Db(\varphi^x_{T+t}))_{i,k} (\Psi_t^{-1}\Psi_s)_{k,j}.
\end{split}
\end{equation}
The Young inequality yields
\begin{equation}\label{qt1}
\begin{split}
2\sum\limits_{i,j,k=1}^{d}|(\Delta^s_t(x))_{i,j}(Db(0)-Db(\varphi^x_{T+t}))_{i,k} (\Psi_t^{-1}\Psi_s)_{k,j}|&  \\
&\hspace{-5cm}\lqq \sum\limits_{i,j,k=1}^{d}
\left(\frac{\delta}{d}|(\Delta^s_t(x))_{i,j}|^2+ 
\frac{d}{\delta}|(Db(0)-Db(\varphi^x_{T+t}))_{i,k} (\Psi_t^{-1}\Psi_s)_{k,j}|^2\right) \\
&\hspace{-5cm}=\delta|\Delta^s_t(x)|^2+ 
\frac{d}{\delta}\sum\limits_{i,j,k=1}^{d}|(Db(0)-Db(\varphi^x_{T+t}))_{i,k} (\Psi_t^{-1}\Psi_s)_{k,j}|^2.
\end{split}
\end{equation}
Since $b\in \mathcal{C}^2$,  there exists a positive constant $C:=C(|x|)$ such that
\[
|Db(y)-Db(0)|\lqq C|y|\quad \textrm{ for any } y \textrm{ with } |y|\lqq |x|.
\]
By Hypothesis~\ref{hyp: potential} we observe that
$|\varphi^x_t| \lqq |x| e^{-\delta t}\lqq |x|$.
From Lemma~\ref{lem: cotainfsup}.ii) we have $|(\Psi_t^{-1}\Psi_s)|^2\lqq de^{-2\delta(t-s)} $. Then
\begin{equation}\label{qt2}
\begin{split}
\sum\limits_{i,j,k=1}^{d}|(Db(0)-Db(\varphi^x_{T+t}))_{i,k} (\Psi_t^{-1}\Psi_s)_{k,j}|^2
&\lqq C^2
\sum\limits_{i,j,k=1}^{d} |\varphi^x_{T+t}|^2 |(\Psi_t^{-1}\Psi_s)_{k,j}|^2\\
&\lqq 
C^2|\varphi^x_{T+t}|^2 d^2e^{-4\delta (t-s)}.
\end{split}
\end{equation}
Combining \eqref{qt0}, \eqref{qt1} and \eqref{qt2} we infer
\begin{equation*}
\begin{split}
\frac{\ud}{\ud t} |\Delta^{s}_t(x)|^2 
&\lqq -\delta 
|\Delta^{s}_t(x)|^2+
\frac{C^2 d^3}{\delta}|\varphi^x_{T+t}|^2 e^{-4\delta (t-s)}.
\end{split}
\end{equation*}
Since $\Delta^s_s(x)=0$, the preceding differential inequality with the help of Lemma~\ref{lem: gronwall} imply
\begin{equation*}
|\Delta^{s}_t(x)|^2\lqq 
\frac{C^2 d^3}{\delta}e^{-\delta(t-s)}\int_{s}^{t} |\varphi^x_{T+u}|^2 e^{-3\delta (u-s)}\ud u.
\end{equation*}
Observe that $|\varphi^x_{T+t}|=|\varphi^{\varphi^x_T}_t|\lqq e^{-\delta t}|\varphi^x_T|$. Then
\begin{equation*}
|\Delta^{s}_t(x)|^2\lqq 
\frac{C^2 d^3}{\delta}
|\varphi^x_{T}|^2
e^{-\delta(t-s)}\int_{s}^{t} e^{-\delta u} e^{-3\delta (u-s)}\ud u.
\end{equation*}
The integral version of the Gr\"onwall-Bellman lemma given in \cite{Mikami}, Lemma 1, yields    
\begin{equation*}
|\Delta^{s}_t(x)|^2\lqq 
\frac{C^2 d^3}{4\delta^2}
|\varphi^x_{T}|^2
e^{-\delta t}(1-e^{-4\delta (t-s)}).
\end{equation*}
\end{proof}

\bigskip

\section{\textbf{Freidlin-Wentzell first order approximation}}\label{ap: multiscale}

The result of this section yields a precise quantification of the inhomogeneous linearization error 
of $X^{\e, x}$ by $Y^{\e}_\cdot(x)$ given in \eqref{eq: Yxte} under the Hypothesis~\ref{hyp: potential} and \ref{hyp: moment condition} for any moment $\beta>0$. 
\begin{lem}[Quantitative first order expansion]\label{prop: secondorder}\hfill\\
Assume Hypothesis~\ref{hyp: potential} and \ref{hyp: moment condition} for some $\beta>0$.
For $(t_\e)_{\e>0}$ with
$t_\e \to \infty$ as $\e\to 0$ let the following limit hold true
\[
\lim_{\e \to 0} t_\e {\e}^{\frac{1}{1+ 2(\beta \wedge 1)}} = 0.
\]
Then for any 
$\alpha\in (0,2)$, $K>0$ and  $\Delta_\e = \e^{\frac{\al}{2}}$ 
there exist  positive constants 
$\e_0=\e_0(K,\alpha,\beta,\delta)$
 and 
$C=C(K,\alpha,\beta,\delta)$
such that for all $\e\in (0,\e_0]$ 
\[
\sup_{|x|\lqq K}
{\mathbb{P}(|X^{\e,x}_{t_\e}-Y^{\e}_{t_\e}(x)|\gqq \Delta^{1/\alpha}_\e \e)}\lqq C(K){\e^{\frac{\beta \wedge 1}{1+ 2 (\beta \wedge 1)}}}.
\]
\end{lem}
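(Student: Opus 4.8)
The plan is to compare the nonlinear flow $X^{\e,x}$ with its first-order Freidlin--Wentzell linearization $Y^\e_\cdot(x) = \varphi^x_\cdot + \e Y^x_\cdot$ through the rescaled discrepancy $R^{\e,x}_t := (X^{\e,x}_t - Y^\e_t(x))/\e$. Writing $Z^{\e,x}_t = (X^{\e,x}_t - \varphi^x_t)/\e$ as in Subsection~\ref{subsec: outline}, we have $R^{\e,x}_t = Z^{\e,x}_t - Y^x_t$, and from \eqref{eq:appearlinear} and \eqref{eq: Yxt} this residual solves a \emph{driftless-in-the-noise} linear SDE: the $\ud L_t$ terms cancel, leaving
\[
\ud R^{\e,x}_t = -Db(\varphi^x_t) R^{\e,x}_t\,\ud t - \Big(\int_0^1 \big[Db(\varphi^x_t + \theta(X^{\e,x}_t - \varphi^x_t)) - Db(\varphi^x_t)\big]\ud\theta\Big) Z^{\e,x}_t\,\ud t,\qquad R^{\e,x}_0 = 0.
\]
The second integrand is $O(|X^{\e,x}_t - \varphi^x_t|) = O(\e |Z^{\e,x}_t|)$ by the $\cC^2$-regularity of $b$ on compacts, provided $X^{\e,x}_t$ stays in a bounded region; hence the inhomogeneity is formally of size $\e |Z^{\e,x}_t|^2$, which is the source of the gained power of $\e$.

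First I would establish an a priori moment bound: using Hypothesis~\ref{hyp: potential} (coercivity of $Db$) together with the It\^o formula for $|R^{\e,x}_t|^2$ (or $|R^{\e,x}_t|^{\beta\wedge 1}$ when $\beta<1$ via the sub-additivity trick already used in \eqref{e: bterm3b}) and Lemma~\ref{lem: gronwall}, the linear contracting part dominates, so $\EE[\sup_{s\le t_\e}|R^{\e,x}_s|^{\beta\wedge 1}]$ is controlled by $\int_0^{t_\e}\EE\big[\|Db(\cdots)-Db(\varphi^x_s)\|\,|Z^{\e,x}_s|\big]^{\beta\wedge 1}\ud s$ on the event where the paths are bounded. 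On that event this is $\lesssim \e^{\beta\wedge 1}\int_0^{t_\e}\EE[|Z^{\e,x}_s|^{2(\beta\wedge 1)}]\,\ud s$. Thus I need a uniform (in $\e$, polynomial in $t$) bound on the moments of $Z^{\e,x}_s = (X^{\e,x}_s - \varphi^x_s)/\e$; this follows from the ergodicity-type moment estimates of Corollary~\ref{lem: demomento} / Corollary~\ref{cor: demomento} in Appendix~\ref{Appendix D} applied to the linearized equation \eqref{eq:appearlinear}, whose drift is again coercive, giving $\EE[|Z^{\e,x}_s|^{2(\beta\wedge1)}]\le C(K)$ uniformly. Combining, $\EE[\sup_{s\le t_\e}|R^{\e,x}_s|^{\beta\wedge1}\mathbf 1_{\text{bounded}}]\lesssim C(K)\,\e^{\beta\wedge1}\,t_\e$, so by Markov
\[
\PP\big(|X^{\e,x}_{t_\e} - Y^\e_{t_\e}(x)|\ge \Delta_\e^{1/\alpha}\e,\ \text{paths bounded}\big) = \PP\big(|R^{\e,x}_{t_\e}|\ge \Delta_\e^{1/\alpha}\big)\lesssim \frac{C(K)\,\e^{\beta\wedge1}\,t_\e}{\Delta_\e^{(\beta\wedge1)/\alpha}}.
\]

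Next I would close the localization: the "paths bounded" event can be taken as $\{\sup_{s\le t_\e}|X^{\e,x}_s|\le \text{const}\}$, whose complement has probability tending to $0$ uniformly over $|x|\le K$ by the moment estimates of Appendix~\ref{Appendix D} (or directly by the Siorpaes-type estimate \eqref{eq:siorpaes99X}); since these probabilities decay at a rate at least comparable to the main term, they are absorbed into $C(K)\e^{(\beta\wedge1)/(1+2(\beta\wedge1))}$. Finally, optimizing: $\Delta_\e = \e^{\alpha/2}$ gives $\Delta_\e^{(\beta\wedge1)/\alpha} = \e^{(\beta\wedge1)/2}$, so the main bound is $\lesssim C(K)\,t_\e\,\e^{(\beta\wedge1)/2}$, and the hypothesis $t_\e\,\e^{1/(1+2(\beta\wedge1))}\to0$ plus elementary exponent bookkeeping ($\tfrac{\beta\wedge1}{2} - \tfrac{1}{1+2(\beta\wedge1)} = \tfrac{\beta\wedge1}{1+2(\beta\wedge1)}$ after a short computation, matching the target exponent) yields the claim for $\e\le\e_0(K,\alpha,\beta,\delta)$. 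The uniformity in $|x|\le K$ comes throughout from the constants $C(|x|)$ in Lemma~\ref{lem: cotainfsup} and in the $\cC^2$-bounds of $b$ depending only on $|x|\le K$.

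\textbf{The main obstacle} I anticipate is handling the case $\beta\in(0,1)$ cleanly: there one cannot use $L^2$ energy estimates for $R^{\e,x}$ directly because the large jumps of $L$ only have $\beta$-moments. The remedy — as the paper itself does in \eqref{e: bterm3b} — is to work with $|R^{\e,x}|^{\beta\wedge1}$ and exploit that $R^{\e,x}$ is driven by a \emph{finite-variation, purely deterministic-coefficient} ODE (the noise having cancelled), so no stochastic integration against $L$ is needed for $R^{\e,x}$ itself and the only $\beta$-moment issue enters through $\EE[|Z^{\e,x}_s|^{2(\beta\wedge1)}]$, which is exactly what Appendix~\ref{Appendix D} is designed to bound. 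A secondary technical point is justifying that the quadratic gain is not destroyed by the localization threshold (it must be chosen fixed, not $\e$-dependent, for the $O(\e|Z|^2)$ estimate of the inhomogeneity to be legitimate), which is straightforward once the uniform moment bounds are in place.
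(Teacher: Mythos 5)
Your core setup --- noise cancellation in $R^{\e,x}_t = X^{\e,x}_t - Y^\e_t(x)$, the coercive contracting linear part, and the quadratic Taylor remainder as the source of the gained power of $\e$ --- matches the paper's, but the way you close the estimate has a genuine gap. The chain ending in $\e^{\beta\wedge1}\int_0^{t_\e}\EE[|Z^{\e,x}_s|^{2(\beta\wedge1)}]\ud s$ requires moments of $Z^{\e,x}_s$ of order $2(\beta\wedge1)$, which strictly exceeds $\beta$ for every $\beta<2$: if $\beta<1$ then $2\beta>\beta$, and if $1\le\beta<2$ then $2>\beta$. Since $L$ only has finite moments of order $\le\beta$, and Corollary~\ref{lem: demomento} controls moments of order $\gamma\le\beta\wedge1$ only, the quantity you need is infinite in precisely the regime the lemma is meant to cover, and the estimate does not close. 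The exponent identity you quote, $\tfrac{\beta\wedge1}{2}-\tfrac{1}{1+2(\beta\wedge1)}=\tfrac{\beta\wedge1}{1+2(\beta\wedge1)}$, is also false (at $\gamma:=\beta\wedge1=1$ it reads $\tfrac16=\tfrac13$), so your claimed main bound $t_\e\e^{(\beta\wedge1)/2}$ does not yield the stated rate under the hypothesis on $t_\e$; the required comparison reduces to $2\gamma^2-\gamma-2\ge0$, which has no solution in $(0,1]$.

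The paper avoids both problems by never putting a quadratic quantity under an expectation. The Gr\"onwall estimate is kept pathwise, and one then localizes on the $\e$-dependent event $A^\e_M:=\{\e^\theta\sup_{0\le s\le t_\e}|Y^1_s(x)|^2\le M\}$. On $A^\e_M$ the pathwise bound forces a purely numerical inequality that fails for $\e$ small, so the good-event contribution to the probability is identically $0$ --- no expectations are taken there. The complement $\PP((A^\e_M)^c)$ is then bounded via Siorpaes' pathwise BDG-type inequality (Theorem~1 of \cite{SIORPAES}) plus Markov's inequality applied termwise: the small-jump quadratic-variation and compensated-martingale contributions use only $\int_{|z|\le1}|z|^2\nu(\ud z)<\infty$, and the big-jump contribution uses subadditivity of $z\mapsto z^{\beta\wedge1}$, hence only $\beta\wedge1$-moments. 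The highest order of integrability invoked anywhere is therefore $\beta\wedge1$. This is also why the threshold must be $\e$-dependent, contrary to your secondary remark: $M/\e^\theta$ is precisely what is traded off against the Siorpaes rate $t_\e\e^{(\beta\wedge1)\theta}$ in the final optimization over $\theta$, and boundedness of $D^2b$ is not lost on $A^\e_M$ because its argument $\varphi^x_s+\theta_1\theta_2\,\e Y^x_s$ stays in a compact set there, since $|\e Y^x_s|=O(\e^{1-\theta/2})\to0$. The choice $\theta=\tfrac{1}{1+2(\beta\wedge1)}$ then comes from balancing the contributions \eqref{e: term1}--\eqref{e: term3} against the good-event constraint, and delivers the stated rate.
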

\begin{proof}
Let $t\gqq 0$.
Recall that $
Y^{\e}_t(x)=\varphi^x_t+\e Y^x_t$,
by \eqref{eq: Yxte},
where 
\[
\e Y^x_t = - \int_0^t Db(\varphi^x_s) \e Y^x_s \ud s  + \e \ud L_t.  
\]
That is $W^{\e}_t = \e Y^x_t$ satisfies 
\[
W^\e_t = - \int_0^t Db(\varphi^x_s) W^\e_s \ud s  + \e \ud L_t.  
\]
Hence 
\begin{align*}
&X^{\e, x}_t -Y^{\e}_t(x) = X^{\e, x}_t - \varphi^x_t - W^\e_t \\
&= \int_0^t \Big(-b(X^{\e, x}_s) - \big(-b(Y^{\e}_s(x))\big)\Big) \ud s 
+ \int_0^t \Big(-b(Y^{\e}_s(x)) - \big(-b(\varphi_s^x) - Db(\varphi^x_s) W^\e_s\big)\Big) \ud s.
\end{align*}
The chain rule yields 
\begin{align*}
&|X^{\e, x}_t -Y^{\e}_t(x)|^2 
= -2 \int_0^t \< b(X^{\e, x}_s) -b(Y^{\e}_s(x)), X^{\e, x}_s -Y^{\e}_s(x)\> \ud s \\
&\hspace{3cm} + 2 \int_0^t \< -b(Y^{\e}_s(x)) - \big(-b(\varphi_s^x) - Db(\varphi^x_s) \e Y_s\big), X^{\e, x}_s -Y^{\e}_s(x)\> \ud s. 
\end{align*}
By the mean value theorem, the Cauchy-Schwarz and the Young inequality we have
\begin{align*}
|-b(Y^{\e}_s(x)) - \big(-b(\varphi^x_s) - Db(\varphi^x_s) \e Y^x_s \big)|&=|-b(\varphi^x_s + \e Y^x_s) - \big(-b(\varphi^x_s) - Db(\varphi^x_s) \e Y^x_t\big)| \\
&\lqq \int_0^1\int_0^1 \|D^2 b(\varphi^x_s+ \theta_1\theta_2 \e Y^x_s) \|\ud \theta_1 \ud \theta_2 ~ |W^\e_s|^2.
\end{align*}
Together with Hypothesis \ref{hyp: potential} we obtain 
\begin{align*}
&|X^{\e, x}_t -Y^{\e}_t(x)|^2 \\
&\lqq  - \delta \int_0^t |X^{\e, x}_s -Y^{\e}_s(x)|^2 \ud s +  \frac{1}{\delta} \int_0^t \left(\int_0^1\int_0^1 \|D^2 b(\varphi^x_s+ \theta_1\theta_2 \e Y^x_s) \|\ud \theta_1 \ud \theta_2 ~ |W^\e_s|^2\right)^2 \ud s.
\end{align*}
Then the integral version of the Gr\"onwall-Bellman lemma given in \cite{Mikami}, Lemma 1, yields 
\begin{align*}
|X^{\e, x}_t -Y^{\e}_t(x)|^2 
&\lqq \frac{\e^{2-\theta}}{\delta} \int_0^t \left(\int_0^1\int_0^1 \|D^2 b(\varphi^x_s+ \theta_1\theta_2 \e Y^x_s) \|\ud \theta_1 \ud \theta_2 \right)^2 \e^{\theta}|Y^x_s|^2\ud s.
\end{align*}
Let $M>0$ and $\theta\in (0,1)$ and introduce
\[
A^{\e}_M:=\Big\{\e^{\theta}\sup_{0\lqq s\lqq t}|Y^1_s(x)|^2\lqq M\Big\}.
\]
For $\e\in (0,1]$ we have
\[
A^{\e}_M\subset \Big\{\sup_{0\lqq s\lqq t}|Y^1_s(x)|^2\lqq M\Big\}.
\]
Then on the event $A^{\e}_M$ it follows
\begin{align*}
|X^{\e, x}_t -Y^{\e}_t(x)|^2 
&\lqq \e^{2-\theta}C_M t.
\end{align*}
Observe that
\[
\mathbb{P}(|X^{\e,x}_{t_\e}-Y^\e_{t_\e}(x)|\gqq \Delta^{1/\alpha}_\e \e)\lqq 
\mathbb{P}(|X^{\e,x}_{t_\e}-Y^\e_{t_\e}(x)|\gqq \Delta^{1/\alpha}_\e \e,A^{\e}_M)
+\mathbb{P}((A^{\e}_M)^c).
\]
Then
\[
\mathbb{P}\Big(|X^{\e,x}_{t_\e}-Y^\e_{t_\e}(x)|\gqq \Delta^{1/\alpha}_\e \e,A^{\e}_M\Big)\lqq 
\mathbb{P}\Big(C_M\e^{2-\theta} t_\e \gqq \Delta^{1/\alpha}_\e \e,A^{\e}_M\Big)=
\mathbb{P}\Big(C_M t_\e \gqq \frac{\Delta^{1/\alpha}_\e}{\e^{1-\theta}},A^{\e}_M\Big).
\]
Choosing $\Delta_\e=\e^{\nicefrac{\alpha}{2}}$ we obtain
\[
\mathbb{P}\Big(|X^{\e,x}_{t_\e}-Y^\e_{t_\e}(x)|\gqq \Delta^{1/\alpha}_\e \e,A^{\e}_M\Big)\lqq 
\mathbb{P}\Big(C_M t_\e \gqq \frac{1}{\sqrt{\e^{1-\theta}}},A^{\e}_M\Big)=0\quad \textrm{ for } \e\ll 1.
\]
In the sequel, we estimate the term 
\[
\mathbb{P}\Big(\sup_{0\lqq s\lqq t_\e}|Y^1_s(x)|^2> \frac{M}{\e^\theta}\Big). 
\]
By Theorem~1 in \cite{SIORPAES} we have
\begin{align*}
\sup_{0\lqq s\lqq t_\e}|Y^1_s(x)|
&\lqq 6\sqrt{[Y^1_\cdot(x)]_{t_\e}} + 2\int_0^{t_\e}  H_{s-} \ud Y^1_s(x)\quad \textrm{ a.s.},
\end{align*}
where 
\begin{align*}
H_{s-} = \frac{Y^1_s(x)}{\sqrt{\sup_{0 \lqq s \lqq t_\e} |Y^1_s(x)|^2 + [Y^1_\cdot(x)]_{s-}}}. 
\end{align*}
In particular, we have 
\begin{align*}
&[Y^1_\cdot(x)]_{t} = [L]_{t} = \int_0^t \int_{|z|\lqq 1} |z|^2 N(\ud s\ud z)\quad \textrm{such that }\\
&\int_0^{t}  H_{s-} \ud Y^1_s(x) = \int_0^t \< H_{s-}, - Db(\varphi^x_s) Y^1_s(x) \> \ud s + 
\int_0^t \int_{|z|\lqq 1}\< H_{s-}, z\> \ti N(\ud s\ud z) \\
&\hspace{3cm}+ 
\int_0^t \int_{|z|> 1}\< H_{s-}, z\> N(\ud s\ud z).
\end{align*}
We apply Hypothesis~\ref{hyp: potential} and obtain a.s. 
\[
\int_0^t \< H_{s-}, - Db(\varphi^x_s) Y^1_s(x) \> \ud s \lqq 0.  
\]
Hence 
\begin{align*}
&\PP\Big( \sup_{0\lqq s\lqq t_\e}|Y^1_s(x)| > \frac{M}{\e^\theta}\Big)\\ 
&\lqq \PP\Big(6 \Big(\int_0^{t_\e} \int\limits_{|z|\lqq 1} |z|^2 N(\ud s\ud z)\Big)^{\nicefrac{1}{2}}  +  \int_0^{t_\e} \int\limits_{|z|\lqq 1}\< H_{s-}, z\> \ti N(\ud s\ud z) + 
\int_0^{t_\e} \int\limits_{|z|> 1}\< H_{s-}, z\> N(\ud s\ud z)  > \frac{M}{\e^\theta}\Big) \\
&\lqq \PP\Big(\int_0^{t_\e} \int_{|z|\lqq 1} |z|^2 N(\ud s\ud z) > \frac{1}{18^2} \frac{M^2}{\e^{2\theta}}\Big)  + \PP\Big(\int_0^{t_\e} \int_{|z|\lqq 1}\< H_{s-}, z\> \ti N(\ud s\ud z) > \frac{1}{3} \frac{M}{\e^\theta}\Big) \\
&\qquad + \PP\Big(\int_0^{t_\e} \int_{|z|> 1}\< H_{s-}, z\> N(\ud s\ud z) > \frac{1}{3} \frac{M}{\e^\theta}\Big).
\end{align*}
We continue term by term. First we obtain  
\begin{align}
\PP\Big(\int_0^{t_\e} \int_{|z|\lqq 1} |z|^2 N(\ud s\ud z) > (\frac{1}{18})^2 \frac{M^2}{\e^{2\theta}}\Big) 
&\lqq t_\e \e^{2\theta} \frac{(18)^2 }{M^2} \int_{|z|\lqq 1} |z|^2 \nu(\ud z) = C~t_\e \e^{2\theta}.\label{e: term1}
\end{align}
By the 
Markov inequality we bound the second term and obtain
\begin{align}
\PP\Big(\int_0^{t_\e} \int_{|z|\lqq 1}\< H_{s-}, z\> \ti N(\ud s\ud z) > \frac{1}{3} \frac{M}{\e^\theta}\Big) 
&\lqq \e^{2\theta} \Big(\frac{3}{M}\Big)^2 \EE\Big[\Big(\int_0^{t_\e} \int_{|z|\lqq 1}\< H_{s-}, z\> \ti N(\ud s\ud z\Big)^2\Big]\nonumber\\
&= \e^{2\theta} \Big(\frac{3}{M}\Big)^2 \EE\Big[\int_0^{t_\e} \int_{|z|\lqq 1}\< H_{s-}, z\>^2 \nu(\ud z)\ud s\Big]
\nonumber\\
&= \e^{2\theta} t_\e \Big(\frac{3}{M}\Big)^2 \int_{|z|\lqq 1} |z|^2 \nu(\ud z).\label{e: term2}
\end{align}
Finally, 
\begin{align}
\PP\Big(\int_0^{t_\e} \int_{|z|> 1}\< H_{s-}, z\> N(\ud s\ud z) > \frac{1}{3} \frac{M}{\e^\theta}\Big) 
&\lqq \PP\Big(\int_0^{t_\e} \int_{|z|> 1} |z| N(\ud s\ud z) > \frac{1}{3} \frac{M}{\e^\theta}\Big) \nonumber\\
&\lqq \e^{(\beta\wedge 1)\theta} \Big(\frac{3}{M}\Big)^{(\beta\wedge 1)}
\EE\Big[\Big(\int_0^{t_\e} \int_{|z|> 1} |z| N(\ud s\ud z)\Big)^{(\beta\wedge 1)}\Big] \nonumber\\
&\lqq \e^{(\beta\wedge 1)\theta} \Big(\frac{3}{M}\Big)^{(\beta\wedge 1)}
\EE\Big[\int_0^{t_\e} \int_{|z|> 1} |z|^{(\beta\wedge 1)} N(\ud s\ud z)\Big] \label{e: Loubert Bie}\\
&= \e^{(\beta\wedge 1)\theta}  t_\e \Big(\frac{3}{M})^{(\beta\wedge 1\Big)} \int_{|z|> 1} |z|^{(\beta\wedge 1)} \nu(\ud z), \label{e: term3}
\end{align}
where we have used the subadditivity of the power $\beta \wedge 1$ in the sense of 
Subsection 1.1.2, see formula (1.6) in \cite{Bie}. 
Optimizing over $\theta$ we obtain $\theta = \frac{1}{1 + 2(\beta \wedge 1)}$. 
\end{proof}

\bigskip 

\section{\textbf{The linear inhomogeneous dynamics $Y_\cdot^{\e}(x)$}}\label{Appendix C}

This section gathers properties of the inhomogeneous first order expansion $Y_\cdot^{\e}(x)$ of $X^{\e, x}$ mainly with the help of Fourier techniques. 

\bigskip 

\subsection{\textbf{$\beta$-H\"older continuity of the characteristic exponent of a L\'evy process}}\label{Appendix C.1}\hfill\\

\noindent It is classical that $\beta\gqq 1$ in Hypothesis \ref{hyp: moment condition} implies that 
the characteristic function is continuously differentiable, and hence locally Lipschitz 
continuous. This remains valid for the characteristic exponent $\psi$.
In the sequel, we provide an elementary proof for the respective fractional case $\beta\in (0,1)$.
\begin{prop}[Local H\"older continuity of the characteristic exponent]\label{lem:Holdercontinuity}\hfill\\
Let $L=(L_t)_{t\gqq 0}$ be a L\'evy process on $\mathbb{R}^d$. 
Denote by $\psi$ its characteristic exponent and by $\nu$ its L\'evy measure. Assume that
\[
\int_{|z|\gqq 1} |z|^\beta\nu(\ud z)<\infty\quad \textrm{ for some } \beta>0.
\]
Then we have the following.
\begin{enumerate}
\item If $\beta\gqq 1$, $\psi$ is $\cC^1$. In particular, it is Lipschitz continuous.
\item If $\beta \in (0,1)$, $\psi$ is locally H\"older continuous with H\"older index $\beta$.
\end{enumerate}
\end{prop}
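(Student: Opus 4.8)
The plan is to work directly from the L\'evy--Khintchine representation of the characteristic exponent
\[
\psi(u) = i\< a, u\> - \frac{1}{2} \< u, \Sigma u\> + \int_{\RR^d} \Big(e^{i\< u, z\>} - 1 - i\< u, z\> \ind_{B_1(0)}(z)\Big) \nu(\ud z),
\]
and to estimate $\psi(u)-\psi(w)$ for $u,w$ in a fixed ball. The linear term $i\< a, u\>$ is globally Lipschitz and the Gaussian term $-\frac12\<u,\Sigma u\>$ is locally Lipschitz (it is $\cC^\infty$), so the entire difficulty is the jump integral. For that I would split the integral at $|z|=1$ into a small-jump part and a large-jump part and treat them separately, using in each region the elementary pointwise bounds on the integrand: for the small jumps the second-order Taylor bound $|e^{i\<u,z\>}-1-i\<u,z\>|\lqq \tfrac12|u|^2|z|^2$ together with $|(e^{i\<u,z\>}-1-i\<u,z\>)-(e^{i\<w,z\>}-1-i\<w,z\>)|\lqq C(|u|\vee|w|)\,|u-w|\,|z|^2$, which is integrable against $\nu$ on $B_1(0)$ by the defining property $\int (1\wedge|z|^2)\nu(\ud z)<\infty$; for the large jumps, where there is no compensator, one uses $|e^{i\<u,z\>}-e^{i\<w,z\>}|\lqq |u-w|\,|z|$ in case (1), which is integrable against $\nu$ on $B_1^c(0)$ precisely when $\beta\gqq 1$.

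For case (2), $\beta\in(0,1)$, the large-jump term is the only place the argument genuinely changes. Here I would not use the Lipschitz bound $|e^{i\<u,z\>}-e^{i\<w,z\>}|\lqq|u-w||z|$ (whose integrand $|z|$ need not be $\nu$-integrable on $B_1^c(0)$), but rather interpolate: for any $\gamma\in[0,1]$ one has the elementary inequality
\[
|e^{i\<u,z\>}-e^{i\<w,z\>}| \lqq 2^{1-\gamma}\,|\<u-w,z\>|^{\gamma} \lqq 2^{1-\gamma}\,|u-w|^{\gamma}\,|z|^{\gamma},
\]
obtained by writing $|e^{ia}-e^{ib}|\lqq \min\{2,|a-b|\}\lqq 2^{1-\gamma}|a-b|^\gamma$. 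Taking $\gamma=\beta$ gives a bound whose $z$-dependence is $|z|^\beta$, which is $\nu$-integrable on $B_1^c(0)$ by hypothesis, and whose $u,w$-dependence is $|u-w|^\beta$. Combining the small-jump contribution (which is locally Lipschitz, hence in particular locally $\beta$-H\"older since we are on a bounded set) with this large-jump contribution yields local $\beta$-H\"older continuity of $\psi$. In case (1), the same split but with $\gamma=1$ in the large-jump estimate gives $\cC^1$-ness: differentiability under the integral sign is justified by dominated convergence, the dominating function for $\partial_{u_j}$ being $|z_j|\,\ind_{B_1(0)}(z)\cdot(\text{const})+|z_j|\,\ind_{B_1^c(0)}(z)$, integrable since $\int(1\wedge|z|^2)\nu(\ud z)<\infty$ and $\int_{|z|\gqq1}|z|\,\nu(\ud z)<\infty$, and continuity of the derivative follows again by dominated convergence.

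I do not expect a serious obstacle here; the statement is elementary once one has the right pointwise inequalities. The only point requiring a small amount of care is the interpolation inequality $|e^{ia}-e^{ib}|\lqq 2^{1-\gamma}|a-b|^\gamma$ and the bookkeeping of splitting the compensated versus uncompensated regimes at $|z|=1$; everything else is a routine application of the integrability constraints built into the definition of a L\'evy measure together with Hypothesis~\ref{hyp: moment condition}. I would present the proof with the two cases handled in parallel, isolating the large-jump estimate as the only place the value of $\beta$ enters.
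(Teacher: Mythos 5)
Your proof is correct and follows the same overall decomposition as the paper's: split the jump integral at $|z|=1$, bound the compensated small-jump part by a Taylor/mean-value estimate (Lipschitz, hence locally $\beta$-H\"older on a bounded set), and bound the uncompensated large-jump part by a H\"older-type interpolation inequality. The difference is in how you implement the large-jump interpolation, and your version is cleaner. The paper rewrites $|e^{i\<u,z_1\>}-e^{i\<u,z_2\>}|=2|\sin(\<u,z_1-z_2\>/2)|$ and then splits the integration region according to whether $|\<u,z_1-z_2\>|>1$ or $\lqq 1$, estimating the first region by the trivial bound $|\sin|\lqq 1 \lqq |\cdot|^\beta$ and the second by introducing $\tilde C_\beta:=\sup_{|\theta|\lqq 1}|\sin(\theta/2)|/|\theta|^\beta<\infty$. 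You instead use a single elementary inequality $|e^{ia}-e^{ib}|\lqq\min\{2,|a-b|\}\lqq 2^{1-\gamma}|a-b|^\gamma$ with $\gamma=\beta$, which subsumes the paper's case split in one line. Also, for item (1) the paper cites Theorem~15.32 of Klenke, while you sketch a short self-contained argument by differentiating under the integral sign with a dominating function; and you explicitly dispose of the drift and Gaussian terms (which the paper silently drops since its driving noise has no Gaussian part). All three differences are real but minor; the estimates obtained are the same.
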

\begin{proof}
The proof of item (1) is given in
Theorem~15.32 of \cite{Klenke}. 
We continue with the proof of~ (2).
Assume that $\beta\in (0,1)$. We prove that $\psi$ is locally H\"older continuous.
Recall that
\[
\psi(z)=\int_{\RR^d}\big(e^{i\<u,z\>}-1-i\<u,z\>\ind_{\{|u|\lqq 1\}}(u)\big)\nu(\ud u), \quad z\in \RR^d.
\]
For any $z\in \RR^d$, let 
\[
f_1(z)=\int_{|u|\lqq 1}\big(e^{i\<u,z\>}-1-i\<u,z\>\big)\nu(\ud u)
\quad \textrm{ and } \quad f_2(z)=\int_{|u|>1}\big(e^{i\<u,z\>}-1\big)\nu(\ud u). 
\]
First, we analyze $f_2$. Let $z_1,z_2\in \RR^d$.
Notice that
\[
\begin{split}
&|f_2(z_1)-f_2(z_2)| \lqq \int_{|u|>1}|e^{i\<u,z_1\>}-e^{i\<u,z_2\>}|\nu(\ud u)\\
&\quad  =\int_{|u|>1}|e^{i\<u,z_1-z_2\>}-1|\nu(\ud u)=
\sqrt{2}\int_{|u|>1}\sqrt{1-\cos(\<u,z_1-z_2\>)}\nu(\ud u)\\
&\quad = 2\int_{|u|>1}\left|\sin\left(\frac{\<u,z_1-z_2\>}{2}\right)\right|\nu(\ud u)\\
& \quad =2\int\limits_{|u|>1,|\<u,z_1-z_2\>|>1}\left|\sin\left(\frac{\<u,z_1-z_2\>}{2}\right)\right|\nu(\ud u)
+2\int\limits_{|u|>1,|\<u,z_1-z_2\>|\lqq 1}\left|\sin\left(\frac{\<u,z_1-z_2\>}{2}\right)\right|\nu(\ud u)\\
& \quad \lqq 2\int\limits_{|u|>1,|\<u,z_1-z_2\>|>1} |\<u,z_1-z_2\>|^{\beta}\nu(\ud u)
+2\int\limits_{|u|>1,|\<u,z_1-z_2\>|\lqq 1}\left|\sin\left(\frac{\<u,z_1-z_2\>}{2}\right)\right|\nu(\ud u)\\
& \quad \lqq 2|z_1-z_2|^{\beta}\int\limits_{|u|>1} |u|^{\beta}\nu(\ud u)
+2\int\limits_{|u|>1,|\<u,z_1-z_2\>|\lqq 1}\left|\sin\left(\frac{\<u,z_1-z_2\>}{2}\right)\right|\nu(\ud u)\\
& \quad =C_\beta|z_1-z_2|^{\beta}
+2\int\limits_{|u|>1,|\<u,z_1-z_2\>|\lqq 1}\left|\sin\left(\frac{\<u,z_1-z_2\>}{2}\right)\right|\nu(\ud u),\\
\end{split}
\]
where $C_\beta=2\int_{|u|>1} |u|^{\beta}\nu(\ud u)<\infty$.
Let $\tilde{C}_\beta=\sup\left\{\frac{|\sin(\nicefrac{\theta}{2})|}{|\theta|^{\beta}}: |\theta|\lqq 1\right\}$.
Since $\beta\in (0,1)$ we have $\tilde{C}_\beta<\infty$. Indeed, notice that
 $\lim\limits_{\theta\to 0}\frac{|\sin(\nicefrac{\theta}{2})|}{|\theta|^{\beta}}=0$ then $C<\infty$. 
 Furthermore, 
 \[
 \begin{split}
 \int\limits_{|u|>1,|\<u,z_1-z_2\>|\lqq 1}\left|\sin\left(\frac{\<u,z_1-z_2\>}{2}\right)\right|\nu(\ud u) 
 &\lqq 
 \int\limits_{|u|>1,|\<u,z_1-z_2\>|\lqq 1}\tilde{C}_\beta |\<u,z_1-z_2\>|^\beta \nu(\ud u)\\
& \lqq \tilde{C}_\beta |z_1-z_2|^\beta \int_{|u|>1}|u|^\beta \nu(\ud u).
 \end{split}
 \]
Hence, $|f_2(z_2)-f_2(z_1)|\lqq C(\beta)|z_2-z_1|^\beta$ for any $z_1,z_2\in \mathbb{R}^d$.
In the sequel, we  analyze $f_1$.  We calculate for $z_1,z_2\in \RR^d$
\[
\begin{split}
|f_1(z_1)-f_1(z_2)|&\lqq \int_{|u|\lqq 1}\left| e^{i\<u,z_1\>}-e^{i\<u,z_2\>}-i\<u,z_1-z_2\>\right|\nu(\ud u)\\
&\lqq  C|z_1-z_2|\int_{|u|\lqq 1}|u|^2 \nu(\ud u)
=C_1|z_1-z_2|,
\end{split}
\]
where we have used the mean value theorem for the integrand 
\[
\begin{split}
\left| e^{i\<u,z_1\>}-e^{i\<u,z_2\>}-i\<u,z_1-z_2\>\right|&=
\left| \int_{0}^{1} e^{i\<u,z_1+\theta(z_2-z_1)\>}\<u,z_1-z_2\>\ud \theta-\<u,z_1-z_2\>\right|\\
&=\int_{0}^{1} |e^{i\<u,z_1+\theta(z_2-z_1)\>}-1 |\ud \theta |\<u,z_1-z_2\>|
\end{split}
\]
with $C_1=C\int_{|u|\lqq 1}|u|^2 \nu(\ud u)<\infty$.
If $|z_1|\lqq \frac{1}{2}$ and $|z_2|\lqq  \frac{1}{2}$, then
$|z_1-z_2|\lqq |z_1-z_2|^\beta$. This concludes the proof of (2).
\end{proof}
\begin{rem}
\hfill
\begin{enumerate}
\item Note that the above calculations for $f_1$ give an elementary proof 
of the fact that any pure jump L\'evy process with uniformly bounded jumps-sizes
has a globally Lipschitz continuous characteristic exponent $\psi$.
\item The calculations for $f_2$ yield that any compound Poisson process with $\beta$-integrability
\[\int_{|z|>1} |z|^{\beta}\nu(\ud z)<\infty \quad\textrm{ for some } \beta>0\]
 has a locally H\"older continuous characteristic exponent $\psi$ with H\"older index $\beta$. This extends the well-known result that the existence of integer moments translates to the respective order of differentiability of the characteristic function to the case of fractional moments.\\
\end{enumerate}
\end{rem}

\bigskip 

\subsection{\textbf{Continuous dependence of the total variation in the nonlinearity}}\hfill

\begin{lem}[Continuous dependence on the initial value]\label{lem: continuidad}\hfill\\
Let $t>0$, $x\in \RR^d$ and denote by $(Y^{x}_t)_{t\gqq 0}$ the unique strong solution of 
\eqref{eq: Yxt} as well as by $g^{x}_t$ the respective density of $Y^{x}_t$. 

Then $x\mapsto g^{x}_t(u)$ is continuous for any fixed $t>0$ and $u\in \RR^d$.
In addition, the map $x\mapsto \norm{Y^{x}_t-U}$ is continuous for any fixed $t>0$ and 
$U$ any random vector on $\RR^d$. 
\end{lem}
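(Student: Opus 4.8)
The plan is to exploit the variation-of-constants representation of $Y^x_t$ as a sum of a deterministic drift-dependent term and an $x$-dependent stochastic convolution, and to transfer continuity through the characteristic function followed by Fourier inversion. First I would write, as in \eqref{eq: representation} mutatis mutandis (with $\fT=0$),
\[
Y^x_t = \int_0^t \Theta^{x}_{t,s}\,\big(Db(\varphi^x_s)\varphi^x_s - b(\varphi^x_s)\big)\,\ud s + \int_0^t \Theta^{x}_{t,s}\,\ud L_s,
\]
where $\Theta^{x}_{t,s}$ denotes the (invertible) propagator of the linear system $\dot u = -Db(\varphi^x_t)u$. The first summand is a deterministic vector depending continuously on $x$: indeed, $x\mapsto \varphi^x_\cdot$ is continuous on compact time intervals by Gr\"onwall applied to \eqref{dde1.1}, $b,Db\in\cC^1$, and $x\mapsto \Theta^x_{t,s}$ is continuous because the coefficient $s\mapsto Db(\varphi^x_s)$ depends continuously on $x$ uniformly on $[0,t]$ (continuous dependence of linear ODE flows on coefficients). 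So it suffices to treat the stochastic convolution $S^x_t:=\int_0^t \Theta^x_{t,s}\,\ud L_s$.

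The key step is to show that the density $g^x_t$ of $S^x_t$ (equivalently of $Y^x_t$, since they differ by a deterministic shift) is continuous in $x$ pointwise. The characteristic function of $S^x_t$ is
\[
\widehat{g^x_t}(\theta) = \exp\Big(\int_0^t \psi\big((\Theta^x_{t,s})^{*}\theta\big)\,\ud s\Big),
\]
which is continuous in $x$ for each fixed $\theta$, since $\psi$ is continuous (L\'evy–Khintchin) and $(x,s)\mapsto (\Theta^x_{t,s})^*\theta$ is jointly continuous. To pass from pointwise convergence of characteristic functions to pointwise convergence of densities via the inversion formula $g^x_t(u)=(2\pi)^{-d}\int_{\RR^d} e^{-i\<\theta,u\>}\widehat{g^x_t}(\theta)\,\ud\theta$, I need a dominated-convergence argument, hence a uniform-in-$x$ integrable bound on $|\widehat{g^x_t}(\theta)|$. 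This is furnished by Lemma~\ref{lem oreymasuda}: for $|v|>C_{\sphericalangle}$ one has $-\mathsf{Re}\,\psi(v)\gqq \int_{|\<v,z\>|\lqq 1}|\<v,z\>|^2\nu(\ud z)/\pi^2 \cdot \pi^2 \gtrsim c_{\sphericalangle}|v|^\alpha$ (up to the explicit constant used in Section~4 of \cite{BP}), and by Lemma~\ref{lem: cotainfsup}.iii) applied on a compact $x$-range one has $|(\Theta^x_{t,s})^*\theta|\gqq c_1 e^{-c_2(t-s)}|\theta|\gtrsim |\theta|$ uniformly for $s$ in a fixed subinterval $[0,t/2]$ and $x$ in a compact set. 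Integrating $\mathsf{Re}\,\psi$ over that subinterval yields $|\widehat{g^x_t}(\theta)|\lqq \exp(-c|\theta|^\alpha)$ for $|\theta|$ large, uniformly over the compact $x$-set; this is exactly the estimate already exploited in the proof of Lemma~\ref{lem: cvtlineal}. Dominated convergence then gives $g^x_t(u)\to g^{x_0}_t(u)$ as $x\to x_0$, for every $u$, which is the first assertion (the shift by the deterministic term only translates the argument $u$ continuously, so the conclusion for $Y^x_t$ follows).

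For the second assertion, fix a random vector $U$ with law $\mu_U$ and write $\norm{Y^x_t-U}=\tfrac12\int_{\RR^d}|g^x_t(u)-\rho_U(u)|\,\ud u$ if $U$ is absolutely continuous with density $\rho_U$; in general one uses $\norm{Y^x_t-U}=\int_{\RR^d}(\rho^{x,\mathrm{ac}}_t(u))^{+}\,\ud u$-type bounds, but since the total variation distance is dominated by $\tfrac12\|g^x_t-g^{x_0}_t\|_{L^1}+\norm{Y^{x_0}_t-U}$ and symmetrically, it suffices to prove $x\mapsto g^x_t$ is continuous in $L^1(\RR^d)$. This upgrades the pointwise continuity just established via Scheff\'e's lemma for densities: $g^x_t\to g^{x_0}_t$ pointwise and all are probability densities, hence the convergence is in $L^1$. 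Therefore $|\norm{Y^x_t-U}-\norm{Y^{x_0}_t-U}|\lqq \tfrac12\|g^x_t-g^{x_0}_t\|_{L^1}\to 0$, proving continuity of $x\mapsto\norm{Y^x_t-U}$. The main obstacle is the uniform-in-$x$ tail bound on the characteristic function needed for dominated convergence in the inversion formula; everything else is continuous dependence of ODE flows plus Scheff\'e, both routine. One technical point to handle with care is that $\Theta^x_{t,s}$ is only bounded below on $[0,t/2]$ (not on all of $[0,t]$ if one worries about degeneration), but restricting the lower bound on $-\mathsf{Re}\,\psi$ to that subinterval is already enough, exactly as done in Lemma~\ref{lem: cvtlineal}.
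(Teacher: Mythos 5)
Your proposal follows essentially the same route as the paper: Fourier inversion split between a compact region $|\theta|\lqq K$ and a tail $|\theta|>K$, pointwise continuity in $x$ of the characteristic exponent for the compact part, a uniform-in-$x$ exponential tail bound via Lemma~\ref{lem oreymasuda} and the lower bound of Lemma~\ref{lem: cotainfsup}.iii), dominated convergence to get pointwise convergence of densities, and finally Scheff\'e plus the triangle inequality for the total variation statement. That is exactly the paper's proof. Two small discrepancies are worth flagging. First, your variation-of-constants formula for $Y^x_t$ carries an extraneous deterministic term $\int_0^t \Theta^x_{t,s}\big(Db(\varphi^x_s)\varphi^x_s - b(\varphi^x_s)\big)\ud s$: that term belongs to the inhomogeneous process $Y^{\e,x}(\cdot;\fT,\xi)$ of \eqref{eq: linealhomogenea}, not to $Y^x_t$, which solves \eqref{eq: Yxt} with zero initial datum and no non-homogeneous drift, so $Y^x_t = \Phi^{-1}_t(x)\int_0^t \Phi_s(x)\,\ud L_s$ and the deterministic summand is identically zero. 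Since you only use it as a continuous shift this does not damage the argument, but the decomposition is mis-stated. Second, on the compact region the paper invokes Proposition~\ref{lem:Holdercontinuity} (local $\beta$-H\"older continuity of $\psi$, proved in the same appendix) to obtain a quantitative, $s$-integrable dominating bound $C_K|\theta|^{1\wedge\beta}(\sqrt d\,e^{-\delta(t-s)})^{1\wedge\beta}$ for the difference of the exponents; you invoke only plain continuity of $\psi$ plus compactness. For fixed $t$ and a compact $x$-range, plain continuity plus a uniform bound on the propagator is indeed enough for dominated convergence, so your route is valid, if slightly less explicit than what the paper spells out. The rest (tail estimate choosing $K$ large enough that $c_1 e^{-c_2(|x|)t}|\theta| > C_{\sphericalangle}$ — or equivalently restricting the lower bound to a subinterval — then Scheff\'e and triangle inequality) matches the paper.
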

\begin{proof}
Let $x,x'\in \RR^d$. The Fourier inversion formula yields
\begin{align}\label{fourier}
g^{x}_t(u)-g^{x'}_t(u)&=C_\pi \int_{\RR^d} e^{i\<u,\theta\>}\left(\hat{f}^{x}_t(\theta)-\hat{f}^{x'}_t(\theta)\right)\ud \theta\nonumber\\
&=C_\pi \int_{|\theta|\lqq K} e^{i\<u,\theta\>}\left(\hat{f}^{x}_t(\theta)-\hat{f}^{x'}_t(\theta)\right)\ud \theta+
C_\pi \int_{|\theta|>K} e^{i\<u,\theta\>}\left(\hat{f}^{x}_t(\theta)-\hat{f}^{x'}_t(\theta)\right)\ud \theta
\end{align}
for any $u\in \RR^d$. We start with the first term of the right-hand side of the preceding inequality. Recall 
\[
\hat{f}^{x}_t(\theta)=\exp\left(\int_{0}^{t}\psi(\Phi^*_s(x)(\Phi^*_t)^{-1}(x)\theta)\ud s\right)
\quad \mbox{and}\quad \hat{f}^{x'}_t(\theta)=\exp\left(\int_{0}^{t}\psi(\Phi^*_s(x')(\Phi^*_t)^{-1}(x')\theta)\ud s\right).
\]
For any $|\theta|\lqq K$ we have
\[
\sup_{0\lqq s\lqq t}|\psi(\Phi^{*}_s(x')(\Phi^{*}_t)^{-1}(x')\theta)-\psi(\Phi^{*}_s(x)(\Phi^{*}_t)^{-1}(x)\theta)|\to 0,\quad\textrm{ as } x'\to x.
\]
Indeed, by Proposition~\ref{lem:Holdercontinuity}, Item (2), in Appendix~\ref{Appendix C} there exists a positive constant $C_K$ such that
for all $|\theta|\leq K$, $x,x'\in \RR^d$ we have 
\begin{align*}
|\psi(\Phi^{*}_s(x')(\Phi^{*}_t)^{-1}(x')\theta)&-\psi(\Phi^{*}_s(x)(\Phi^{*}_t)^{-1}(x)\theta)|\\[2mm]
&
\lqq C_K|\theta|^{1\wedge \beta}|\Phi^{*}_s(x')(\Phi^{*}_t)^{-1}(x')-\Phi^{*}_s(x)(\Phi^{*}_t)^{-1}(x)|^{1\wedge \beta}\\[2mm]
&\lqq C_K|\theta|^{1\wedge \beta}(|\Phi^{*}_s(x')(\Phi^{*}_t)^{-1}(x')|^{1\wedge \beta}+|\Phi^{*}_s(x)(\Phi^{*}_t)^{-1}(x)|^{1\wedge \beta})\\[2mm]
&\lqq 2C_k|\theta|^{1\wedge \beta}(\sqrt{d}e^{-\delta(t-s)})^{1\wedge \beta}.
\end{align*}
Then the dominated convergence theorem in the exponent yields for any $|\theta|\lqq K$
\[
\hat{f}^{x'}_t(\theta)\to \hat{f}^{x}_t(\theta),\quad \textrm{ as } x'\to x.
\]
Again, by dominated convergence we have 
\[
\int_{|\theta|\lqq K} e^{i\<u,\theta\>}\left(\hat{f}^{x}_t(\theta)-\hat{f}^{x'}_t(\theta)\right)\ud \theta\to 0,\quad \textrm{ as } x'\to x.
\]
We continue with the second term of the right-hand side.
Let $|\theta|>K$ and we assume that  $|x'|\lqq r$ and $|x|\lqq r$ where $r=2|x|$.
We analyze
\begin{align*}
|\hat{f}^{x}_t(\theta)| &=\exp\left( \int_{0}^{t} \int_{\RR^d}(\cos(\<\Phi^*_s(x)(\Phi^*_t)^{-1}(x)\theta,w\>)-1)\nu(\ud w)\ud s\right)\\
&\lqq \exp\left( \int_{0}^{t} \int_{|\<\Phi^*_s(x)(\Phi^*_t)^{-1}(x)\theta,w\>|\lqq \pi}(\cos(\<\Phi^*_s(x)(\Phi^*_t)^{-1}(x)\theta,w\>)-1)\nu(\ud w)\ud s\right)\\
&\lqq \exp\left(-\frac{2}{\pi^2} \int_{0}^{t} \int_{|\<\Phi^*_s(x)(\Phi^*_t)^{-1}(x)\theta,w\>|\lqq \pi}
|\<\Phi^*_s(x)(\Phi^*_t)^{-1}(x)\theta,w\>|^2 \nu(\ud w)\ud s\right).
\end{align*}
By Lemma~\ref{lem: cotainfsup} we have
\[
|\Phi^*_s(x)(\Phi^*_t)^{-1}(x)\theta|\gqq c_1e^{-c_2(r)(t-s)}|\theta|\quad \textrm{ for any }
s\in [0,t], |x|\lqq r, \theta\in \RR^d.
\]
Note that 
\[
c_1e^{-c_2(r)(t-s)}|\theta|\gqq c_1 Ke^{-c_2(r)t}.
\]
Since $t>0$ is fixed. 
The choice  $K>\frac{e^{c_2(r)t}C_\sphericalangle}{c_1}$ yields $c_1e^{-c_2(r)(t-s)}|\theta|>C_\sphericalangle$ where $C_\sphericalangle$ is the constant that appears in Lemma~\ref{lem oreymasuda}.
Then we have for $|\theta|>K$
\begin{align*}
|\hat{f}^{x}_t(\theta)|&\lqq \exp\left(-\frac{2}{\pi^2}c_\sphericalangle\int_{0}^{t} |\Phi^*_s(x)(\Phi^*_t)^{-1}(x)\theta|^{\alpha} \ud s\right)
\lqq \exp\left(-\frac{2}{\pi^2}c_\sphericalangle|\theta|^\alpha \int_{0}^{t} c^{\alpha}_1 e^{-c_2(r)s\alpha} \ud s\right).
\end{align*}
Then
\begin{align*}
\int_{|\theta|>K}|\hat{f}^{x'}_t(\theta)-\hat{f}^{x}_t(\theta)|\ud \theta
&\lqq \int_{|\theta|>K}|\hat{f}^{x'}_t(\theta)|\ud \theta+\int_{|\theta|>K}|\hat{f}^{x}_t(\theta)|\ud \theta\\
&\lqq 2\int_{|\theta|>K}\exp\left(-\frac{2}{\pi^2}c_\sphericalangle|\theta|^\alpha \int_{0}^{t} c^{\alpha}_1 e^{-c_2(r)s\alpha} \ud s\right)\ud \theta<\infty.
\end{align*}
Sending $x'\to x$ and subsequently $K \to \infty$ we obtain
\[
\lim\limits_{K\to \infty}\limsup\limits_{x'\to x}\int_{|\theta|>K}|\hat{f}^{x'}_t(\theta)-\hat{f}^{x}_t(\theta)|\ud \theta=0.
\]
By \eqref{fourier} we obtain
\[
\lim\limits_{x'\to x}\int_{\RR^d}|\hat{f}^{x'}_t(\theta)-\hat{f}^{x}_t(\theta)|\ud \theta=0.
\]
The preceding limit yields that $x\in \RR^d\mapsto g^{x}_t(u)\in [0,\infty)$ is continuous for any $t>0$ and $u\in \mathbb{R}^d$ fixed. 
This proves the first part of the statement.

\noindent We show the second part of the statement. The Scheff\'e lemma applied to the densities $g^x_t$, $g^{x'}_t$   
 implies for any $t>0$
\[
\norm{Y^{x'}_t-Y^{x}_t}\to 0,\quad \textrm{ as } x'\to x.
\]
The triangle inequality yields for any random vector $U$ on $\RR^d$ 
\[
\left|\norm{Y^{x'}_t-U}-\Big\lVert
Y^{x}_t-U
\Big\rVert_{\mathrm{TV}}\right| \lqq \norm{Y^{x'}_t-Y^{x}_t}
\]
for any $t>0$ and $x,x'\in \RR^d$. 
Combining both preceding expressions finishes the proof.
\end{proof}

\bigskip 

\subsection{\textbf{Ergodicity of the inhomogeneous O-U process $Y^\e_t(x)$}}
\subsubsection{\textbf{Existence of the limiting distribution $\mu^{\e}_*$ and its convergence in law}}\hfill
\begin{lem}\label{lem:convergenceindist} \hfill
\begin{enumerate}
 \item For any $\e\in (0, 1)$ and $x\in \RR^d$ we have that 
$Y^{\e}_t(x)$ converges in distribution to $\mu^{\e}_*$ as $t\ra \infty$, where 
$\e Z_\infty$ has the law of $\mu^{\e}_*$.
 \item For any $\e\in (0,1]$, $K>0$, $t\gqq 0$ and $(x_{\e, t})_{\e, t}$ 
with $|x_{\e, t}|\lqq K$ 
we have that $Y^{\e}_{t}(x_{\e, t})$ converges in distribution to $\mu^{\e}_*$ as $t \ra \infty$, where 
$\e Z_\infty$ has the law of $\mu^{\e}_*$.
\end{enumerate}
\end{lem}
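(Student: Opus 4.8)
\textbf{Proof proposal for Lemma~\ref{lem:convergenceindist}.}

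The plan is to exploit the explicit variation-of-constants representation $Y^{\e}_t(x)=\varphi^x_t+\e Y^x_t$ together with the fact that the characteristic function of $Y^x_t$ is explicitly computable through the inhomogeneous flow $\Phi_\cdot(x)$. For Item (1), first I would note that $\varphi^x_t\to 0$ as $t\to\infty$ by the contractivity estimate $|\varphi^x_t|\lqq e^{-\delta t}|x|$ from Hypothesis~\ref{hyp: potential}, so it suffices to show that $\e Y^x_t$ converges in distribution to $\e Z_\infty$ as $t\to\infty$. Since $L$ has independent increments, the characteristic function of $Y^x_t$ has the form
\[
\theta\mapsto\exp\Big(\int_0^t\psi\big(\Phi^*_s(x)(\Phi^{-1}_t(x))^*\theta\big)\ud s\Big),
\]
so after the substitution $u=t-s$ and using the convergence $\varphi^x_{t}\to 0$ (which forces $Db(\varphi^x_{t+\cdot})\to Db(0)$ and hence $\Phi^*_s(x)(\Phi^{-1}_t(x))^*$ to resemble the time-homogeneous semigroup $e^{-Db(0)^*u}$ for the relevant range of $s$), one identifies the pointwise limit of the exponent as $\int_0^\infty\psi(e^{-Db(0)^*u}\theta)\ud u$, which is exactly the log-characteristic function of $Z_\infty$. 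Monotone/dominated convergence in the exponent gives pointwise convergence of characteristic functions, and L\'evy's continuity theorem concludes. The control of the integrand uses the uniform exponential bounds $|\Phi^*_s(x)(\Phi^{-1}_t(x))^*|\lqq\sqrt d\,e^{-\delta(t-s)}$ from Lemma~\ref{lem: cotainfsup}.i) together with the local H\"older continuity of $\psi$ from Proposition~\ref{lem:Holdercontinuity}, which controls $|\psi(\Phi^*_s(x)(\Phi^{-1}_t(x))^*\theta)-\psi(e^{-Db(0)^*(t-s)}\theta)|$ by a factor of $|\Phi^*_s(x)(\Phi^{-1}_t(x))^*-e^{-Db(0)^*(t-s)}|^{1\wedge\beta}$, which by Lemma~\ref{lem: cotainfsup}.v) is of size $O(|\varphi^x_T|^{1\wedge\beta}e^{-\delta t(1\wedge\beta)})$; combined with integrability of the dominating envelope this yields the convergence of the exponents.

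For Item (2), the additional point is that the initial condition $x_{\e,t}$ is allowed to vary with $t$ (and $\e$), but stays in the compact ball $\{|x|\lqq K\}$. Here I would argue directly: $|\varphi^{x_{\e,t}}_t|\lqq e^{-\delta t}K\to 0$ uniformly, so the deterministic part vanishes regardless of the choice of $x_{\e,t}$; and for the stochastic part $\e Y^{x_{\e,t}}_t$, the characteristic function bounds above are all uniform over $|x|\lqq K$ (the constants in Lemma~\ref{lem: cotainfsup} and the H\"older constant in Proposition~\ref{lem:Holdercontinuity} depend only on $K$), so the same estimates give convergence of the characteristic function of $\e Y^{x_{\e,t}}_t$ to that of $\e Z_\infty$ uniformly in the admissible sequences. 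Alternatively — and perhaps more cleanly — I would invoke the continuity statement of Lemma~\ref{lem: continuidad}: the map $x\mapsto\norm{Y^{\e}_t(x)-\e Z_\infty}$ is continuous, and combining the bound $\norm{Y^{\e}_t(x)-\e Z_\infty}\lqq \norm{Y^x_t-Z_\infty}+\norm{(\nicefrac{\varphi^x_t}{\e}+Z_\infty)-Z_\infty}$ from \eqref{ine: fine} with the uniform decay of $\varphi^x_t$ over $|x|\lqq K$ and a uniform-in-$x$ version of Item (1) gives $\sup_{|x|\lqq K}\norm{Y^\e_t(x)-\e Z_\infty}\to 0$, which is stronger than the claimed convergence in distribution.

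The main obstacle I anticipate is making the limit of the exponent $\int_0^t\psi(\Phi^*_s(x)(\Phi^{-1}_t(x))^*\theta)\ud s\to\int_0^\infty\psi(e^{-Db(0)^*u}\theta)\ud u$ rigorous uniformly over the compact set of initial data, because the flow $\Phi_\cdot(x)$ is time-inhomogeneous and $\varphi^x_{T^x_\e+\cdot}$ sits at different places along the trajectory for different $t$; the resolution is precisely the quantitative comparison $|\Phi^{-1}_t(x)\Phi_s(x)-\Psi^{-1}_t\Psi_s|^2\lqq\frac{C^2(|x|)d^3}{4\delta^2}|\varphi^x_T|^2e^{-\delta t}$ of Lemma~\ref{lem: cotainfsup}.v), which shows the inhomogeneous flow collapses onto the homogeneous one at an exponential rate controlled uniformly on $\{|x|\lqq K\}$. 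Everything else is routine once this estimate is in hand, so I would not belabor the elementary dominated-convergence bookkeeping.
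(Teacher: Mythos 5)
Your approach matches the paper's proof in all essential respects: both compute the explicit characteristic function of $Y^x_t$ via the variation-of-constants formula, compare the inhomogeneous exponent to the homogeneous one using the H\"older continuity of $\psi$ (Proposition~\ref{lem:Holdercontinuity}) together with the flow comparison of Lemma~\ref{lem: cotainfsup}.v), invoke L\'evy's continuity theorem, and then handle the vanishing deterministic part $\varphi^x_t$ via Slutsky. Item (2) is, as you say, the observation that every constant in these estimates depends on $x$ only through $|x|\lqq K$.

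One technical detail you gloss over, which the paper is careful about: the H\"older bound on $\psi$ is \emph{local}. Since both arguments of $\psi$ are bounded by $c_3 e^{-\delta(t-s)}|z|$ (Lemma~\ref{lem: cotainfsup}.iii)), the comparison of the exponents is only valid for $|z|\lqq\frac{1}{2c_3}$. The paper therefore proves $\Theta^z_t(x)\to 0$ only for such small $z$, and then extends to all $z\in\RR^d$ by the standard fact that a characteristic function (and hence its limit) is uniquely determined by its values in any neighborhood of the origin. Your ``dominated convergence in the exponent'' would either have to carry the same restriction and invoke the same uniqueness argument, or instead use the $(\beta\wedge 1)$-growth of $\psi$ near $0$ to produce an integrable dominating envelope valid for all $z$; both are workable, but you should make one of them explicit. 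A small stylistic divergence: the paper compares $Y^x_t$ to the \emph{finite-time} homogeneous Ornstein--Uhlenbeck $Z_t$ (showing $\EE[e^{i\<z,Y^x_t\>}]/\EE[e^{i\<z,Z_t\>}]\to 1$) and then cites Theorem~4.1 of Sato--Yamazato for $Z_t\stackrel{d}{\lra}Z_\infty$, rather than establishing the convergence of the exponent to $\int_0^\infty\psi(e^{-Db(0)^*u}\theta)\,\ud u$ in one step as you sketch; they are equivalent. Finally, a caution: your proposed ``alternative'' route for Item~(2) via the total-variation convergence of Lemma~\ref{lem: cvtlineal} would be circular, since the proof of that lemma itself invokes Lemma~\ref{lem:convergenceindist}; your direct route, using the uniformity over $|x|\lqq K$ of the estimates, is the one the paper uses and is correct.
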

\noindent
\textbf{Proof.} We start with the proof of (1). 
 Let $x\in \RR^d$ and $\e\in (0,1]$ be fixed.
Recall that $Y^{\e}_t(x)=\varphi^x_t+\e Y^x_t$ for any $t\gqq 0$
by \eqref{eq: Yxte}, where $Y^x=(Y^x_t)_{t\gqq 0}$
is the solution 
of the stochastic differential equation
\begin{align*}
\ud Y^x_t =-Db(\varphi^x_t)Y^x_t \ud t + \ud L_t \quad \textrm{ with } Y_0=0
\end{align*}
and $(\varphi^x_t)_{t\gqq 0}$ is the solution of \eqref{dde1}.
By the variation of constants formula, it is not hard to see that 
\begin{align*}
Y^x_t = \Phi^{-1}_t(x) \int_0^t \Phi_s(x) \ud L_s\quad 
\textrm{ for } t\gqq 0,
\end{align*}
where $\Phi(x):=(\Phi_t(x))_{t\gqq 0}$ is the solution of the matrix differential equation 
\[
\frac{\ud}{\ud t} \Phi_t(x) = \Phi_t(x)\, Db(\varphi_t^x)\quad \textrm{ with }  \Phi_0 = I_d.   
\]
In addition, examining the Wronskian at $0$ we have that $\det \Phi_t(x) \neq 0$ for all $t\gqq 0$. 
The inverse matrix $\Phi_t^{-1}(x)$ exists for any $t\gqq 0$ and 
$\Phi^{-1}(x):=(\Phi^{-1}_t(x))_{t\gqq 0}$ is the solution of the matrix differential equation
\[
\frac{\ud}{\ud t} \Phi_t^{-1}(x) =  - Db(\varphi_t^x)\, \Phi_t^{-1}(x)\quad \textrm{ with }  \Phi_0 = I_d.   
\]
Recall that $Z=(Z_t)_{t\gqq 0}$ is the solution of
\begin{equation*}\label{def:Z}
\ud Z_t = - Db(0) Z_t \ud t +\ud L_t\quad \textrm{ with } Z_0= 0.
\end{equation*}
Since this equation is also linear, 
its solution $Z$ is also given explicitly by the 
variation-of-constants formula 
\begin{align*}
Z_t = \Psi^{-1}_t \int_0^t \Psi_s \ud L_s\quad 
\textrm{ for any  } t\gqq 0,
\end{align*}
where $\Psi:=(\Psi_t)_{t\gqq 0}$ is the solution of the matrix differential equation 
\[
\frac{\ud}{\ud t} \Psi_t = \Psi_t \,Db(0)\quad\textrm{ with }  \Psi_0 = I_d.   
\]
Note that the inverse matrix $\Psi_t^{-1}$ exists for any $t\gqq 0$ and $\Psi^{-1}:=(\Psi^{-1}_t)_{t\gqq 0}$ satisfies the matrix differential equation
\[
\frac{\ud}{\ud t} \Psi_t^{-1} =  - Db(0)\, \Psi_t^{-1}\quad \textrm{ with }  \Psi_0 = I_d.   
\]
Since we are interested in convergence in distribution, we analyze the characteristic function of $Y^x_t$ and $Z_t$ for $t>0$.
By Theorem~3.1 in \cite{SaYa} we know that
\begin{equation}\label{eq:cfou}
\mathbb{E}\left[e^{i\<z,Z_t\>}\right]=\exp\left(\int_{0}^{t}\psi(\Psi^*_s(\Psi^{-1}_t)^*z)\ud s\right)\quad  \textrm{ for } z\in \RR^d,
\end{equation}
where $\psi:\RR^d\to \CC$ is the characteristic exponent of the L\'evy process $L$.
For the inhomogeneous process $(Y^x_t)_{t\gqq 0}$, 
a standard discretization procedure combined with \eqref{eq:cfou} yields
\[
\mathbb{E}\left[e^{i\<z,Y^x_t\>}\right]=\exp\left(\int_{0}^{t}\psi(\Phi^*_s(x)(\Phi^{-1}_t(x))^*z)\ud s\right )\quad \textrm{ for } z\in \RR^d.
\]
By Lemma~\ref{lem: cotainfsup} part iii) we note
that there exist uniform positive 
constants $c_3$ and $c_4$ such that
\begin{equation*}
|\Phi^*_s(x)(\Phi^{-1}_t(x))^*z|\lqq c_3e^{-c_4 (t-s)}|z|
\qquad \textrm{ and } \qquad
|\Psi^*_s(\Psi^{-1}_t)^*z|\lqq c_3e^{-c_4 (t-s)}|z| 
\end{equation*}
for any $t\gqq 0$, $s\in [0,t]$ and $z\in \RR^d$.
For $t>0$ and $z\in \RR^d$, we define the error term by
\[
\Theta^z_t(x):=\int_{0}^{t}\psi(\Phi^*_s(x)(\Phi^{-1}_t(x))^*z)\ud s-\int_{0}^{t}\psi(\Psi^*_s(\Psi^{-1}_t)^*z)\ud s.
\]
Since we are assuming that the L\'evy process $L=(L_t)_{t\gqq 0}$ has $\beta$-moment for some $\beta>0$ (see Hypothesis~\ref{hyp: moment condition}), the characteristic exponent $\psi$ is  differentiable for $\beta\gqq 1$ and locally H\"older continuous with index $\beta$ for $\beta\in (0,1)$, a proof is given in  Proposition~\ref{lem:Holdercontinuity} in Appendix~\ref{Appendix C}.
Let $z\in \RR^d$ with $|z|\lqq \frac{1}{2c_3}$. Then there exists a positive constant $C_1:=C(c_3,\beta)$  such that
\begin{equation}\label{comparacion}
\begin{split}
|\Theta^z_t(x)| 
&\lqq \int_{0}^{t}|\psi(\Phi^*_s(x)(\Phi^{-1}_t(x))^*z)-\psi(\Psi^*_s(\Psi^{-1}_t)^*z)|\ud s\\
&\lqq C_1\int_{0}^{t}|\Phi^*_s(x)(\Phi^{-1}_t(x))^*-\Psi^*_s(\Psi^{-1}_t)^*|^{\beta\wedge 1}\ud s \quad \textrm{ for } t\gqq 0.
\end{split}
\end{equation}
By Lemma~\ref{lem: cotainfsup}, part v), there is a positive constant $C = C(|x|)$ such that
\begin{align*}
|\Phi^*_s(x)(\Phi^{-1}_t(x))^* -(\Psi_s)^*(\Psi^{-1}_t)^* |^2 &\lqq 
\frac{C^2(|x|) d^3}{4\delta^2}
|\varphi^x_{0}|^2
e^{-\delta t}(1-e^{-4\delta (t-s)})\lqq C_2(|x|)e^{-\delta t}
\end{align*}
for any $s\in [0,t]$, where $C_2(|x|)$ is a constant that depends continuously on $|x|$.
Using then the preceding inequality in \eqref{comparacion} we obtain
\begin{equation}\label{e: charexpcontrol}
|\Theta^z_t(x)|\lqq C_3(|x|)e^{-\frac{\delta}{2} (\beta\wedge 1) t}t
\quad \textrm{ for } t\gqq 0,
\end{equation}
where $C_3(|x|)$ is a constant that depends continuously on $|x|$.
Sending $t\to \infty$, we obtain $\Theta^z_t(x)\to 0$ for any 
$|z|\lqq \frac{1}{2c_3}$. 
\noindent
In the sequel, we prove
$Y_t^x  \stackrel{d}{\lra} Z_\infty$. 
By Theorem~4.1  in \cite{SaYa} we know that 
$Z_t  \stackrel{d}{\lra} Z_\infty$, that is,
\begin{equation}\label{limitee}
\lim\limits_{t\to\infty}
\mathbb{E}\left[e^{i\<z,Z_t\>}\right]=
\exp\Big(\int_{0}^{\infty}\psi(e^{-Db(0)s}z)\ud s\Big)=:\chi(z)
\quad \textrm{ for } z\in \RR^d.
\end{equation}
Recall that for each $t>0$, $Z_t$ is infinitely divisible (see for instance Theorem~9.1 in \cite{SA}), then $\mathbb{E}[e^{i\<z,Z_t\>}]\not=0$ for any $z\in \RR^d$ (see Lemma~7.5 in \cite{SA}). 
Hence, \eqref{e: charexpcontrol} implies
\begin{equation}\label{e: charfuncfrac}
\lim\limits_{t\to \infty}\frac{\mathbb{E}\left[e^{i\<z,Y^x_t\>}\right]}{\mathbb{E}\left[e^{i\<z,Z_t\>}\right]}=\lim\limits_{t\to \infty}\exp\left(\Theta^z_t(x)\right)=1 \quad \textrm{ for } |z|\lqq \frac{1}{2c_3}.
\end{equation}
By \eqref{limitee} we infer
$\lim\limits_{t\to \infty}\mathbb{E}\left[e^{i\<z,Y^x_t\>}\right]=\chi(z)$ for  $|z|\lqq \frac{1}{2c_3}$.
Since $\chi$ is a characteristic function, it is uniquely determined by its values in an open neighborhood of the origin. As a result we obtain
$\lim\limits_{t\to \infty}\mathbb{E}\left[e^{i\<z,Y^x_t\>}\right]=\chi(z)$  for any $z\in \RR^d$.
By the L\'evy continuity theorem we obtain $Y^x_t  \stackrel{d}{\lra} Z_\infty$. 
Recall that $Y^{\e}_t(x)=\varphi^x_t+\e Y^x_t$, $t\gqq 0$.
Since $\varphi^x_t \to 0$, as $t\to \infty$, the Slutsky lemma yields
$Y^{\e}_t(x)  \stackrel{d}{\lra} \e Z_\infty$
as $t \to \infty$. This finishes the proof of (1). 

We finish with the proof of (2). By \eqref{e: charexpcontrol} we have 
that the convergence \eqref{e: charfuncfrac} only depends of $x$ via $|x|\lqq K$ 
and consequently is valid for all $|x_{t, \e}|\lqq K$ as stated in (2). 

\bigskip 

\subsubsection{\textbf{Convergence of $Y^\e_t(x)$ to $\mu^{\e}_*$ in the total variation distance}}\label{ss: ERGINHOMOUTV}\hfill\\
\begin{lem} \label{lem: cvtlineal}
For any $K>0$ and $\e>0$ we have 
\begin{equation}\label{e: cvtlineal}
\lim\limits_{t\to \infty}\sup_{|x|\lqq K} \norm{Y^{\e}_t(x) - \mu^{\e}_*}= 0, 
\end{equation}
where $\e Z_\infty$ has the law of $\mu^{\e}_*$.
In particular, 
\begin{equation}\label{eq: tvd22}
\lim\limits_{t\to \infty}\sup_{|x|\lqq K} \norm{Y^{x}_t - Z_\infty}=0.
\end{equation}
\end{lem}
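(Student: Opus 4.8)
The plan is to prove Lemma~\ref{lem: cvtlineal} by strengthening the distributional convergence established in Lemma~\ref{lem:convergenceindist} to convergence in total variation, and this is done via Fourier inversion together with the regularity afforded by the Orey--Masuda cone condition of Lemma~\ref{lem oreymasuda}. First I would reduce \eqref{e: cvtlineal} to \eqref{eq: tvd22}: since $Y^{\e}_t(x) = \varphi^x_t + \e Y^x_t$ and $\mu^{\e}_*$ is the law of $\e Z_\infty$, the scale and (deterministic) shift invariance of $\norm{\cdot}$ (part ii) of Lemma~A.1 in \cite{BP}) gives
\[
\norm{Y^{\e}_t(x) - \mu^{\e}_*} \lqq \norm{Y^{x}_t - Z_\infty} + \norm{(\nicefrac{\varphi^x_t}{\e} + Z_\infty) - Z_\infty},
\]
and since $|\varphi^x_t| \lqq K e^{-\delta t}$ uniformly for $|x|\lqq K$, the second term tends to $0$ uniformly by the shift-continuity of $L^1$ (absolute continuity of $Z_\infty$, already recorded in Subsection~\ref{sec: OUP}). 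So it suffices to establish \eqref{eq: tvd22}.

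For \eqref{eq: tvd22}, I would show that the density $g^x_t$ of $Y^x_t$ converges to the density $f_\infty$ of $Z_\infty$ in $L^1(\RR^d)$, uniformly for $|x|\lqq K$; by Scheff\'e's lemma it is enough to prove pointwise convergence of the densities uniformly over $|x|\lqq K$, which in turn follows once $\int_{\RR^d} |\hat g^x_t(\theta) - \hat f_\infty(\theta)|\,\ud\theta \ra 0$ uniformly in $|x|\lqq K$. Split the integral at $|\theta| \lqq K'$ and $|\theta| > K'$. On the compact region $|\theta|\lqq K'$, the explicit characteristic function $\hat g^x_t(\theta) = \exp(\int_0^t \psi(\Phi^*_s(x)(\Phi^{-1}_t(x))^* \theta)\,\ud s)$ and the control $|\Theta^z_t(x)| \lqq C_3(|x|) e^{-\frac{\delta}{2}(\beta\wedge 1)t}\,t$ from \eqref{e: charexpcontrol} — which depends on $x$ only through $|x|\lqq K$ — together with $\hat f_\infty(\theta)\neq 0$ give uniform convergence to $\hat f_\infty$ and hence a vanishing contribution by dominated convergence. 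For the tail region $|\theta| > K'$, I would use Lemma~\ref{lem oreymasuda}: as in the proof of Lemma~\ref{lem: continuidad}, the lower bound $|\Phi^*_s(x)(\Phi^{-1}_t(x))^*\theta| \gqq c_1 e^{-c_2(K)(t-s)}|\theta|$ from Lemma~\ref{lem: cotainfsup}.iii), valid uniformly for $|x|\lqq K$, combined with $\mathsf{Re}\,\psi(\vartheta) \lqq -\frac{2}{\pi^2} c_\sphericalangle |\vartheta|^\alpha$ once $|\vartheta| > C_\sphericalangle$, yields
\[
|\hat g^x_t(\theta)| \lqq \exp\Big(-\tfrac{2}{\pi^2} c_\sphericalangle c_1^\alpha |\theta|^\alpha \int_0^t e^{-c_2(K)\alpha s}\,\ud s\Big),
\]
for all $|x|\lqq K$ and $|\theta| > K'$ with $K'$ chosen large depending on $t, K$; since $t$ is fixed the exponent is a fixed negative multiple of $|\theta|^\alpha$, giving an integrable, $x$-uniform Gaussian-type tail bound, and the analogous (stronger) bound for $\hat f_\infty$. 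Sending $K' \to \infty$ kills the tail contribution uniformly in $|x|\lqq K$.

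Finally, I would pass to a uniform statement in $x$ by the standard argument already used several times in the paper: the map $x \mapsto \norm{Y^x_t - Z_\infty}$ is continuous on $\RR^d$ for each fixed $t>0$ by Lemma~\ref{lem: continuidad}, so for each $t$ the supremum over the compact ball $\{|x|\lqq K\}$ is attained at some $x_t$ with $|x_t|\lqq K$; running the above Fourier estimates along the sequence $(x_t)$ — all bounds depending on $x$ only through the uniform bound $K$ — shows $\norm{Y^{x_t}_t - Z_\infty} \ra 0$ as $t \ra \infty$, which is exactly \eqref{eq: tvd22}. I expect the only genuinely delicate point to be bookkeeping the dependence of $K'$ on $t$ in the tail estimate: because the factor $\int_0^t e^{-c_2(K)\alpha s}\,\ud s$ stays bounded below by a positive constant for $t$ bounded away from $0$, one should first dispatch all large $t$ (say $t\gqq 1$) with a single choice of constants, and only then note that $t\ra\infty$ is what is needed; everything else is a routine repetition of the arguments in Lemma~\ref{lem:convergenceindist} and Lemma~\ref{lem: continuidad}.
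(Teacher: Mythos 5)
Your reduction to \eqref{eq: tvd22}, the compact-part argument via \eqref{e: charexpcontrol}, and the $x$-uniformization via Lemma~\ref{lem: continuidad} all match the paper's proof. The gap is in the tail estimate, at exactly the point you flag as ``delicate'', and your proposed fix does not close it. Your claimed bound
\[
|\hat g^x_t(\theta)| \lqq \exp\Big(-\frac{2}{\pi^2}\, c_\sphericalangle c_1^\alpha |\theta|^\alpha \int_0^t e^{-c_2(K)\alpha s}\,\ud s\Big)
\]
needs the cone condition of Lemma~\ref{lem oreymasuda} to apply at \emph{every} $s\in[0,t]$. By Lemma~\ref{lem: cotainfsup}.iii) this forces $c_1 e^{-c_2(K)(t-s)}|\theta| > \pi C_\sphericalangle$ for all $s\in[0,t]$; the worst case $s=0$ gives the threshold $|\theta| > K'(t) := \pi C_\sphericalangle e^{c_2(K)t}/c_1$, which grows exponentially in $t$. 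There is therefore no ``single choice of constants'' valid for all $t\gqq 1$: what you control by noting that $\int_0^t e^{-c_2\alpha s}\,\ud s$ is bounded below is the size of the exponent, not the domain of validity of the estimate. With a $t$-dependent threshold you cannot fix $K'$, send $t\to\infty$, and then send $K'\to\infty$, which is exactly the iterated limit the Scheff\'e--Fourier argument requires.

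The paper's Step~2 resolves this by applying Orey--Masuda only on a terminal subinterval of fixed length. Fix $R$ (not $t$), choose $t_0(R)$ so that $c_1 e^{-c_2(K) t_0(R)} R/\pi \gqq C_\sphericalangle$ (so $t_0$ depends only on $R$ and $K$), and for $t>t_0(R)$ simply drop the nonnegative contribution of $[0,\,t-t_0(R)]$ to $-\mathsf{Re}\int_0^t\psi(\Phi^*_s(x)(\Phi^{-1}_t(x))^*\theta)\,\ud s$; on the remaining interval $[t-t_0(R),\,t]$ the cone condition holds for every $|\theta|\gqq R$. This yields the $t$-uniform estimate
\[
|\hat g^x_t(\theta)| \lqq \exp\Big(-\ti c(K)\, |\theta|^\alpha \big(1-e^{-c_2(K)\alpha\, t_0(R)}\big)\Big), \qquad |\theta|\gqq R,\ t>t_0(R),
\]
which is integrable and independent of $t$, so that $\lim_{R\to\infty}\limsup_{t\to\infty}\int_{|\theta|>R}|\hat g^x_t(\theta)|\,\ud\theta = 0$ follows. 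This subinterval trick is the one idea your proposal is missing; once inserted, your argument is complete and coincides with the paper's.
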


\begin{proof}The idea is to show that convergence in distribution
 (Lemma~\ref{lem:convergenceindist} (2)) combined with 
 the Orey-Masuda cone condition (Lemma~\ref{lem oreymasuda}) 
 implies $\lim\limits_{t\to \infty}\sup_{|x|\lqq K}\norm{Y^{\e}_t(x) - \mu^{\e}_*}=0$.
It is enough to prove that
$Z_\infty$ has a continuous density and 
$\lim\limits_{t\to \infty}\sup_{|x|\lqq K}\norm{Y_t^x - Z_\infty}= 0$.
The latter implies $\lim\limits_{t\to \infty}\sup_{|x|\lqq K}\norm{Y^{\e}_t(x) - \mu^{\e}_*}=0$. Let $|x|\lqq K$. Indeed, 
\begin{align}\label{ine2}
\norm{Y^{\e}_t(x)-\mu^{\e}_*}
& \lqq \norm{(\varphi^x_t+\e Y_t^x)-(\varphi^x_t+\e Z_\infty)}+
\norm{(\varphi^x_t+\e Z_\infty)-(\e Z_\infty)} \nonumber \\ 
&=\norm{Y_t^x-Z_\infty}+
\norm{(\nicefrac{\varphi^x_t}{\e}+ Z_\infty)-Z_\infty}\nonumber\\
&\lqq \sup_{|x|\lqq K}\norm{Y_t^x-Z_\infty}+
\sup_{|x|\lqq K} \norm{(\nicefrac{\varphi^x_t}{\e}+ Z_\infty)-Z_\infty}.
\end{align}
We start with the second term in \eqref{ine2}. 
Since $Z_\infty$ has a continuous density (see for instance, Case~3 in Section~4 in \cite{BP}),
the Scheff\'e lemma yields 
\[
\sup_{|x|\lqq K} \norm{(\nicefrac{\varphi^x_t}{\e}+ Z_\infty)-Z_\infty} 
= \norm{(\nicefrac{\varphi^{\ti x}_t}{\e}+ Z_\infty)-Z_\infty}
\]
for some $|\ti x| \lqq K$. By Hypothesis~\ref{hyp: potential} 
we have 
$
|\varphi^x_t| \lqq e^{-\delta t} |x| \lqq e^{-\delta t} K  
$
whenever $|x|\lqq K$. Again by the Scheff\'e lemma we deduce 
\[
\norm{(\nicefrac{\varphi^{\ti x}_t}{\e}+ Z_\infty)-Z_\infty} \ra 0, \qquad t\ra \infty. 
\]
The latter, together with inequality \eqref{ine2} and  $\sup_{|x|\lqq K}\norm{Y^x_t - Z_\infty} \ra 0$ for $t\ra \infty$ implies $\sup_{|x|\lqq K} \norm{Y^{\e}_t(x) - \e Z_\infty}\ra 0$ for $t\ra \infty$.
In the sequel, we dominate  $\sup_{|x|\lqq K}\norm{Y_t^x-Z_\infty}$. 
By Lemma~\ref{lem: continuidad} we have 
\[
\sup_{|x|\lqq K}\norm{Y_t^x-Z_\infty} =  \norm{Y_t^{\ti x}-Z_\infty} 
\]
for some $|\ti x|\lqq K$. For convenience of notation we drop the tilde and write $x$. 
The proof is divided in 2 steps. 

\bigskip
\noindent
\textbf{Step 1.}
We start with the proof that for any $0<t\lqq \infty$, $Y_t^x$ has a continuous density.
From Theorem~28.1  in \cite{SA}, it is sufficient to show that 
\begin{equation}\label{eq: characteristic function in L1}
\int_{\RR^d} \big|\EE\big[e^{i \< z, Y_t^x\>}\big]\big| \ud z < \infty \qquad \mbox{ for all  } t >0.   
\end{equation}
Fix $t>0$. Since
\[
\mathbb{E}\left[e^{i\<z,Y_t^x\>}\right]=
\exp\Big(\int_{0}^{t}\psi(\Phi^*_s(x)(\Phi^{-1}_t(x))^*z)\ud s\Big)\quad \textrm{ for } z\in \RR^d,
\]
we have
\begin{align}\label{bound}
\big|\EE\big[e^{i \< z, Y_t^x\>}\big]\big| 
&\lqq  
\exp\left(\int_0^t \int_{\RR^d}\big(\cos(\<(\Phi^*_s(x)(\Phi^{-1}_t(x))^*z, \theta\>) -1\big) \nu(\ud \theta) \ud s\right)\nonumber\\ 
&\lqq \exp\left(\int_0^t \int_{|\<(\Phi^*_s(x)(\Psi^{-1}_t(x))^*z, \theta\>)| \lqq \pi}\big(\cos(\<(\Phi^*_s(x)(\Phi^{-1}_t(x))^*z, \theta\>) -1\big) \nu(\ud \theta) \ud s\right)\nonumber\\ 
&\lqq \exp\left(-2\int_0^t \int_{|\<(\nicefrac{\Phi^*_s(x)(\Phi^{-1}_t(x))^*z}{\pi}, \theta\>)| \lqq 1}
|\<(\nicefrac{\Phi^*_s(x)(\Phi^{-1}_t(x))^*z}{\pi}, \theta\>)|^2
\nu(d\theta) \ud s\right),
\end{align}
where the last inequality follows from the well-known inequality $1- \cos(x) \gqq 2 \big(\frac{x}{\pi}\big)^2$ for $|x|\lqq \pi$.
By Lemma~\ref{lem: cotainfsup} we know that there exist positive 
constants
$c_1,c_2(|x|)$ such that
\begin{equation}\label{eq: exponential estimates}
c_1e^{-c_2(|x|) (t-s)}|z| \lqq |\Phi^*_s(x)(\Phi^{-1}_t(x))^*z|
\end{equation}
for any $t\gqq 0$, $s\in [0,t]$ and $z\in \RR^d$.
Due to the boundedness of the characteristic function it is enough to prove that
\[
\int_{|z|> R}\big|\EE\big[e^{i \< z, Y_t^x\>}\big]\big| \ud z<\infty\quad \textrm{ for some } R>0.
\]
Let  $R>C_\sphericalangle \frac{ \pi e^{c_2(|x|) t}}{c_1}$ and $|z|\gqq R$,
where $C_\sphericalangle$ is given in Lemma~\ref{lem oreymasuda}.
 Then
\[
\frac{|\Phi^*_s(x)(\Phi^{-1}_t(x))^*z|}{\pi}\gqq \frac{c_1e^{-c_2(|x|) (t-s)}}{\pi}|z|\gqq C_\sphericalangle e^{c_2(|x|) s}\gqq C_\sphericalangle \quad \textrm{ for any } s\in [0,t].
\]
 By  \eqref{bound} and Lemma~\ref{lem oreymasuda} we obtain 
\begin{align*}
\big|\EE\big[e^{i \< z, Y_t^x\>}\big]\big| 
\lqq \exp\left(-\frac{2c_\sphericalangle c^{\alpha}_1|z|^{\alpha}}{\pi^{\alpha}}\int_0^t e^{-c_2(|x|) \alpha(t-s)} \ud s\right)
\lqq \exp\left(-\frac{2c_\sphericalangle c^{\alpha}_1|z|^{\alpha}}{\pi^{\alpha}c_2(|x|) \alpha }(1-e^{-c_2(|x|) \alpha t})\right)
\end{align*}
for any $|z|\gqq R$, which implies the existence of $\mathcal{C}^{\infty}_{\textrm{b}}$ density (see for instance Theorem~28.1 in \cite{SA}).
For $Y_\infty$, we just notice that $Y_\infty=Z_\infty$ in distribution and 
$Z_\infty$ has a $\mathcal{C}^{\infty}_{\textrm{b}}$ density (see Case~3 Section~4 in \cite{BP}).\\

\noindent
\textbf{Step 2.} Convergence in total variation.
We prove $\norm{Y_t^x - Z_\infty} \ra 0$, as $t\ra \infty$.
For any $R>0$ fixed we split 
\begin{align*}
\int_{\RR^d} \big|\EE\big[e^{i \< z, Y^x_t\>}\big] - \EE\big[e^{i \< z, Z_\infty \>}\big]\big|\ud z
&= \bigg(\int_{|z|\lqq R} + \int_{|z|> R}\bigg) \big|\EE\big[e^{i \< z, Y^x_t\>}\big] - \EE\big[e^{i \< z, Z_\infty \>}\big]\big|\ud z. 
\end{align*}
By Lemma~\ref{lem:convergenceindist} and the uniform convergence of the characteristic functions on compact sets we have that 
\[
\lim_{t \ra \infty} 
\int_{|z|\lqq R} \big|\EE\big[e^{i \< z, Y^x_t\>}\big] - \EE\big[e^{i \< z, Z_\infty \>}\big]\big| \ud z = 0.
\]
Note that
\begin{align*}
\int_{|z|> R} \big|\EE\big[e^{i \< z, Y^x_t\>}\big] - \EE\big[e^{i \< z, Z_\infty \>}\big]\big| \ud z 
\lqq 
\int_{|z|> R} \big| \EE\big[e^{i \< z, Y^x_t\>}\big]\big | \ud z 
+\int_{|z|> R} \big| \EE\big[e^{i \< z, Z_\infty \>}\big]\big| \ud z.
\end{align*}
It is easy to see that the Orey-Masuda condition implies condition (H) in \cite{BP}. In the proof of Proposition~5.3 there it is shown that under condition (H) we have
\[
\lim\limits_{R\to \infty}\limsup\limits_{t\to \infty}\int_{|z|> R} \big| \EE\big[e^{i \< z, Z_\infty \>}\big]\big| \ud z=0 .
\]
Therefore, the limit  
\begin{equation}\label{falta}
\lim\limits_{R\to \infty}\limsup\limits_{t\to \infty}\int_{|z|> R} \big| \EE\big[e^{i \< z, Y^x_t \>}\big]\big| \ud z=0
\end{equation}
yields the desired result. Indeed, 
\begin{align*}
&\limsup_{t\ra \infty}  \int_{\RR^d} \big|\EE\big[e^{i \< z, Y^x_t\>}\big] - \EE\big[e^{i \< z, Z_\infty \>}\big]\big|\ud z  \\
&\hspace{2cm} \lqq 
\limsup\limits_{t\to \infty}\int_{|z|> R} \big| \EE\big[e^{i \< z, Y^x_t \>}\big]\big| \ud z+\limsup\limits_{t\to \infty}\int_{|z|> R} \big| \EE\big[e^{i \< z, Z_\infty \>}\big]\big| \ud z,
\end{align*}
where the left-hand side does not depend on $R$. Sending $R\to \infty$ we obtain  
\[
\lim_{t\ra \infty}  \int_{\RR^d} \big|\EE\big[e^{i \< z, Y^x_t\>}\big] - \EE\big[e^{i \< z, Z_\infty \>}\big]\big|\ud z=0.
\]
In the sequel, we prove inequality \eqref{falta}.
By inequality 
\eqref{bound} we have
\begin{align*}
\big|\EE\big[e^{i \< z, Y^x_t\>}\big]\big| 
\lqq \exp\left(-2\int_0^t \int_{|\<(\nicefrac{\Phi^*_s(\Phi^{-1}_t)^*z}{\pi}, \theta\>)| \lqq 1}
|\<(\nicefrac{\Phi^*_s(x)(\Phi^{-1}_t(x))^*z}{\pi}, \theta\>)|^2
\nu(\ud \theta) \ud s\right).
\end{align*}
By Lemma~\ref{lem: cotainfsup} we know that there exist positive 
constants
$c_1,c_2(|x|)$ such that
\begin{equation*}
c_1e^{-c_2(|x|) (t-s)}|z| \lqq |\Phi^*_s(x)(\Phi^{-1}_t(x))^*z|\quad \textrm{ for  } t\gqq 0, s\in [0,t], z\in \RR^d.
\end{equation*}
Let $R>\frac{\pi C_\sphericalangle}{C_1}$,
 $t>\frac{1}{c_2(|x|)}\ln(\frac{C_1 R}{\pi})=:t_0(R)$, $s\in [0,t]$ and $|z|\gqq R$, 
 where $C_\sphericalangle$ is given in Lemma~\ref{lem oreymasuda}.
Then we obtain
\[
\frac{|\Phi^*_s(x)(\Phi^{-1}_t(x))^*z|}{\pi}\gqq \frac{c_1e^{-c_2(|x|) (t-s)}R}{\pi}\gqq C_\sphericalangle\quad \textrm{ whenever }\quad s\in [t-t_0(R),t].
\]
Observe that
\begin{align*}
\big|\EE\big[e^{i \< z, Y_t^x\>}\big]\big| 
&\lqq \exp\left(-2\int_0^{t-t_0(R)} \int_{|\<(\nicefrac{\Phi^*_s(x)(\Phi^{-1}_t(x))^*z}{\pi}, \theta\>)| \lqq 1}
|\<(\nicefrac{\Phi^*_s(x)(\Phi^{-1}_t(x))^*z}{\pi}, \theta\>)|^2
\nu(\ud \theta) \ud s\right)\\
&\hspace{1cm}\cdot \exp\left(-2\int_{t-t_0(R)}^t \int_{|\<(\nicefrac{\Phi^*_s(x)(\Phi^{-1}_t(x))^*z}{\pi}, \theta\>)| \lqq 1}
|\<(\nicefrac{\Phi^*_s(x)(\Phi^{-1}_t(x))^*z}{\pi}, \theta\>)|^2
\nu(\ud \theta) \ud s\right)\\
&\lqq \exp\left(-2\int_{t-t_0(R)}^t \int_{|\<(\nicefrac{\Phi^*_s(x)(\Phi^{-1}_t(x))^*z}{\pi}, \theta\>)| \lqq 1}
|\<(\nicefrac{\Phi^*_s(x)(\Phi^{-1}_t(x))^*z}{\pi}, \theta\>)|^2
\nu(\ud \theta) \ud s\right).
\end{align*}
The Orey-Masuda cone condition (Lemma~\ref{lem oreymasuda}) and equality \eqref{eq: exponential estimates} yield
\begin{align*}
\big|\EE\big[e^{i \< z, Y_t^x\>}\big]\big| 
\lqq \exp\left(-\frac{2c_\sphericalangle c_1^{\alpha} |z|^{\alpha}}{\pi^{\alpha}}\int_0^t 
e^{-c_2(|x|) \alpha(t-s)} \ud s\right)
\lqq \exp\left(-
\tilde{c}(|x|) |z|^{\alpha}
(1-e^{-c_2(|x|) \alpha t_0(R)})\right)
\end{align*}
for any $|z|\gqq R$ and $t>t_0(R)$, where $\tilde{c}(|x|):=\frac{2c_\sphericalangle c^{\alpha}_1}{\pi^{\alpha}c_2(|x|) \alpha}>0$.
Therefore for $|x|\lqq K$ there are positive 
constants $\ti c(K)$ and $c_2(K)$ such that 
\begin{align*}
\limsup\limits_{t\to \infty}
\int_{|z|\gqq R}\big|\EE\big[e^{i \< z, Y_t^x\>}\big]\big| \ud z\lqq 
\int_{|z|\gqq R}\exp\Big(-
\tilde{c}(K) |z|^{\alpha}
(1-e^{-c_2(K) \alpha t_0(R)})\Big) \ud z 
\end{align*}
for any $R> \frac{\pi C_\sphericalangle}{C_1}$. 
Sending $R\to \infty$, the dominated convergence theorem implies \eqref{falta}.
\end{proof}

\bigskip 

\subsection{\textbf{Geometric profile characterization for rotationally invariant $Z_\infty$}}\hfill

\begin{lem}\label{lem:monotoniaTV}
Let $f\in \mathcal{C}^1(\RR^d,(0,\infty))$ be a smooth density
 such that
$f(z) = g(|z|)$
for some function $g\in \mathcal{C}^1((0, \infty), (0, \infty))$  with  $g'(s) < 0$ for all $s > 0$  and  $g'\in L^1(\RR^d)$. 
Then the map 
\[
(0, \infty) \ni r \mapsto \int_{\RR^d} |f(z+r e_1) - f(z)| \ud z \in (0, \infty)
\]
is strictly increasing. In particular, it is injective.           
\end{lem}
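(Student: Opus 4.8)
The plan is to reduce the statement to a one-dimensional monotonicity fact by integrating out the directions orthogonal to $e_1$, and then to exploit the strict unimodality encoded in $g' < 0$. First I would write points of $\RR^d$ as $z = (z_1, w)$ with $z_1 \in \RR$ and $w \in \RR^{d-1}$, and observe that for fixed $w$ the function $z_1 \mapsto f(z_1, w) = g(\sqrt{z_1^2 + |w|^2})$ is itself unimodal on $\RR$: it is strictly increasing on $(-\infty, 0)$ and strictly decreasing on $(0, \infty)$, with a unique maximum at $z_1 = 0$, because $g$ is strictly decreasing in its radial argument. This is the key structural input, and it holds for \emph{every} fixed $w \ne 0$ (and trivially, in the limiting sense, for $w = 0$).

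Next I would use the standard identity for the $L^1$-distance of a unimodal density and its translate. For a unimodal function $\phi$ on $\RR$ with mode at the origin, and $r > 0$, one has $\int_\RR |\phi(t + r) - \phi(t)| \ud t = 2\big(\sup_t \phi(t) \wedge (\text{mass considerations})\big)$; more precisely, writing $m_w(r) \in (-r, 0)$ for the crossing point where $f(m_w(r) + r, w) = f(m_w(r), w)$, one gets
\[
\int_\RR |f(t + r, w) - f(t, w)|\, \ud t = 2\!\left(\int_{m_w(r)}^{m_w(r)+r} \!\!\big(\text{larger branch} - \text{smaller branch}\big)\right)
\]
which simplifies, via telescoping of the monotone pieces, to a clean expression. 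The cleanest route: differentiate in $r$. Since $z_1 \mapsto f(z_1, w)$ is $\cC^1$ and its translate crosses it transversally at exactly one point $m_w(r)$, the map $r \mapsto \int_\RR |f(t+r,w) - f(t,w)|\ud t$ is differentiable with derivative
\[
\frac{\partial}{\partial r} \int_\RR |f(t+r,w) - f(t,w)|\, \ud t = -2\, \partial_1 f\big(m_w(r) + r, w\big) > 0,
\]
because $m_w(r) + r \in (0, r)$ lies strictly to the right of the mode, where $\partial_1 f(z_1, w) < 0$ (here $\partial_1 f(z_1, w) = g'(\rho)\, z_1/\rho$ with $\rho = \sqrt{z_1^2 + |w|^2}$, which is strictly negative for $z_1 > 0$). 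Then I would integrate this derivative identity over $w \in \RR^{d-1}$ against Lebesgue measure, using Fubini (justified by $f \in L^1$ together with $g' \in L^1$ to dominate the $r$-derivative), to conclude that
\[
\frac{\ud}{\ud r} \int_{\RR^d} |f(z + r e_1) - f(z)|\, \ud z = -2 \int_{\RR^{d-1}} \partial_1 f\big(m_w(r) + r, w\big)\, \ud w > 0
\]
for every $r > 0$, which gives strict monotonicity and hence injectivity.

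The main obstacle I anticipate is the careful handling of the crossing point $m_w(r)$ and the differentiability of the inner integral: one must check that $m_w(r)$ is well-defined, unique, and depends smoothly enough on $r$ (implicit function theorem, using transversality $\partial_1 f(m_w(r)+r,w) \ne \partial_1 f(m_w(r),w)$, which follows from strict monotonicity of $z_1 \mapsto \partial$-signs), and that differentiation under the integral sign in $w$ is legitimate near $w = 0$ where the radial singularity of $\partial_1 f$ could in principle cause trouble — though $|\partial_1 f(z_1,w)| \le |g'(\sqrt{z_1^2+|w|^2})|$ is harmless since $g' \in L^1(\RR^d)$. An alternative, perhaps cleaner, presentation avoids pointwise differentiation entirely: for $0 < r_1 < r_2$ show directly that $|f(z+r_2 e_1) - f(z)| \ge |f(z+r_1 e_1) - f(z)|$ fails pointwise but the \emph{integrated} inequality is strict, by the layer-cake / rearrangement observation that translating a unimodal profile further can only increase its $L^1$ distance from itself, with strictness coming from $g' < 0$ on a set of positive measure. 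I would present whichever of these two routes turns out shorter in the writeup, but I expect the derivative computation above to be the most transparent.
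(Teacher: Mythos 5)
Your approach is essentially the paper's: rewrite the total variation via the minimum $f(z+re_1)\wedge f(z)$, locate the set where the order flips, differentiate in $r$ under the integral sign, and use $g'<0$ to conclude strict positivity. The paper works directly with the $d$-dimensional integral (no Fubini slicing), noting that $\{z: f(z+re_1)<f(z)\}=\{z:z_1>-\nicefrac{r}{2}\}$, and then differentiates $f(z+re_1)$ in $r$ pointwise.

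Two corrections are in order. First, you do not need the implicit function theorem: by the radial symmetry of each slice $z_1\mapsto f(z_1,w)$, the crossing point of $f(t+r,w)$ and $f(t,w)$ is exactly $m_w(r)=-\nicefrac{r}{2}$, for every $w$. This is precisely the half-space observation the paper makes, and it eliminates all the regularity worries about $m_w(r)$ that your ``main obstacle'' paragraph anticipates. Second, your stated one-dimensional derivative formula is incorrect. Since
\[
\int_\RR |f(t+r,w)-f(t,w)|\,\ud t = 2\big(F(\nicefrac{r}{2},w)-F(-\nicefrac{r}{2},w)\big)=2\int_{-\nicefrac{r}{2}}^{\nicefrac{r}{2}}f(u,w)\,\ud u,
\]
where $F(\cdot,w)$ is the corresponding antiderivative, differentiation gives
\[
\frac{\ud}{\ud r}\int_\RR |f(t+r,w)-f(t,w)|\,\ud t = f(\nicefrac{r}{2},w)+f(-\nicefrac{r}{2},w) = 2f(\nicefrac{r}{2},w),
\]
which is the value of $f$ at the crossing, not $-2\,\partial_1 f(m_w(r)+r,w)$. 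One may check this is consistent with the paper's formula by integrating $g'(|u|)\,u_1/|u|$ over $u_1>\nicefrac{r}{2}$ at fixed $w$, which telescopes to $-f(\nicefrac{r}{2},w)$. Both your (wrong) expression and the correct one are strictly positive, so the conclusion is unaffected; but the writeup would need this fixed, and you should justify differentiation under the integral in $w$ by domination with $f(\nicefrac{a}{2},\cdot)\in L^1(\RR^{d-1})$ for $r\gqq a>0$ rather than by the irrelevant bound $|\partial_1 f|\lqq |g'|$.
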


\begin{proof} 
First we rewrite 
\begin{align*}
\int_{\RR^d} |f(z+r e_1) - f(z)| \ud z 
&= 2 - 2 \int_{\RR^d} (f(z+r e_1) \wedge f(z))\ud z.
\end{align*} 
By the definition of the minimum we have 
 \[
  \frac{\ud }{\ud r}\Big(f(z+r e_1) \wedge f(z)\Big) 
  = 
\begin{cases} 
0 & \mbox{ for all }z: \quad f(z)<  f(z+r e_1),\\
 \frac{\ud}{\ud r}f(z+r e_1) & \mbox{ for all }z: \quad f(z)> f(z+r e_1).
\end{cases}
\]
In the sequel we determine the shape of 
$
 \{z\in \RR^d~|~f(z)> f(z+r e_1)\}.$
Since 
\[
f(z) < f(\ti z)\quad \mbox{ if and only if }\quad |z| > |\ti z|, 
\]
the continuity of $f$ yields 
\begin{align*}
f(z+r e_1) \wedge f(z) 
&= \begin{cases} f(z) & \mbox{ for all }z: \quad |z +r e_1| < |z|, \\ 
f(z+r e_1) &\mbox{ for all }z: \quad |z +r e_1| > |z|. \end{cases}
\end{align*}
That is, we obtain geometrically the shifted half space  
\begin{align*}
\{z\in \RR^d~|~|z +r e_1|^2 > |z|^2\} = \{z\in \RR^d~|~z_1 > - \nicefrac{r}{2}\}. 
\end{align*}
Consequently, it follows  
 \[
  \frac{\ud}{\ud r}\Big(f(z+r e_1) \wedge f(z)\Big) =  
  \begin{cases} 
  0 &\mbox{ for all }z: \quad z_1 < - \frac{r}{2},\\ 
  \frac{\ud}{\ud r} f(z+r e_1) & \mbox{ for all }z: \quad z_1 > - \frac{r}{2}. 
  \end{cases}
 \]
We continue with the computation of $\frac{\ud}{\ud r} f(z+r e_1)$.  
For all $z\neq 0$  we have
 \begin{align*}
\frac{\ud}{\ud r} f(z+r e_1) 
= \frac{\ud}{\ud r} g(|z+r e_1|) 
= g'(|z+r e_1|) \frac{\<z+r e_1,e_1\>}{|z
+r e_1|}
= g'(r|\frac{z}{r}+ e_1|) \frac{(1 + \<\frac{z}{r},  e_1\>) }{|e_1 +\frac{z}{r}|},
\end{align*}
such that  the Leibniz integral rule 
 and the implicit function theorem imply
\begin{align*}
\frac{\ud }{\ud r} \int_{\RR^d}  f(z+r e_1) \wedge f(z) \ud z 
&= \int_{\RR^d} \frac{\ud }{\ud r}  f(z+r e_1) \wedge f(z) \ud z=\int_{\frac{z_1}{r} > -\frac{1}{2}} g'(r| \frac{z}{r}+ e_1|) \frac{(1 + \frac{z_1}{r}) }{|e_1 +\frac{z}{r}|} \ud z\\
&=\int_{v_1 > -\frac{1}{2}} \underbrace{g'(r |v+e_1|)}_{<0} \underbrace{\frac{(1 + v_1) }{|e_1 +v|}}_{>0} r^d \ud v< 0.
\end{align*}
Consequently, we obtain the desired result
\begin{align*}
\frac{\ud}{\ud r} \int_{\RR^d} |f(z+r e_1) - f(z)| dz 
&=- 2 \frac{\ud}{\ud r}\int_{\RR^d} f(z+r e_1) \wedge f(z)dz\\
&=- 2 \int_{v_1 > -\frac{1}{2}} \underbrace{g'(r |v+e_1|)}_{<0} \underbrace{\frac{(1 + v_1) }{|e_1 +v|}}_{>0} r^d \ud v >0.
\end{align*}
\end{proof} 

\bigskip 
\section{\textbf{Exponential ergodicity of coercive L\'evy SDEs in $L^\beta, \beta>0$}}\label{Appendix D}\hfill

\noindent  
In this section we fix the following standing assumptions.
Let $b$ be a vector field satisfying Hypothesis~\ref{hyp: potential} and  $A$  a $d$-squared matrix  with real entries. 
Consider  a L\'evy process  $L=(L_t)_{t\gqq 0}$  with values in $\RR^d$ with  strongly locally layered stable
L\'evy measure  $\nu$
with given parameters 
$(\nu_0,\nu_\infty,\Lambda,q,c_0,\alpha)$ satisfying Hypotheses \ref{hyp: moment condition}, \ref{hyp: regularity} and \ref{hyp: blumental}
and the strong solution $X=(X_t)_{t\gqq 0}$  of the SDE
\begin{align}\label{ddeduf}
\left\{
\begin{array}{r@{\;=\;}l}
\ud X_t & - b(X_t) \ud t + A\ud L_t \quad \textrm{ for  } t\gqq 0,\\
X_0 & x\in \RR^d.
\end{array}
\right.
\end{align} 

\begin{defn}[H\"ormander condition, nonlinear Kalman rank condition]\label{def: controllability} \hfill\\
Under the standing assumptions we denote by $B_0=I_{d}$ be the identity matrix on $\RR^d$ and define for $n\in \NN$ the
$(d\times d)$-matrix-valued function $B_{n}(x)$ recursively by
\[
B_{n}(x):=-b(x)\cdot DB_{n-1}(x)+Db(x)B_{n-1}(x),\qquad x\in \RR^d,
\]
where $b\cdot DF:=\sum_{k=1}^{d}b_k\frac{\partial}{\partial_k}F$, and $F$ is a 
$(d\times d)$-matrix-valued function.
We say that the SDE \eqref{ddeduf} satisfies a H\"ormander condition if its coefficients $b=(b_1,\ldots, b_d)^*$ and the matrix $A$ satisfy the following:
For each $x\in \RR^d$ there exists some $n=n(x)\in \mathbb{N}\cup \{0\}$  such that
\begin{equation}\label{hormander}
\mathrm{Rank}[B_0 A, B_1(x)A,\ldots, B_{n}(x)A]=d.
\end{equation}
\end{defn}

\begin{lem}[Orey-Masuda type condition]
Under the standing assumptions,
the limit \eqref{layered0} implies
\[
\lim\limits_{h\to 0} h^{\alpha-2}\int_{|z|\lqq h} |z|^2\nu(\ud z):=\kappa_1>0.
\]
The latter is Condition $\mathrm{(1.2)}$ in \cite{SongZhang}.
\end{lem}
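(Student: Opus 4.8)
The plan is to extract the radial behavior of $\nu_0$ near the origin from the uniform convergence \eqref{layered0}, since $\nu_\infty$ is supported away from $0$ and contributes nothing to $\int_{|z|\le h}|z|^2\nu(\ud z)$ for $h<1$. First I would write, using the polar decomposition of $\nu_0$ in Definition~\ref{def:llslm89}, that for $h\in(0,1)$
\[
\int_{|z|\lqq h}|z|^2\nu(\ud z)=\int_{|z|\lqq h}|z|^2\nu_0(\ud z)=\int_{\SSS^{d-1}}\Lambda(\ud\theta)\int_0^h r^2\, q(r,\theta)\,\ud r,
\]
and then split $r^2 q(r,\theta)=r^{1-\alpha}c_0(\theta)+r^{1-\alpha}\big(r^{1+\alpha}q(r,\theta)-c_0(\theta)\big)$. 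Integrating the first term over $r\in(0,h)$ gives $\frac{h^{2-\alpha}}{2-\alpha}c_0(\theta)$, and integrating over $\theta$ gives $\frac{h^{2-\alpha}}{2-\alpha}\Lambda_1(\SSS^{d-1})$, where $\Lambda_1(\ud\theta)=c_0(\theta)\Lambda(\ud\theta)$ is the finite measure already appearing in Remark~\ref{rem: generalhoudre} and Proposition~\ref{tomate}. Multiplying by $h^{\alpha-2}$ yields exactly $\frac{1}{2-\alpha}\Lambda_1(\SSS^{d-1})$, which is the candidate for $\kappa_1$.

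The remaining work is to show the error term vanishes after multiplication by $h^{\alpha-2}$. Set $\e(r):=\sup_{\theta\in\SSS^{d-1}}|r^{1+\alpha}q(r,\theta)-c_0(\theta)|$, which by \eqref{layered0} satisfies $\e(r)\to 0$ as $r\to 0$. Then
\[
h^{\alpha-2}\Big|\int_{\SSS^{d-1}}\Lambda(\ud\theta)\int_0^h r^{1-\alpha}\big(r^{1+\alpha}q(r,\theta)-c_0(\theta)\big)\ud r\Big|
\lqq h^{\alpha-2}\Lambda(\SSS^{d-1})\int_0^h r^{1-\alpha}\e(r)\,\ud r.
\]
Since $\e$ is bounded on $(0,1)$ (being a difference of the locally integrable $r^{1+\alpha}q(r,\cdot)$-profile and $c_0\in L^1(\Lambda)$, and convergent as $r\to0$), for any $\delta>0$ pick $r_\delta$ with $\e(r)<\delta$ for $r<r_\delta$; for $h<r_\delta$ one gets $h^{\alpha-2}\int_0^h r^{1-\alpha}\e(r)\,\ud r\le \delta\, h^{\alpha-2}\cdot\frac{h^{2-\alpha}}{2-\alpha}=\frac{\delta}{2-\alpha}$. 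Letting $h\to0$ and then $\delta\to0$ kills this term. Combining the two pieces gives $\lim_{h\to0}h^{\alpha-2}\int_{|z|\lqq h}|z|^2\nu(\ud z)=\frac{1}{2-\alpha}\Lambda_1(\SSS^{d-1})=:\kappa_1$, and positivity of $\kappa_1$ is immediate from Hypothesis~\ref{hyp: blumental}\eqref{essinf} (which gives $\underbar{c}_0>0$ and $\Lambda$ not concentrated on a lower-dimensional set, so $\Lambda_1(\SSS^{d-1})\ge\underbar{c}_0\,\Lambda(\SSS^{d-1})>0$), exactly as used in the proof of Lemma~\ref{lem oreymasuda}.

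I do not expect a genuine obstacle here — the only point requiring minor care is justifying that $\e(r)$ is bounded near $0$ so that the truncation argument for the error term is legitimate; this follows because $\e(r)\to0$ makes $\e$ bounded on a neighborhood of $0$, which is all that is needed. Everything else is Fubini (valid since $q>0$ and $\Lambda$ is finite) and an elementary computation of $\int_0^h r^{1-\alpha}\ud r$ for $\alpha\in(0,2)$. The statement that this matches Condition~(1.2) of \cite{SongZhang} is then just a matter of naming.
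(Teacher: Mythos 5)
Your proof is correct and follows essentially the same route as the paper's: both use the polar decomposition of $\nu_0$, the uniform limit \eqref{layered0} to control $r^{1+\alpha}q(r,\theta)-c_0(\theta)$, and the elementary integral $\int_0^h r^{1-\alpha}\,\ud r=h^{2-\alpha}/(2-\alpha)$, arriving at $\kappa_1=\frac{1}{2-\alpha}\int_{\SSS^{d-1}}c_0\,\ud\Lambda>0$. The paper phrases the error control as a two-sided sandwich $r^{1-\alpha}(c_0(\theta)-\eta)<r^2q(r,\theta)<r^{1-\alpha}(c_0(\theta)+\eta)$ for $r<r_0(\eta)$ rather than your main-term-plus-remainder split, but this is the same estimate; your aside about boundedness of $\e(r)$ on all of $(0,1)$ is superfluous, since your argument only invokes $\e(r)<\delta$ on $(0,h)$ with $h<r_\delta$.
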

\begin{proof}
Observe that
\[
\int_{|z|\lqq h} |z|^2\nu(\ud z)= \int_{\SSS^{d-1}}\int_{0}^{h} r^2 q(r,\theta)\ud r\Lambda(\ud \theta).
\]
Let $\eta>0$ be fixed.
By limit \eqref{layered0}  we deduce
that there exists $r_0:=r_0(\eta)\in (0,1)$ such that for any $0<r<r_0$ we have
\[
r^{1-\alpha}(c_0(\theta)-\eta)<r^{2}q(r,\theta)<r^{1-\alpha}(c_0(\theta)+\eta) \quad \textrm{ for any } \theta\in \SSS^{d-1}.
\]
Let $h\in(0,r_0)$. Then
\[
\begin{split}
\frac{1}{2-\alpha}\int_{\SSS^{d-1}}(c_0(\theta)-\eta)\Lambda(\ud \theta)\lqq 
h^{\alpha-2}\int_{\SSS^{d-1}}\int_{0}^{h} r^2 q(r,\theta)\ud r\Lambda(\ud \theta)\lqq 
\frac{1}{2-\alpha}\int_{\SSS^{d-1}}(c_0(\theta)+\eta)\Lambda(\ud \theta).
\end{split}
\]
Sending $h\to 0$ followed by sending $\eta \to 0$, we obtain 
\[
\lim\limits_{h\to 0}h^{\alpha-2}\int_{\SSS^{d-1}}\int_{0}^{h} r^2 q(r,\theta)\ud r\Lambda(\ud \theta)= 
\frac{1}{2-\alpha}\int_{\SSS^{d-1}} c_0(\theta) \Lambda(\ud \theta)>0,
\]
where the last inequality follows from the fact that
$c_0:\SSS^{d-1}\rightarrow (0,\infty)$  and $c_0\in L^1(\Lambda)$.
\end{proof}
In the sequel we extend Theorem~4.1 in
\cite{Peng} to $L^\beta$ for arbitrary $\beta>0$.
\begin{thm}[Exponential ergodicity]\label{Ext PengZhang}\hfill\\
Under the standing assumptions
and the H\"ormander condition \eqref{hormander} there exists a unique invariant distribution $\mu$ for \eqref{ddeduf} satisfying
exponential ergodicity in the total variation distance.
\end{thm}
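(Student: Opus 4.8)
The plan is to deduce Theorem~\ref{Ext PengZhang} from the Meyn--Tweedie Foster--Lyapunov apparatus \cite{MeynTweedie}, in the spirit of Theorem~4.1 of \cite{Peng}, and then upgrade the moment exponent from $\beta\geqslant 2$ (or $\beta\geqslant 1$) to arbitrary $\beta>0$ by a truncation/interpolation argument on the Lyapunov function. First I would recall that the strong Markov process $X$ from \eqref{ddeduf} admits a transition semigroup $(P_t)_{t\geqslant 0}$ which, under Hypothesis~\ref{hyp: regularity} (in particular the gradient bound \eqref{eq: gradientbound}) together with the H\"ormander condition \eqref{hormander} and the Orey--Masuda-type condition established in the preceding lemma (Condition (1.2) of \cite{SongZhang}), has the strong Feller property and is open-set irreducible; this is exactly Theorem~1.1 and Theorem~1.3 of \cite{SongZhang} combined with the nonlinear Kalman rank condition of Definition~\ref{def: controllability}. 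Strong Feller plus irreducibility gives that all compact sets are petite (equivalently, small) for a suitable skeleton chain, which is the topological input needed for the Meyn--Tweedie machinery.

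Next I would construct the Lyapunov function. For a parameter $\beta'\in(0,\beta\wedge 1]$ set $V_{\beta'}(x)=(1+|x|^2)^{\beta'/2}$ (a smooth, proper, everywhere-positive function comparable to $1+|x|^{\beta'}$). Applying It\^o's formula to $V_{\beta'}(X_t)$ and using the coercivity Hypothesis~\ref{hyp: potential} (which gives $\langle b(x),x\rangle\geqslant \delta|x|^2$ since $b(0)=0$), the drift term contributes a negative multiple of $|x|^{\beta'}V_{\beta'}(x)/(1+|x|^2)\sim \delta\beta' V_{\beta'}(x)$ for large $|x|$, while the jump part of the generator contributes a bounded error controlled by $\int_{|z|>1}|Az|^{\beta'}\nu(\mathrm{d}z)<\infty$ (finite by Hypothesis~\ref{hyp: moment condition} and $\beta'\leqslant\beta$, using subadditivity of $t\mapsto t^{\beta'}$ exactly as in \eqref{e: Loubert Bie} and Lemma~\ref{prop: secondorder}) plus the small-jump part which is bounded by $\int_{|z|\leqslant1}|Az|^2\nu(\mathrm{d}z)<\infty$. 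This yields a geometric drift inequality $\mathcal{A}V_{\beta'}\leqslant -c V_{\beta'}+d\,\mathbf{1}_{C}$ for some compact set $C$ and constants $c,d>0$; the truncation issue (that $V_{\beta'}$ need not be in the domain when $\beta'<1$) is handled by working with $V_{\beta'}\wedge n$, deriving the estimate uniformly in $n$, and passing to the limit by monotone convergence, precisely the trick already used for \eqref{eq:invariant moment est} in the proof of Proposition~\ref{prop: equil}.

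Then the Harris--Meyn--Tweedie theorem (Theorem~6.1 of \cite{MeynTweedie}) applies: the geometric drift condition together with the petiteness of $C$ yields the existence of a unique invariant probability measure $\mu$ with $\int V_{\beta'}\,\mathrm{d}\mu<\infty$ and the $V_{\beta'}$-geometric ergodicity bound $\|P_t(x,\cdot)-\mu\|_{\mathrm{TV}}\leqslant \|P_t(x,\cdot)-\mu\|_{V_{\beta'}}\leqslant C\theta^{t}V_{\beta'}(x)$, for some $C>0$, $\theta\in(0,1)$; since $V_{\beta'}(x)\leqslant C(1+|x|^{\beta'})$ this is the asserted exponential ergodicity in total variation, and Theorem~\ref{ergodicitytheorem} follows as the special case $b\mapsto b$, $A=\varepsilon I_d$ after rescaling. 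The main obstacle I expect is not the Lyapunov computation, which is routine once coercivity is in hand, but verifying that compact sets are genuinely petite: this requires the joint use of the strong Feller regularity coming from \eqref{eq: gradientbound} via \cite{SongZhang} and of irreducibility, and the irreducibility in the multidimensional jump setting is exactly where the H\"ormander/nonlinear Kalman rank condition \eqref{hormander} enters in a nontrivial way (it replaces the hypoellipticity argument available in the Brownian case); one must check that the support theorem / controllability argument of \cite{Peng} goes through verbatim under our strongly locally layered stable hypothesis, which it does because the small-jump behaviour of $\nu$ is governed by the same $\alpha$-stable-like pole. The remaining bookkeeping — that the constants can be traced through the skeleton discretization, though the $\varepsilon$-dependence is only partially explicit — is already commented on after Theorem~\ref{ergodicitytheorem} and needs no further argument here.
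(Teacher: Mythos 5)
Your proposal is correct and lands the same result, but it goes directly through the Meyn--Tweedie machinery (Foster--Lyapunov drift plus petiteness of compacts) instead of through Theorem~2.1 of \cite{Peng}, which is what the paper actually does. The paper's proof is structured around verifying Peng's three conditions: Condition LC (a Lyapunov moment estimate obtained from It\^o's formula applied to $|x|_c^\gamma=(|x|^2+c^2)^{\gamma/2}$ for $\gamma\lqq \beta\wedge 1$, using coercivity for the drift and subadditivity of the $\gamma$-power for the large-jump integral); Condition~H1 (a synchronous-coupling proximity estimate: for $x,y$ in a ball $\overline{B}_R(0)$, the probability that $X_t(x)$ and $X_t(y)$ are $\Delta$-close and both in a larger ball is uniformly bounded away from zero for $t\gqq T_0$, obtained by contracting $|X_t(x)-X_t(y)|_c^\gamma$ under coercivity); and Condition~H2 (regularity, i.e.\ strong Feller, for which the paper simply refers to \cite{Peng}). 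Your Lyapunov computation with $V_{\beta'}(x)=(1+|x|^2)^{\beta'/2}$ is essentially identical to the paper's Condition~LC step (the paper's $|x|_c^\gamma$ is the same function with $c$ as a tuning parameter), and your observations about the drift direction, the subadditivity of $t\mapsto t^{\beta'}$ for the large-jump integral, and the need for $\beta'\lqq\beta\wedge 1$ all match. Where you diverge is in how the minorization/small-set step is supplied: you replace the paper's explicit H1 coupling estimate by the abstract assertion ``strong Feller $+$ irreducibility $\Rightarrow$ compacts petite,'' which is correct and standard in the Meyn--Tweedie theory. This is a cleaner package conceptually, but the paper's route makes the constants and the time scale $T_0$ concrete through the synchronous coupling, and also isolates precisely the two analytic inputs that need to be rechecked for the new noise class ($\beta$-moment for LC, coercive coupling contraction for H1) while quoting H2 verbatim. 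Your flag that the genuine content is in the irreducibility/controllability step (where H\"ormander/Kalman rank enters) and in the density regularity from \cite{SongZhang} under the gradient bound \eqref{eq: gradientbound} is exactly right, and is also the paper's position. One small imprecision: you say $V_{\beta'}$ ``need not be in the domain when $\beta'<1$,'' but the smoothed version is $\cC^2$ with bounded Hessian for all $\beta'\in(0,1]$; the truncation/localization is needed not for smoothness but to justify that the compensated jump integral in It\^o's formula has zero expectation, and the paper handles this with the same localization you describe.
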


\begin{proof}
For $\beta\gqq 2$, it is the statement of Theorem~4.1 in \cite{Peng}.
Let $\beta\in (0,2)$.
We apply Theorem~2.1 in \cite{Peng}. Therefore, we verify Conditions LC, 
H1 and H2 in \cite{Peng}, p. 2-3.

\noindent \textbf{Condition LC in \cite{Peng}.} 
We stress that the fulfilment of Condition LC  only requires Hypothesis~\ref{hyp: potential} and Hypothesis~\ref{hyp: moment condition}.
We define $|\cdot|_c := \sqrt{|x|^{2
} + c^2}$ for $c>0$ satisfying  for all $x\in \RR^d$
\[
c \lqq |x|_c \lqq |x| + c, \quad \nabla |x|_c := \frac{x}{|x|_c} 
\quad \textrm{ and }\quad 0 \lqq \frac{|x|}{|x|_c} < 1.
\]
In addition, we have
\begin{align*} 
D^2 |x|_c 
&=\left(
\begin{array}{ccccc} 
\frac{|x|_c^2 - x_1^2}{|x|_c^3} & - \frac{x_1 x_2}{|x|_c^3} & - \frac{x_1 x_3}{|x|_c^3}  & \dots & - \frac{x_1 x_d}{|x|_c^3} \\
- \frac{x_1 x_2}{|x|_c^3}       &   \frac{|x|_c^2 - x_2^2}{|x|_c^3} & - \frac{x_2 x_3}{|x|_c^3}  &  & \\
- \frac{x_1 x_3}{|x|_c^3}       &  - \frac{x_2 x_3}{|x|_c^3} &  \frac{|x|_c^2 - x_3^2}{|x|_c^3} &   & \vdots\\
\vdots       &   &        & \ddots & \\
- \frac{x_1 x_d}{|x|_c^3} & & \dots & & \frac{|x|_c^2 - x_d^2}{|x|_c^3}
\end{array}
\right).
\end{align*}
Consequently,
\begin{align*}
\|D^2 |x|_c\|_1 &= \sum_{i,j} |(D^2 |x|_c)_{ij}| 
=\sum_{i} |(D^2 |x|_c)_{i,i}|+\sum_{i\not=j} |(D^2 |x|_c)_{ij}|\\
&=\frac{d|x|_c^2-\sum_{i}x^2_i+\sum_{i\not=j}|x_ix_j|}{|x|^3_c}
\lqq \frac{1}{|x|_c} \left(d+\frac{|x|^2}{|x|_c^2}\right) \lqq  \frac{d+1}{c}.
\end{align*}
Let
$0<\gamma\lqq \beta\wedge 1$. 
We calculate the gradient and the Hessian of $|x|^{\gamma}_c$ as follows:
\[
\nabla |x|_c^{\gamma}=\gamma |x|^{\gamma-1}_c \nabla |x|_c=
\gamma |x|^{\gamma-1}_c 
\frac{x}{|x|_c}=\gamma |x|^{\gamma-2}_c 
x
\]
and
\begin{align*}
&\partial_{ii} |x|_c^{\gamma}=
\partial_i(
\gamma |x|^{\gamma-2}_c x_i)=\gamma|x|^{\gamma-2}_c+  
\gamma
(\gamma-2)|x|^{\gamma-4}_c x^2_i,\\
&
\partial_{ij} |x|_c^{\gamma}=
\partial_i(
\gamma |x|^{\gamma-2}_c x_j)=
\gamma x_j \partial_i(
|x|^{\gamma-2}_c )=
\gamma 
(\gamma-2)|x|^{\gamma-4}_c x_ix_j\quad \textrm{ for } i\neq j.
\end{align*}
Hence,
\begin{align*}
\sum_{i,j}|\partial_{ij} |x|_c^{\gamma}|&=
\gamma 
(2-\gamma)|x|^{\gamma-4}_c \sum_{i,j} |x_ix_j|+d\gamma|x|^{\gamma-2}_c
\gamma 
(2-\gamma)|x|^{\gamma-4}_c \|x\|^2+d\gamma|x|^{\gamma-2}_c,
\end{align*}
where $\|\cdot\|$ denotes the 1-norm. Since $\|x\|\lqq \sqrt{d} |x|$, we obtain
\begin{align*}
\sum_{i,j}|\partial_{ij} |x|_c^{\gamma}|
&\lqq 
(\gamma 
(2-\gamma)d+d\gamma)|x|^{\gamma-2}_c\lqq 
(\gamma 
(2-\gamma)d+d\gamma)c^{\gamma-2}.
\end{align*}
With the help of the preceding calculations
It\^o's formula yields 
\begin{align*}
|X_t|_c^{\gamma} 
&= |x|_c^{\gamma} -\gamma \int_0^t \< 
|X_s|^{\gamma-2}_c
X_s, b(X_s)\> \ud s + \int_0^t \int_{|z|< 1} \big( |X_{s-}+ A z|_c^{\gamma} - |X_{s-}|_c^{\gamma} \big) \ti N(\ud s \ud z) \\
&\qquad + \int_0^t \int_{|z|\gqq 1} \big(|X_{s-}+ A z|_c^{\gamma} - |X_{s-}|_c^{\gamma}\big) N(\ud s \ud z) \\
&\qquad + \int_0^t \int_{|z|< 1} \big( |X_{s-}+ A z|_c^{\gamma} - |X_{s-}|_c^{\gamma} - \< \gamma |X_s|^{\gamma-2}_c
X_s, A z\> \big) \nu(\ud z) \ud s,
\end{align*}
where $N$ is a Poisson random measure with compensator $\ud t \otimes \nu(\ud z)$.  Moreover, we have the L\'evy-I\^o decomposition such that  $\PP$-a.s. for all $t\gqq 0$
\[
L_t = \int_0^t \int_{|z|\lqq 1} z \ti N(\ud s \ud z) +  \int_0^t \int_{|z|> 1} z N(\ud s \ud z),
\]
where $\ti N$ is the compensated version of $N$. 
Taking expectations we obtain
\begin{align*}
\EE\big[|X_t|_c^{\gamma}\big] 
&= |x|_c^{\gamma} -\gamma \int_0^t \EE\big[\< |X_s|^{\gamma-2}_c
X_s, b(X_s)\>\big] \ud s \\
&\qquad + \EE\Big[\int_0^t  \int_{|z|< 1} \big( |X_{s-}+ A z|_c^{\gamma} - |X_{s-}|_c^{\gamma} \big) \ti N(\ud s \ud z)\Big] \\
&\qquad + \EE\Big[\int_0^t \int_{|z|\gqq 1} \big(|X_{s-}+ A z|_c^{\gamma} - |X_{s-}|_c^{\gamma}\big) N(\ud s \ud z)\Big ] \\
&\qquad + \int_0^t \EE\Big[\int_{|z|< 1} \big( |X_{s-}+ A z|_c^{\gamma} - |X_{s-}|_c^{\gamma} - \< \gamma|X_s|^{\gamma-2}_c
X_s, A z\> \big) \nu(\ud z)\Big] \ud s.
\end{align*}
First, since the moment of order $\beta$ is finite, a localization argument yields
\[
\EE\Big[\int_0^t  \int_{|z|< 1} \big( |X_{s-}+ A z|_c^{\gamma} - |X_{s-}|_c^{\gamma} \big) \ti N(\ud s \ud z)\Big]=0\quad \textrm{ for } t\gqq 0.
\]
Secondly, by the It\^o isometry for Poisson random measures (see \cite{Appl}) we obtain
\begin{align*}
\EE\Big[\int_0^t \int_{|z|\gqq 1} \big(|X_{s-}+ A z|_c^{\gamma} - |X_{s-}|_c^{\gamma}\big) N(\ud s \ud z)\Big ]&=\EE\Big[\int_0^t \int_{|z|\gqq 1} \big(|X_{s-}+ A z|_c^{\gamma} - |X_{s-}|_c^{\gamma}\big) \nu(\ud z) \ud s\Big ]\\
&\hspace{-0cm}=\int_0^t \EE\Big[ \int_{|z|\gqq 1} \big(|X_{s-}+ A z|_c^{\gamma} - |X_{s-}|_c^{\gamma}\big) \nu(\ud z)\Big ] \ud s.
\end{align*}
Hence, for almost all $t$ we have
\begin{align*}
\EE\big[|X_t|_c^{\gamma}\big] 
&= |x|_c^{\gamma} -\gamma \int_0^t \EE\big[\< |X_s|^{\gamma-2}_c
X_s, b(X_s)\>\big] \ud s \\
&\qquad + \int_0^t \EE\Big[\int_{|z|\gqq 1} \big(|X_{s-}+ A z|_c^{\gamma} - |X_{s-}|_c^{\gamma}\big) \nu(\ud z)\Big ] \ud s \\
&\qquad + \int_0^t \EE\Big[\int_{|z|< 1} \big( |X_{s-}+ A z|_c^{\gamma} - |X_{s-}|_c^{\gamma} - \< \gamma|X_s|^{\gamma-2}_c
X_s, A z\> \big) \nu(\ud z)\Big] \ud s.
\end{align*}
Taking derivatives we obtain
\begin{align*}
\frac{\ud}{\ud t}\EE\big[|X_t|_c^{\gamma}\big] 
&= -\gamma\EE\big[\< |X_s|^{\gamma-2}_c
X_s, b(X_t)\>\big] +  \EE\Big[\int_{|z|\gqq 1} \big(|X_{t-}+ A z|_c^{\gamma} - |X_{t-}|_c^{\gamma}\big) \nu(\ud z)\Big ] \\
&\qquad +  \EE\Big[\int_{|z|< 1} \big( |X_{t-}+ A z|_c^{\gamma} - |X_{t-}|_c^{\gamma} - \< \gamma|X_s|^{\gamma-2}_c
X_s, A z\> \big) \nu(\ud z)\Big].
\end{align*}
By Hypothesis~\ref{hyp: potential} it follows that
\begin{align*}
-\gamma |X_s|^{\gamma-2}_c\< 
X_s, b(X_s)\> 
& \lqq 
-\delta \gamma |X_s|^{\gamma-2}_c|X_s|^2
=-\delta \gamma |X_s|^{\gamma-2}_c(|X_s|^2_c - c^2)\\[2mm]
&=-\delta \gamma |X_s|^{\gamma}_c+\delta \gamma c^2 |X_s|^{\gamma-2}_c\lqq -\delta \gamma |X_s|^{\gamma}_c+\delta \gamma c^\gamma.
\end{align*}
Hence 
\begin{align*}
\frac{\ud}{\ud t}\EE\big[|X_t|_c^{\gamma}\big] 
&\lqq - \delta \gamma \EE\big[|X_t|_c^{\gamma}\big] + \delta \gamma c^\gamma    + \EE\Big[\int_{|z|\gqq 1} \big(|X_{t-} + A z|_c^{\gamma} - |X_{t-}|_c^{\gamma}\big)\nu(\ud z) \Big] \\
&\quad + \EE\Big[ \int_{|z|< 1} \big(|X_{t-} + A z|_c^{\gamma} - |X_{t-}|_c^{\gamma} - \< \gamma|X_s|^{\gamma-2}_c
X_s, A z\>\big)\nu(\ud z) \Big].
\end{align*}
For $\gamma \in (0,1]$, the subadditivity of  the power of order $\gamma$ yields
\begin{align*}
\EE\Big[\int_{|z|\gqq 1} \big(|X_{t-} + A z|_c^{\gamma} - |X_{t-}|_c^{\gamma}\big)\nu(\ud z)\Big]
&\lqq   \int_{|z|\gqq 1} |Az|_c^{\gamma} \nu(\ud z)\\
&\lqq \|A\|^\gamma \int_{|z|\gqq 1} |z|^\gamma \nu(\ud z) =\|A\|^\gamma C_1,
\end{align*}
and 
\begin{align*}
&\EE \Big[\int_{|z|< 1} \big(|X_{t-} + A z|_c^{\gamma} - |X_{t-}|_c^{\gamma} - \< \gamma|X_s|^{\gamma-2}_c
X_s, A z\>\big)\nu(\ud z)\Big]\\
&\hspace{2cm}\lqq  (\gamma 
(2-\gamma)d+d\gamma)c^{\gamma-2} \int_{|z|<1} |Az|^2 \nu(\ud z) \\
&\hspace{2cm}\lqq  
 \|A\|^2 c^{\gamma-2} (\gamma(2-\gamma)d+d\gamma)\int_{|z|<1} |z|^2 \nu(\ud z)) 
=:\|A\|^2 c^{\gamma-2} C_2. 
\end{align*}
Therefore we have 
\begin{align*}
\frac{\ud}{\ud t}\EE\big[|X_t|_c^{\gamma}\big] 
&\lqq - \delta \gamma \EE\big[|X_t|_c^{\gamma}\big] + \delta \gamma c^\gamma +\|A\|^\gamma C_1 +\|A\|^2 c^{\gamma-2}C_2 
\end{align*}
and the Gr\"onwall lemma yields 
\begin{align}\label{eq:99 ergoestimate}
\EE\big[|X_t|^{\gamma}\big]  & \lqq \EE\big[|X_t|_c^{\gamma}\big] 
\lqq |x|_c^{\gamma}\, e^{-\delta \gamma t} + \frac{1-e^{-\delta \gamma t}}{\delta\gamma}(\delta \gamma c^\gamma +\|A\|^\gamma C_1 +\|A\|^2 c^{\gamma-2}C_2  )\nonumber\\
& \lqq |x|_c^{\gamma}\, e^{-\delta \gamma t} +\frac{1}{\delta\gamma}(\delta \gamma c^\gamma +\|A\|^\gamma C_1 +\|A\|^2 c^{\gamma-2}C_2  )
\nonumber\\
& \lqq |x|^\gamma \, e^{-\delta \gamma t}+C_3,
\end{align}
where $C_3=c^\gamma+\frac{1}{\delta\gamma}(\delta \gamma c^\gamma +\|A\|^\gamma C_1 +\|A\|^2 c^{\gamma-2}C_2 )$.

\noindent \textbf{Condition $\mathbf{H_{1}}$ in \cite{Peng}}. 
We emphasize that Condition  $\mathbf{H_{1}}$ also only requires
Hypothesis~\ref{hyp: potential} and Hypothesis~\ref{hyp: moment condition}. 
In the sequel we consider the solution 
$(X_t(x))_{t\gqq 0}$ of 
 \eqref{ddeduf} with initial condition~$x$.
By Hypothesis~\ref{hyp: potential}
we have for all $x, y \in \RR^d$ 
\begin{align}
\frac{\ud}{\ud t} |X_t(x) - X_t(y)|_c^{\gamma} 
&= 
-\gamma |X_t(x) - X_t(y)|^{\gamma-2}_c 
\< 
X_t(x) - X_t(y)
, b(X_t(x)) - b(X_t(y))\>
\nonumber
\\[2mm]
&\lqq -\gamma \delta |X_t(x) - X_t(y)|^{\gamma-2}_c
|X_t(x) - X_t(y)|^{2}
\nonumber
\\[2mm]
&= -\gamma \delta |X_t(x) - X_t(y)|^{\gamma-2}_c
(|X_t(x) - X_t(y)|^2_c - c^2)
\nonumber\\[2mm]
&= -\gamma \delta |X_t(x) - X_t(y)|^{\gamma}_c+  \gamma \delta |X_t(x) - X_t(y)|^{\gamma-2}_c c^2
\nonumber
\\[2mm]
&\lqq -\gamma \delta |X_t(x) - X_t(y)|^{\gamma}_c+  \gamma \delta c^{\gamma},\label{eq:xy}
\end{align}
where in the last inequality we use that $|x|_c\gqq c$ and $\gamma \in (0,1]$.
Gr\"onwall's lemma applied to \eqref{eq:xy} yields
\begin{align}
\EE\big[|X_t(x) - X_t(y)|^\gamma\big]& \lqq \EE\big[|X_t(x) - X_t(y)|_c^{\gamma}\big] \lqq |x-y|_c^{\gamma} e^{- \delta \gamma t}+c^\gamma
 \lqq |x-y|^{\gamma} e^{- \delta \gamma t}+2c^\gamma.
\label{e:99H1b}
\end{align}
Let $R>0$ and $\Delta>0$. Here, we analyze the quantity
\[
\PP\big(\underbrace{|X_t(x) - X_t(y)|\lqq \Delta}_{=:D}, 
\underbrace{|X_t(x)|\lqq R_0}_{=:B}, \underbrace{|X_t(y)|\lqq R_0}_{=:C} \big),
\]
where $x,y\in \overline{B}_{R_0}(0)$ for a suitable $R_0>0$.
Observe that
\begin{equation}\label{eq:99subad}
\PP((D\cap B\cap C)^c)=\PP(D^c \cup B^c \cup C^c)\lqq \PP(D^c)+\PP(B^c)+\PP(C^c).
\end{equation}
For any $\Delta>0$ and $R>0$ we set $R_0=\max\{R,(4C_3)^{1/\gamma}\}$, where $C_3$ is the positive constant in estimate \eqref{eq:99 ergoestimate} and take $x,y\in \overline{B}_{R}(0)$.
By estimate \eqref{eq:99 ergoestimate} we obtain for any $t\gqq 0$ 
\begin{align}\label{eq:99largeball1}
\PP(B^c)& =\PP(|X_t(x)|> R_0)  \lqq \frac{\EE[|X_t(x)|^\gamma]}{R^\gamma_0} \nonumber\\
& \lqq \frac{|x|^\gamma \, e^{-\delta \gamma t}+C_3}{R^\gamma_0}\lqq \frac{R^\gamma e^{-\delta \gamma t}+C_3}{R^\gamma_0}
=\frac{R^\gamma}{R^\gamma_0}e^{-\delta \gamma t}+\frac{C_3}{R^\gamma_0}\lqq e^{-\delta \gamma t}+\frac{1}{4}.
\end{align}
Switching the role of $x$ and $y$ we have
\begin{equation}\label{eq:99 largeball2}
\PP(C^c)=\PP(|X_t(y)|> R_0)
\lqq e^{-\delta  \gamma t}+\frac{C_3}{R^\gamma_0}=e^{-\delta \gamma  t}+\frac{1}{4}.
\end{equation}
We continue with the  analysis of $\PP(D^c)$.
Again, let $x,y\in \overline{B}_{R}(0)$.
Define $T'_0(\Delta,R)$ as the unique positive solution of  
\begin{align*}
\frac{((2R)^\gamma+c^\gamma) e^{-\delta \gamma T'_0}}{\Delta^\gamma} = \frac{1}{4},
\end{align*}
where $c^\gamma=\frac{\Delta^\gamma}{8}$.
Hence for any $x,y\in \overline{B}_{R}(0)$ and $t\gqq T_0$ we have by \eqref{e:99H1b} 
the estimate
\begin{align}\label{eq:99proximity}
\PP(D^c)=\PP\big(|X_t(x) - X_t(y)| > \Delta \big) 
&\lqq \frac{\EE\big[|X_t(x) - X_t(y)|^\gamma\big]}{\Delta^\gamma}\nonumber \lqq \frac{|x-y|_c^{\gamma} e^{- \delta \gamma t}+c^\gamma}{\Delta^\gamma } \nonumber\\
&\lqq \frac{((2R)^\gamma+c^\gamma) e^{- \delta \gamma  t}+c^\gamma }{\Delta^\gamma }\lqq \frac{3}{8}.
\end{align}
By taking $T_0=\max\{T'_0,\frac{\ln(32)}{\delta \gamma}\}$  and 
combining \eqref{eq:99subad}, 
\eqref{eq:99largeball1}, \eqref{eq:99 largeball2}    and \eqref{eq:99proximity} it follows uniformly for any $x,y\in \overline{B}_{R}(0)$ and $t\gqq T_0$ that
\[
\mathbb{P}((D\cap B\cap C)^c)\lqq 2e^{-\delta \gamma t}+\frac{7}{8}\lqq \frac{15}{16}<1.
\]
The preceding inequality yields the weak form of irreducibility condition $\mathbf{H_{1}}$ in \cite{Peng} for the canonical coupling.

\noindent
\textbf{Condition $\mathbf{H_{2}}$ in \cite{Peng}.} The proof is virtually identical to  \cite{Peng} p.15-16. 
\end{proof}

The following corollaries are Taylor-made statements for the error estimates in Subsection~\ref{sec: local}.

\begin{cor}
For any $0< \gamma \lqq \beta \wedge 1$ there exists a positive constant $C$ such that for all $\e>0$, $x\in \RR^d$ and $t\gqq 0$ we have 
\begin{align}\label{eq: momentL1}
\EE\big[|X^{\e,x}_t|^\gamma \big]  & \lqq |x|^\gamma\, e^{-\delta \gamma t}  +C\e^\gamma.
\end{align}
\end{cor}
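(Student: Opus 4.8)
\textbf{Proof proposal for \eqref{eq: momentL1}.}
The plan is to reduce the statement for the scaled process $X^{\e,x}$ to an instance of the estimate \eqref{eq:99 ergoestimate} already obtained in the proof of Theorem~\ref{Ext PengZhang} (Condition LC), applied to the SDE \eqref{ddeduf} with the matrix $A$ chosen to be $\e I_d$. Indeed, \eqref{dde1} is exactly \eqref{ddeduf} with $A = \e I_d$, so $X^{\e,x}$ is a strong solution of \eqref{ddeduf} for that choice, and the standing assumptions of Appendix~\ref{Appendix D} (Hypotheses~\ref{hyp: potential}, \ref{hyp: moment condition}, \ref{hyp: regularity}, \ref{hyp: blumental}) are in force by hypothesis. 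The key observation is that \eqref{eq:99 ergoestimate} was derived using only Hypothesis~\ref{hyp: potential} and Hypothesis~\ref{hyp: moment condition}, and it reads
\[
\EE\big[|X_t|^{\gamma}\big] \lqq |x|^{\gamma}\, e^{-\delta \gamma t}+C_3,\qquad C_3 = c^\gamma+\frac{1}{\delta\gamma}\big(\delta \gamma c^\gamma +\|A\|^\gamma C_1 +\|A\|^2 c^{\gamma-2}C_2 \big),
\]
for any fixed $c>0$, where $C_1 = \int_{|z|\gqq 1}|z|^\gamma\nu(\ud z)$ and $C_2 = (\gamma(2-\gamma)d+d\gamma)\int_{|z|<1}|z|^2\nu(\ud z)$ are constants depending only on $\nu$, $d$ and $\gamma$.

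The only step that needs care is tracking the $\e$-dependence of the additive constant $C_3$ so that it comes out as $C\e^\gamma$. First I would substitute $\|A\| = \e$, giving
\[
C_3(\e,c) = 2c^\gamma + \frac{C_1}{\delta\gamma}\e^\gamma + \frac{C_2}{\delta\gamma}\,c^{\gamma-2}\e^2.
\]
Now I would make the choice $c := \e$ (rather than a fixed $c$); this is legitimate since $c>0$ was an arbitrary free parameter in the derivation of \eqref{eq:99 ergoestimate} and the computation there is valid for every positive $c$. With $c=\e$ the three terms become $2\e^\gamma$, $(C_1/\delta\gamma)\e^\gamma$ and $(C_2/\delta\gamma)\e^{\gamma-2}\e^2 = (C_2/\delta\gamma)\e^\gamma$, so
\[
C_3(\e,\e) = \Big(2 + \frac{C_1}{\delta\gamma} + \frac{C_2}{\delta\gamma}\Big)\e^\gamma =: C\,\e^\gamma,
\]
with $C = C(\gamma,\delta,d,\nu)$ independent of $\e$, $x$ and $t$. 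Moreover $|x|_c^\gamma\,e^{-\delta\gamma t} \lqq (|x|+\e)^\gamma e^{-\delta\gamma t}$; since $\gamma\in(0,1]$ the subadditivity of $s\mapsto s^\gamma$ gives $(|x|+\e)^\gamma \lqq |x|^\gamma + \e^\gamma$, which after enlarging $C$ by $1$ if necessary absorbs the cross term into $C\e^\gamma$. This yields precisely \eqref{eq: momentL1}.

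I do not expect any genuine obstacle here; the statement is essentially a bookkeeping corollary of the Condition LC estimate. The one point demanding a sentence of justification is that the derivation of \eqref{eq:99 ergoestimate} — which uses It\^o's formula for $|X_t|_c^\gamma$, the compensation of the small-jump integral (finite $\beta$-moment plus a localization argument), the It\^o isometry for the large-jump Poisson integral, the coercivity bound $-\gamma|x|_c^{\gamma-2}\<x,b(x)\>\lqq -\delta\gamma|x|_c^\gamma+\delta\gamma c^\gamma$, the subadditivity bound on the large-jump term and the Hessian bound $\sum_{i,j}|\partial_{ij}|x|_c^\gamma|\lqq (\gamma(2-\gamma)d+d\gamma)c^{\gamma-2}$ on the small-jump term — goes through verbatim for the SDE \eqref{ddeduf} with $A=\e I_d$ and with the $c$-parameter set equal to $\e$, because every constant there was made explicit in $\|A\|$ and $c$. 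Hence the corollary follows, and one should remark that the same reasoning, applied to the difference process as in the derivation of \eqref{e:99H1b}, would also give the analogous two-point bound should it be needed elsewhere.
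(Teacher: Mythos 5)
Your proof is correct and matches the paper's approach exactly: the paper's one-line proof is precisely ``take $A=\e I_d$ and $c=\e$ in inequality \eqref{eq:99 ergoestimate},'' and you have simply carried out the substitution explicitly and verified that $C_3$ collapses to $C\e^\gamma$. The additional paragraph justifying $|x|_\e^\gamma\lqq |x|^\gamma+\e^\gamma$ is slightly redundant given that \eqref{eq:99 ergoestimate} already replaces $|x|_c^\gamma$ by $|x|^\gamma$ in its last line, but it does no harm.
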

\begin{proof}
The statement follows taking $A=\e I_d$ and $c=\e$ in inequality \eqref{eq:99 ergoestimate}.
\end{proof}

\begin{cor}\label{lem: demomento}
For any $0< \gamma \lqq \beta \wedge 1$ there exists a positive constant $C=C(\delta,d, \gamma)$ such that
\begin{align}\label{eq: gamma moment}
\EE\left[|X^{\e,x}_t|^\gamma\right] \lqq C\e^\gamma  +|\varphi^x_t|^\gamma
\end{align}
for all $t\gqq 0$ and $\e\in (0,1]$.
\end{cor}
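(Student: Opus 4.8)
The plan is to compare the noisy flow $X^{\e,x}$ directly with the deterministic flow $\varphi^x$, and to run the same energy estimate that underlies Condition LC in the proof of Theorem~\ref{Ext PengZhang}, but applied to the difference process rather than to $X^{\e,x}$ itself. Set $D_t := X^{\e,x}_t - \varphi^x_t$, which by \eqref{dde1} and \eqref{dde1.1} solves
\begin{align*}
\ud D_t = -\big(b(X^{\e,x}_t) - b(\varphi^x_t)\big)\ud t + \e\, \ud L_t, \qquad D_0 = 0.
\end{align*}
The point is that Hypothesis~\ref{hyp: potential} applies verbatim to the increment $b(X^{\e,x}_t) - b(\varphi^x_t) = b(\varphi^x_t + D_t) - b(\varphi^x_t)$, so that $\langle D_t, b(X^{\e,x}_t) - b(\varphi^x_t)\rangle \gqq \delta |D_t|^2$; this is precisely the analogue of \eqref{eq:xy} with $X_t(x)$, $X_t(y)$ replaced by $X^{\e,x}_t$, $\varphi^x_t$. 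Since the jump part of $D$ coincides with that of $\e L$, the It\^o expansion of the regularized power $|D_t|_c^\gamma$, with $|\cdot|_c = \sqrt{|\cdot|^2 + c^2}$ and $0 < \gamma \lqq \beta\wedge 1$, is the one already computed in the proof of Theorem~\ref{Ext PengZhang} for the matrix $A = \e I_d$.

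First I would take expectations, using a standard localisation (via the exit times $\tau_n := \inf\{t\gqq 0 : |D_t| > n\}$ and Fatou's lemma, exactly as in the proof of Theorem~\ref{Ext PengZhang}) to remove the compensated Poisson integral. Bounding the drift term by $-\gamma\delta\,|D_t|_c^\gamma + \gamma\delta c^\gamma$ (using $|D_t|_c \gqq c$ and $\gamma\lqq 1$), the large-jump compensator by $\|\e I_d\|^\gamma \int_{|z|>1}|z|^\gamma\nu(\ud z) = \e^\gamma C_1$, and the small-jump second-order term by $\|\e I_d\|^2 c^{\gamma-2}\big(\gamma(2-\gamma)d + d\gamma\big)\int_{|z|\lqq 1}|z|^2\nu(\ud z) = \e^2 c^{\gamma-2} C_2$, exactly as in the derivation of \eqref{eq:99 ergoestimate}, one obtains
\begin{align*}
\frac{\ud}{\ud t}\EE\big[|D_t|_c^\gamma\big] \lqq -\gamma\delta\,\EE\big[|D_t|_c^\gamma\big] + \gamma\delta c^\gamma + \e^\gamma C_1 + \e^2 c^{\gamma-2} C_2.
\end{align*}
The choice $c = \e$ turns the constant on the right-hand side into a multiple of $\e^\gamma$, and since $D_0 = 0$, Lemma~\ref{lem: gronwall} yields $\EE[|D_t|^\gamma] \lqq \EE[|D_t|_c^\gamma] \lqq C\e^\gamma$ for a constant $C = C(\delta, d, \gamma)$, uniformly in $t\gqq 0$ and $\e\in(0,1]$.

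It then remains only to undo the shift: since $\gamma\in(0,1]$, the map $s\mapsto s^\gamma$ is subadditive on $[0,\infty)$, so $|X^{\e,x}_t|^\gamma = |D_t + \varphi^x_t|^\gamma \lqq |D_t|^\gamma + |\varphi^x_t|^\gamma$, and taking expectations gives $\EE[|X^{\e,x}_t|^\gamma] \lqq C\e^\gamma + |\varphi^x_t|^\gamma$, which is the claim. I expect the only delicate point to be the bookkeeping in the It\^o expansion of $|D_t|_c^\gamma$ together with the justification that the compensated-jump martingale has zero expectation; both are handled by repeating the finite-moment localisation already carried out in the proof of Theorem~\ref{Ext PengZhang}, so no genuinely new difficulty arises.
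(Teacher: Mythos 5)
Your proof is correct and follows essentially the same route as the paper: you form the difference $D_t = X^{\e,x}_t - \varphi^x_t$ (the paper's $\ti X^{\e,x}_t$), observe that it satisfies a coercive SDE driven by $\e \ud L_t$ with $D_0 = 0$, and then repeat the Condition LC estimate from Theorem~\ref{Ext PengZhang} with $A=\e I_d$, $c=\e$ to get $\EE[|D_t|^\gamma]\lqq C\e^\gamma$, finishing with subadditivity of $s\mapsto s^\gamma$. The only cosmetic difference is that the paper records the drift of $D_t$ in the mean-value form $-\bigl(\int_0^1 Db(\varphi^x_t + \theta D_t)\,\ud\theta\bigr) D_t$, whereas you keep it as $-(b(X^{\e,x}_t)-b(\varphi^x_t))$ and invoke Hypothesis~\ref{hyp: potential} directly; the two formulations are equivalent and feed the same energy inequality.
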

\begin{proof}
Note that the difference $\ti X^{\e,x}_t := X^{\e,x}_t - \varphi^x_t$ satisfies 
\[
\ud \ti X^{\e,x}_t = - \Big(\int_0^1 Db(\varphi^x_t + \theta \ti X^{\e,x}_t) \ud \theta\Big) \ti X^{\e,x}_t \ud t + \e \ud L_t, \qquad \ti X^{\e,x}_0 = 0.
\]
Hypothesis~\ref{hyp: potential} together 
with the analogous computations to  the proof of Condition \textbf{LC} 
in Theorem~\ref{Ext PengZhang} in Appendix~\ref{Appendix D} yields that for $A = \e I_d$, $c = \e$ and $0 <\gamma \lqq 1 \wedge \beta$ 
there is a constant $C>0$ such that  $\e \in (0,1]$, $x\in \RR^d$ and $t\gqq 0$ imply
\begin{align*}
\EE\big[|\ti X^{\e,x}_t|^\gamma \big]  & \lqq C\e^\gamma.
\end{align*}
Using the subadditivity of the $\gamma$-power we obtain \eqref{eq: gamma moment}. 
\end{proof}
\begin{cor}\label{cor: demomento}
For any $x\in \mathbb{R}^d$ and $0 < \gamma \lqq \beta \wedge 1$ 
there exists a positive constant $C=C(|x|, \gamma)$ such that for all $\vt\in (0,1)$ and $\e\in (0,1)$ we have 
\[
\PP\big(|X^{\e,x}_{T^{x}_\e}|\gqq r_\e\big)\lqq C(|x|)\e^{\gamma\vartheta}.
\]
\end{cor}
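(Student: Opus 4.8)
The plan is to combine the moment estimate of Corollary~\ref{lem: demomento} with the deterministic decay estimate of Lemma~\ref{lem:ordenepsilon} via Markov's inequality. First I would recall that $r_\e = \e^{1-\vt}$ and that $T^x_\e = t^x_\e - \Delta_\e + \rho\cdot w^x_\e$ is of order $\mathcal{O}(\ln(\nicefrac1\e))\to\infty$ as $\e\to 0$. Applying Markov's inequality with the $\gamma$-th power for $0<\gamma\lqq\beta\wedge 1$ gives
\[
\PP\big(|X^{\e,x}_{T^x_\e}|\gqq r_\e\big)\lqq \frac{\EE\big[|X^{\e,x}_{T^x_\e}|^\gamma\big]}{r_\e^\gamma}.
\]
By Corollary~\ref{lem: demomento} the numerator is bounded by $C\e^\gamma + |\varphi^x_{T^x_\e}|^\gamma$, and by Lemma~\ref{lem:ordenepsilon} we have $|\varphi^x_{T^x_\e}|\lqq C(|x|,\rho)\e$, so $\EE[|X^{\e,x}_{T^x_\e}|^\gamma]\lqq C(|x|,\rho)\e^\gamma$ for $\e$ small enough.

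Dividing by $r_\e^\gamma = \e^{(1-\vt)\gamma}$ yields
\[
\PP\big(|X^{\e,x}_{T^x_\e}|\gqq r_\e\big)\lqq C(|x|,\rho)\frac{\e^\gamma}{\e^{(1-\vt)\gamma}} = C(|x|,\rho)\e^{\vt\gamma},
\]
which is exactly the claimed bound (absorbing the $\rho$-dependence of the constant into $C(|x|)$, since $\rho$ is a fixed parameter throughout, or keeping it explicit as $C(|x|,\rho)$ — I would state it in the form used downstream). For the degenerate initial point $x=0$, one notes $\varphi^0_t\equiv 0$, so the bound $\EE[|X^{\e,0}_{T^0_\e}|^\gamma]\lqq C\e^\gamma$ holds directly from Corollary~\ref{lem: demomento} with no appeal to Lemma~\ref{lem:ordenepsilon}, and the same computation applies with any admissible time scale $T^0_\e$.

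I do not expect a genuine obstacle here: the statement is a quantitative repackaging of two already-established estimates. The only point requiring a little care is making sure the time scale $T^x_\e$ really satisfies $T^x_\e\to\infty$ and lies in the regime covered by Lemma~\ref{lem:ordenepsilon} (which is stated precisely for $T^x_\e = t^x_\e - \Delta_\e + \rho\cdot w^x_\e$ with $\Delta_\e\to 0$), and to check that the constant $C(|x|)$ from Corollary~\ref{lem: demomento} and the constant $C(|x|,\rho)$ from Lemma~\ref{lem:ordenepsilon} are indeed continuous (hence locally bounded) in $|x|$, so that the final constant is of the asserted form. If a uniform-in-$|x|\lqq K$ version is needed elsewhere, the same argument gives it by taking suprema of these continuous constants over the compact ball.
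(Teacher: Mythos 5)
Your proof is correct and follows essentially the same route as the paper's: apply Markov's inequality at order $\gamma$, bound the moment by Corollary~\ref{lem: demomento}, and control the deterministic part $|\varphi^x_{T^x_\e}|$ by Lemma~\ref{lem:ordenepsilon}, yielding $\EE[|X^{\e,x}_{T^x_\e}|^\gamma]\lqq C(|x|)\e^\gamma$ and hence the bound $C(|x|)\e^{\gamma\vartheta}$ after dividing by $r_\e^\gamma$. Your extra remarks on the $\rho$-dependence of the constant, continuity in $|x|$, and the degenerate case $x=0$ are accurate and consistent with how the statement is used elsewhere in the paper.
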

\begin{proof}
By Corollary~\ref{lem: demomento} and Lemma~\ref{lem:ordenepsilon}  we have
\[
\EE\big[|X^{\e,x}_{T^{x}_\e}|^\gamma\big]\lqq   C_1\e^\gamma  +C_2(|x|)\e^\gamma
\]
for some positive constants $C_1$ and $C_2(|x|)$.
The preceding inequality with the help of 
Markov's inequality yields
\begin{align*}
\PP\big(|X^{\e,x}_{T^{x}_\e}|\gqq r_\e\big)\lqq \frac{\EE\big[|X^{\e,x}_{T^{x}_\e}|^\gamma\big]}{\e^{\gamma(1-\vartheta)}}\lqq 
(C_1 +C_2(|x|))\e^{\gamma\vartheta},
\end{align*}
which concludes the statement.
\end{proof}

\bigskip 
\section*{Acknowledgments}
The research of GBV has been supported by the Academy of Finland, via
the Matter and Materials Profi4 university profiling action.
GBV also would like to express
his gratitude to University of Helsinki for all the facilities used along
the realization of this work. 
The research of MAH has been supported by the 
Proyecto de la Convocatoria 2020-2021:
``Stochastic dynamics of systems perturbed with small Markovian noise with applications in biophysics, climatology and statistics"
of the School of Sciences (Facultad de Ciencias) at Universidad de los Andes. 
JCP acknowledges support from  CONACyT-MEXICO CB-250590.
The authors would like to thank professor M. Jara 
and professor R. Imbuzeiro Oliveira both at IMPA for ideas how to construct the example given in Subsubsection \ref{subsub:counter}.

\bibliographystyle{amsplain}

\end{document}